\newtheorem{theorem}{Theorem}[section]
\newtheorem{lemma}[theorem]{Lemma}
\newtheorem{proposition}[theorem]{Proposition}
\newtheorem{corollary}[theorem]{Corollary}
\theoremstyle{plain}
\theoremstyle{definition}
\newtheorem{definition}[theorem]{Definition}
\newtheorem{remark}[theorem]{Remark}
\numberwithin{equation}{section}
\renewcommand{\theenumi}{(\roman{enumi})}
\renewcommand{\labelenumi}{\textup{(\theenumi)}}
\title{Reciprocal Cuntz--Krieger algebras} 
\author{Kengo Matsumoto \\
Department of Mathematics \\
Joetsu University of Education \\
Joetsu, Niigata 943-8512, Japan
\and
Taro Sogabe \\
Department of Mathematics \\
Kyoto University \\
Kyoto,  606-8502, Japan}
\begin{document}


\maketitle

\date{}

\def\det{{{\operatorname{det}}}}

\begin{abstract}
Reciprocality in Kirchberg algebras 
is a duality between  strong extension groups and $\operatorname{K}$-theory groups.
We describe a construction of the reciprocal dual algebra $\widehat{\mathcal{A}}$
for a Kirchberg algebra $\mathcal{A}$ 
with finitely generated $\operatorname{K}$-groups via K-theoretic duality for extensions.
In particular, we may concretely construct and realize the reciprocal algebra 
$\widehat{\mathcal{O}}_A$ for simple Cuntz--Krieger algebras $\mathcal{O}_A$
in several different ways.
As a result, 
the algebra $\widehat{\mathcal{O}}_A$ 
is realized as a unital simple purely infinite universal $C^*$-algebra 
generated by a family of partial isometries subject to certain operator relations.
We will finally study gauge actions on the reciprocal algebra 
$\widehat{\mathcal{O}}_A$ and prove that 
there exists an isomorphism between the fundamental groups
$\pi_1({\operatorname{Aut}}({\mathcal{O}}_A))$ and
$\pi_1({\operatorname{Aut}}(\widehat{\mathcal{O}}_A))$
preserving their gauge actions.
\end{abstract}

{\it Mathematics Subject Classification}:
Primary 46L80; Secondary 19K33, 19K35.

{\it Keywords and phrases}:  K-group, Ext-group, KK-theory,  
$C^*$-algebra,  Cuntz--Krieger algebra, Kirchberg algebra, duality.

\tableofcontents



\newcommand{\Ker}{\operatorname{Ker}}
\newcommand{\sgn}{\operatorname{sgn}}
\newcommand{\Ad}{\operatorname{Ad}}
\newcommand{\ad}{\operatorname{ad}}
\newcommand{\orb}{\operatorname{orb}}
\newcommand{\rank}{\operatorname{rank}}

\def\Re{{\operatorname{Re}}}
\def\det{{{\operatorname{det}}}}
\newcommand{\K}{\operatorname{K}}

\newcommand{\sqK}{\operatorname{K}\!\operatorname{K}}

\newcommand{\bbK}{\mathbb{K}}
\newcommand{\N}{\mathbb{N}}
\newcommand{\bbC}{\mathbb{C}}
\newcommand{\R}{\mathbb{R}}
\newcommand{\Rp}{{\mathbb{R}}^*_+}
\newcommand{\T}{\mathcal{T}}
\newcommand{\bbT}{\mathbb{T}}

\newcommand{\Z}{\mathbb{Z}}
\newcommand{\Zp}{{\mathbb{Z}}_+}
\def\AF{{{\operatorname{AF}}}}

\def\TorZ{{{\operatorname{Tor}}^\Z_1}}
\def\Ext{{{\operatorname{Ext}}}}
\def\Exts{\operatorname{Ext}_{\operatorname{s}}}
\def\Extw{\operatorname{Ext}_{\operatorname{w}}}
\def\Ext{{{\operatorname{Ext}}}}
\def\Free{{{\operatorname{Free}}}}

\def\Ks{\operatorname{K}^{\operatorname{s}}}
\def\Kw{\operatorname{K}^{\operatorname{w}}}

\def\OA{{{\mathcal{O}}_A}}
\def\ON{{{\mathcal{O}}_N}}
\def\OAT{{{\mathcal{O}}_{A^t}}}
\def\OAI{{{\mathcal{O}}_{A^\infty}}}
\def\OAIn{{{\mathcal{O}}_{A^\infty_n}}}
\def\OAInone{{{\mathcal{O}}_{A^\infty_{n+1}}}}

\def\OalgAI{{{\mathcal{O}}^{\operatorname{alg}}_{A^\infty}}}

\def\OAIN{{{\mathcal{O}}_{A^\infty_N}}}
\def\OATI{{{\mathcal{O}}_{A^{t \infty}}}}

\def\PAI{{{\mathcal{P}}_{A^\infty}}}
\def\FAI{{{\mathcal{F}}_{A^\infty}}}
\def\IAI{{{\mathcal{I}}_{A^\infty}}}
\def\IAIn{{{\mathcal{I}}_{A^\infty}^n}}

\def\OSA{{{\mathcal{O}}_{S_A}}}

\def\TA{{{\mathcal{T}}_A}}
\def\TAn{{{\mathcal{T}}_{A_n}}}
\def\TAnone{{{\mathcal{T}}_{A_{n+1}}}}
\def\TAN{{{\mathcal{T}}_{A_N}}}

\def\whatA{{\widehat{\A}}}
\def\whatB{{\widehat{\B}}}

\def\TN{{{\mathcal{T}}_N}}

\def\TAT{{{\mathcal{T}}_{A^t}}}

\def\TB{{{\mathcal{T}}_B}}
\def\TBT{{{\mathcal{T}}_{B^t}}}

\def\A{{\mathcal{A}}}
\def\B{{\mathcal{B}}}
\def\C{{\mathcal{C}}}
\def\D{{\mathcal{D}}}
\def\O{{\mathcal{O}}}
\def\OaA{{{\mathcal{O}}^a_A}}
\def\OB{{{\mathcal{O}}_B}}
\def\OTA{{{\mathcal{O}}_{\tilde{A}}}}
\def\F{{\mathcal{F}}}
\def\G{{\mathcal{G}}}
\def\FA{{{\mathcal{F}}_A}}
\def\PA{{{\mathcal{P}}_A}}
\def\PI{{{\mathcal{P}}_\infty}}
\def\OI{{{\mathcal{O}}_\infty}}

\def\OalgI{{{\mathcal{O}}^{\operatorname{alg}}_\infty}}

\def\calI{\mathcal{I}}

\def\calP{\mathcal{P}}
\def\calQ{\mathcal{Q}}
\def\calR{\mathcal{R}}
\def\calC{\mathcal{C}}
\def\calD{\mathcal{D}}
\def\calM{\mathcal{M}}

\def\bbC{{\mathbb{C}}}

\def\U{{\mathcal{U}}}
\def\OF{{{\mathcal{O}}_F}}
\def\DF{{{\mathcal{D}}_F}}
\def\FB{{{\mathcal{F}}_B}}
\def\DA{{{\mathcal{D}}_A}}
\def\DB{{{\mathcal{D}}_B}}
\def\DZ{{{\mathcal{D}}_Z}}

\def\End{{{\operatorname{End}}}}

\def\Ext{{{\operatorname{Ext}}}}
\def\Hom{{{\operatorname{Hom}}}}

\def\Tor{{{\operatorname{Tor}}}}

\def\Max{{{\operatorname{Max}}}}
\def\Max{{{\operatorname{Max}}}}
\def\max{{{\operatorname{max}}}}
\def\KMS{{{\operatorname{KMS}}}}
\def\Per{{{\operatorname{Per}}}}
\def\Out{{{\operatorname{Out}}}}
\def\Aut{{{\operatorname{Aut}}}}
\def\Ad{{{\operatorname{Ad}}}}
\def\Inn{{{\operatorname{Inn}}}}
\def\Int{{{\operatorname{Int}}}}
\def\det{{{\operatorname{det}}}}
\def\exp{{{\operatorname{exp}}}}
\def\nep{{{\operatorname{nep}}}}
\def\sgn{{{\operatorname{sign}}}}
\def\cobdy{{{\operatorname{cobdy}}}}
\def\Ker{{{\operatorname{Ker}}}}
\def\Coker{{{\operatorname{Coker}}}}
\def\Im{{\operatorname{Im}}}

\def\ind{{{\operatorname{ind}}}}
\def\Ind{{{\operatorname{Ind}}}}
\def\id{{{\operatorname{id}}}}
\def\supp{{{\operatorname{supp}}}}
\def\co{{{\operatorname{co}}}}
\def\scoe{{{\operatorname{scoe}}}}
\def\coe{{{\operatorname{coe}}}}
\def\I{{\mathcal{I}}}
\def\Span{{{\operatorname{Span}}}}
\def\event{{{\operatorname{event}}}}
\def\Proj{{{\operatorname{Proj}}}}
\def\S{\mathcal{S}}

\def\calK{\mathcal{K}}

\def\PI{\mathcal{P}_{\infty}}
\def\PAI{{{\mathcal{P}}_{A^\infty}}}

\def\whatOA{\widehat{\mathcal{O}}_A}
\def\whatOAT{\widehat{\mathcal{O}}_{A^t}}

\def\sAI{\sigma_{A^\infty}}
\def\swhatA{\sigma_{\widehat{A}}}
\def\OalgAI{{{\mathcal{O}}^{\operatorname{alg}}_{A^\infty}}}

\def\wtO{\widetilde{O}}
\def\wtA{\widetilde{A}}
\def\wtAI{\widetilde{A}_{\infty}}
\def\wttAI{\widetilde{A}^t_{\infty}}
\def\E{\mathcal{E}}
\def\opE{\operatorname{E}}

\def\gATI{{\gamma_{A^{t \infty}}}}


\section{Introduction and Preliminaries}
\subsection{Introduction}

Non-commutative analogue of the classical Spanier--Whitehead duality 
in topology was introduced and studied by Kaminker--Schochet \cite{KS}.
It was called the Spanier--Whitehead $\K$-duality and 
inspired by $\K$-theoretic duality of Cuntz--Krieger algebras 
studied by Kaminker--Putnam \cite{KP}.
Kaminker and Putnam investigated the duality between 
the $\K_0$-group 
$\K_0(\OA) = \Z^N/(I-A^t)\Z^N$ and 
weak extension group $\Extw(\OA) = \Z^N/(I-A)\Z^N$
of the Cuntz--Krieger algebra $\OA$ from $\sqK$-theoretic view point.
The Spanier--Whitehead $\K$-dual of a separable $C^*$-algebra $\A$
 is written as $D(\A)$. 
The $C^*$-algebra $D(\A)$ for $\A$,
which is unique up to $\sqK$-equivalence, 
but not unique up to isomorphism,
  exists  
if $\K_*(\A)$ are finitely generated (\cite{KS}).

In \cite{Sogabe2022}, the second named author 
 proved that
for a Kirchberg algebra $\A$ with finitely generated $\K$-groups, 
there exists a unique Kirchberg 
algebra $\B$ with finitely generated $\K$-groups
such that 
$\A \sim_{\sqK} D(C_\B)$
and
$\B \sim_{\sqK} D(C_\A),$
where 
$C_\A$ and $C_\B$ are the mapping cone algebras for the unital 
embeddings
$u_\A: \mathbb{C}\rightarrow \A$ and $u_\B: \mathbb{C}\rightarrow \B$
respectively,
and $\sim_{\sqK}$ means $\sqK$-equivalence.
The $C^*$-algebra $\B$ is said to be reciprocal to $\A$,
and written as $\whatA$.
The operation $\A \rightarrow \whatA$ is regarded as a duality which satisfies 
$\widehat{\widehat{\A}} \cong \A.$
 The duality may be considered as a duality between
the strong extension groups  $\Exts$ and the $\K$-groups
such that $\Exts^i(\whatA) \cong \K_{i+1}(\A)$,
whereas the Spanier--Whitehead $\K$-duality  
is a duality between
the weak extension groups  $\Extw$ and the $\K$-groups
such that $\Extw^i(D(\A)) \cong \K_{i+1}(\A)$.
We have to emphasize that the algebra 
$\whatA$ is uniquely determined  by $\A$ up to isomorphism,
whereas 
the Spanier--Whitehead $\K$-dual
$D(\A)$ is uniquely determined  by $\A$ up to $\sqK$-equivalence.
The reciprocality was introduced in studying the homotopy groups 
$\pi_i(\Aut(\A))$
of the automorphism groups 
$\Aut(\A)$
of Kirchberg algebras $\A$ and bundles of $C^*$-algebras.
In \cite{Sogabe2022}, it was proved that 
$\pi_i(\Aut(\A)) \cong \pi_i(\Aut(\whatA))$
and also 
 two Kirchberg algebras $\A, \B$ satisfy 
$\pi_i(\Aut(\A)) \cong \pi_i(\Aut(\B))$
if and only if $\A$ is isomorphic to $\B$, or
$\A$ is isomorphic to $\widehat{B}$.
By using the fact that the conditions
$\A$ is isomorphic to $\B$, or
$\A$ is isomorphic to $\widehat{B}$
is a dichotomy, we proved in \cite{MatSogabe} that
the homotopy groups $\pi_i(\Aut(\OA)), i=1,2$ 
are complete invariants of the isomorphism class of 
a simple Cuntz--Krieger algebra $\OA$.
It was also proved in \cite{MatSogabe} that 
two extension groups 
$\Exts(\OA), \Extw(\OA)$ are complete set of invariants of 
simple Cuntz--Krieger algebra $\OA$ by using reciprocality.  
In the subsequent paper \cite{MatSogabe2},
by using the reciprocality,
the above results are generalized to more general
Kirchberg algebras with  finitely generated $\K$-groups.
These studies show that reciprocal duality plays an important role in studying the structure theory 
of Kirchberg algebras with finitely generated $\K$-groups.
We have however not known how to construct and realize the reciprocal algebra $\whatA$
from the original algebra $\A$,
because there is no concrete machinery to realize the Spanier--Whaitehead $\K$-dual $D(\A)$
from $\A$.

In the present paper, 
we will first describe a systematic construction of the reciprocal algebra $\whatA$
for a Kirchberg algebra $\A$ with finitely generated $\K$-groups.
In particular, we may concretely construct and realize the reciprocal algebra 
$\widehat{\O}_A$ for simple Cuntz--Krieger algebras $\OA$
in several different  ways. 
As in \cite{MatSogabe2}, the value 
$\chi(\A) = \rank(\K_0(\A)) - \rank(\K_1(\A))$
for any simple Cuntz--Krieger algebra $\A = \OA$ is always zero,  
wheras $\chi(\A) $ for $\A = \whatOA$ is always one, 
so that $\whatOA$ can not be realized as a Cuntz--Krieger algebra for any finite matrix.
Typical example of the reciprocal algebra is the reciprocal dual
$\widehat{\O}_2$ of the Cuntz algebra $\O_2$, 
which is realized as the Cuntz algebra $\O_\infty$.

The first strong (resp. weak) extension group $\Exts^1(\A)$ 
(resp. $\Extw^1(\A)$)
for a Kirchberg algebra is identified with the strong (resp. weak)
equivalence classes  $\Exts(\A)$ (resp. $\Extw(\A)$)
of essential unital extensions of $\A$ 
by the $C^*$-algebra $\calK$ 
of compact operators. 
For $m \in \Z$, take a unitary $u_m$ in the Calkin algebra 
$\calQ(H)$
of Fredholm index $m$.
It is well-known that for the extension $\tau_m:= \Ad(u_m)\circ \tau: \A \rightarrow \calQ(H)$  with the trivial 
extension $\tau:\A\rightarrow \B(H)$, the map 
$\iota_\A: \Z\ni m  \rightarrow [\tau_m]_s \in \Exts(\A)$ gives rise to 
an isomorphism
$\Extw(\A) \cong \Exts(\A)/\Z[\iota_\A(1)]_s.$
We will first prove the following theorem.

\begin{theorem}[{Theorem \ref{thm:construction}}] \label{thm:main1}
Let $\A$ be a Kirchberg algebra with finitely generated $\K$-groups.
For any essential unital extension 
$0 \rightarrow \calK
\rightarrow \E
\rightarrow \A
\rightarrow 0
$
of $\A$ and $\epsilon \in \{-1,1\}$, 
we have a systematic construction of a Kirchberg algebra $\O_\E$
with finitely generated $\K$-groups and a projection $e_\A \in \O_\E$ such that 
\begin{equation*}
(\K_0(\O_\E), [1_{\O_\E}]_0, [e_\A]_0, \K_1(\O_\E)) 
\cong 
(\Exts^1(\A), \epsilon [\E]_s, [\iota_\A(1)]_s, \Exts^0(\A))
\end{equation*}
where $[\E]_s\in \Exts^1(\A)$ is the strong equivalence class of 
$\E$ in $\Exts^1(\A)$.
Hence the isomorphism class of the Kirchberg algebra $\O_\E$ 
depends only on the strong equivalence class $[\E]_s$ of $\E$ in $\Exts^1(\A)$,
and the reciprocal dual $\whatA$ of $\A$ is realized as 
a corner of $\O_\E$, that is, 
\begin{equation*}
e_\A \O_\E e_\A \cong \widehat{\A} \quad \text{the reciprocal dual of } \A.
\end{equation*}
\end{theorem}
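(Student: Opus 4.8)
The plan is to reduce the statement to the Kirchberg--Phillips classification, feeding it the reciprocal invariant computed from the extension data. First I would observe that the relevant groups are finitely generated: by the reciprocality of \cite{Sogabe2022} the dual $\whatA$ is a Kirchberg algebra with finitely generated $\K$-groups and $\Exts^i(\A)\cong\K_{i+1}(\whatA)$, so both $\Exts^0(\A)$ and $\Exts^1(\A)$ are finitely generated abelian groups, and in particular the distinguished elements $\epsilon[\E]_s$ and $[\iota_\A(1)]_s$ lie in $\Exts^1(\A)$. Hence the quadruple
\[
(\Exts^1(\A),\ \epsilon[\E]_s,\ [\iota_\A(1)]_s,\ \Exts^0(\A))
\]
is an admissible target for the range-of-invariant theorem for unital Kirchberg algebras in the UCT class.

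Second, I would use the existence half of Kirchberg--Phillips to build a unital UCT Kirchberg algebra $\O_\E$ whose invariant $(\K_0,[1]_0,\K_1)$ equals $(\Exts^1(\A),\epsilon[\E]_s,\Exts^0(\A))$, and then choose a nonzero projection $e_\A\in\O_\E$ with $[e_\A]_0=[\iota_\A(1)]_s$; this is possible because in a purely infinite simple algebra every $\K_0$-class, the zero class included, is represented by a nonzero projection. This realizes the displayed isomorphism of quadruples. Since that quadruple depends only on the strong class $[\E]_s$ and on the fixed sign $\epsilon$, the uniqueness half of Kirchberg--Phillips shows at once that the isomorphism class of the pair $(\O_\E,e_\A)$, hence of $\O_\E$, depends only on $[\E]_s$.

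Third, for the corner statement I would use that $e_\A$, being a nonzero projection in the simple algebra $\O_\E$, is full, so $e_\A\O_\E e_\A$ is Morita equivalent to $\O_\E$ and is itself a unital UCT Kirchberg algebra; the corner inclusion induces isomorphisms $\K_i(e_\A\O_\E e_\A)\cong\K_i(\O_\E)$ carrying $[1_{e_\A\O_\E e_\A}]_0=[e_\A]_0$ to $[\iota_\A(1)]_s$. Its invariant is therefore $(\Exts^1(\A),[\iota_\A(1)]_s,\Exts^0(\A))$. Computing the invariant of $\whatA$ through the reciprocality isomorphisms $\K_0(\whatA)\cong\Exts^1(\A)$ and $\K_1(\whatA)\cong\Exts^0(\A)$ and matching, a final application of Kirchberg--Phillips yields $e_\A\O_\E e_\A\cong\whatA$.

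I expect the crux to be this last matching, namely the identification of the distinguished element: one must verify that under the duality isomorphism $\K_0(\whatA)\cong\Exts^1(\A)$ the unit class $[1_{\whatA}]_0$ is carried exactly onto $[\iota_\A(1)]_s$, rather than merely matching the abstract groups. This forces one to track the mapping-cone description of reciprocality through the unital embedding $\bbC\to\whatA$: the homomorphism $\Z\to\K_0(\whatA)$, $1\mapsto[1_{\whatA}]_0$, should be shown to be $\K$-theoretically dual to the canonical map $\iota_\A:\Z\to\Exts^1(\A)$, whose image $\Z[\iota_\A(1)]_s$ is precisely the kernel of the quotient $\Exts^1(\A)\to\Extw^1(\A)$. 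In other words, the very element one divides out to pass from strong to weak $\Ext$ is dual to the unit of $\whatA$; this is consistent with the basic example $\widehat{\O_2}\cong\O_\infty$, where $[\iota_{\O_2}(1)]_s$ generates $\Exts^1(\O_2)\cong\Z\cong\K_0(\O_\infty)$ and corresponds to $[1_{\O_\infty}]_0$. Making this duality of distinguished elements precise is the main work of the argument.
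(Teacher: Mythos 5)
Your proposal is correct at the level of the stated isomorphisms, but it takes a genuinely different route from the paper, and the difference matters for what the theorem is actually for. You obtain $\O_\E$ abstractly from the range-of-invariant half of the Kirchberg--Phillips theorem applied to the quadruple $(\Exts^1(\A), \epsilon[\E]_s, [\iota_\A(1)]_s, \Exts^0(\A))$, whereas the paper builds $\O_\E$ concretely: it first invokes Pennig--Sogabe's duality for extensions to produce a dual extension $0 \rightarrow \calK \rightarrow \F \rightarrow \B \rightarrow 0$ whose $\K$-theory realizes exactly this quadruple, with $\epsilon[1_\F]_0$ matching $\epsilon[\E]_s$ and the class of a rank-one projection $e \in \calK \subset \F$ matching $[\iota_\A(1)]_s$, and then forms the Cuntz--Pimsner algebra $\O_{H_\F}$ of the bimodule $H_\F = H \otimes l^2(\N) \otimes \F$, using Kumjian's results to see that $\O_{H_\F}$ is a Kirchberg algebra $\sqK$-equivalent to $\F$, with $e_\A$ the canonical image of $e$. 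Your route is shorter and makes the clause ``depends only on $[\E]_s$'' immediate (indeed trivial, since you define $\O_\E$ by its invariant), but it forfeits the ``systematic construction'' that is the point of the theorem: the explicit model $\O_{H_\F}$ is what the paper uses later to realize $\whatOA$ as a corner of $\OATI$, as an Exel--Laca algebra, and as a free product, none of which is accessible from an abstract existence argument. On the crux you flag --- that under reciprocality the unit class $[1_{\whatA}]_0$ corresponds to $[\iota_\A(1)]_s$ and not merely the groups --- your diagnosis is exactly right, and the resolution is the paper's Proposition \ref{prop:basic2} (i) $\Longrightarrow$ (iii) (equivalently Corollary \ref{cor:whatA}), proved via the six-term sequence \eqref{eq:sixtermExt} together with \cite[Corollary 2.20]{Sogabe2022}; the paper's own proof of the theorem cites this proposition at precisely the same point, so your argument closes once you appeal to it rather than re-deriving the mapping-cone duality by hand. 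Note also that in the paper's construction the distinguished-element bookkeeping for the full quadruple (including $\epsilon[\E]_s$) is carried by the Pennig--Sogabe duality pair rather than chosen by fiat, which is what makes the later identification of $\O_\E$ with concrete algebras such as $\O_{H_{\TAT}}$ verifiable.
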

By using the construction given in Theorem \ref{thm:main1},
we may concretely  construct and realize the reciprocal dual $\whatOA$ 
of a simple Cuntz--Krieger algebra $\OA$
by using the Toeplitz extension 
$0 \rightarrow \calK \rightarrow \TA \rightarrow \OA \rightarrow 0$
(Theorem \ref{thm:mr} (ii)).
We prove the following theorem in Section \ref{sect:reciprocalCK}.
\begin{theorem}[{Theorem \ref{thm:mr}, Corollary \ref{cor:whatOA}, Proposition \ref{prop:ExelLaca} and Theorem \ref{thm:freeproduct}}]
\label{thm:main2}
 Let $A^t$ be the transposed matrix of 
 an $N \times N$ irreducible non-permutation matrix $A=[A(i,j)]_{i,j=1}^N$ 
 with entries in 
$\{0,1\}$.
  Then we may construct a Kirchberg algebra
  $\OATI$ and a projection $e_A$ in $\OATI$ in a concrete way such that 
the corner  $e_A \OATI e_A$ of $\OATI$ 
is isomorphic to the reciprocal algebra 
  $\whatOA$
  of the Cuntz--Krieger algebra $\OA$.
 The algebra
 $\OATI$ is constructed in the following three different ways:
 \begin{equation*}
 \begin{cases}
 &  \bullet \, \, \text{a Cuntz--Pimsner algebra for the Toeplitz algebra }\TAT, \\  
 &  \bullet \, \,\text{a universal } C^*-\text{algebra as a Exel--Laca algebra},  \\
 &  \bullet \, \,\text{a reduced free product } C^*-\text{algebra }\TAT*\OI.  
 \end{cases}
 \end{equation*}
\end{theorem}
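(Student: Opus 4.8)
The plan is to reduce the entire statement to the general construction of Theorem \ref{thm:main1} and then to recognize the single algebra it produces in three concrete guises, using classification for the two nuclear models and a freeness argument for the free product. First I would apply Theorem \ref{thm:main1} to $\A = \OA$ with $\E$ the canonical Toeplitz--Cuntz--Krieger extension
\[
0 \rightarrow \calK \rightarrow \TA \rightarrow \OA \rightarrow 0,
\]
and a fixed sign $\epsilon \in \{-1,1\}$. This at once yields a unital Kirchberg algebra, to be named $\OATI$, and a projection $e_A$ with $e_A \OATI e_A \cong \whatOA$; the corner assertion therefore needs no argument beyond Theorem \ref{thm:main1}. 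It then remains to make the invariant explicit and to identify $\OATI$ concretely. Feeding the Kaminker--Putnam computation $\Extw^1(\OA) \cong \Z^N/(I-A)\Z^N$, the relation $\Extw^1 \cong \Exts^1/\Z[\iota_\A(1)]_s$ from the Introduction, and the degree-zero groups into the isomorphism of Theorem \ref{thm:main1} gives
\[
(\K_0(\OATI), [1_{\OATI}]_0, [e_A]_0, \K_1(\OATI)) \cong (\Exts^1(\OA), \epsilon[\TA]_s, [\iota_\A(1)]_s, \Exts^0(\OA)).
\]
In particular $\rank \K_0(\OATI) - \rank \K_1(\OATI) = \chi(\whatOA) = 1$, which already explains why $\OATI$ cannot come from a finite matrix and points toward the infinite-matrix (Exel--Laca) model.

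For the Cuntz--Pimsner realization (Theorem \ref{thm:mr}) I would take the Toeplitz algebra $\TAT$ of the transposed matrix as coefficient algebra and build a Hilbert $\TAT$-bimodule $X$ whose Cuntz--Pimsner algebra $\O_X$ carries generators encoding both the Toeplitz partial isometries of $A^t$ and the extra index-shifting isometry prescribed by $\iota_\A(1)$; this is in fact the shape in which Theorem \ref{thm:main1} manufactures $\O_\E$, so the identification $\O_X \cong \OATI$ is essentially built in and is confirmed by matching universal relations. The Pimsner six-term sequence recomputes $\K_*(\O_X)$ and cross-checks the first paragraph, and it is here that the countable $\{0,1\}$-matrix $A^{t\infty}$ is pinned down. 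For the Exel--Laca model (Proposition \ref{prop:ExelLaca}) I would write $A^{t\infty}$ explicitly, adjoining to $A^t$ a sequence of rows and columns that record the Toeplitz/Fredholm data, and verify that the universal Exel--Laca relations coincide with a generator-and-relation presentation of $\OATI$, so the isomorphism follows from universality. Irreducibility and the non-permutation hypothesis on $A$ propagate to $A^{t\infty}$ and guarantee that this Exel--Laca algebra is simple and purely infinite, hence a unital Kirchberg algebra; its K-theory, computed from the Exel--Laca six-term sequence, matches the target, so Kirchberg--Phillips classification also confirms $\OATI$ is the correct algebra with the correct class of the unit.

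For the free-product realization (Theorem \ref{thm:freeproduct}) I would instead argue directly, since a reduced free product need not be nuclear a priori. The idea is to locate inside $\OATI$ a copy of $\TAT$ and a copy of $\OI$ that generate $\OATI$ and are free with respect to the canonical gauge-invariant state (or conditional expectation), and then to invoke the uniqueness of reduced free products to conclude $\OATI \cong \TAT * \OI$; nuclearity of this particular reduced free product is not postulated but follows a posteriori from the other two models. Simplicity and pure infiniteness of such free products are governed by the theory of Dykema--R{\o}rdam, and the K-theory by the exact sequences of Germain, both of which I would use to confirm that $\TAT * \OI$ has exactly the invariant of the first paragraph.

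The step I expect to be the main obstacle is this last one. Establishing that the two distinguished subalgebras sit freely inside $\OATI$ with respect to the correct state, and reconciling the K-theory of the reduced free product $\TAT * \OI$ with $\Exts^*(\OA)$, is considerably more delicate than the Cuntz--Pimsner and Exel--Laca identifications, which are governed by universal properties and standard six-term sequences. A secondary technical point is to write the infinite matrix $A^{t\infty}$ so that its Exel--Laca relations reproduce the Cuntz--Pimsner presentation on the nose, and to check throughout that all three constructions carry compatible units, so that every isomorphism preserves the class $[1]$ and the three models genuinely coincide as the algebra singled out by Theorem \ref{thm:main1}.
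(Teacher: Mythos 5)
Your proposal is correct and follows essentially the same route as the paper: apply Theorem \ref{thm:construction} to the Toeplitz extension via the strong duality pair $(\TA,\TAT)$ with $\epsilon=-1$, realize $\OATI$ as the Cuntz--Pimsner algebra over the bimodule $F_{A^t}\otimes\ell^2(\N)\otimes\TAT$ and as an Exel--Laca algebra by universality combined with a simplicity argument, and obtain the free product by checking freeness of $\Ker(\varphi_A)$ and $\Ker(\varphi_\infty)$ with respect to the vacuum-type state and invoking uniqueness of reduced free products (\cite[Theorem 4.7.2]{BO}), which is indeed where the substantive computation lies, exactly as you predict. The one step you gloss that the paper must argue explicitly is injectivity of the canonical surjection from the universal algebra onto the Pimsner algebra, settled in Theorem \ref{thm:mr} via the inductive-limit decomposition of $\OATI$ over the finite Toeplitz algebras $\mathcal{T}_{A_k}$ and their ideal structure; your extra cross-checks (Germain exact sequences, Dykema--R{\o}rdam, Kirchberg--Phillips) are not used by the paper and are unnecessary, but harmless.
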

In Section \ref{sect:generatorsrelations},
we present our first main result.
\begin{theorem}[{Theorem \ref{thm:univrelation}}]
Let $A =[A(i,j)]_{i,j=1}^N$ be an irreducible non-permutation matrix
with entries in $\{0,1\}$.
\begin{enumerate}
\renewcommand{\theenumi}{(\roman{enumi})}
\renewcommand{\labelenumi}{\textup{\theenumi}}
\item
The reciprocal dual $C^*$-algebra $\whatOA$ of the simple 
Cuntz--Krieger algebra $\OA$ is the universal $C^*$-algebra 
$C^*(r_i, s \mid i \in \N)$   
generated by a family $r_i, s, i \in \N$
of partial isometries subject to the following relation:
{\begin{enumerate}
\renewcommand{\theenumi}{(\arabic{enumi})}
\renewcommand{\labelenumi}{\textup{\theenumi}}
\item
$\sum_{j=1}^m r_j r_j^* < r_{N+1}^* r_{N+1} =r_{N+2}^* r_{N+2} =\cdots $ 
for all $m \in \N$,
\item
$r_i^* r_i 
= \sum_{j=1}^N A(j,i) r_j r_j^* + r_{N+1}^* r_{N+1} -\sum_{j=1}^N r_j r_j^*$ 
for $1 \le i \le N$,
\item
$s s^* = r_{N+1}^* r_{N+1} -\sum_{j=1}^N r_j r_j^*, \quad s^* s =1, $ 
\item $s^* r_{N+1} = r_{N+1}^* r_{N+1}.$ 
\end{enumerate}
}
\item
The reciprocal dual $C^*$-algebra $\whatOA$ 
is a unital simple purely infinite Exel--Laca algebra $\mathcal{O}_{\widehat{A}_\infty}$
defined by the  matrix $\widehat{A}_\infty$ where 
\begin{equation*}
\widehat{A}_\infty=
\left[
\begin{array}{ccccccc}
&                &1      &0     &0     &\cdots \\
&\text{\Huge A}^t&\vdots&\vdots&\vdots&\vdots \\
&                &1      &0     &0     &\cdots \\
1&\dots          &1      &1     &1     &\cdots \\
1&\dots          &1      &0     &0     &\cdots \\
1&\dots          &1      &0     &0     &\cdots \\
\vdots &\dots    &\vdots &\vdots&\vdots&\cdots \\
\end{array}
\right]
\end{equation*}
\end{enumerate}
\end{theorem}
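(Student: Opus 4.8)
The plan is to build everything on the concrete realization of $\whatOA$ furnished by Theorem~\ref{thm:main2}, namely $\whatOA \cong e_A \OATI e_A$, together with the explicit description of $\OATI$ as an Exel--Laca algebra (equivalently, as the Cuntz--Pimsner algebra over $\TAT$ or the reduced free product $\TAT * \OI$). First I would fix the canonical generating partial isometries of $\OATI$ and the explicit projection $e_A$ provided by that theorem, and define the candidate generators of the corner by compression: the $r_i$ for $i \le N$ and the tail generators $r_i$ for $i \ge N+2$ arise by compressing the Toeplitz/Cuntz--Pimsner generators, while $s$ is taken to be the compression of the extra isometry coming from the $\OI$ factor of the free product. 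The role of this $\O_\infty$-summand is precisely to produce, inside the corner, an isometry $s$ with $s^*s = 1$ but $ss^* \ne 1$; this is what forces $\chi(\whatOA)=1$ and is recorded by relations~(3) and~(4).

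For part~(i) I would verify relations~(1)--(4) by a direct computation in $\OATI$, using the defining relations of the Cuntz--Pimsner/Exel--Laca generators and the explicit form of $e_A$. Writing $q := r_{N+1}^* r_{N+1}$, relation~(4) gives $s^* r_{N+1} = q$, whence $r_{N+1}^*(ss^*)r_{N+1} = q = r_{N+1}^*r_{N+1}$ and therefore $r_{N+1} = s\,q$; thus $r_{N+1}$ is determined by $s$ and the common domain projection $q$, and $ss^* = q - \sum_{j=1}^N r_j r_j^*$ is exactly the range projection appearing in~(2). Having checked that the compressed elements satisfy~(1)--(4) and that they generate $e_A\OATI e_A$ (a fullness argument for $e_A$), one obtains a canonical surjection $\Phi$ from the universal algebra $C^*(r_i,s \mid (1)\text{--}(4))$ onto $\whatOA$.

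For part~(ii) the heart of the matter is to recognize this universal algebra as the Exel--Laca algebra $\mathcal{O}_{\widehat{A}_\infty}$. I would set the Exel--Laca generators to be $S_i := r_i$ for $i \ne N+1$ and $S_{N+1} := s$, with range projections $P_i = r_i r_i^*$ for $i \le N$, $P_{N+1} = ss^* = q - \sum_{j=1}^N r_j r_j^*$, and $P_i = r_i r_i^*$ for $i \ge N+2$. Relation~(2) is then precisely the Exel--Laca relation $S_i^* S_i = \sum_j \widehat{A}_\infty(i,j) P_j$ for $1 \le i \le N$ (using $A(j,i) = A^t(i,j)$ and the entries of the first $N$ rows of $\widehat{A}_\infty$), while $S_i^* S_i = q$ for $i \ge N+2$ and $s^* s = 1$ reproduce the rows of $\widehat{A}_\infty$ indexed by $\{N+2,N+3,\dots\}$ and by $N+1$; the orthogonality and commutation relations follow from~(1) and the mutual orthogonality of the $P_j$. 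One must also match the Exel--Laca completeness relations for all finite index subsets with finitely supported column data, and conversely deduce~(1)--(4) from them; this identifies the two universal objects and reads off the displayed matrix $\widehat{A}_\infty$.

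Finally I would verify that $\widehat{A}_\infty$ is irreducible (transitive) and satisfies Exel--Laca's condition~(II), so that $\mathcal{O}_{\widehat{A}_\infty}$ is a unital, simple, purely infinite, nuclear, UCT algebra; simplicity then forces the surjection $\Phi$ to be injective, giving $\whatOA \cong C^*(r_i,s\mid(1)\text{--}(4)) \cong \mathcal{O}_{\widehat{A}_\infty}$. As an alternative to invoking simplicity for injectivity, one can compute $\K_*(\mathcal{O}_{\widehat{A}_\infty})$ from the Exel--Laca six-term sequence and match it with $\K_*(\whatOA) = (\Exts^1(\OA),\Exts^0(\OA))$ coming from Theorem~\ref{thm:main1}, concluding by Kirchberg--Phillips. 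The main obstacle I anticipate is exactly the bookkeeping forced by the infinite index set: the row $N+1$ (all $1$'s) and the rows $\ge N+2$ have infinite support, so the naive ``sum of range projections'' relation is unavailable and must be replaced by the finite Exel--Laca completeness relations. Checking that the finite presentation~(1)--(4) genuinely captures these relations --- equivalently, that $\Phi$ is faithful and that the projections $P_j$ for $j \ge N+2$ account for $1-q$ correctly --- is the delicate step.
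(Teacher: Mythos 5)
Your overall architecture coincides with the paper's: realize $\whatOA$ as the corner $P_N\OATI P_N$, exhibit generators of the corner satisfying (1)--(4), obtain a surjection $\Phi$ from the universal algebra, identify that universal algebra with the Exel--Laca algebra $\mathcal{O}_{\widehat{A}_\infty}$, and let simplicity of the latter force injectivity of $\Phi$; your observation that $r_{N+1}=s\,r_{N+1}^*r_{N+1}$ is recovered from (3) and (4) also matches the paper's remark on the redundancy in $(R5)$. However, two of your concrete steps would fail. First, the corner generators are \emph{not} compressions: since $S_iS_i^*\le 1-P_N$, one has $P_NS_i=0$ and hence $P_NS_iP_N=0$ for $1\le i\le N$, so compressing the Toeplitz generators yields $0$, which cannot satisfy (2). (Your $s$ is fine: $P_NS_{N+1}P_N=S_{N+1}P_N$ because the range of $S_{N+1}$ lies under $P_N$.) The paper instead transports the generators into the corner by conjugating with the isometry $S_{N+1}$, setting $R_i:=S_{N+1}S_iS_{N+1}^*$ for all $i\in\N$ and $S:=S_{N+1}P_N$. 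Relatedly, a ``fullness argument for $e_A$'' cannot show that your chosen elements generate the corner --- fullness yields Morita equivalence, not a generating set; the paper proves generation in Lemmas \ref{lem:generator2} and \ref{lem:generator3} by an explicit case-by-case reduction of the elements $P_NS_\mu S_\nu^*P_N$ to words in $T_{n,i}=S_nS_iS_n^*$ and $T_{n,0}=S_nP_N$, and then to $R_i, S$ via $U_n=S_nS_{N+1}^*=S^*R_n$.

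Second, and more seriously, your Exel--Laca assignment $S_i:=r_i$ for $i\ge N+2$ does not satisfy the relations of the displayed matrix $\widehat{A}_\infty$. Put $q:=r_{N+1}^*r_{N+1}$ and $p_1:=q-\sum_{j=1}^Nr_jr_j^*=ss^*$. For $j\ge N+2$, relation (1) gives $r_jr_j^*\le p_1$, hence $p_1r_j=r_j$, and then (2) yields $(r_i^*r_i)\,r_j=\bigl(\sum_{k=1}^NA(k,i)r_kr_k^*+p_1\bigr)r_j=r_j\ne0$ for $1\le i\le N$, while for $i\ge N+2$ relation (1) gives $(r_i^*r_i)r_j=qr_j=r_j\ne0$; but $\widehat{A}_\infty(i,j)=0$ at all these entries requires $(S_i^*S_i)S_j=0$. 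So your generating family realizes a different matrix, one whose rows $i\le N$ have infinite support, and the identification with the displayed $\widehat{A}_\infty$ breaks down. The paper's resolution (Lemmas \ref{lem:C*(riss*ri)}, \ref{lem:7.15} and \ref{lem:7.16}) is to twist the tail generators: take $t_i:=r_i$ for $i\le N$, $t_{N+1}:=s$, and $t_i:=s^*r_i$ for $i\ge N+2$; these still generate $C^*(r_i,s\mid i\in\N)$, and one checks $\widehat{A}_\infty(i,j)=1$ exactly when $t_i^*t_i\ge t_jt_j^*$. With this correction the remainder of your plan --- simplicity and pure infiniteness of $\mathcal{O}_{\widehat{A}_\infty}$ from the transitive graph with a loop having an exit, hence injectivity of the surjection $\Phi$ onto the corner --- goes through exactly as in the paper's proof of Theorem \ref{thm:univrelation}.
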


The gauge action of the Cuntz--Krieger algebras $\OA$ 
is one of the most important circle action of $\OA$ to analyze the structure of
$\OA$. 
For the above realization of $\whatOA$ as a universal $C^*$-algebra 
generated by a family of partial isometries, 
there exists a natural gauge action 
on $\whatOA$ written $\hat{\gamma}^{A}$.
In Section \ref{sect:gaugeaction}, we will show that 
there exists a unique ground state for gauge action on $\whatOA$ whereas 
there is no KMS state 
(Proposition \ref{prop:groundstate2} and Remark \ref{remark:absenseKMSstate}).

As in the authors' previous paper \cite{MatSogabe},
the homotopy groups $\pi_i(\Aut(\OA))$ of the automorphism group $\Aut(\A)$ of $\A$
completely determine the algebraic structure of $\OA$.
One of the remarkable properties of the reciprocal dual of Cuntz--Krieger algebras is 
the fact that the homotopy groups  $\pi_i(\Aut(\OA))$ and
$\pi_i(\Aut(\whatOA))$ of their automorphism groups are isomorphic as abelian groups. 
The class $[\gamma^A]$ of the gage action $\gamma^A$ 
on $\OA$ occupies an interesting position in $\pi_1(\Aut(\OA))$
(cf. \cite{Cuntz1984}, \cite{KP}).
We have gauge action $\hat{\gamma}^{A}$ on the reciprocal dual $\whatOA$
which determines an element in $\pi_1(\Aut(\whatOA))$.

In Section \ref{gau}, we will prove the following our second main result.
\begin{theorem}[{Theorem \ref{thm:gaugeinpi1whatOA}}]
There exists an isomorphism
$\Phi: \pi_1(\Aut(\OA)) \rightarrow \pi_1(\Aut(\whatOA))$
of their fundamental groups satisfying
$\Phi([\gamma^A]) =[\hat{\gamma}^A]$.
\end{theorem}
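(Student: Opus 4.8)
The plan is to start from the abstract identification of fundamental groups furnished by reciprocal duality and then upgrade it to one that respects the gauge classes by tracking both sides through an explicit model of $\pi_1(\Aut(-))$. By \cite{Sogabe2022} the reciprocal duality $\A \mapsto \whatA$ is realized by a canonical $\sqK$-duality, and in particular it induces an isomorphism $\pi_1(\Aut(\OA)) \cong \pi_1(\Aut(\whatOA))$; the genuinely new content of the theorem is that this isomorphism (possibly after a harmless correction) carries $[\gamma^A]$ to $[\hat{\gamma}^A]$. Accordingly I would first fix the concrete description of $\pi_1(\Aut(\A))$ for a unital Kirchberg algebra $\A$ with finitely generated $\K$-groups in the UCT class, as computed in the authors' previous work \cite{MatSogabe}, expressing it through the $\K$-groups together with a $\sqK^1$-type self-equivalence term. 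The candidate map $\Phi$ is then the isomorphism this description makes canonical under the degree shift $\Exts^i(\whatA) \cong \K_{i+1}(\A)$ that defines reciprocality.

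Next I would represent the gauge class geometrically. A gauge action is a based loop $z \mapsto \gamma^A_z$ in $\Aut(\OA)$, so its class $[\gamma^A] \in \pi_1(\Aut(\OA))$ is detected by the $\OA$-bundle over $S^2$ obtained by clutching along the equator, equivalently by the $\sqK$-class of its mapping-torus section algebra. Since $\gamma^A$ is a loop based at the identity, each $(\gamma^A_z)_*$ acts as the identity on $\K_*(\OA)$, so $[\gamma^A]$ lies in the distinguished part of $\pi_1(\Aut(\OA))$ carrying the secondary, degree-one information; for a simple Cuntz--Krieger algebra this is the canonical summand coming from the fixed-point/AF-core structure and the Kaminker--Putnam duality \cite{KP}, and I would identify $[\gamma^A]$ as its preferred generator. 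Applying the same analysis to $\whatOA$, but now using the generators-and-relations and Exel--Laca pictures of Theorem \ref{thm:univrelation} together with the natural gauge action $\hat{\gamma}^A$ on the corner $e_A \OATI e_A$, I would identify $[\hat{\gamma}^A]$ as the preferred generator of the corresponding summand for $\whatOA$.

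It then remains to show that $\Phi$ matches these two preferred generators. Here I would exploit the fact, going back to \cite{KP}, that the Spanier--Whitehead duality underlying reciprocality is itself built from the circle data: the generator of $\K^1(\bbT) \cong \Z$ is self-dual under $\sqK$-duality, and the gauge classes of $\OA$ and $\whatOA$ both trace back to this generator through the mapping-cone algebras $C_{\OA}$ and $C_{\whatOA}$ of the unital inclusions $\mathbb{C} \hookrightarrow \OA$ and $\mathbb{C} \hookrightarrow \whatOA$ used to define the duality. Concretely, I would show that the dual reciprocal action induced on $\whatOA$ by $\gamma^A$ through the duality functor $D$ agrees, up to homotopy in $\Aut(\whatOA)$, with $\hat{\gamma}^A$, so that $\Phi([\gamma^A]) = [\hat{\gamma}^A]$ follows from naturality of the duality with respect to these circle actions.

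The hard part will be exactly this last compatibility. The isomorphism $\Phi$ from \cite{Sogabe2022} is produced by abstract $\sqK$-duality and is, a priori, only controlled up to the ambiguity inherent in Spanier--Whitehead duals, whereas $\gamma^A$ and $\hat{\gamma}^A$ are concrete circle actions; bridging the two requires a genuinely equivariant refinement that tracks the circle action through the mapping-cone construction and the duality functor and rules out any sign or torsion discrepancy introduced under the degree shift $\Exts^i(\whatA) \cong \K_{i+1}(\A)$. I would control this by pairing both gauge classes against a common invariant, namely the $\sqK$-class of the associated $S^2$-bundle (equivalently the induced action on $\underline{\K}$-theory together with its secondary obstruction), and checking agreement directly on the explicit Cuntz--Krieger generators $r_i, s$ and $e_A$ of Theorem \ref{thm:univrelation}, thereby reducing the equivariant matching to the finite linear-algebra data $I - A$ and $I - A^t$ that already govern the ordinary duality.
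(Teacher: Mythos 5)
Your high-level frame coincides with the paper's: both pass to a $\sqK$-theoretic model of $\pi_1(\Aut(-))$ (the paper uses Dadarlat's isomorphism \cite{Dadarlat2007}, $\pi_1(\Aut(\A)) \cong \sqK(C_{\mathbb{C}}(\A), SC_0(\mathbb{T},1)\otimes \A)$, with a loop $\alpha$ represented by the explicit Cuntz pair $[C(\alpha), C(l)]$), and both reduce the theorem to an equivariant compatibility between the duality and the two circle actions. But at precisely the step you yourself flag as ``the hard part,'' your proposal substitutes intention for argument, and that step is where all the work of the paper lies. The paper's mechanism is completely concrete: it constructs the duality class as a Cuntz pair $[\tilde{Q}_{V_A}, \tilde{Q}_I] \in \sqK(\mathbb{C}, \TAT \otimes C_{\mathbb{C}}(\OA))$ built from the Fock-space coisometry $V_A = (\,\cdots) + \sum_{i=1}^N (R^A_i)^* \otimes l_i$, proves it really is a duality class by a five-lemma argument over the two mapping-cone sequences (Lemmas \ref{lem:c1}, \ref{c2} and Theorem \ref{thm:dclass}), transports $[\hat{\gamma}^A]$ from the corner $P_N\OATI P_N$ to the Toeplitz side via the $\sqK$-equivalence $\theta : \TAT \ni R^A_i \mapsto S_i \in \OATI$ (which intertwines $\gamma_{\TAT}$ with $\gATI$ and sends $P_\Omega$ to $P_N$), and then concludes from the one-line identity $((\gamma_\TAT)_z\otimes{\rm id}_\OA)(Q_{V_A}(t)) = ({\rm id}_\TAT\otimes((\gamma^A)^{-1})_z)(Q_{V_A}(t))$, which is visible only because $V_A$ pairs $(R^A_i)^*$ against $l_i$; the resulting inverse is harmless since $(\pi_1(\Aut(\OA)), [\gamma^A]) \cong (\pi_1(\Aut(\OA)), [(\gamma^A)^{-1}])$ by negation in an abelian group. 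Your substitute --- ``pairing both gauge classes against a common invariant'' such as the class of the clutched $S^2$-bundle --- is not an argument unless you prove that this invariant is complete on the relevant part of $\pi_1$ and that the duality maps it correctly, which is exactly the content you would need to establish; no equivariant refinement of the abstract duality of \cite{Sogabe2022} is available off the shelf, and the paper avoids needing one by exhibiting the intertwining class by hand.

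Two of your intermediate claims are moreover vacuous or false. First, every loop in $\Aut(\OA)$ based at $\id$ consists of automorphisms acting trivially on $\K_*(\OA)$, so this property places $[\gamma^A]$ in no ``distinguished part'' of $\pi_1(\Aut(\OA))$. Second, $[\gamma^A]$ is in general not ``the preferred generator of a canonical summand'': for $\OA \cong \O_2$ the group $\pi_1(\Aut(\O_2))$ vanishes and the class is zero, and in other cases it may be torsion or a non-generator, so an argument that matches ``preferred generators'' on the two sides has no content to match. Likewise the proposed final reduction ``to the finite linear-algebra data $I-A$ and $I-A^t$'' cannot work as stated: $\pi_1(\Aut(\OA)) \cong \sqK(C_{\mathbb{C}}(\OA), SC_0(\mathbb{T},1)\otimes\OA)$ carries extension and torsion information beyond the cokernels and kernels of $I-A$, and the paper at no point makes such a reduction --- it is the explicit equivariant duality class, not matrix data, that forces $\Phi([\gamma^A]) = [\hat{\gamma}^A]$.
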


In Section \ref{sec:Examples}, 
we present several examples of the reciprocal algebras $\whatA$
for Kirchberg algebras $\A$ with finitely generated $\K$-groups. 

In what follows, a Kirchberg algebra means a separable unital purely infinite simple $C^*$-algebra
satisfying UCT. 
An $N\times N$ matrix $A=[A(i,j)]_{i,j=1}^N$
is always assumed to be irreducible non-permutation with entries in $\{0,1\}.$ 
\subsection{Preliminaries}

Let $H$ be a separable infinite dimensional Hilbert sapce.
Let $\B(H)$ denote the $C^*$-algebra of bounded linear operators
on $H$, and $\calK(=\calK(H))$ the $C^*$-subalgebra of $\B(H)$ consisting of 
compact operators on $H$. 
Let $\A$ be a separable unital UCT $C^*$-algebra.
We mean by an extension of $\A$  a short exact sequence
\begin{equation}\label{eq:extEA}
0 \longrightarrow \calK
\longrightarrow \E
\longrightarrow \A
\longrightarrow 0
\end{equation}
of $C^*$-algebras. 
It is often written as $(\E)$ for short.
If $\E$ is unital and $\calK$ is an essential ideal of $\E$,
then the extension \eqref{eq:extEA} 
is said to be a unital essential extension of $\A$.
In what follows, an extension means  a unital essential extension.
It is well-known that  
 there is a bijective correspondence between unital essential extensions and
unital injective $*$-homomorphisms
$\tau: \A \rightarrow \calQ(H)$
from $\A$ to the Calkin algebra $\calQ(H) = \B(H)/ \calK$ (cf. \cite{Blackadar}). 
The unital injective $*$-homomorphism
$\tau: \A \rightarrow \calQ(H)$
is called a Busby invariant.
We often identify an extension and its Busby invariant.
Two Busby invariants $\tau_1, \tau_2:\A \rightarrow \calQ(H)$ 
are said to be strongly (resp. weakly) equivalent if there exists 
a unitary $U\in \B(H)$ (resp. $u\in \calQ(H)$)
such that $\tau_2(a) = \pi(U) \tau_1(a) \pi(U)^*$ 
(resp. $\tau_2(a) = u \tau_1(a) u^*$)
for all $a \in \A$, where
$\pi:\B(H) \rightarrow \calQ(H)$ is the natural quotient map.  
We fix an isomorphism $H\oplus H \cong H$, which induces 
an embedding $\calQ(H) \oplus \calQ(H) \hookrightarrow \calQ(H)$. 
Let $\Exts(\A)$ (resp. $\Extw(\A)$) 
be the set of strongly (resp. weakly) equivalence classes of Busby invariants
from $\A$ to $\calQ(H)$.
The embedding $\calQ(H) \oplus \calQ(H) \hookrightarrow \calQ(H)$
makes both $\Exts(\A)$ and  $\Extw(\A)$ abelian semi-groups.
It is well-known that if the $C^*$-algebra $\A$ is nuclear,
they become abelian groups (cf. \cite{Arveson}, \cite{Blackadar}, \cite{CE}).

Let $\A$ be a separable unital nuclear $C^*$-algebra.
The mapping cone $C_\A$ is defined by the $C^*$-algebra
$C_\A =\{ f \in C_0(0,1]\otimes \A \mid f(1) \in \mathbb{C} 1_\A\}$.
Define the four kinds of extension groups of $\A$
by Kasparov $\sqK$-groups:
\begin{equation}\label{eq:ExtKK}
\Exts^i(\A) := \sqK^{i+1}(C_\A, \mathbb{C}), \qquad
\Extw^i(\A) := \sqK^{i}(\A, \mathbb{C}),
\qquad i=0,1.
\end{equation} 
The latter groups 
$\sqK^{i}(\A, \mathbb{C}), i=0,1$ are also written as $\K^i(\A)$ and called the $\K$-homology groups for $\A$.
As in \cite{Blackadar}, \cite{Kasparov81}, \cite{Skandalis},
(cf.  \cite{HR}, \cite{PennigSogabe}), 
we know that 
\begin{equation*}
\Exts(\A) \cong \Exts^1(\A), \qquad 
\Extw(\A) \cong \Extw^1(\A).
\end{equation*}
There is a natural short exact sequence:
\begin{equation}\label{eq:exactCA}
0 
\longrightarrow S\A
\longrightarrow C_\A
\longrightarrow \mathbb{C}
\longrightarrow 0,
\end{equation}
where $S\A= C_0(0,1) \otimes \A$.
By applying the functor $\sqK(-,\mathbb{C})$
to \eqref{eq:exactCA}, we have a cyclic six term exact sequence:
\begin{equation} \label{eq:sixtermExt}
\begin{CD}
\Exts^0(\A) @>>> \Extw^0(\A) @>>> \Z \\
@AAA @. @VVV   \\
0 @<<< \Extw^1(\A) @<<< \Exts^1(\A).
\end{CD} 
\end{equation}
Then the  right vertical arrow in \eqref{eq:sixtermExt} 
is given by the homomorphism
$\iota_\A:\Z\rightarrow \Exts(\A)$
under the identification $\Exts^1(\A) = \Exts(\A)$.
Let us denote by $[1_\A]_0$ the class of the unit $1_\A$ of $\A$ in $\K_0(\A)$.

\section{Reciprocal duality and $\K$-theoretic duality for extensions}
\subsection{Reciprocal duality}
In this subsection, 
we summarize basic facts concerning on reciprocal duality
studied in \cite{Sogabe2022}, \cite{PennigSogabe}, \cite{MatSogabe} and \cite{MatSogabe2}.
Let $\A$ be a separable $C^*$-algebra with finitely generated $\K$-groups.
The Spanier--Whitehead $\K$-dual of $\A$
 is defined to be a separable $C^*$-algebra  $D(\A)$ such that 
 $\sqK(D(\A), \mathbb{C}) \cong \sqK(\mathbb{C}, \A)$ \cite{KS} (cf. \cite{KP}).
Although the isomorphism class of $D(\A)$
 is not uniquely determined by $\A$,
 it is unique up to $\sqK$-equivalence.  
Let us recall the definition of reciprocality introduced in \cite{Sogabe2022}.
\begin{definition}[{\cite{Sogabe2022}}]
Let $\A$ be a separable $C^*$-algebra with finitely generated $\K$-groups.
Then a separable $C^*$-algebra $\B$ with finitely generated $\K$-groups
is said to be {\it reciprocal}\/ to $\A$ if  
$\A \sim_{\sqK} D(C_\B)$
and
$\B \sim_{\sqK} D(C_\A).$
\end{definition}
It is proved in \cite{Sogabe2022} that for a Kirchberg algebra $\A$
with finitely generated $\K$-groups, there exists   
 a Kirchberg algebra $\B$ with finitely generated $\K$-groups
 which is reciprocal to $\A$. 
 It  is unique up to isomorphism, 
 and  written as $\whatA$.
 By definition, it is obvious that 
 $\B$ is reciprocal to $\A$ if and only if 
 $\A$ is reciprocal to $\B$, so that 
 $\widehat{\whatA} \cong \A$. 
  
Let $G, H$ be abelian groups with $g \in G, h \in H$.
We write $(G, g) \cong (H,h)$ if there exists an isomorphism 
$\Phi:G\rightarrow H$ such that $\Phi(g) = h.$
The following proposition is seen in \cite[Proposition 3.8]{MatSogabe2}
without proof.
We give its proof for the sake of completeness.
\begin{proposition}\label{prop:basic2}
Let $\A, \B $ be Kirchberg algebras with finitely generated $\K$-groups.
Then the following four conditions are equivalent:
\begin{enumerate}
\renewcommand{\theenumi}{(\roman{enumi})}
\renewcommand{\labelenumi}{\textup{\theenumi}}
\item $\whatA \cong \B.$
\item $\K_i(\A) \cong \Exts^{i+1}(\B)$ and $\Exts^i(\A) \cong \K_{i+1}(\B), \quad i=0,1.$
\item $(\Exts^1(\A), \iota_\A(1)) \cong (\K_0(\B), [1_\B]_0)$ and $\Exts^0(\A) \cong \K_1(\B)$.
\item $(\K_0(\A), [1_\A]_0) \cong (\Exts^1(\B), \iota_\B(1))$ and $\K_1(\A) \cong \Exts^0(\B)$.
\end{enumerate}
\end{proposition}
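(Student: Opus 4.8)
The plan is to make condition (i) the hub and prove it equivalent to each of (ii), (iii), (iv), leaning on two classification principles: the Kirchberg--Phillips theorem, by which a unital Kirchberg algebra is determined up to isomorphism by its pointed $\K$-theory $(\K_0,[1]_0,\K_1)$, and the UCT characterization of $\sqK$-equivalence, by which two separable UCT $C^*$-algebras are $\sqK$-equivalent precisely when their $\Z/2$-graded $\K$-groups are abstractly isomorphic. Throughout I will use the reciprocal duality facts recalled above from \cite{Sogabe2022}: the existence and isomorphism-uniqueness of $\whatA$, the symmetry $\widehat{\whatA}\cong\A$, and the Spanier--Whitehead duality \cite{KS}, which swaps $\K$-theory and $\K$-homology and so gives $\K_*(D(X))\cong\K^*(X)$, together with the definition $\Exts^i(\A)=\sqK^{i+1}(C_\A,\mathbb{C})$.

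First I would record the $\K$-theory computation for the reciprocal dual. Since $\whatA\sim_{\sqK}D(C_\A)$ preserves $\K$-theory, and $\K_j(D(C_\A))\cong\K^j(C_\A)=\sqK^j(C_\A,\mathbb{C})=\Exts^{j+1}(\A)$ by the duality and the definition of $\Exts$, I obtain the unpointed isomorphisms $\K_0(\whatA)\cong\Exts^1(\A)$ and $\K_1(\whatA)\cong\Exts^0(\A)$, and dually $\Exts^j(\whatA)\cong\K_{j+1}(\A)$; here the only care needed is bookkeeping the degree shift in the $\sqK$-groups. Upgrading this to the pointed statement $(\K_0(\whatA),[1_{\whatA}]_0)\cong(\Exts^1(\A),\iota_\A(1))$ is the delicate point: one must track the unit class of $\whatA$ through the duality and identify it with the image $\iota_\A(1)$ of the generator of $\Z=\K^0(\mathbb{C})$ under the structural map of the mapping cone sequence $0\to S\A\to C_\A\to\mathbb{C}\to 0$, i.e. with the vertical arrow in the six-term sequence \eqref{eq:sixtermExt}. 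This is supplied by the functoriality of Spanier--Whitehead duality as developed in \cite{Sogabe2022} (cf. \cite{PennigSogabe}).

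With the pointed computation in hand, (i) $\Leftrightarrow$ (iii) is immediate from Kirchberg--Phillips: $\whatA\cong\B$ iff $(\K_0(\whatA),[1_{\whatA}]_0,\K_1(\whatA))\cong(\K_0(\B),[1_\B]_0,\K_1(\B))$, and substituting the computed pointed $\K$-theory of $\whatA$ turns the right-hand requirement into exactly (iii). For (i) $\Leftrightarrow$ (iv) I would invoke the symmetry of reciprocality: $\whatA\cong\B$ is equivalent to $\A\cong\whatB$, and applying the same pointed computation to $\B$ (so $(\K_0(\whatB),[1_{\whatB}]_0,\K_1(\whatB))\cong(\Exts^1(\B),\iota_\B(1),\Exts^0(\B))$) converts $\A\cong\whatB$, again via Kirchberg--Phillips, into (iv). Forgetting the distinguished elements in (iii) and (iv), both of which hold once (i) does, yields all four abstract group isomorphisms of (ii), giving (i) $\Rightarrow$ (ii).

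The remaining implication (ii) $\Rightarrow$ (i) is where the UCT characterization does the work. From (ii) I read off $\K_*(\B)\cong\Exts^{*+1}(\A)\cong\K_*(D(C_\A))$ and $\K_*(\A)\cong\Exts^{*+1}(\B)\cong\K_*(D(C_\B))$ as graded groups, and by the UCT these abstract isomorphisms upgrade to $\sqK$-equivalences $\B\sim_{\sqK}D(C_\A)$ and $\A\sim_{\sqK}D(C_\B)$; hence $\B$ is reciprocal to $\A$, and by the isomorphism-uniqueness of the reciprocal dual $\B\cong\whatA$, which is (i). The point to verify carefully is that $\sqK$-equivalence really is detected by the unpointed $\K$-groups, so that the purely group-theoretic hypothesis (ii) recovers the reciprocal relation without separately tracking unit classes, the pointed data then being restored automatically through the equivalence with (iii) and (iv). Accordingly, I expect the principal obstacle to be not any single implication but the faithful transport of the distinguished elements $\iota_\A(1)$ and $[1_{\whatA}]_0$ through the duality, which is what the pointed equivalences (iii) and (iv) genuinely require.
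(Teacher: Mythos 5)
Your architecture agrees with the paper's on two of the three equivalences. For (i) $\Leftrightarrow$ (ii) your argument is the paper's almost verbatim: the unpointed computation $\K_i(\whatA)\cong\Exts^{i+1}(\A)$ (the paper's \eqref{eq:2.1}) together with $\widehat{\widehat{\A}}\cong\A$ gives (i) $\Rightarrow$ (ii), and conversely the abstract isomorphisms of (ii) are upgraded by the UCT to $\sqK$-equivalences $\B\sim_{\sqK}D(C_\A)$ and $\A\sim_{\sqK}D(C_\B)$, whence $\B\cong\whatA$ by the isomorphism-uniqueness of the reciprocal dual. Your reduction of (iv) to (iii) via $\whatA\cong\B\Leftrightarrow\A\cong\whatB$ is also exactly the paper's.

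The genuine gap is at (i) $\Leftrightarrow$ (iii). You route both directions through the pointed computation $(\K_0(\whatA),[1_{\whatA}]_0,\K_1(\whatA))\cong(\Exts^1(\A),\iota_\A(1),\Exts^0(\A))$ plus Kirchberg--Phillips, and you justify that pointed computation by ``functoriality of Spanier--Whitehead duality.'' That justification cannot work as stated: $D(C_\A)$ and the equivalence $\whatA\sim_{\sqK}D(C_\A)$ are defined only up to $\sqK$-equivalence, and a $\sqK$-equivalence carries no information about unit classes, so no amount of functoriality of the duality by itself tracks $[1_{\whatA}]_0$ to $\iota_\A(1)$. The paper's proof exhibits the actual mechanism: assuming (i), it computes that the two quotients $\Exts^1(\A)/\Z\iota_\A(1)\cong\Extw^1(\A)$ and $\K_0(\B)/\Z[1_\B]_0$ agree (via $\K_1(C_{\whatA})$), and then invokes the group-theoretic rigidity statement \cite[Corollary 2.20]{Sogabe2022}, which upgrades isomorphisms of finitely generated abelian groups together with isomorphisms of their quotients by the distinguished elements to an isomorphism of pairs. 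For the converse (iii) $\Rightarrow$ (i), the paper deliberately avoids presupposing the pointed characterization of $\whatA$ (it derives that only afterwards, as Corollary \ref{cor:whatA}, from this very proposition): it converts (iii) into isomorphisms of $\Exts^i$ and $\Extw^1$ and applies the classification by extension groups \cite[Theorem 4.3]{MatSogabe2} rather than Kirchberg--Phillips. If you are allowed to quote the pointed characterization of $\whatA$ from \cite{Sogabe2022} as a black box, your Kirchberg--Phillips argument is correct and in fact shorter; but to make the step you yourself flagged as delicate self-contained, you must replace ``functoriality'' by the quotient computation plus the rigidity lemma (or an equivalent unit-tracking statement), on pain of circularity with the statement being proved.
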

\begin{proof}
We note the following:
\begin{equation}
\K_i(\A) 
=  \K_i(D(C_{\whatA})) 
  \cong
 \sqK^i(C_{\whatA}, \mathbb{C})
  =\Exts^{i+1}(\whatA), \quad i=0,1.\label{eq:2.1} 
\end{equation}
 
(i) $\Longleftrightarrow$ (ii):
Assume (i). The assertion (ii) immediately follows from 
\eqref{eq:2.1} and the fact that $\widehat{\widehat{\A}} \cong \A$.
Assume (ii). 
By \eqref{eq:2.1}, 
we have
$\Exts^{i+1}(\B) \cong \K_i(D(C_\B))$
so that the condition 
$\K_i(\A) \cong \Exts^{i+1}(\B)$
implies $D(C_\B)\sim_{\sqK}\A$.
Symmetrically the condition
$\Exts^i(\A) \cong \K_{i+1}(\B)$
implies $D(C_\A)\sim_{\sqK}\B$,
showing that $\B$ is reciprocal to $\A$.

(i) $\Longleftrightarrow$ (iii):
Assume (i) and hence $\A \cong \whatB.$
By \eqref{eq:2.1} for $\B$, we have
$\Exts^1(\A) \cong \Exts^1(\whatB) \cong  \K_0(\B)$
and
\begin{equation*}
\Exts^1(\A)/\Z\iota_\A(1) 
\cong \Extw^1(\A) =\K^1(\A)
\cong\K_1(D(\A)) \cong \K_1(C_\whatA) 
\cong \K_0(\A)/\Z[1_\A]_0. 
\end{equation*}
By \cite[Corollary 2.20]{Sogabe2022},
we have $(\Exts^1(\A), \iota_\A(1)) \cong (\K_0(\B), [1_\B]_0)$.
The condition
$\Exts^0(\A) \cong \K_1(\B)$ 
follows from \eqref{eq:2.1} for $\whatA \cong \B.$

Assume (iii).
As 
$\Exts^1(\A)/\Z\iota_\A(1) \cong \Extw^1(\A)$
and
$\K_0(\B)/\Z[1_\B]_0 \cong \Extw^1(\B)$,
by \eqref{eq:2.1}, we have 
$\Extw^1(\A) \cong \Extw^1(\whatB).$
Since
$\K_i(\B) \cong \Exts^{i+1}(\whatB), i=0,1,$
we have
$\Exts^{i+1}(\A) \cong \Exts^{i+1}(\whatB),$
so that $\A \cong\whatB$  
by \cite[Theorem 4.3]{MatSogabe2},
and hence $\whatA \cong \B$.

(i) $\Longleftrightarrow$ (iv):
Since $\whatA \cong \B$ is equivalent to $\A \cong \whatB$,
the equivalence (i) $\Longleftrightarrow$ (iv)
follows from the equivalence 
(i) $\Longleftrightarrow$ (iii).
\end{proof}
\begin{corollary}\label{cor:whatA}
Let $\A $ be a Kirchberg algebra with finitely generated $\K$-groups.
Then the reciprocal dual $\whatA$ is characterized to be 
a Kirchberg algerba with finitely generated $\K$-groups satisfying 
\begin{equation*}
(\K_0(\whatA), [1_{\whatA}]_0, \K_1(\whatA)) \cong
(\Exts^1(\A), \iota_\A(1), \Exts^0(\A)).
\end{equation*}
\end{corollary}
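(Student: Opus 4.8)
The plan is to derive Corollary~\ref{cor:whatA} as a direct consequence of Proposition~\ref{prop:basic2}, since the corollary is essentially a restatement of one of the equivalent conditions for $\whatA \cong \B$. The strategy is to apply the proposition with the specific choice $\B = \whatA$, so that the hypothesis $\whatA \cong \B$ of condition~(i) is automatically satisfied, and then read off the resulting $\K$-theoretic data from condition~(iii).

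First I would invoke the existence and uniqueness (up to isomorphism) of $\whatA$ as a Kirchberg algebra with finitely generated $\K$-groups, which is guaranteed by the results of \cite{Sogabe2022} recalled immediately after the definition of reciprocality. This establishes that the object whose invariants we wish to compute is well-defined. Then, setting $\B := \whatA$ in Proposition~\ref{prop:basic2}, condition~(i) reads $\whatA \cong \whatA$, which holds trivially. By the equivalence (i)~$\Longleftrightarrow$~(iii) proved in the proposition, condition~(iii) must then also hold, namely
\begin{equation*}
(\Exts^1(\A), \iota_\A(1)) \cong (\K_0(\whatA), [1_{\whatA}]_0)
\quad \text{and} \quad
\Exts^0(\A) \cong \K_1(\whatA).
\end{equation*}
Reversing the direction of these isomorphisms and bundling the three pieces of data into a single triple yields exactly the asserted isomorphism
$(\K_0(\whatA), [1_{\whatA}]_0, \K_1(\whatA)) \cong (\Exts^1(\A), \iota_\A(1), \Exts^0(\A))$.

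For the characterization clause — that $\whatA$ is the \emph{unique} such algebra — I would argue that if $\B$ is any Kirchberg algebra with finitely generated $\K$-groups satisfying the displayed isomorphism with $\A$, then unpacking that triple isomorphism recovers precisely the two conditions of Proposition~\ref{prop:basic2}(iii), whence (i) gives $\whatA \cong \B$. Thus the triple $(\K_0, [1], \K_1)$ pins down $\whatA$ up to isomorphism among Kirchberg algebras with finitely generated $\K$-groups.

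I do not expect any genuine obstacle here: all the substantive content — the identification \eqref{eq:2.1} via Spanier--Whitehead duality, the pointed-group refinement from \cite[Corollary 2.20]{Sogabe2022}, and the rigidity from \cite[Theorem 4.3]{MatSogabe2} — has already been absorbed into the proof of Proposition~\ref{prop:basic2}. The only mild care needed is bookkeeping: ensuring that the pointed isomorphism of $\K_0$-groups (respecting the distinguished unit classes $\iota_\A(1)$ and $[1_{\whatA}]_0$) is carried through and that the three invariants are assembled consistently into the stated triple. This is purely formal, so the corollary follows in a few lines.
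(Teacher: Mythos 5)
Your proposal is correct and matches the paper's intended argument: the corollary is stated in the paper without proof precisely because it is the specialization $\B = \whatA$ of Proposition \ref{prop:basic2}, with the equivalence (i) $\Longleftrightarrow$ (iii) giving both the displayed isomorphism of triples and the uniqueness clause (the latter via Kirchberg--Phillips classification implicit in the proposition). Your bookkeeping — reversing the direction of the isomorphisms in (iii) and citing existence/uniqueness of $\whatA$ from \cite{Sogabe2022} — is exactly what is needed, so there is nothing to add.
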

The two invariants introduced in \cite{MatSogabe2}  
\begin{align}
\chi(\A):= & \rank(\K_0(\A) ) -\rank(\K_1(\A)) \in \Z, \label{eq:chi(A)}\\
w(\A) := & \rank(\K_0(\A)) - \rank(\K_0(\A)/\Z[1_\A]_0) \in \{0, 1\} \label{eq:w(A)}
\end{align}
for a Kirchberg algebra $\A$ 
 with finitely generated $\K$-groups give a hierarchy of the Kirchberg algebras 
 with finitely generated $\K$-groups,
 where $\rank(G)$ for a finitely generated abelian group $G$ is defined by 
 $\dim_\mathbb{Q}(G\otimes_\Z\mathbb{Q}).$
The values make a clear difference between $\whatA$ and $\A$ as in the following lemma. 
\begin{lemma}[{\cite[Lemma 3.7]{MatSogabe2}}]\label{lem:chiw}
Let $\A$ be a Kirchberg algebra  with finitely generated $\K$-groups.
Then we have
$$
\chi(\A) + \chi(\whatA) =1, \qquad  w(\A) + w(\whatA) =1.
$$
\end{lemma}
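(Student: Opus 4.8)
The plan is to reduce both identities, via Corollary \ref{cor:whatA}, to statements about the $\K$- and extension groups of $\A$ alone. Corollary \ref{cor:whatA} identifies $(\K_0(\whatA),[1_{\whatA}]_0,\K_1(\whatA))$ with $(\Exts^1(\A),\iota_\A(1),\Exts^0(\A))$, so that $\chi(\whatA)=\rank\Exts^1(\A)-\rank\Exts^0(\A)$, while $w(\whatA)=1$ exactly when $\iota_\A(1)$ has infinite order in $\Exts^1(\A)$ and $w(\whatA)=0$ when $\iota_\A(1)$ is torsion; symmetrically $w(\A)=1$ iff $[1_\A]_0$ has infinite order in $\K_0(\A)$. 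Throughout I will use the $\K$-homology UCT, which for a UCT algebra with finitely generated $\K$-groups gives a short exact sequence with a finite $\Ext^1_\Z$-term, whence $\rank\Extw^i(\A)=\rank\K^i(\A)=\rank\K_i(\A)$.

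For $\chi(\A)+\chi(\whatA)=1$, I would extract from the cyclic six-term sequence \eqref{eq:sixtermExt} the exact sequence
\begin{equation*}
0 \longrightarrow \Exts^0(\A)\longrightarrow\Extw^0(\A)\longrightarrow\Z\stackrel{\iota_\A}{\longrightarrow}\Exts^1(\A)\longrightarrow\Extw^1(\A)\longrightarrow 0,
\end{equation*}
the two end zeros both coming from the single $0$ term of \eqref{eq:sixtermExt}. Since rank is additive along exact sequences of finitely generated abelian groups, the alternating sum of ranks vanishes, and substituting $\rank\Extw^i(\A)=\rank\K_i(\A)$ yields
\begin{equation*}
\rank\Exts^1(\A)-\rank\Exts^0(\A)=1+\rank\K_1(\A)-\rank\K_0(\A)=1-\chi(\A).
\end{equation*}
As the left-hand side is $\chi(\whatA)$, this gives $\chi(\A)+\chi(\whatA)=1$.

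For $w(\A)+w(\whatA)=1$ I must compare the torsion of $[1_\A]_0$ with that of $\iota_\A(1)$, and the key tool is the index pairing $\K_0(C_\A)\times\K^0(C_\A)\to\Z$ together with the quotient $q\colon C_\A\to\mathbb{C}$. On $\K$-theory, $q_*\colon\K_0(C_\A)\to\Z$ is the map in the mapping cone six-term sequence whose image is $\ker([1_\A]_0\colon\Z\to\K_0(\A))$, so $q_*=0$ iff $[1_\A]_0$ is non-torsion, i.e.\ iff $w(\A)=1$. On $\K$-homology, $q^*(1)=\iota_\A(1)\in\K^0(C_\A)=\Exts^1(\A)$. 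By naturality of the pairing, $q_*(x)=\langle x,\iota_\A(1)\rangle$ for every $x\in\K_0(C_\A)$, so $q_*=0$ iff $\iota_\A(1)$ annihilates $\K_0(C_\A)$. Finally, the $\K$-homology UCT for $C_\A$ identifies the kernel of $\K^0(C_\A)\to\Hom(\K_0(C_\A),\Z)$ with the finite torsion subgroup, so $\iota_\A(1)$ annihilates $\K_0(C_\A)$ iff it is torsion, i.e.\ iff $w(\whatA)=0$. Chaining these equivalences gives $w(\A)=1\iff w(\whatA)=0$, and since both lie in $\{0,1\}$ we obtain $w(\A)+w(\whatA)=1$.

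The main obstacle is the $w$-identity: one must recognize that the single map $q\colon C_\A\to\mathbb{C}$ induces both $q_*$ (the $\K$-theory map detecting the torsion of $[1_\A]_0$) and $q^*$ (producing $\iota_\A(1)$), and then combine naturality of the index pairing with the UCT description of its kernel to convert ``annihilates under the pairing'' into ``is torsion.'' By contrast, the $\chi$-identity is a routine rank count once the equality $\rank\Extw^i(\A)=\rank\K_i(\A)$ is available.
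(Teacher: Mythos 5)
Your proposal is correct, and a comparison with ``the paper's proof'' is not quite possible here: the paper states Lemma \ref{lem:chiw} as a citation of \cite[Lemma 3.7]{MatSogabe2} and gives no argument, so you have in effect supplied the missing proof from the ingredients the paper itself provides. Both halves check out. For the $\chi$-identity, your five-term exact sequence is exactly what one gets by cutting the cyclic sequence \eqref{eq:sixtermExt} at its zero term, and the rank count together with $\rank\Extw^i(\A)=\rank\K_i(\A)$ (valid because the $\Ext^1_\Z$-term in the UCT is finite for finitely generated $\K$-groups) gives $\chi(\whatA)=\rank\Exts^1(\A)-\rank\Exts^0(\A)=1-\chi(\A)$, where the first equality is Corollary \ref{cor:whatA}. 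For the $w$-identity, the three links in your chain are each sound: the boundary map of $0\to S\A\to C_\A\to\mathbb{C}\to 0$ in $\K$-theory is $1\mapsto[1_\A]_0$, so $\Im q_*=\Ker(\partial_\A)$ and $q_*=0$ iff $[1_\A]_0$ is non-torsion; the identity $q^*(1)=\iota_\A(1)$ is precisely the paper's assertion that the right vertical arrow of \eqref{eq:sixtermExt} is $\iota_\A$ under $\Exts^1(\A)=\sqK^0(C_\A,\mathbb{C})$; and naturality of the index pairing plus the UCT for $C_\A$ (whose $\K$-groups are finitely generated, so the kernel of $\K^0(C_\A)\to\Hom(\K_0(C_\A),\Z)$ is finite and hence equals the torsion subgroup, $\Hom$ being torsion-free) converts $q_*=0$ into ``$\iota_\A(1)$ is torsion,'' i.e.\ $w(\whatA)=0$. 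One could phrase the $w$-step slightly more internally to \eqref{eq:sixtermExt} --- $\iota_\A(1)$ is torsion iff the map $\Extw^0(\A)\to\Z$ there is nonzero, which is UCT-dual to $[1_\A]_0$ being non-torsion --- but that is the same argument as yours in different clothing, not a genuinely different route.
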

A typical example of the reciprocal Kirchberg algebras are the Cuntz algebras $\O_2, \OI$ with $\widehat{\O_2}\cong \OI$.

One of the main purpose of the present paper is to present and realize
the reciprocal dual $\whatA$ of a Kirchberg algebra $\A$,
especially of a Cuntz--Krieger algebra $\OA$ in a systemtic and concrete way.

\subsection{$\K$-theoretic duality for extensions}

There is another $\K$-theoretic duality in $C^*$-algebra contexts,
that is a $\K$-theoretic duality for extensions introduced in 
\cite{MaJMAA2024}(cf. \cite{PennigSogabe}).
$\K$-theoretic duality for extensions of $C^*$-algebras is useful in construction
of reciprocal algebras in our further discussions. 
\begin{definition}[{\cite{MaJMAA2024}, \cite{PennigSogabe}}]
Let $\A, \B$ be separable unital nuclear $C^*$-algebras.
Two extensions
$$
(\E): \quad 0 \rightarrow \calK \rightarrow \E \rightarrow \A \rightarrow 0
\quad \text{ and }
\quad
(\F): \quad 0 \rightarrow \calK \rightarrow \F \rightarrow \B \rightarrow 0
$$
are called a strong $\K$-theoretic duality pair  with respect to $\epsilon\in \{-1,1\}$
if the following two conditions hold:
\begin{enumerate}
\renewcommand{\theenumi}{(\arabic{enumi})}
\renewcommand{\labelenumi}{\textup{\theenumi}}
\item
There exist
isomorphisms
$\Phi_\A^i: \Exts^i (\A)\rightarrow \K_{i+1}(\F), i=0,1$
such that 
\begin{equation}\label{eq:SKdualA}
\Phi_\A^1([\E]_s) = \epsilon [1_\F]_0, \qquad
\Phi_\A^1([\iota_\A(1)]_s) = [e]_0,
\end{equation}
where $[e]_0 \in \K_0(\F)$ is the class $[e]_0$ of a projection $e \in \calK$
of rank one.
\item
There exist
isomorphisms
$\Phi_\B^i: \Exts^i (\B)\rightarrow \K_{i+1}(\E), i=0,1$
such that 
\begin{equation}\label{eq:SKdualB}
\Phi_\B^1([\F]_s) = \epsilon [1_\E]_0, \qquad
\Phi_\B^1([\iota_\B(1)]_s) = [e]_0,
\end{equation}
where $[e]_0 \in \K_0(\E)$ is the class $[e]_0$ of a projection $e \in \calK$
of rank one.
\end{enumerate}
\end{definition} 
We note that 
the condition \eqref{eq:SKdualA} (resp. \eqref{eq:SKdualB})
automatically implies that
$\Extw^1(\A)\cong \K_0(\B)$ and $\Extw^0(\A)\cong \K_1(\B)$
(resp.
$\Extw^1(\B)\cong \K_0(\A)$ and $\Extw^0(\B)\cong \K_1(\A)).$
The following result was proved in \cite[Theorem 5.7]{PennigSogabe}.
\begin{proposition}[{\cite[Theorem 5.7]{PennigSogabe}}] \label{prop:PennigSogabe}
Let $(\E): 0 \rightarrow \calK \rightarrow \E \rightarrow \A \rightarrow 0$
be an extension of a separable unital nuclear $C^*$-algebra $\A$ with finitely generated $\K$-groups.
For $\epsilon \in \{-1,1\}$, 
there exists a separable unital nuclear $C^*$-algebra $\B$
with finitely generated $\K$-groups
and an extension  
$(\F): 0 \rightarrow \calK \rightarrow \F \rightarrow \B \rightarrow 0$
such that $(\E)$ and 
$(\F)$ is a stong $\K$-theoretic duality pair with respect to $\epsilon$. 
If in particular $\A$ is a Kirchberg algebra, 
then one may take $\B$ as a Kirchberg algebra.
\end{proposition}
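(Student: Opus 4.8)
The goal is to produce, from a single extension $(\E)$ of $\A$, a dual extension $(\F)$ of some $\B$ so that the two form a strong $\K$-theoretic duality pair with sign $\epsilon$. The natural strategy is to \emph{specify the target $\K$-theoretic data first} and then \emph{realize} it by a concrete algebra and extension. Concretely, I would first read off what $\K_*(\F)$ and the relevant classes are forced to be by condition (1) of the definition: the isomorphisms $\Phi_\A^i:\Exts^i(\A)\to \K_{i+1}(\F)$ force
\begin{equation*}
\K_0(\F)\cong \Exts^1(\A),\qquad \K_1(\F)\cong \Exts^0(\A),
\end{equation*}
with $[1_\F]_0$ corresponding to $\epsilon^{-1}[\E]_s$ and a rank-one $[e]_0$ corresponding to $\iota_\A(1)$. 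Since $\A$ has finitely generated $\K$-groups and satisfies UCT, the groups $\Exts^i(\A)=\sqK^{i+1}(C_\A,\bbC)$ are themselves finitely generated; this is the input that lets one invoke a realization theorem for Kirchberg algebras. The first key step, then, is to build a Kirchberg algebra $\B$ (equivalently the middle algebra $\F$ of an essential unital extension by $\calK$) whose $\K$-theory with distinguished unit class matches the prescribed data. For this I would appeal to the standard range-of-invariant / Kirchberg--Phillips machinery: any pair of finitely generated abelian groups with a distinguished element of the $\K_0$-group is realized as $(\K_0,\K_1,[1])$ of a unital Kirchberg algebra, and the unit class can be matched using the flexibility recorded in \cite[Corollary 2.20]{Sogabe2022}.

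\textbf{Building the extension and checking the pairing.}
Once $\B$ (hence $\F$ as a unital essential extension of $\B$ by $\calK$) is constructed with $\K_0(\F)\cong\Exts^1(\A)$ carrying $[1_\F]_0\mapsto \epsilon[\E]_s$ and $[e]_0\mapsto \iota_\A(1)$, the content of condition (1) is essentially forced; the nontrivial requirement is that the \emph{same} algebra $\B$ also satisfies condition (2), i.e.\ that the dual direction $\Phi_\B^i:\Exts^i(\B)\to\K_{i+1}(\E)$ holds with the matching of $[\F]_s$ and $[1_\E]_0$. The way to obtain both halves simultaneously is to recognize that a strong $\K$-theoretic duality pair is precisely the extension-level shadow of the Spanier--Whitehead / reciprocal duality summarized in the previous subsection. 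Using the identifications $\Exts^i(\A)=\sqK^{i+1}(C_\A,\bbC)$ and the six-term sequence \eqref{eq:sixtermExt}, together with $\K_i(\F)\cong\sqK_i(\F)$ and the mapping-cone description of $\Exts$, I would show that the choice of $\B$ as (a concrete model of) the reciprocal dual $\whatA$ makes both conditions (1) and (2) hold at once: indeed Corollary \ref{cor:whatA} already gives $(\K_0(\whatA),[1_{\whatA}]_0,\K_1(\whatA))\cong(\Exts^1(\A),\iota_\A(1),\Exts^0(\A))$, and this is exactly the data (up to the sign $\epsilon$ on the unit, absorbed by an index-$\pm1$ adjustment of the Busby invariant) needed for the pairing.

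\textbf{The $\KK$-theoretic core and the sign.}
The heart of the argument is the duality isomorphism itself. I would construct an explicit $\sqK$-equivalence (or at least the relevant $\sqK$-class) implementing $\Exts^*(\A)\cong\K_{*+1}(\F)$ from the Spanier--Whitehead $\K$-duality of Kaminker--Schochet \cite{KS}: since $D(C_\A)$ realizes $\B$ up to $\sqK$-equivalence, the pairing maps $\Phi_\A^i$ are induced by the duality class, and naturality of that class in the mapping-cone exact sequence \eqref{eq:exactCA} transports the unit class and the $\iota_\A(1)$ class correctly. The role of $\epsilon\in\{-1,1\}$ is to account for the orientation/Fredholm-index freedom in identifying $[\E]_s$ with $\pm[1_\F]_0$; composing the Busby invariant with $\Ad(u_m)$ for a unitary $u_m\in\calQ(H)$ of index $\mp1$ toggles this sign, so both choices of $\epsilon$ are realizable by the same underlying $\B$. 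The main obstacle I anticipate is \emph{the compatibility of the unit classes with the duality isomorphism}: matching the abstract groups is routine given UCT and finite generation, but verifying that $\Phi_\A^1([\E]_s)=\epsilon[1_\F]_0$ and $\Phi_\A^1(\iota_\A(1))=[e]_0$ hold \emph{for the same isomorphism} (and dually for $\B$) requires carefully tracking the distinguished elements through the six-term sequence and the mapping-cone boundary maps, rather than merely counting ranks. This naturality/bookkeeping of distinguished classes is where the real work lies, and it is what \cite[Theorem 5.7]{PennigSogabe} supplies.
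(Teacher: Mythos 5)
The paper offers no proof of this proposition at all: it is quoted verbatim from \cite[Theorem 5.7]{PennigSogabe}, so your sketch has to stand as an independent argument, and it does not. After correctly isolating the crux --- that $\Phi_\A^1$ must send \emph{both} $[\E]_s\mapsto\epsilon[1_\F]_0$ and $\iota_\A(1)\mapsto[e]_0$ under a \emph{single} isomorphism, and that condition (2) must hold simultaneously for the same pair --- you close by saying this bookkeeping ``is what \cite[Theorem 5.7]{PennigSogabe} supplies.'' Since that theorem \emph{is} the statement to be proved, the core of your argument is circular; what remains is a strategy outline.

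Beyond the circularity, two concrete steps are wrong. First, the dual quotient $\B$ is \emph{not} the reciprocal dual $\whatA$. The six-term sequence of $0\to\calK\to\F\to\B\to 0$ gives $\K_0(\B)\cong\K_0(\F)/\langle[e]_0\rangle$, so imposing $\K_0(\F)\cong\Exts^1(\A)$ with $[e]_0\leftrightarrow\iota_\A(1)$ forces $\K_0(\B)\cong\Exts^1(\A)/\langle\iota_\A(1)\rangle\cong\Extw^1(\A)$ --- exactly the remark the paper makes right after the definition of a duality pair --- whereas Corollary \ref{cor:whatA} gives $\K_0(\whatA)\cong\Exts^1(\A)$. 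These differ whenever $\iota_\A(1)\neq 0$: for $\A=\O_2$ the Toeplitz pair \eqref{eq:TATAT} has $\B=\O_2$ while $\widehat{\O}_2=\OI$, and more generally for $\A=\OA$ one gets $\B=\OAT$, while $\whatOA$ satisfies $\chi(\whatOA)=1$ and is not a Cuntz--Krieger algebra at all. In the paper's scheme $\whatA$ arises as the corner $e_\A\O_\E e_\A$ of a Kirchberg algebra $\sqK$-equivalent to $\F$ (Theorem \ref{thm:construction}), not as the quotient $\B$; relatedly, your parenthetical ``$\B$ (equivalently the middle algebra $\F$)'' conflates two non-interchangeable algebras, and $\F$, having $\calK$ as an essential ideal, is never a Kirchberg algebra. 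Second, your sign mechanism fails: replacing the Busby invariant of $(\F)$ by $\Ad(u_{\mp 1})\circ\tau_\F$ shifts $[\F]_s$ by $\mp\iota_\B(1)$, which under $\Phi_\B^1$ \emph{translates} the distinguished element by the rank-one class $[e]_0$; it does not flip $\epsilon[1_\E]_0$ to $-\epsilon[1_\E]_0$. Nor can one compose $\Phi_\A^1$ with $g\mapsto -g$, since that would also send $\iota_\A(1)$ to $-[e]_0$, violating the second normalization. So the two values of $\epsilon$ require genuinely different choices of the dual extension, and the realization step --- producing $\F$ with $[1_\F]_0$ and $[e]_0$ in the prescribed relative position and then verifying condition (2) for the same pair --- is precisely the content your proposal leaves unproved.
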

A typical example of a strong $\K$-theoretic duality pair is the Toeplitz  
extensions  of Cuntz--Krieger algebras:
\begin{equation}\label{eq:TATAT}
(\TA): \, \, 0 \rightarrow \calK \rightarrow \TA \rightarrow \OA \rightarrow 0, 
\qquad
(\TAT): \, \, 0 \rightarrow \calK  \rightarrow \TAT \rightarrow \OAT \rightarrow 0, 
\end{equation}
which is a strong $\K$-theoretic duality pair with respect to $\epsilon = -1$
(\cite{MaJMAA2024}).


\section{General construction of the reciprocal dual}
For abelian groups $G_i, H_i, i=0,1$ with elements 
$g_0 \in G_0$ and $h_0 \in H_0$, we write
$(G_0, g_0, G_1) \cong (H_0, h_0, H_1)$
if there exist isomorphisms $\Phi_i: G_i \rightarrow H_i, \, i=0,1$
of groups such that $\Phi_0(g_0) = h_0.$
We similarly write  
$(G_0, g_0, g'_0, G_1) \cong (H_0, h_0,h'_0, H_1)$
with elements $g_0, g'_0 \in G_0$ and  $h_0, h'_0 \in H_0$
if there exist isomorphisms $\Phi_i: G_i \rightarrow H_i, \, i=0,1$
of groups such that $\Phi_0(g_0) = h_0, \Phi_0(g'_0) = h'_0.$

We will show the following theorem which gives us a systematic construction of the reciprocal dual
$\whatA$ from $\A$, by using Cuntz--Pimsner--Kumjian 's construction for an extension of $\A$
(\cite{Pimsner}, \cite{Kumjian}). 
The systematic construction will give us a concrete realization of the reciprocal dual
$\whatOA$ of a simple Cuntz--Krieger algebr $\OA$ in Theorem \ref{thm:mr} (ii)
 and Corollary \ref{cor:whatOA}.
\begin{theorem}\label{thm:construction}
Let $\A$ be a Kirchberg algebra with finitely generated $\K$-groups.
For any essential unital extension 
$0 \longrightarrow \calK
\longrightarrow \E
\longrightarrow \A
\longrightarrow 0
$
of $\A$ and $\epsilon \in \{-1,1\}$, 
we have a systematic construction of a Kirchberg algebra $\O_\E$
with finitely generated $\K$-groups and a projection $e_\A \in \O_\E$ such that 
\begin{equation*}
(\K_0(\O_\E), [1_{\O_\E}]_0, [e_\A]_0, \K_1(\O_\E)) 
\cong 
(\Exts^1(\A), \epsilon [\E]_s, [\iota_\A(1)]_s, \Exts^0(\A))
\end{equation*}
where $[\E]_s\in \Exts^1(\A)$ is the strong equivalence class of 
$\E$ in $\Exts^1(\A)$.
Hence the isomorphism class of the Kirchberg algebra $\O_\E$ 
depends only on the strong equivalence class $[\E]_s$ of $\E$ in $\Exts^1(\A)$,
and the reciprocal dual $\whatA$ of $\A$ is realized as 
a corner of $\O_\E$, that is, 
\begin{equation*}
e_\A \O_\E e_\A \cong \whatA \quad \text{the reciprocal dual of } \A.
\end{equation*}
\end{theorem}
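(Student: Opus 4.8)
The plan is to reduce, via the $\K$-theoretic duality for extensions, to building a Kirchberg algebra whose $\K$-theory is that of the \emph{dual} extension algebra, and then to recover $\whatA$ as a corner using the characterization in Corollary \ref{cor:whatA}. First I would apply Proposition \ref{prop:PennigSogabe} to the given extension $(\E)$ and the chosen sign $\epsilon$, producing a dual extension $(\F): 0 \to \calK \to \F \to \B \to 0$ of a Kirchberg algebra $\B$ that forms a strong $\K$-theoretic duality pair with $(\E)$. The duality isomorphisms $\Phi_\A^i \colon \Exts^i(\A) \to \K_{i+1}(\F)$ (with $\K_{2}=\K_0$ by Bott periodicity) then identify
$$(\Exts^1(\A), \epsilon[\E]_s, [\iota_\A(1)]_s, \Exts^0(\A)) \cong (\K_0(\F), [1_\F]_0, [e]_0, \K_1(\F)),$$
where I use $\Phi_\A^1([\E]_s) = \epsilon[1_\F]_0$ and $\Phi_\A^1([\iota_\A(1)]_s) = [e]_0$ from \eqref{eq:SKdualA}, the class $[e]_0$ coming from a rank-one projection $e \in \calK$. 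Thus the entire target invariant is already carried by $\F$ together with its unit and the projection $e$; the remaining task is to manufacture, out of the (non-simple) extension algebra $\F$, a Kirchberg algebra with the same four-tuple of $\K$-theoretic data.

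For this I would invoke the Cuntz--Pimsner--Kumjian construction (\cite{Pimsner}, \cite{Kumjian}) applied to $\F$, producing a separable unital nuclear $C^*$-algebra $\O_\E$ together with a unital inclusion $\F \hookrightarrow \O_\E$. The correspondence is to be chosen so that $\O_\E$ absorbs the ideal $\calK$ and becomes purely infinite and simple, hence a Kirchberg algebra (separability, nuclearity and the UCT pass through by the permanence properties and the nuclearity of $\F$); concretely this is the same algebra realized in Theorem \ref{thm:main2} as $\TAT * \OI$ in the Cuntz--Krieger case, so that pure infiniteness and simplicity come from the adjoined $\OI$-structure. I would then define $e_\A$ to be the image in $\O_\E$ of the rank-one projection $e \in \calK \subset \F$.

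The heart of the argument is the $\K$-theory computation. I would feed the correspondence into the Pimsner six-term exact sequence relating $\K_*(\O_\E)$ and $\K_*(\F)$, and show that the relevant connecting maps $\id - [X]_*$ vanish in such a way that the inclusion $\F \hookrightarrow \O_\E$ induces an isomorphism $\K_*(\F) \cong \K_*(\O_\E)$. Since this inclusion is unital and sends $e$ to $e_\A$, the isomorphism is automatically compatible with the distinguished classes, giving
$$(\K_0(\O_\E), [1_{\O_\E}]_0, [e_\A]_0, \K_1(\O_\E)) \cong (\K_0(\F), [1_\F]_0, [e]_0, \K_1(\F)),$$
which combined with the first paragraph is exactly the asserted isomorphism. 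I expect this step to be the main obstacle: not the identification of the abstract groups, but the bookkeeping needed to verify that the connecting maps degenerate and, above all, that both distinguished classes $[1_{\O_\E}]_0 \mapsto \epsilon[\E]_s$ and $[e_\A]_0 \mapsto [\iota_\A(1)]_s$ are transported correctly, the sign $\epsilon$ entering precisely here through \eqref{eq:SKdualA}.

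Finally, since $\O_\E$ is a Kirchberg algebra and $e_\A$ is a nonzero projection, the corner $e_\A \O_\E e_\A$ is again a separable nuclear unital UCT purely infinite simple $C^*$-algebra, i.e.\ a Kirchberg algebra; being a full corner (as $\O_\E$ is simple) it is Morita equivalent to $\O_\E$, so the inclusion induces $\K_*(e_\A\O_\E e_\A) \cong \K_*(\O_\E)$ carrying $[1_{e_\A\O_\E e_\A}]_0 = [e_\A]_0$. Consequently
$$(\K_0(e_\A \O_\E e_\A), [1_{e_\A \O_\E e_\A}]_0, \K_1(e_\A \O_\E e_\A)) \cong (\Exts^1(\A), \iota_\A(1), \Exts^0(\A)),$$
which by Corollary \ref{cor:whatA} is exactly the invariant characterizing $\whatA$. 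An application of the Kirchberg--Phillips classification theorem for unital UCT Kirchberg algebras then yields $e_\A \O_\E e_\A \cong \whatA$. Moreover, as the entire invariant depends only on $[\E]_s$, the isomorphism class of $\O_\E$ depends only on $[\E]_s$, completing the proof.
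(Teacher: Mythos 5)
Your proposal is correct in outline and follows essentially the same route as the paper: Proposition \ref{prop:PennigSogabe} to produce the dual extension $(\F)$, a Pimsner--Kumjian algebra built over $\F$, transport of the four-tuple of invariants, and then the corner plus Kirchberg--Phillips via Corollary \ref{cor:whatA}. The one place where you diverge --- and where your plan leaves a real idea unsupplied --- is the middle step: you write that ``the correspondence is to be chosen so that $\O_\E$ absorbs the ideal $\calK$ and becomes purely infinite and simple,'' and you anticipate having to verify that the connecting maps in Pimsner's six-term sequence degenerate. The paper makes a specific choice that dissolves this obstacle entirely: it lifts the Busby invariant to a faithful representation $\pi_\F:\F\to\B(H)$ and takes the bimodule $H_\F=(H\otimes l^2(\N))\otimes\F$ with left action $\varphi_\F=\pi_\F\otimes 1\otimes 1$. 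Because $\varphi_\F(\F)\cap\calK(H_\F)=0$, Pimsner's \cite[Corollary 4.5]{Pimsner} gives a canonical isomorphism $\T_{H_\F}\cong\O_{H_\F}$, and the inclusion $\F\hookrightarrow\T_{H_\F}$ is a $\sqK$-equivalence by the general Toeplitz--Pimsner theory, so no six-term bookkeeping is needed and the unit and the rank-one projection $e$ are transported tautologically; simplicity and pure infiniteness come from Kumjian's \cite[Theorem 3.1]{Kumjian} (the $l^2(\N)$ factor gives infinite multiplicity), not from an adjoined $\OI$-structure --- the free product picture $\TAT*\OI$ is a consequence in the Cuntz--Krieger case (Theorem \ref{thm:freeproduct}), not the engine of the general construction. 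One further small point you gloss over: independence of the isomorphism class of $\O_\E$ from $\epsilon$ is not automatic from your display but follows, as in the paper, by composing with the automorphism $g\mapsto -g$ of $\K_0(\O_{H_\F})$ when $\epsilon=-1$ before invoking classification. With the bimodule specified as above, your argument closes and coincides with the paper's proof.
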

\begin{proof}
 For an extension
$(\E): 
0 \longrightarrow \calK
\longrightarrow \E
\longrightarrow \A
\longrightarrow 0
$
of $\A$ and $\epsilon \in \{1,-1 \}$, 
by \cite{PS}, 
there exists an extension
$(\F):
0 \longrightarrow \calK
\longrightarrow \F
\longrightarrow \B
\longrightarrow 0
$ of a Kirchberg algebra $\B$ such that 
they are strong $\K$-theoretic duality pair with the isomorphisms
 \begin{align*}
\Phi_A: & (\Exts^1(\A), [\E]_s, [\iota_\A(1)]_s, \Exts^0(\A))
\rightarrow (\K_0(\F), \epsilon [1_\F]_0, [e]_0, \K_1(\F)),\\
\Phi_B: & (\Exts^1(\B), [\F]_s, [\iota_\B(1)]_s, \Exts^0(\B))
\rightarrow (\K_0(\E), \epsilon [1_\E]_0, [e]_0, \K_1(\E)). 
\end{align*}
Hence there are isomorphisms
$\Phi_A^1: \Exts^1(\A) \rightarrow \K_0(\F),$
$\Phi_B^1: \Exts^1(\B) \rightarrow \K_0(\E) $
satisfying \eqref{eq:SKdualA}, \eqref{eq:SKdualB}, respectively.
For the extension
$(\F)$,
there exists a 
$*$-homomorphism $\pi_\F: \F \rightarrow \B(H)$ 
for the Busby invariant $\tau_\F: \B \rightarrow \calQ(H)$
such that the diagram 
\begin{equation*}
\begin{CD}
0 @>>> \calK @>>> \F @>>> \B @>>> 0 \\
@. @|  @V{\pi_\F}VV @VV{\tau_\F}V @. \\ 
0 @>>> \calK @>>> \B(H) @>>> \calQ(H) @>>> 0 \\
\end{CD}
\end{equation*}
commutes.
Since $\calK$ is essential in $\F$, 
the $*$-homomorphism $\pi_\F: \F \rightarrow \B(H)$ is injective.
Following the A. Kumjian's construction in \cite{Kumjian}(see also \cite{Pimsner})
of Cuntz--Toeplitz--Pimsner algebras, 
we will get 
 our desired  $C^*$-algebras 
$\O_{\E}$ and the 
reciprocal algebra $\widehat{\A}$ in the following way.
We first represent $\F$ on the Hilbert space $H$
by the map
$\pi_\F: \F \rightarrow \B(H)$ 
and consider the Hilbert $C^*$-bimodule 
$H_\F:=(H\otimes_\mathbb{C} l^2(\mathbb{N}))\otimes_\mathbb{C} \F$ 
over $\F$
where 
the $\F$-valued inner product on $H_\F$ is defined by 
$<\xi\otimes x, \zeta\otimes y>_\F:=<\xi, \zeta>_\mathbb{C}x^*y\in \F$ 
for 
$\xi, \zeta\in H\otimes l^2(\mathbb{N})$, $x, y\in \F$,
and
the left action 
$\varphi_\F: \F \rightarrow \mathcal{L}(H\otimes_\mathbb{C} l^2(\mathbb{N})\otimes_\mathbb{C} \F)$  
of $\F$ to the adjointable bounded module maps on $H_\F$    
is given by $\varphi_\F := \pi_\F \otimes 1 \otimes 1: 
\F\otimes 1\otimes 1 \subset \mathcal{L}(H\otimes l^2(\mathbb{N})\otimes \F).$ 
We second take Pimsner's tensor algebra 
$\mathcal{T}_{H_\F}$ 
which is KK-equivalent to $\F$ via the inclusion 
$\F\hookrightarrow \mathcal{T}_{H_\F}$.
Since $\F\otimes 1\otimes 1\cap \mathcal{K}(H_\F)=0$, 
Kumjian's results 
\cite[Proposition 2.1]{Kumjian} and \cite[Theorem 3.1]{Kumjian}  
together with \cite[Corollary 4.5]{Pimsner}
tell us that 
the $C^*$-algebra $\O_{H_\F}$ 
is simple purely infinite and hence 
it is a Kirchberg algebra such that
$\O_{H_\F}$ and $\T_{H_\F}$ are canonically isomorphic, 
and 
the natural embedding 
$\F \hookrightarrow \T_{H_\F}$
yields a $\sqK$-equivalence.
Let $e_\A \in \O_{H_\F}$ be the projection defined by 
the embeddings
$
e\in \calK \hookrightarrow \F \hookrightarrow \T_{H_\F} \cong \O_{H_\F}.
$
We then have
\begin{align*}
(\K_0(\O_{H_\F}), [1_{\O_{H_\F}}]_0, [e_\A]_0, \K_1(\O_{H_\F})) 
= &  
(\K_0(\T_{H_\F}), [1_{\T_{H_\F}}]_0, [e]_0,  \K_1(\T_{H_\F})) \\
= &  
(\K_0(\F), [1_{\F}]_0, [e]_0, \K_1(\F)) \\
\cong &  
(\Exts^1(\A), \epsilon [\E]_s, [\iota_\A(1)]_s, \Exts^0(\A)).
\end{align*}
Hence there exists an isomorphism 
$\Phi_0: \K_0(\O_{H_\F}) \rightarrow \Exts^1(\A)$ such that 
$\Phi_0( [1_{\O_{H_\F}}]_0) = \epsilon [\E]_s$.
For $\epsilon = -1$,
by composing the automorphism 
$g \in \K_0(\O_{H_\F}) \rightarrow -g \in \K_0(\O_{H_\F}),$
one has 
\begin{equation*}
(\K_0(\O_{H_\F}), [1_{\O_{H_\F}}]_0, \K_1(\O_{H_\F})) 
\cong   
(\Exts^1(\A), [\E]_s, \Exts^0(\A)),
\end{equation*}
so the Kirchberg--Phillips classification theorem shows that
the isomorphism class of the Kirchberg algebra $\O_{H_\F}$
for the $C^*$-algebra $\A$
depends only on the class $[\E]_s$ in $\Exts^1(\A)$ (i.e., independent of $\epsilon\in \{\pm 1\}$ and $\F$).
Hence we may write the $C^*$-algebra $\O_{H_\F}$
as $\O_\E$, which is the desired algebra. 
By Proposition \ref{prop:basic2} (i) $\Longrightarrow$ (iii), 
 we have an isomorphism
$$
(\Exts^1(\A), [\iota_\A(1)]_s, \Exts^0(\A))
\cong
(\K_0(\widehat{\A}), [1_{\widehat{\A}}]_0, \K_1(\widehat{\A})) 
$$
and hence 
$$
(\K_0(\O_{\E}), [e_\A]_0, \K_1(\O_{\E})) 
\cong
(\K_0(\widehat{\A}), [1_{\widehat{\A}}]_0, \K_1(\widehat{\A})),
$$ 
proving that 
$e_\A \O_{\E} e_\A \cong \widehat{\A}.$
\end{proof}
\begin{remark}
{\bf 1.}
In Theorem \ref{thm:construction},
if we take  the extension
$\E_1 : = \iota_\A(1)$ 
of $\A$, 
we have $[\E_1]_s = [\iota_\A(1)]_s$  and hence 
$
\O_{\E_1} \cong \whatA, 
$ 
because 
we have
\begin{align*}
(\K_0(\O_{\E_1}), [1_{\O_{\E_1}}]_0, \K_1(\O_{\E_1})) 
\cong   
(\Exts^1(\A),  [\E_1]_s, \Exts^0(\A)) 
\cong 
(\K_0(\widehat{\A}), [1_{\widehat{\A}}]_0, \K_1(\widehat{\A})),
\end{align*}
so  that 
$\O_{\E_1} \cong \whatA.$

{\bf 2.}
The pair $(\O_\E, e_\A)$ is compatible to strong $\K$-theoretic duality for extensions 
in the following sense.
Two extensions  
$ 
0 \rightarrow \calK
\rightarrow \E
\rightarrow \A
\rightarrow 0,  \text{ and }
0 \rightarrow \calK
\rightarrow \F
\rightarrow \B
\rightarrow 0
$ 
are strong $\K$-theoretic duality pair
with respect to $\epsilon \in \{1,-1 \}$ 
if and only if 
 \begin{align*}
 (\K_0(\O_\E), [1_{\O_\E}]_0, [e_\A]_0, \K_1(\O_\E))
\cong & (\K_0(\F), \epsilon [1_\F]_0, [e]_0, \K_1(\F)),\\
 (\K_0(\O_\F), [1_{\O_\F}]_0, [e_\B]_0, \K_1(\O_\F))
\cong & (\K_0(\E), \epsilon [1_\E]_0, [e]_0, \K_1(\E)). 
\end{align*}
\end{remark}

\section{The reciprocal dual of Cuntz--Krieger algebras}\label{sect:reciprocalCK}
In this section, we focus on studying concrete constructions and realizations
of the reciprocal dual $\whatOA$ of a simple Cuntz--Krieger algebra $\OA.$ 
Throughout this section, 
let $A = [A(i,j)]_{i,j=1}^N$ be an irreducible non-permutation matrix with entries in $\{0,1\}.$
It forces us the algebra $\OA$ to be a Kirchberg algebra.
The Cuntz--Krieger algebra $\OA$ 
is the universal C*-algebra generated by partial isometries 
$S_1, \dots, S_N$ subject to the relations:
$
1=\sum_{j=1}^NS_jS_j^*,\, S^*_iS_i=\sum_{j=1}^NA(i, j)S_jS_j^*, \, \,  i=1,\dots,N.
$
By Corollary \ref{cor:whatA} together with \cite[Theorem 3.3 (ii)]{MaAnalMath2024}
and \cite[Lemma 4.2]{MaJMAA2024}, we know that
the reciprocal dual $C^*$-algebra $\whatOA$ of $\OA$ 
is characterized in terms of the $\K$-theory data in the following way.
\begin{proposition}\label{prop:chracterizationwhatOA}
Let $\widehat{A}$ be the $N\times N$ matrix $A + R_1 - AR_1$
where $R_1$ is the $N \times N$ matrix whose first row is the vector 
$[1,\dots,1]$ and the other rows are zero vectors.
Let $A_T$ be the $(N+1) \times N$ matrix defined by
$A_T = 
\begin{bmatrix} 
-1& \cdots &-1 \\
  & I - A &\\
\end{bmatrix}.
$
We then have
\begin{equation}\label{eq:KwhatOA}
\begin{split}
(\K_0(\whatOA), [1_{\whatOA}]_0) & \cong 
(\Z^N/(I - \widehat{A})\Z^N, 
[(I-A) 
\begin{bmatrix}
1\\
0\\
\vdots\\
0
\end{bmatrix}]),\\
\K_1(\whatOA) & \cong 
\Ker(A_T: \Z^N \rightarrow \Z^{N+1}). 
\end{split}
\end{equation}
\end{proposition}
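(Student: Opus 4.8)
The plan is to read the triple $(\K_0(\whatOA), [1_{\whatOA}]_0, \K_1(\whatOA))$ directly off the strong extension data of $\OA$ and then make that data explicit. By Corollary \ref{cor:whatA},
\[
(\K_0(\whatOA), [1_{\whatOA}]_0, \K_1(\whatOA)) \cong (\Exts^1(\OA), \iota_{\OA}(1), \Exts^0(\OA)),
\]
so everything reduces to identifying the two strong extension groups of $\OA$ together with the distinguished class $\iota_{\OA}(1)$. First I would use that the Toeplitz extensions \eqref{eq:TATAT} form a strong $\K$-theoretic duality pair with $\epsilon=-1$; by condition (1) in the definition of such a pair this gives isomorphisms $\Phi_{\OA}^i : \Exts^i(\OA) \to \K_{i+1}(\TAT)$, $i=0,1$, with $\Phi_{\OA}^1(\iota_{\OA}(1)) = [e]_0$, the class of a rank-one projection. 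Thus it suffices to compute $\K_0(\TAT)$ together with $[e]_0$, and $\K_1(\TAT)$.

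The computation runs through the six term sequence attached to $0 \to \calK \to \TAT \to \OAT \to 0$. Using the standard values $\K_0(\OAT) \cong \Z^N/(I-A)\Z^N$, $\K_1(\OAT) \cong \Ker(I-A : \Z^N \to \Z^N)$, $\K_0(\calK)=\Z$, $\K_1(\calK)=0$, the sequence collapses to
\[
\Ker(I-A) \xrightarrow{\partial} \Z \xrightarrow{\iota_*} \K_0(\TAT) \xrightarrow{\pi_*} \Z^N/(I-A)\Z^N \to 0, \qquad \K_1(\TAT) = \Ker(\partial).
\]
The crucial local input is that $\partial$ is the coordinate-sum map $v \mapsto \sum_i v_i$; granting this, $\K_1(\TAT) = \{v \in \Ker(I-A) : \sum_i v_i = 0\}$, which is exactly $\Ker(A_T)$ because the top row $[-1,\dots,-1]$ of $A_T$ records the sum condition. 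For $\K_0$ I would identify $\K_0(\TAT)$ with $\Z^N/(I-A)Z_0$, where $Z_0 = \{x \in \Z^N : \sum_i x_i = 0\}$, and then apply the matrix identity $I-\widehat A = (I-A)(I-R_1)$ together with $\Im(I-R_1)=Z_0$ to rewrite $(I-A)Z_0 = (I-\widehat A)\Z^N$, yielding $\K_0(\TAT) \cong \Z^N/(I-\widehat A)\Z^N$. Tracking the generator then shows that $[e]_0 = \iota_*(1)$ corresponds to the class $[(I-A)e_1]$ with $e_1 = [1,0,\dots,0]^t$; feeding this through $\Phi_{\OA}^1$ and Corollary \ref{cor:whatA} places $[1_{\whatOA}]_0$ at $[(I-A)e_1]$, as claimed.

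The hard part is the two bookkeeping steps: verifying that the connecting map $\partial$ is the coordinate sum (which is what forces the top row of $A_T$), and tracking the rank-one class $[e]_0$ to $[(I-A)e_1]$ through the duality isomorphism $\Phi_{\OA}^1$ and the cokernel presentation. These are precisely the content of \cite[Theorem 3.3 (ii)]{MaAnalMath2024} and \cite[Lemma 4.2]{MaJMAA2024}, which I would invoke for the two identifications. The remaining point is the purely algebraic observation that, since $R_1 = e_1 \mathbf{1}^t$ with $\mathbf{1}=[1,\dots,1]^t$, one has $I-\widehat A = (I-A)(I - e_1\mathbf{1}^t)$ and $\Im(I-e_1\mathbf{1}^t)$ equals the augmentation sublattice $Z_0$; this converts the intrinsic cokernel $\Z^N/(I-A)Z_0$ into the clean matrix form $\Z^N/(I-\widehat A)\Z^N$ appearing in \eqref{eq:KwhatOA}, completing the assembly.
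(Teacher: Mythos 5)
Your proposal is correct and follows essentially the same route as the paper: the paper states this proposition without a written proof, deriving it directly from Corollary \ref{cor:whatA} (which reduces the triple $(\K_0(\whatOA),[1_{\whatOA}]_0,\K_1(\whatOA))$ to $(\Exts^1(\OA),\iota_{\OA}(1),\Exts^0(\OA))$) together with the computations of the strong extension groups in \cite[Theorem 3.3 (ii)]{MaAnalMath2024} and \cite[Lemma 4.2]{MaJMAA2024}, exactly the two references to which you delegate the connecting-map and rank-one-class bookkeeping. Your added explication via the Toeplitz duality pair, the six term sequence, and the identity $I-\widehat{A}=(I-A)(I-R_1)$ with $\Im(I-R_1)=\{x\in\Z^N : \sum_i x_i=0\}$ is a sound unpacking of what those citations contain, not a different method.
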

In this section, we will concretely construct the Kirchberg algebra $\whatOA$ from $\OA$
satisfying \eqref{eq:KwhatOA} and realize it in several different ways.
We first present a difference between $\whatOA$ and $\OA$ via the invariant $\chi$ and $w$.

\subsection{The comparison between $\whatOA$ and  $\OA$} 

For a finitely generated abelian group $G$,
we write its torsion part and torsion free part as
$\Tor(G)$ and $\Free(G)$, respectively.
Let us recall the invariant $\chi(\A)$
defined in \eqref{eq:chi(A)}.
We first provide a lemma which characterizes Cuntz--Krieger algebras in Kirchberg algebras
by using $\K$-groups.
Since the proof is obvious from \cite{Ro}, so we omit it.
\begin{lemma}\label{lem:characterizationCK}
Let $\A$ be a Kirchberg algebra with finitely generated $\K$-groups.
Then the following three conditions are equivalent:
\begin{enumerate}
\renewcommand{\theenumi}{(\roman{enumi})}
\renewcommand{\labelenumi}{\textup{\theenumi}}
\item 
$\A \cong \OA$ for some simple Cuntz--Krieger algebra $\OA$.
\item $\Free(\K_0(\A)) \cong \K_1(\A).$
\item $\chi(\A) = 0$ and $\Tor(\K_1(\A)) \cong 0.$
\end{enumerate} 
\end{lemma}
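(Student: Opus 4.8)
The plan is to establish the cycle (i) $\Rightarrow$ (iii) $\Leftrightarrow$ (ii) $\Rightarrow$ (i), isolating the genuinely $C^*$-algebraic content in the last implication; the first two steps are $\K$-theory bookkeeping.

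For (i) $\Rightarrow$ (iii), and simultaneously for the rank statements needed later, I would run the standard computation of the $\K$-groups of a simple Cuntz--Krieger algebra, namely $\K_0(\OA) \cong \Coker(I - A^t)$ and $\K_1(\OA) \cong \Ker(I - A^t)$, both taken for the single integer matrix $I - A^t$ acting on $\Z^N$. Passing to a Smith normal form $U(I - A^t)V = \operatorname{diag}(d_1, \dots, d_s, 0, \dots, 0)$ with $U, V \in GL_N(\Z)$, $d_i \neq 0$, and $s = \rank_{\mathbb{Q}}(I - A^t)$, I read off $\K_0(\OA) \cong \bigoplus_{i=1}^s \Z/d_i \oplus \Z^{N-s}$ and $\K_1(\OA) \cong \Z^{N-s}$. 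Hence $\K_1(\OA)$ is free, so $\Tor(\K_1(\OA)) = 0$, while $\rank(\K_0(\OA)) = N - s = \rank(\K_1(\OA))$, giving $\chi(\OA) = 0$; the same two lines yield $\Free(\K_0(\OA)) \cong \Z^{N-s} \cong \K_1(\OA)$, a fact reused below.

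The equivalence (ii) $\Leftrightarrow$ (iii) is then elementary abelian group theory. If (ii) holds, then $\K_1(\A) \cong \Free(\K_0(\A))$ is free, so $\Tor(\K_1(\A)) = 0$, and its rank equals $\rank(\K_0(\A))$, so $\chi(\A) = 0$. Conversely, under (iii) the finitely generated group $\K_1(\A)$ is torsion-free, hence free of rank $\rank(\K_1(\A)) = \rank(\K_0(\A))$; since finitely generated free abelian groups of equal rank are isomorphic, $\K_1(\A) \cong \Z^{\rank(\K_0(\A))} \cong \Free(\K_0(\A))$.

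The implication (ii) $\Rightarrow$ (i) is where the classification theory enters and is the main obstacle. By the Kirchberg--Phillips classification theorem, the Kirchberg algebra $\A$ is determined up to isomorphism by the pointed triple $(\K_0(\A), [1_\A]_0, \K_1(\A))$, so it suffices to realize this triple by a simple Cuntz--Krieger algebra. For this I would appeal to R{\o}rdam's determination of the range of the invariant $A \mapsto (\K_0(\OA), [1_{\OA}]_0)$ over irreducible non-permutation matrices \cite{Ro}, by which every pair $(G, g)$ with $G$ a finitely generated abelian group and $g \in G$ arbitrary is attained. Choosing $\OA$ with $(\K_0(\OA), [1_{\OA}]_0) \cong (\K_0(\A), [1_\A]_0)$, the computation of the first paragraph gives $\K_1(\OA) \cong \Free(\K_0(\OA)) \cong \Free(\K_0(\A))$, while (ii) supplies $\Free(\K_0(\A)) \cong \K_1(\A)$; hence the three pieces of the invariant agree and $\A \cong \OA$. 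The nontrivial input is precisely the unrestricted range of the unit class $[1_{\OA}]_0$ in \cite{Ro}, which is why the statement, though ``obvious from'' that reference, is not formal.
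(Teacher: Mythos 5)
Your proposal is correct and follows exactly the route the paper intends: the paper omits the proof as ``obvious from \cite{Ro}'', and your argument supplies precisely those details --- the Smith-normal-form computation showing $\K_0(\OA)\cong\Coker(I-A^t)$ has free part isomorphic to $\K_1(\OA)\cong\Ker(I-A^t)$, the elementary equivalence of (ii) and (iii), and for (ii) $\Rightarrow$ (i) the combination of R{\o}rdam's range-of-invariant result (arbitrary $(G,g)$ realized as $(\K_0(\OA),[1_{\OA}]_0)$) with the Kirchberg--Phillips classification of the triple $(\K_0,[1]_0,\K_1)$. No gaps; this is the same approach, just written out.
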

The proposition below clarifies the difference 
between $\whatOA$ and $\OA$.
\begin{proposition}\label{prop:characterizationwhatOA}
Let $\B$ be a Kirchberg algebra with finitely generated $\K$-groups.
Then the following three conditions are equivalent:
\begin{enumerate}
\renewcommand{\theenumi}{(\roman{enumi})}
\renewcommand{\labelenumi}{\textup{\theenumi}}
\item 
$\B \cong  \widehat{\O}_A$ for some simple Cuntz--Krieger algebra $\OA$.
\item $\Free(\K_0(\B)) \cong \K_1(\B) \oplus \Z.$
\item $\chi(\B) = 1$ and $\Tor(\K_1(\B)) \cong 0.$
\end{enumerate} 
\end{proposition}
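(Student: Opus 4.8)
The plan is to mirror the proof of Lemma \ref{lem:characterizationCK}, transporting its characterization of Cuntz--Krieger algebras through the reciprocal duality $\B \mapsto \whatB$. The two structural facts driving everything are that the duality is involutive, $\widehat{\whatB}\cong \B$, and that it flips the invariant $\chi$ by Lemma \ref{lem:chiw}. First I would dispose of the purely arithmetic equivalence (ii) $\Longleftrightarrow$ (iii). Since $\Free(\K_0(\B))$ is free, condition (ii) forces $\K_1(\B)\oplus\Z$ to be torsion free, i.e. $\Tor(\K_1(\B))=0$, and comparing ranks gives $\rank(\K_0(\B))=\rank(\K_1(\B))+1$, i.e. $\chi(\B)=1$. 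Conversely, if $\chi(\B)=1$ and $\K_1(\B)$ is torsion free, then $\K_1(\B)$ is free and $\Free(\K_0(\B))$ is free of rank $\rank(\K_1(\B))+1$, so $\Free(\K_0(\B))\cong \K_1(\B)\oplus\Z$. This step is elementary group arithmetic.

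The heart of the argument is a single $\K$-theoretic lemma: reciprocal duality preserves the torsion of $\K_1$, namely $\Tor(\K_1(\whatB))\cong\Tor(\K_1(\B))$ for every Kirchberg algebra $\B$ with finitely generated $\K$-groups. By Corollary \ref{cor:whatA} we have $\K_1(\whatB)\cong\Exts^0(\B)$. The six-term sequence \eqref{eq:sixtermExt} exhibits $\Exts^0(\B)$ as the kernel of the map $\Extw^0(\B)\to\Z$, so the cokernel of the injection $\Exts^0(\B)\hookrightarrow\Extw^0(\B)$ embeds in $\Z$ and is torsion free; hence $\Tor(\Exts^0(\B))\cong\Tor(\Extw^0(\B))$. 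Since $\Extw^0(\B)=\K^0(\B)$ is the $\K$-homology and $\B$ satisfies the UCT, the universal coefficient theorem gives the split (because $\Hom(\K_0(\B),\Z)$ is free) short exact sequence $0\to\Ext^1_\Z(\K_1(\B),\Z)\to\K^0(\B)\to\Hom(\K_0(\B),\Z)\to 0$, whence $\Tor(\K^0(\B))\cong\Ext^1_\Z(\K_1(\B),\Z)\cong\Tor(\K_1(\B))$. Chaining these identifications yields the lemma.

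With the lemma in hand both remaining implications become short. For (iii) $\Longrightarrow$ (i): from $\chi(\B)=1$ and Lemma \ref{lem:chiw} we get $\chi(\whatB)=0$, and the lemma gives $\Tor(\K_1(\whatB))=\Tor(\K_1(\B))=0$; thus $\whatB$ satisfies condition (iii) of Lemma \ref{lem:characterizationCK}, so $\whatB\cong\OA$ for some simple Cuntz--Krieger algebra $\OA$, and passing to reciprocal duals gives $\B\cong\widehat{\whatB}\cong\whatOA$. For (i) $\Longrightarrow$ (iii): if $\B\cong\whatOA$ then $\chi(\B)=1-\chi(\OA)=1$ by Lemma \ref{lem:chiw} together with Lemma \ref{lem:characterizationCK}, while the lemma gives $\Tor(\K_1(\B))=\Tor(\K_1(\whatOA))=\Tor(\K_1(\OA))=0$. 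As an independent check of this last point, Proposition \ref{prop:chracterizationwhatOA} identifies $\K_1(\whatOA)\cong\Ker(A_T\colon\Z^N\to\Z^{N+1})$, a subgroup of a free group and hence torsion free.

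The main obstacle is establishing the torsion-preservation lemma cleanly, that is, the identification $\Tor(\Exts^0(\B))\cong\Tor(\K_1(\B))$; the delicate points there are the injectivity of $\Exts^0(\B)\to\Extw^0(\B)$ and torsion freeness of its cokernel, both read off \eqref{eq:sixtermExt}, plus the correct indexing in the $\K$-homology UCT. Everything else reduces to rank bookkeeping and the involutivity of the duality.
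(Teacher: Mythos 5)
Your proof is correct, but it is organized differently from the paper's. The paper proves (i) $\Longleftrightarrow$ (ii) directly: condition (i) is first rephrased, via the duality identifications $\Exts^i(\B)\cong \K_{i+1}(\whatB)$ and Lemma \ref{lem:characterizationCK}, as $\Free(\Exts^1(\B))\cong\Exts^0(\B)$; this is then converted using the six-term sequence \eqref{eq:sixtermExt} into $\Extw^0(\B)\cong\Free(\Extw^1(\B))\oplus\Z$, and finally, via the UCT decompositions $\Extw^0(\B)\cong\Free(\K_0(\B))\oplus\Tor(\K_1(\B))$ and $\Extw^1(\B)\cong\Free(\K_1(\B))\oplus\Tor(\K_0(\B))$, into (ii); the equivalence (ii) $\Longleftrightarrow$ (iii) is dismissed as obvious. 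You instead prove (i) $\Longleftrightarrow$ (iii), delegating the rank count to the quoted flip $\chi(\B)+\chi(\whatB)=1$ of Lemma \ref{lem:chiw} and isolating the torsion content as a standalone lemma, $\Tor(\K_1(\whatB))\cong\Tor(\K_1(\B))$, which you derive correctly from exactness of \eqref{eq:sixtermExt} at $\Exts^0(\B)$ and $\Extw^0(\B)$ (the cokernel of $\Exts^0(\B)\hookrightarrow\Extw^0(\B)$ is a subgroup of $\Z$, hence torsion free) together with the same degree-$0$ K-homology UCT the paper uses. The raw ingredients coincide, but your decomposition buys a genuine simplification: the paper's claim that \eqref{eq:equivalenceBOA} is equivalent to \eqref{eq:equivalenceBOA2} ``by the six-term sequence'' in fact requires unwinding two cases according to whether the index map $\Extw^0(\B)\to\Z$ vanishes (equivalently whether $\iota_\B(1)$ generates an infinite cyclic direct summand), whereas your torsion lemma and the rank identity of Lemma \ref{lem:chiw} each hold unconditionally, so no case analysis arises. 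The cost is that you lean on Lemma \ref{lem:chiw} as a black box, which the paper's proof of this particular proposition does not need. All your individual steps check out, including the indexing $\Tor(\K^0(\B))\cong\Ext^1_\Z(\K_1(\B),\Z)\cong\Tor(\K_1(\B))$, the applications of Lemma \ref{lem:characterizationCK} to $\whatB$ (which is again a Kirchberg algebra with finitely generated $\K$-groups), and the use of involutivity $\B\cong\widehat{\whatB}$ to pass back to (i).
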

\begin{proof}
Since 
$\Exts^i(\widehat{\O}_A) \cong \K_{i+1}(\OA), i=0,1$ 
 by Lemma \ref{lem:characterizationCK},
the condition (i) is equivalent to the condition 
\begin{equation}\label{eq:equivalenceBOA}
\Free(\Exts^1(\B)) \cong  \Exts^0(\B).
\end{equation}
By the cyclic six term exact sequence \eqref{eq:sixtermExt} for $\B$,
 the condition \eqref{eq:equivalenceBOA}
 is equivalent to the condition
\begin{equation}\label{eq:equivalenceBOA2}
 \Extw^0(\B) \cong \Free(\Extw^1(\B)) \oplus \Z.
\end{equation}
By the Universal Coefficient Theorem (UCT) (cf. \cite{Blackadar}), 
we see that
$$
\Extw^0(\B) \cong \Free(\K_0(\B)) \oplus \Tor(\K_1(\B)),
\quad
\Extw^1(\B) \cong \Free(\K_1(\B)) \oplus \Tor(\K_0(\B)),
$$
showing that the equality \eqref{eq:equivalenceBOA2}
is equivalent to the condition (ii).
The equivalence between 
(ii) $\Longleftrightarrow$ (iii) is obvious.
\end{proof}
Let us recall the invariant $w(\A)$ defined in \eqref{eq:w(A)}.
The following proposition 
shows us more clear difference between $\OA$ and $\whatOA$ through the invariant $w(\A)$.
\begin{proposition}\label{prop:equivalentwhatOA}
Let $\B$ be a Kirchberg algebra with finitely generated $\K$-groups
such that $\B \cong \widehat{\O}_A$ for some  simple Cuntz--Krieger algebra $\OA$.
Then we have
\begin{enumerate}
\renewcommand{\theenumi}{(\roman{enumi})}
\renewcommand{\labelenumi}{\textup{\theenumi}}
\item 
If $w(\B) =1$, that is, $[1_\B]_0$ in $\K_0(\B)$ is non-torsion, 
then
\begin{align*}
\K_1(\OA) \cong \K_1(\B), \quad 
\K_0(\OA) \cong \K_0(\B)/\Z[1_\B]_0, \quad 
\Free(\K_0(\OA)) \oplus \Z \cong \Free(\K_0(\B)).
\end{align*}  
\item 
If $w(\B) =0$, that is,  $[1_\B]_0$  in $\K_0(\B)$ is torsion, 
then
\begin{align*}
\K_1(\OA) \cong \K_1(\B)\oplus \Z, \quad 
\K_0(\OA) \cong \K_0(\B)/\Z[1_\B]_0, \quad 
\Free(\K_0(\OA)) \cong \Free(\K_0(\B)).
\end{align*}  
\end{enumerate}
\end{proposition}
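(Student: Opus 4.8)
The plan is to push everything through the reciprocal duality into the two strong extension groups $\Exts^\bullet$ and then read off the answer from the universal coefficient theorem together with the special $\K$-theoretic shape of a Cuntz--Krieger algebra. Since $\B \cong \whatOA$ we also have $\whatB \cong \OA$, so applying Corollary \ref{cor:whatA} once to $\A = \B$ and once to $\A = \OA$ gives the two identifications
\begin{align*}
(\K_0(\OA), [1_{\OA}]_0, \K_1(\OA)) &\cong (\Exts^1(\B), \iota_\B(1), \Exts^0(\B)),\\
(\K_0(\B), [1_\B]_0, \K_1(\B)) &\cong (\Exts^1(\OA), \iota_{\OA}(1), \Exts^0(\OA)).
\end{align*}
I would freely use that $\K_1(\OA)$ and $\K_1(\B)$ are free with $\Free(\K_0(\OA)) \cong \K_1(\OA)$ and $\Free(\K_0(\B)) \cong \K_1(\B) \oplus \Z$ (Lemma \ref{lem:characterizationCK} and Proposition \ref{prop:characterizationwhatOA}), and the dichotomy $w(\OA) + w(\B) = 1$ of Lemma \ref{lem:chiw}, which says precisely that $[1_{\OA}]_0$ is non-torsion exactly when $w(\B) = 1$.

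First I would establish the middle isomorphism $\K_0(\OA) \cong \K_0(\B)/\Z[1_\B]_0$, which holds in both cases. Using the second identification and the relation $\Extw^1 \cong \Exts^1/\Z\iota$ recalled in the introduction, one gets $\K_0(\B)/\Z[1_\B]_0 \cong \Exts^1(\OA)/\Z\iota_{\OA}(1) \cong \Extw^1(\OA)$. The UCT gives $\Extw^1(\OA) \cong \Free(\K_1(\OA)) \oplus \Tor(\K_0(\OA))$; since $\K_1(\OA)$ is torsion free and $\Free(\K_0(\OA)) \cong \K_1(\OA)$ for a Cuntz--Krieger algebra, this equals $\Free(\K_0(\OA)) \oplus \Tor(\K_0(\OA)) = \K_0(\OA)$, as wanted.

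Next I would compute the relation between $\K_1(\OA)$ and $\K_1(\B)$, which is where the two cases diverge. As $\K_1(\OA)$ is free it is determined up to isomorphism by its rank, and by $\chi(\OA) = 0$ this rank equals $\rank \K_0(\OA) = \rank \Exts^1(\B)$. From the short exact sequence $0 \to \Z\iota_\B(1) \to \Exts^1(\B) \to \Extw^1(\B) \to 0$ coming from $\Extw^1(\B) \cong \Exts^1(\B)/\Z\iota_\B(1)$, additivity of rank gives $\rank \Exts^1(\B) = \rank \Extw^1(\B) + \delta$, where $\delta = 1$ if $\iota_\B(1)$ has infinite order and $\delta = 0$ otherwise, while $\rank \Extw^1(\B) = \rank \K_1(\B)$ by the UCT. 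Since $\iota_\B(1)$ corresponds to $[1_{\OA}]_0$ under the first identification, $\delta = 1$ precisely when $w(\OA) = 1$, i.e. when $w(\B) = 0$. Hence $\K_1(\OA)$ is free of rank $\rank \K_1(\B) + 1$ when $w(\B) = 0$ and of rank $\rank \K_1(\B)$ when $w(\B) = 1$, yielding $\K_1(\OA) \cong \K_1(\B) \oplus \Z$ and $\K_1(\OA) \cong \K_1(\B)$ respectively.

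Finally the torsion-free comparison in each case follows formally from $\Free(\K_0(\OA)) \cong \K_1(\OA)$ and $\Free(\K_0(\B)) \cong \K_1(\B) \oplus \Z$: when $w(\B) = 1$ we get $\Free(\K_0(\OA)) \oplus \Z \cong \K_1(\B) \oplus \Z \cong \Free(\K_0(\B))$, and when $w(\B) = 0$ we get $\Free(\K_0(\OA)) \cong \K_1(\B) \oplus \Z \cong \Free(\K_0(\B))$. The only genuinely delicate point, and the place I expect to spend the most care, is the bookkeeping of the distinguished unit class across the duality: one must track $[1_\B]_0$ against $\iota_\B(1) \leftrightarrow [1_{\OA}]_0$ and correctly invoke $w(\OA) + w(\B) = 1$ to decide in which case the extra copy of $\Z$ appears. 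Everything else reduces to the UCT and the rank/torsion splitting of finitely generated abelian groups.
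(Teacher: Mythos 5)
Your proposal is correct, but it takes a genuinely different route from the paper's proof. The paper works on the $\K$-theory side of the mapping cone: using $\K_i(C_\B) \cong \K_{i+1}(\OA)$, it applies the cyclic six term sequence \eqref{eq:6termK1B} arising from $0 \to S\B \to C_\B \to \bbC \to 0$, in which the index map $\partial_\B\colon \Z \to \K_0(\B)$ sends $1$ to $[1_\B]_0$; writing $\Ker(\partial_\B) = n_\B\Z$, it extracts the two short exact sequences $0 \to \K_1(\B) \to \K_1(\OA) \to n_\B\Z \to 0$ and $0 \to \partial_\B(\Z) \to \K_0(\B) \to \K_0(\OA) \to 0$, and the dichotomy $n_\B = 0$ versus $n_\B \ne 0$ is exactly your case split $w(\B)=1$ versus $w(\B)=0$. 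All the isomorphisms then drop out of exactness (plus the splitting of the first sequence when $n_\B \ne 0$, since $n_\B\Z$ is free), with no appeal to the UCT or to rank counting. You instead stay on the $\Exts$/$\Extw$ side: Corollary \ref{cor:whatA} applied to both $\B$ and $\OA$ (via $\whatB \cong \OA$), the quotient formula $\Extw^1 \cong \Exts^1/\Z\iota$, and the UCT yield the middle isomorphism, and the outer ones come from rank bookkeeping upgraded to isomorphisms using the freeness of $\K_1(\OA)$ and $\K_1(\B)$ supplied by Lemma \ref{lem:characterizationCK} and Proposition \ref{prop:characterizationwhatOA}; your tracking of the unit class ($\iota_\B(1) \leftrightarrow [1_{\OA}]_0$, hence $\delta = 1$ precisely when $w(\OA)=1$, i.e.\ $w(\B)=0$ by Lemma \ref{lem:chiw}) is the correct substitute for the paper's computation of $\partial_\B$. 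What the paper's argument buys is canonicity — the comparisons between $\K_*(\B)$ and $\K_*(\OA)$ are induced by the natural maps of an exact sequence, while your rank arguments produce only abstract isomorphisms of finitely generated abelian groups (which is all the statement asserts); conversely, your route makes the structural inputs explicit, reuses the already-established K-theoretic characterizations of $\OA$ and $\whatOA$, and avoids the mapping cone sequence entirely.
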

\begin{proof}
Assume that $\B = \widehat{\O}_A$ for some simple Cuntz--Krieger algebra $\OA$.
Since
\begin{equation*}
\K_i(C_\B) \cong \K^i(D(C_\B)) \cong \K^i(\OA) \cong \K_{i+1}(\OA), \quad i=0,1
\end{equation*}
together with the the standard cyclic six term sequence 
for the short exact sequence \eqref{eq:exactCA}
for $\A = \B$, we have
\begin{equation}
\begin{CD} \label{eq:6termK1B}
\K_1(\B) @>{}>> \K_1(\OA) @>{}>> \Z \\
@AAA  @. @VV{\partial_\B}V \\
0 @<<< \K_0(\OA) @<<< \K_0(\B)
\end{CD}
\end{equation}
Now the homomorphism
$\partial_\B: \Z\rightarrow \K_0(\B)$
satisfies $\partial_\B(1) = [1_\B]_0$ in $\K_0(\B).$
Since $\Ker(\partial_\B)\subset \Z$ is a subgroup of $\Z$,
one may find a nonnegative integer $n_\B$ such that 
$\Ker(\partial_\B) = n_B \Z$.
We then have two short exact sequences
\begin{align}
0 \longrightarrow  \K_1(\B) \longrightarrow & \K_1(\OA)
 \longrightarrow n_\B \Z \longrightarrow 0, \label{eq:partialB1} \\ 
0 \longrightarrow \partial_\B(\Z) \longrightarrow & \K_0(\B)
 \longrightarrow \K_0(\OA) \longrightarrow 0. \label{eq:partialB2}
\end{align}
By \eqref{eq:partialB2}, we immediately see that 
$\K_0(\OA) \cong \K_0(\B)/\Z[1_\B]_0.$

We have two cases.

(i) $[1_\B]_0$ in $\K_0(\B)$ is non-torsion:
This case is equivalent to the condition that $n_\B =0$.
It implies that 
$\partial_\B: \Z\rightarrow \K_0(\B)$
is injective, so that 
we have
$$
\K_1(\OA) \cong \K_1(\B), \quad 
\Free(\K_0(\OA)) \oplus \Z \cong \Free(\K_0(\B)).
$$

(ii) $[1_\B]_0$ in $\K_0(\B)$ is  torsion:
This case is equivalent to the condition that $n_\B \ne 0$.
By the exact sequence \eqref{eq:partialB1},
we have
$$
\K_1(\OA) \cong \K_1(\B)\oplus \Z, \quad 
\Free(\K_0(\OA)) \cong \Free(\K_0(\B)).
$$
\end{proof}


\subsection{The Cuntz--Krieger--Toeplitz extensions}

We will provide the Toeplitz algebra $\TA$, introduced in \cite{EFW79, EV},
 for a matrix $A$ 
with entries in $\{0,1\}$ to construct $\whatOA$ in a concrete way.
Let $\{e_i\}_{i=1}^N$ be an orthonormal basis of 
$\mathbb{C}^N$ and let $F_A$ 
be the subspace of the full Fock space 
$\bigoplus_{n=0}^\infty (\mathbb{C}^N)^{\otimes n}$ 
with a vacuum vector $\Omega_A$ 
(i.e., ($\mathbb{C}^N)^{\otimes 0}=\mathbb{C}\Omega_A$)
spanned by the vectors
\[
e_{i_1}\otimes e_{i_2}\otimes\cdots\otimes e_{i_n}\in (\mathbb{C}^N)^{\otimes n}
\quad \text{ with } \quad
A(i_k, i_{k+1})=1, \quad k=1,\dots, n-1.
\]
Let $e_A : F_A\to (\mathbb{C}^N)^{\otimes 0}=\mathbb{C}\Omega_A$ 
be the minimal projection onto the vacuum vector $\Omega_A$.
The creation operators 
$T_i : F_A\to F_A,\;i=1,\dots, N$ 
are defined by
\begin{equation}\label{eq:TionFA}
\begin{split}
T_i(e_{i_1}\otimes e_{i_2}\otimes\cdots\otimes e_{i_n})
& = 
A(i, i_1)e_i\otimes(e_{i_1}\otimes e_{i_2}\otimes\cdots\otimes e_{i_n}), \\
T_i(\Omega_A)
& = e_i
\end{split}
\end{equation}
which satisfy
\begin{equation}
1=\sum_{j=1}^NT_jT_j^* + e_A, 
\qquad 
T_i^*T_i= \sum_{j=1}^NA(i, j)T_jT_j^* +e_A, \quad i=1,\dots,N.
\label{CKT}
\end{equation}
The $C^*$-algebra generated by 
$e_A, T_1,\dots, T_N$ 
is called the Toeplitz algebra $\mathcal{T}_A$ (\cite{EFW79}, cf. \cite{EV}).
For words 
$\mu=(\mu_1,\dots,\mu_m),\,  \nu=(\nu_1, \dots, \nu_n)$,
the operators 
$T_\mu e_A T_\nu^*:=T_{\mu_1}\cdots T_{\mu_m}e_A T_{\nu_n}^*\cdots T_{\nu_1}^*$ 
generate an ideal of $\mathcal{T}_A$ 
 isomorphic to $\calK$ and one has the Cuntz--Krieger--Toeplitz extension
\[
0 \rightarrow \calK \rightarrow \TA \rightarrow  \OA \rightarrow 0.
\]
It is easy to see that 
 $\TA$ is the universal $C^*$-algebra generated by 
a projection $e_A$ and partial isometries 
$ T_1,\dots, T_N$ subject to the relations \eqref{CKT}.

\subsection{A concrete realization of $\whatOA$}\label{proa}
We assume that $A=[A(i,j)]_{i,j=1}^N$ 
is an $N\times N$ irreducible non-permutation matrix with entries in $\{0,1\}.$
Let  
$\mathcal{O}_{A^\infty}$ 
be the universal $C^*$-algebra generated by partial isometries 
$S_i, i \in \N$ subject to the relations:
\begin{equation}\label{eq:OAI1}
 \begin{cases}
 & \sum_{j=1}^n  S_j S_j^*  \le  1 \quad \text{ for all } n \in \N, \\
 & S_i^* S_i  =  \sum_{j=1}^N A(i,j) S_j S_j^* + 1 - \sum_{j=1}^N S_j S_j^* 
 \quad \text{ for all } i \text{ with } 1 \le i \le N, \\
 & S_i^* S_i  =  1 \quad \text{ for all } i \text{ with } i > N. 
 \end{cases}
\end{equation}
By \eqref{eq:OAI1} and the identification 
$$
T_i=S_i \quad \text{for } i=1,\dots,N, \qquad e_A=1-\sum_{j=1}^NS_jS_j^*,
$$
one sees that the algebra $\mathcal{O}_{A^\infty}$ contains $\TA$.
For the $N\times N$-matrix $A$ and $k\in\mathbb{N}$, 
we define an
$(N+k)\times (N+k)$ irreducible non-permutation matrix by
\[A_k:=\left[
\begin{array}{ccccccc}
&&&1&\cdots&1\\
&\text{\Huge A}&&\vdots&\vdots&\vdots\\
&&&1&\dots&1\\
1&\dots&1&1&\cdots&1\\
\vdots&\dots&\vdots&\vdots&\ddots&\vdots\\
1&\dots&1&1&\cdots&1\\
\end{array}
\right]\]
where every entries outside of $A$ is $1$.
\begin{theorem}\label{thm:mr}
\hspace{6cm}
\begin{enumerate}
\renewcommand{\theenumi}{(\roman{enumi})}
\renewcommand{\labelenumi}{\textup{\theenumi}}
\item 
The $C^*$-subalgebra of $\OAI$ 
generated by $\{S_i\}_{i=1}^{N+k}$ 
is isomorphic to the Toeplitz algebra $\mathcal{T}_{A_k}, k\geq 1$,
and the algebra $\OAI$ is the inductive limit  
$\overline{\bigcup_{k=1}^\infty\mathcal{T}_{A_k}}$ 
of the increasing sequence 
$\mathcal{T}_{A_1}\subset\mathcal{T}_{A_2}\subset\cdots$.
\item The $C^*$-algebra $\mathcal{O}_{A^\infty}$ 
is isomorphic to the Cuntz--Pimsner algebra 
$\mathcal{O}_{H_\TA}$ 
constructed from the  Hilbert $\TA-\TA$-bimodule $H_{\TA}:=F_A\otimes l^2(\N)\otimes \TA$ (see the proof of Theorem \ref{thm:construction}), 
so that $\OAI$ is a Kirchberg algebra whose $\K$-theory groups are 
\begin{equation}\label{eq:KOAITA}
(\K_0(\OAI), [1_\OAI]_0, \K_1(\OAI)) = (\K_0(\TA), [1_\TA]_0, \K_1(\TA)).
\end{equation} 
\end{enumerate}
\end{theorem}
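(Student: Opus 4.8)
The plan is to realise the abstract universal algebra $\OAI$ inside the concrete Cuntz--Pimsner algebra $\mathcal{O}_{H_{\TA}}$ of the proof of Theorem \ref{thm:construction} (taken with $\F=\TA$ and $\pi_{\TA}\colon\TA\to\B(F_A)$ the Fock representation), and simultaneously to cut it up into the finite Toeplitz algebras $\mathcal{T}_{A_k}$. The bridge is an explicit identification of generators: the first $N$ generators $S_1,\dots,S_N$ are the creation operators $T_1,\dots,T_N\in\TA$ sitting in the coefficient algebra, while the remaining generators $S_{N+1},S_{N+2},\dots$ are the bimodule creation operators $T_{\xi_m}$ attached to the orthonormal family $\xi_m:=(\Omega_A\otimes f_m)\otimes 1\in H_{\TA}$, where $\{f_m\}_{m\ge1}$ is the standard basis of $l^2(\N)$. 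Everything then reduces to checking relations and a little ideal theory.

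First I would build the $*$-homomorphism. With $S_i$ as above, I verify the defining relations \eqref{eq:OAI1} in $\mathcal{O}_{H_{\TA}}$: for $i\le N$ this is exactly the Cuntz--Krieger--Toeplitz relation \eqref{CKT} inside $\TA$, after rewriting $e_A=1-\sum_{j=1}^N T_jT_j^*$ and using that $\TA$ embeds unitally (since $\varphi_{\TA}$ is unital, so $1_\TA=1_{\mathcal{O}_{H_{\TA}}}$); for $i>N$ the isometry relation is $T_{\xi_m}^*T_{\xi_m}=\langle\xi_m,\xi_m\rangle_{\TA}=1_{\TA}$. The key computation is orthogonality of ranges: from the covariance $a^*T_{\xi_m}=T_{\varphi_{\TA}(a^*)\xi_m}$ with $a=T_j$ and the vacuum identity $T_j^*\Omega_A=0$ in $F_A$ one gets $T_{\xi_m}^*T_j=0$, hence $T_j^*T_{\xi_m}=0$ for $j\le N$, while $T_{\xi_m}^*T_{\xi_{m'}}=\delta_{mm'}1_{\TA}$; thus all range projections are mutually orthogonal and the first relation of \eqref{eq:OAI1} holds. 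By the universal property of $\OAI$ this gives a $*$-homomorphism $\Psi\colon\OAI\to\mathcal{O}_{H_{\TA}}$, and it is surjective because $\{S_i\}$ together with $aT_{\xi_m}b=T_{\varphi_{\TA}(a)\xi_m b}$ and density of $\{\varphi_{\TA}(a)\xi_m b\}$ in $H_{\TA}$ recover $\TA$ and every $T_\eta$.

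Next comes the nondegeneracy needed throughout. Writing $P_k=1-\sum_{j=1}^{N+k}S_jS_j^*$, the orthogonality $T_j^*T_{\xi_m}=0$ yields $T_{\xi_m}T_{\xi_m}^*\le e_A$, whence $\Psi(P_k)=e_A-\sum_{m=1}^k T_{\xi_m}T_{\xi_m}^*\ge T_{\xi_{k+1}}T_{\xi_{k+1}}^*\neq0$, so $P_k\neq0$ in $\OAI$. For part (i) I then check directly that, inside $\OAI$, the elements $S_1,\dots,S_{N+k}$ and the projection $P_k$ satisfy the relations \eqref{CKT} for the matrix $A_k$ (the cases $i\le N$ and $i>N$ reproduce exactly the block form of $A_k$, with $P_k$ in the role of $e_{A_k}$), giving a surjection $\rho_k\colon\mathcal{T}_{A_k}\to C^*(S_1,\dots,S_{N+k})$. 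Since $A_k$ is irreducible and not a permutation, $\mathcal{O}_{A_k}$ is simple, so the only ideals of $\mathcal{T}_{A_k}$ are $0$, $\calK$, $\mathcal{T}_{A_k}$; as $\rho_k\neq0$ and $\rho_k(e_{A_k})=P_k\neq0$ rules out $\ker\rho_k=\calK$ (the ideal generated by $e_{A_k}$), we get $\ker\rho_k=0$, an isomorphism. The inclusions $\mathcal{T}_{A_k}=C^*(S_1,\dots,S_{N+k})\subset C^*(S_1,\dots,S_{N+k+1})=\mathcal{T}_{A_{k+1}}$ are clear and their union generates $\OAI$, so $\OAI=\overline{\bigcup_k\mathcal{T}_{A_k}}$.

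Finally, for part (ii) I upgrade $\Psi$ to an isomorphism. Restricted to each $\mathcal{T}_{A_k}\cong C^*(S_1,\dots,S_{N+k})$, the map $\Psi$ is nonzero and sends $e_{A_k}=P_k$ to $\Psi(P_k)\neq0$, so by the identical three-ideal argument $\Psi|_{\mathcal{T}_{A_k}}$ is injective, hence isometric; as these subalgebras are dense in $\OAI$, $\Psi$ is isometric on $\OAI$ and therefore an isomorphism $\OAI\cong\mathcal{O}_{H_{\TA}}$. The remaining assertions are inherited from the proof of Theorem \ref{thm:construction}: Kumjian's results give that $\mathcal{O}_{H_{\TA}}$ is a Kirchberg algebra with $\mathcal{O}_{H_{\TA}}\cong\mathcal{T}_{H_{\TA}}$, and the unital embedding $\TA\hookrightarrow\mathcal{T}_{H_{\TA}}$ is a $\sqK$-equivalence, hence induces the stated identity \eqref{eq:KOAITA} of $\K$-groups together with the class of the unit. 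The main obstacle is the first step, namely pinning down the correct correspondence of generators and establishing that the coefficient-algebra operators $T_i$ ($i\le N$) and the bimodule operators $T_{\xi_m}$ have mutually orthogonal ranges, i.e. the computation $T_j^*T_{\xi_m}=0$; once this is secured, the ideal-theoretic injectivity arguments are routine.
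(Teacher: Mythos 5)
Your proposal is correct and takes essentially the same route as the paper: the identical generator correspondence $S_i\mapsto T_i$ for $i\le N$ and $S_{N+m}\mapsto T_{\Omega_A\otimes\delta_m\otimes 1_{\TA}}$, the same key orthogonality computation $T_j^*T_{\Omega_A\otimes\delta_m\otimes 1_{\TA}}=T_{(T_j^*\Omega_A)\otimes\delta_m\otimes 1_{\TA}}=0$, and the same universality-plus-ideal-structure arguments (using that $\calK$ is the unique nontrivial ideal of $\mathcal{T}_{A_k}$ and that $P_{N+k}$ dominates a nonzero range projection) for injectivity of both $\mathcal{T}_{A_k}\to C^*(S_1,\dots,S_{N+k})$ and $\OAI\to\mathcal{T}_{H_\TA}\cong\mathcal{O}_{H_\TA}$, with the Kirchberg property and \eqref{eq:KOAITA} inherited from Kumjian--Pimsner exactly as in the proof of Theorem \ref{thm:construction}. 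The only difference is a harmless (arguably tidier) reordering: you construct the surjection onto the Cuntz--Pimsner model first and use it to certify $P_{N+k}\neq 0$ before proving (i), whereas the paper proves (i) first and verifies this nonvanishing inside the concrete model in (ii).
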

\begin{proof}
 (i)
We write 
$P_{N+k}
:=1-\sum_{j=1}^{N+k}S_jS_j^*\in C^*(S_1,\dots , S_{N+k})\subset\OAI$,
and one has
\[
1=\sum_{j=1}^{N+k}S_jS_j^* + P_{N+k}.
\]
By \eqref{eq:OAI1},
the generators $S_1,\dots , S_N, S_{N+1}, \dots , S_{N+k}$ satisfy
\begin{equation*}
S^*_iS_i
=\sum_{j=1}^{N+k}A_k(i, j)S_jS_j^*+P_{N+k}
 \quad \text{ for } i=1,\dots,  N.
\end{equation*}
By the universality of 
$\mathcal{T}_{A_k}$,
there is a surjective homomorphism
$\mathcal{T}_{A_k}\to C^*(S_1, \dots, S_{N+k})$ 
that maps $T_j$ to $S_j$ and $e_{A_k}$ to $P_{N+k}$, 
and the map is injective because 
$0\not=P_{N+k} >S_{N+k+1}S_{N+k+1}^*$ 
and 
$ \calK \subset \mathcal{T}_{A_k}$ containing 
$e_{A_k}$ is the only non-trivial ideal of $\mathcal{T}_{A_k}$.

 (ii)
 As in \cite{Pimsner} with the proof of Theorem \ref{thm:construction}, 
Pimsner's tensor algebra $\mathcal{T}_{H_\TA}$ is generated by
$\TA$ and the left creation operators 
$T_\xi, \xi\in F_A\otimes l^2(\mathbb{N})\otimes \TA$.
By the relations 
$$
xT_\xi y=T_{(x\otimes 1\otimes 1)\xi y}, 
\quad 
T_\xi+T_\zeta=T_{\xi+\zeta}, 
\qquad 
x, y\in\TA,\; \xi, \eta\in F_A\otimes l^2(\mathbb{N})\otimes\TA
$$
seen in \cite{Pimsner},
one can check that the following elements generate 
$\mathcal{T}_{H_\TA}$:
\[\tilde{S}_i:=T_i\in \TA,\quad i=1,\dots,N, 
\qquad 
\tilde{S}_{N+k}:=T_{\Omega\otimes \delta_k\otimes 1_\TA}, \quad k\in \mathbb{N},\]
where
$
\{\delta_k\}_{k=1}^\infty$ 
is the standard basis of $\l^2(\N)$ defined by
$\delta_k(n) = 1$ if $n=k$, otherwise $0$.
We know that the above generators satisfy the three relations \eqref{eq:OAI1} by
the identities 
\[T_j^*T_{\Omega_A\otimes \delta_k\otimes 1_\TA}
=T_{(T_j^*\Omega_A)\otimes \delta_k\otimes 1_\TA}=0,\;\; \;
T_{\Omega_A\otimes \delta_k\otimes 1_\TA}^*T_{\Omega_A\otimes \delta_l\otimes 1_\TA}
=\delta_{k, l}1_\TA,\]
and  \eqref{CKT}.
Thus, the universality of $\mathcal{O}_{A^\infty}$ 
provides a surjective homomorphism:
\[\pi : \mathcal{O}_{A^\infty}\ni P_{N+k} \mapsto 1-\sum_{j=1}^{N+k}\tilde{S}_j\tilde{S}_j^*\in \mathcal{T}_{H_\TA}\cong \mathcal{O}_{H_\TA}.\]
Note that one has 
$$
1-\sum_{j=1}^{N+k}\tilde{S}_j\tilde{S}_j^*> \tilde{S}_{N+k+1}\tilde{S}^*_{N+k+1}\not=0
$$ 
and 
$\{P_{N+k}\}_{k=1}^\infty$ is not contained in the kernel $\Ker(\pi)$
of the map 
$\pi:\OAI\rightarrow \mathcal{T}_{H_\TA}$.
By (i), one has 
$\Ker(\pi)
=
\overline{\bigcup_{k=1}^\infty(\Ker(\pi)\cap\mathcal{T}_{A_k})}$ 
and every ideal  
$\Ker(\pi) \cap \mathcal{T}_{A_k}$ must be $0$ 
because 
$P_{N+k}\not\in \Ker(\pi)\cap\mathcal{T}_{A^k}$.
Now the map $\pi$ is proved to be injective.

As in the proof of Theorem \ref{thm:construction},
The embedding 
$\TA \hookrightarrow \mathcal{T}_{H_\TA}$
gives rise to a $\sqK$-equivalence such that 
$\mathcal{T}_{H_\TA}\cong \mathcal{O}_{H_\TA}$,
so that we have the $\K$-theory formula \eqref{eq:KOAITA}.
\end{proof}
In particular,
we have the following corollary.
\begin{corollary}\label{cor:whatOA}
Let $A =[A(i,j)]_{i,j=1}^N$ 
be an irreducible non-permutation matrix with entries in $\{0,1\}.$
Let $\OATI$ 
be the universal $C^*$-algebra 
generated by partial isometries 
$S_j, j \in \N$  satisfying the relations 
\eqref{eq:OAI1}
for the transposed matrix $A^t$ of $A$,
which is isomorphic to $\mathcal{O}_{H_\TAT}$.
Then the reciprocal dual $\whatOA$ of $\OA$ is realized as a corner of 
$\OATI$ such that  
\[
\whatOA \cong (1-\sum_{j=1}^NS_jS_j^*) \,  \O_{{A^t}^\infty}\,  (1-\sum_{j=1}^NS_jS_j^*).
\]
\end{corollary}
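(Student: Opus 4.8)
The plan is to specialize Theorem \ref{thm:construction} to the Cuntz--Krieger--Toeplitz extensions. I would take $\A = \OA$, let $(\E)$ be the Toeplitz extension $(\TA): 0 \to \calK \to \TA \to \OA \to 0$, and fix $\epsilon = -1$. As recalled around \eqref{eq:TATAT}, the pair consisting of $(\TA)$ and $(\TAT): 0 \to \calK \to \TAT \to \OAT \to 0$ is a strong $\K$-theoretic duality pair with respect to $\epsilon = -1$. The proof of Theorem \ref{thm:construction} builds $\O_\E$ as $\O_{H_\F}$ from \emph{any} extension $(\F)$ forming a duality pair with $(\E)$, and the Kirchberg--Phillips theorem then forces the isomorphism class of $\O_\E$ (together with the distinguished projection) to depend only on $[\E]_s$, independently of the chosen partner $(\F)$. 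Hence I may take the dual extension to be precisely $(\F) = (\TAT)$, so that the algebra produced by the construction is $\O_\E = \mathcal{O}_{H_\TAT}$.

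Next I would invoke Theorem \ref{thm:mr}(ii) applied to the matrix $A^t$: the Cuntz--Pimsner algebra $\mathcal{O}_{H_\TAT}$ is isomorphic to the universal algebra $\OATI$ defined by the relations \eqref{eq:OAI1} for $A^t$, which is exactly the identification asserted in the statement of this corollary. Under this isomorphism the generators $S_1,\dots,S_N$ of $\OATI$ correspond to the creation operators $T_1,\dots,T_N$ of $\TAT \subset \mathcal{O}_{H_\TAT}$, so the vacuum projection $e_{A^t} = 1 - \sum_{j=1}^N T_j T_j^*$ of $\TAT$ — a rank-one projection in the ideal $\calK \subset \TAT$ — becomes $1 - \sum_{j=1}^N S_j S_j^*$ in $\OATI$. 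This is precisely the projection $e_\A$ of Theorem \ref{thm:construction}: there $e_\A$ is the image in $\O_\E \cong \mathcal{O}_{H_\TAT}$ of an arbitrary rank-one projection $e \in \calK \subset \F = \TAT$, and since all rank-one projections in $\calK$ are unitarily equivalent the choice $e = e_{A^t}$ is harmless.

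With these identifications in hand, the conclusion $e_\A \O_\E e_\A \cong \whatA$ of Theorem \ref{thm:construction} reads
\[
\Bigl(1 - \sum_{j=1}^N S_j S_j^*\Bigr)\, \OATI\, \Bigl(1 - \sum_{j=1}^N S_j S_j^*\Bigr) \cong \whatOA,
\]
which is exactly the assertion of the corollary. The substantive work has all been absorbed into Theorems \ref{thm:construction} and \ref{thm:mr}, so what remains is careful bookkeeping; the step I expect to require the most care is verifying that the abstractly defined projection $e_\A$ coincides with the concrete element $1 - \sum_{j=1}^N S_j S_j^*$. This hinges on tracing the generators through the isomorphism $\mathcal{O}_{H_\TAT} \cong \OATI$ of Theorem \ref{thm:mr}(ii) and on confirming that the rank-one compact of the construction may be taken to be the vacuum projection $e_{A^t}$ of $\TAT$.
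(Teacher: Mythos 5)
Your proposal is correct and follows essentially the same route as the paper's proof: the paper likewise specializes Theorem \ref{thm:construction} to the strong $\K$-theoretic duality pair $(\TA)$, $(\TAT)$ with $\epsilon=-1$ from \eqref{eq:TATAT}, uses Theorem \ref{thm:mr} (ii) to identify $\OATI$ with $\mathcal{O}_{H_{\TAT}}$, and identifies the vacuum projection $e_{A^t}\in\TAT\subset\mathcal{O}_{H_{\TAT}}$ with $1-\sum_{j=1}^N S_jS_j^*$. Your extra bookkeeping — that the construction depends only on $[\E]_s$ so the partner may be taken to be $(\TAT)$, and that any rank-one projection in $\calK$ may be replaced by $e_{A^t}$ — is exactly the (implicit) content of the paper's shorter argument.
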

\begin{proof}
By Theorem \ref{thm:mr}, the $C^*$-algebra $\OATI$ 
is isomorphic to the Cuntz--Pimsner algebra
$\mathcal{O}_{H_{\TAT}}$constructed from the  Hilbert $C^*$-bimodule 
$H_{\TAT}$ over  $\TAT$ in the proof of Theorem \ref{thm:construction}
for the transposed matrix $A^t$ of $A$.
As in \eqref{eq:TATAT}, the pair of the Toeplitz extensions of Cuntz--Krieger algebras:
\begin{equation*}
(\TA): \, \, 0 \rightarrow \calK \rightarrow \TA \rightarrow \OA \rightarrow 0, 
\qquad
(\TAT): \, \, 0 \rightarrow \calK \rightarrow  \TAT \rightarrow \OAT \rightarrow 0
\end{equation*}
is a strong $\K$-theoretic duality pair with respect to $\epsilon = -1$ (\cite{MaJMAA2024}).
Since 
$e_{A^t}\in\TAT\subset\mathcal{O}_{H_{\TAT}}$ 
is identified with $1-\sum_{j=1}^NS_jS_j^*\in \mathcal{O}_{{A^t}^\infty}$,
Theorem \ref{thm:construction} implies that 
the reciprocal Kirchberg algebra of $\OA$ 
is given by 
$$
\widehat{\OA}
=e_{A^t}\, \mathcal{O}_{H_{\TAT}}  e_{A^t}
=(1-\sum_{j=1}^NS_jS_j^*) \, \mathcal{O}_{{A^t}^\infty}\, (1-\sum_{j=1}^NS_jS_j^*).
$$
\end{proof}

\subsection{Realization of $\OAI$ as a Exel--Laca algebra}
Recall that the  Exel--Laca algebra  
$\widetilde{O}_{\widetilde{A}}$ for a matrix 
$\widetilde{A}=[\widetilde{A}(i,j)]_{i,j\in \N}$
with $\widetilde{A}(i, j)\in \{0,1\}$ 
is  defined to be the universal unital $C^*$-algebra  
generated by a family ${s_i}, \, {i \in \N}$ of partial isometries 
subject to the relations:
\begin{enumerate}
\renewcommand{\theenumi}{(\arabic{enumi})}
\renewcommand{\labelenumi}{\textup{\theenumi}}
\item $\sum_{i=1}^n s_i s_i^*  < 1 $ for any $n \in \N$.
\item $s_i^* s_i\cdot s_j^* s_j = s_j ^*s_j \cdot s_i^* s_i$ for $i,j \in \N$.
\item  $(s_i^* s_i) s_j  =  \wtA(i,j) s_j$ for  $i,j \in \N$.
\item 
\begin{equation}\label{eq:EL(iii)}
\prod_{i \in X} s_i^* s_i
\prod_{j \in Y} ( 1- s_j^* s_j) =
\sum_{k \in \N} \wtA(X,Y,k) s_k s_k^*
\end{equation}
for all finite subsets $X, Y \subset \N$ such that
\begin{equation}\label{eq:AXYj}
\wtA(X,Y,k) = 
\prod_{i \in X} \wtA(i,k) 
\prod_{j \in Y} ( 1 - \wtA(j,k))\ne 0 
\end{equation}
for at most finitely many $k \in \N$ (\cite{EL}, \text{cf. \cite[Example 4.4.5]{RS}}).
\end{enumerate}

For a matrix $A = [A(i,j)]_{i,j=1}^N, A(i, j)\in\{0, 1\}$,
we define  $\wtAI = [\wtAI(i,j)]_{i,j\in \N}$ with co untable infinite index set $\N$ by
\begin{equation*} 
\wtAI(i,j) =
\begin{cases}
A(i,j) & \text{ if } i,j \le N, \\
1 & \text{ else. }
\end{cases}
\end{equation*} 
We will show that $\OAI$ is the Exel--Laca algebra $\wtO_{\wtAI}$
defined by the matrix $\wtAI$.
\begin{lemma}\label{lem:EL1}
For finite subsets $X, Y \subset \N$, we have
\begin{equation}\label{eq:lem1}
\{ k \in \N \mid \wtAI(X,Y,k)\ne 0 \} \subset \{1,\dots, N\}.
\end{equation}
\end{lemma}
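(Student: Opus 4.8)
The plan is to reduce the statement to a single direct evaluation of the product defining the quantity in \eqref{eq:AXYj}, namely
\begin{equation*}
\wtAI(X,Y,k) = \prod_{i \in X} \wtAI(i,k) \prod_{j \in Y}(1 - \wtAI(j,k)),
\end{equation*}
and to show that this expression vanishes whenever $k > N$. Since $\{1,\dots,N\}$ is exactly the set of indices $k \le N$, it suffices to check that every index $k$ lying outside this range forces a zero factor in the product; that will give the inclusion \eqref{eq:lem1} at once.

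The key observation is that it is the \emph{second} index that selects which branch of the definition of $\wtAI$ is used. If $k > N$, then for every $i \in \N$ the pair $(i,k)$ falls into the ``else'' case of the definition, so $\wtAI(i,k) = 1$; in particular $\wtAI(i,k) = 1$ for all $i \in X$ and $\wtAI(j,k) = 1$ for all $j \in Y$. Each factor $1 - \wtAI(j,k)$ appearing in the product over $Y$ is therefore $1 - 1 = 0$. Hence, as soon as $Y$ is non-empty, the product over $Y$ contains at least one vanishing factor and $\wtAI(X,Y,k) = 0$ for every $k > N$, which is precisely the content of \eqref{eq:lem1}.

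The one point that genuinely needs care — and the only real subtlety here — is the degenerate case $Y = \emptyset$. In that case the product over $Y$ is the empty product $1$, so for every $k > N$ one obtains $\wtAI(X,\emptyset,k) = \prod_{i\in X} 1 = 1 \ne 0$; thus $\wtAI(X,\emptyset,k)$ is nonzero for infinitely many $k$, and the finiteness hypothesis \eqref{eq:AXYj} fails for such a pair. I would therefore record that the pairs $(X,Y)$ to which the Exel--Laca relation (4) — and hence this lemma — is applied are exactly those satisfying \eqref{eq:AXYj}, which for the matrix $\wtAI$ forces $Y \ne \emptyset$, and restrict attention to those admissible pairs. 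For any such pair the computation of the previous paragraph applies verbatim and yields \eqref{eq:lem1}. Once this edge case is isolated, the remainder of the argument is the one-line evaluation above.
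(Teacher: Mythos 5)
Your proof is correct and is essentially the paper's argument in contrapositive form: the paper deduces from $\wtAI(X,Y,k)\ne 0$ that $\wtAI(j,k)=0$ for every $j\in Y$, which by the definition of $\wtAI$ forces $j\le N$ and $k\le N$, while you observe directly that $k>N$ makes every factor $1-\wtAI(j,k)$ equal to $0$. Your explicit isolation of the degenerate case $Y=\emptyset$ --- where the inclusion \eqref{eq:lem1} would literally fail but the finiteness hypothesis \eqref{eq:AXYj} excludes the pair from relation (4) --- is a point the paper's proof tacitly assumes away (its deduction of $k\le N$ needs $Y\ne\emptyset$), so your version is, if anything, slightly more careful.
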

\begin{proof}
For $k \in \N$, we have 
$\wtAI(X,Y,k)\ne 0$ if and only if 
$\wtAI(i,k) =1 $ for all $i \in X$ and 
$\wtAI(j,k) =0 $ for all $j \in Y$.
The condition
$\wtAI(j,k) =0 $ for all $j \in Y$
means that
$A(j,k) =0 $ for all $j \in Y$,
and hence we have 
that $Y \subset \{1,\dots, N\}$ and $k \le N$,
showing that \eqref{eq:lem1}.
\end{proof}
\begin{proposition}\label{prop:ExelLaca}
The $C^*$-algebra $\OAI$ is isomorphic to the Exel--Laca algebra $\wtO_{\wtAI}$
of the matrix $\wtAI$.
\end{proposition}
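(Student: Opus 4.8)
The plan is to exhibit two mutually inverse $*$-homomorphisms via the universal properties of the two algebras: a map $\Psi\colon \OAI \to \wtO_{\wtAI}$ determined by $S_i \mapsto s_i$ and a map $\Phi\colon \wtO_{\wtAI}\to \OAI$ determined by $s_i \mapsto S_i$. Since each map fixes a generating set of partial isometries, the compositions $\Phi\circ\Psi$ and $\Psi\circ\Phi$ act as the identity on generators and hence are the identity, giving the asserted isomorphism. Everything therefore reduces to checking that the generators of each algebra satisfy the defining relations of the other, and the heart of the argument is the bookkeeping for the Exel--Laca relation \eqref{eq:EL(iii)}.

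To build $\Psi$ I must verify that the partial isometries $s_i$ of $\wtO_{\wtAI}$ satisfy \eqref{eq:OAI1}. The inequality $\sum_{j=1}^n s_j s_j^* \le 1$ is immediate from Exel--Laca relation (1). For the source projections I would apply \eqref{eq:EL(iii)} with $X=\emptyset$ and the singleton $Y=\{i\}$. When $i>N$ one has $\wtAI(i,k)=1$ for every $k$, so $\wtAI(\emptyset,\{i\},k)=1-\wtAI(i,k)=0$ for all $k$; the finiteness hypothesis \eqref{eq:AXYj} then holds vacuously and \eqref{eq:EL(iii)} reads $1-s_i^*s_i=0$, i.e.\ $s_i^*s_i=1$. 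When $1\le i\le N$, Lemma \ref{lem:EL1} ensures $\{k:\wtAI(\emptyset,\{i\},k)\neq0\}\subseteq\{1,\dots,N\}$, so \eqref{eq:EL(iii)} is imposed and yields $1-s_i^*s_i=\sum_{k=1}^N(1-A(i,k))\,s_k s_k^*$, which rearranges to precisely the middle relation of \eqref{eq:OAI1}. Hence $\Psi$ exists.

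To build $\Phi$ I must check that the generators $S_i$ of $\OAI$ satisfy Exel--Laca relations (1)--(4) for $\wtAI$. The first step is to record that $\{S_iS_i^*\}_{i\in\N}$ is a mutually orthogonal family: for projections $e,f$ with $e+f\le 1$ one has $efe=0$ and hence $ef=0$, and an induction applied to $\sum_{j=1}^n S_jS_j^*\le 1$ gives mutual orthogonality; since each $S_iS_i^*\neq 0$ the inequality is strict, which is relation (1). Writing $P:=1-\sum_{j=1}^N S_jS_j^*$, orthogonality shows $1=P+\sum_{j=1}^N S_jS_j^*$ is an orthogonal decomposition with $S_kS_k^*\le P$ for $k>N$, and \eqref{eq:OAI1} gives $S_i^*S_i=\sum_{k=1}^N A(i,k)S_kS_k^*+P$ for $i\le N$ and $S_i^*S_i=1$ for $i>N$. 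Both are diagonal with respect to the orthogonal atoms $S_1S_1^*,\dots,S_NS_N^*,P$, which makes relation (2) transparent and gives relation (3), $(S_i^*S_i)S_j=\wtAI(i,j)S_j$, by direct multiplication. For relation (4) I would evaluate the left-hand product of \eqref{eq:EL(iii)} atom by atom: if $Y$ meets $\{N+1,N+2,\dots\}$ both sides vanish, while for $\emptyset\neq Y\subseteq\{1,\dots,N\}$ the coefficient on $S_kS_k^*$ ($k\le N$) is $\prod_{i\in X}\wtAI(i,k)\prod_{j\in Y}(1-\wtAI(j,k))=\wtAI(X,Y,k)$ and the coefficient on $P$ is $\prod_{j\in Y}0=0$, so the product equals $\sum_{k=1}^N\wtAI(X,Y,k)S_kS_k^*$, which by Lemma \ref{lem:EL1} is the entire right-hand side.

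The main obstacle I anticipate is exactly this handling of \eqref{eq:EL(iii)}: one must keep precise track of when the finiteness hypothesis \eqref{eq:AXYj} is actually in force. The crucial point is that $Y=\emptyset$ is never imposed (the set $\{k:\wtAI(X,\emptyset,k)\neq0\}$ contains all $k>N$), which is precisely what prevents a spurious $P$-term from surviving in relation (4); the vanishing of the $P$-component whenever $Y\neq\emptyset$ is the linchpin that matches the two presentations. Once both directions are verified, the mutual-inverse argument closes the proof.
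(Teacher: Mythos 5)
Your proof is correct, but it is organized quite differently from the paper's. The paper checks only one direction: it verifies that the Exel--Laca generators $s_i$ satisfy \eqref{eq:OAI1}, so that universality of $\OAI$ yields a surjective homomorphism $\OAI\to\wtO_{\wtAI}$, and then gets injectivity for free from the simplicity of $\OAI$ established in Theorem \ref{thm:mr}; in particular it never verifies the Exel--Laca relations for the $S_i$, which is the bulk of your work. Within the shared direction the routes also differ: the paper first proves that $s_j^*s_j$, $j>N$, is the unit of $\wtO_{\wtAI}$ by a computation with relations (2) and (3), and then applies \eqref{eq:EL(iii)} with $X=\{N+1\}$, $Y=\{j\}$, whereas you invoke the $X=\emptyset$ instances, which give both $s_i^*s_i=1$ for $i>N$ and the middle relation of \eqref{eq:OAI1} in one stroke. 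Your use of $X=\emptyset$ is legitimate: the paper's definition quantifies over all finite subsets, and the empty-product convention is the standard Exel--Laca one (it is exactly how the Cuntz--Krieger unit relation $1=\sum_j s_js_j^*$ is recovered for finite matrices), so your shortcut is sound and arguably cleaner than the paper's unit computation. Your treatment of relation (4) for the $S_i$ --- the atom-by-atom evaluation against $S_1S_1^*,\dots,S_NS_N^*,P$, and especially the observation that the pairs $(X,\emptyset)$ are never imposed because $\wtAI(X,\emptyset,k)=1$ for all $k>N$ --- is correct and in fact more careful than the paper's Lemma \ref{lem:EL1}, whose statement is literally false for $Y=\emptyset$ (harmlessly, for just the reason you identify). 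What each approach buys: the paper's is much shorter but leans on the Kirchberg/Cuntz--Pimsner identification of $\OAI$ through simplicity; yours is simplicity-free and purely relation-theoretic, producing explicit mutually inverse maps. One point you should make explicit: the strict inequality in Exel--Laca relation (1) and the nonvanishing $S_{n+1}S_{n+1}^*\neq 0$ you use for it do not follow formally from \eqref{eq:OAI1} alone; they come from Theorem \ref{thm:mr} (i), where $C^*(S_1,\dots,S_{N+k})$ is identified injectively with the Toeplitz algebra $\T_{A_k}$, so your map $\Phi$ is not entirely independent of the earlier structure results either.
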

\begin{proof}
Since $\OAI$ is simple and universal, it is enought to show that the generators $\{s_i\}_{i\in \N}\subset \wtO_{\wtAI}$ satisfy (\ref{eq:OAI1}).
By the definition of $s_i$,
one has $s_{N+k}^*s_{N+k}s_j=s_j$ and $(s^*_{N+k}s_{N+k}s^*_j-s^*_j)(s_js^*_{N+k}s_{N+k}-s_j)=0$ for $k\in \N$,
and this implies that $\wtO_{\wtAI}$ has the unit $1_{\wtO_{\wtAI}}=s^*_js_j, j>N$.
Applying (\ref{eq:EL(iii)}) for $X=\{N+1\}, Y=\{j\}, 1\leq j\leq N$,
one has $s^*_js_j=\sum_{k=1}^N\wtAI(j, k)s_ks_k^*+1_{\wtO_{\wtAI}}-\sum_{k=1}^Ns_ks_k^*$.
\end{proof}
\begin{corollary}\label{cor:4.9}
The reciprocal dual $\whatOA$ of a simple Cuntz--Krieger algebra $\OA$ is a corner of 
the Exel--Laca algebra $\wtO_{\wttAI}$ defined by the infinite matrix $\wttAI$
for the transposed matrix $A^t$ of $A$.
\end{corollary}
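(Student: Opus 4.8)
The plan is to combine the two preceding results: the corner realization of $\whatOA$ inside $\OATI$ from Corollary \ref{cor:whatOA}, and the Exel--Laca description of $\OAI$ from Proposition \ref{prop:ExelLaca}, the latter applied to the transposed matrix $A^t$ in place of $A$.

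First I would apply Proposition \ref{prop:ExelLaca} to $A^t$. Since that proposition holds for any irreducible non-permutation matrix with entries in $\{0,1\}$, and $A^t$ is again such a matrix, it furnishes an isomorphism $\OATI \cong \wtO_{\wttAI}$, where $\wttAI$ denotes the infinite extension of $A^t$ given by $\wttAI(i,j) = A^t(i,j)$ for $i,j \le N$ and $\wttAI(i,j) = 1$ otherwise. I would emphasize that this is the isomorphism built in the proof of Proposition \ref{prop:ExelLaca}: it sends each generating partial isometry $S_j$ of $\OATI$ to the Exel--Laca generator $s_j$ of $\wtO_{\wttAI}$, because the $s_j$ are shown there to satisfy the defining relations \eqref{eq:OAI1} of $\OATI$ and one then invokes the universality of $\OATI$.

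Next I would transport the corner along this isomorphism. By Corollary \ref{cor:whatOA}, the reciprocal dual is realized as $\whatOA \cong e_{A^t}\,\OATI\,e_{A^t}$ with $e_{A^t} = 1 - \sum_{j=1}^N S_j S_j^*$. Since the isomorphism carries $e_{A^t}$ to $1 - \sum_{j=1}^N s_j s_j^*$, it restricts to an isomorphism of corners, giving
\[
\whatOA \cong \Bigl(1 - \sum_{j=1}^N s_j s_j^*\Bigr)\, \wtO_{\wttAI}\, \Bigl(1 - \sum_{j=1}^N s_j s_j^*\Bigr),
\]
which exhibits $\whatOA$ as a corner of the Exel--Laca algebra $\wtO_{\wttAI}$, as claimed.

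There is no substantive obstacle here, since the statement is a direct synthesis of Corollary \ref{cor:whatOA} and Proposition \ref{prop:ExelLaca}. The only item needing verification — and it is purely bookkeeping — is that the generator-to-generator isomorphism of Proposition \ref{prop:ExelLaca} matches the corner projection $1 - \sum_{j=1}^N S_j S_j^*$ with $1 - \sum_{j=1}^N s_j s_j^*$, which is immediate from the explicit form of that isomorphism.
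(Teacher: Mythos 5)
Your proof is correct and is exactly the argument the paper intends: the corollary is stated without proof as the immediate synthesis of Proposition \ref{prop:ExelLaca} (applied to $A^t$) and Corollary \ref{cor:whatOA}, which is precisely what you carry out. Your bookkeeping remark that the generator-to-generator isomorphism $S_j\mapsto s_j$ matches the corner projections $1-\sum_{j=1}^N S_jS_j^*$ and $1-\sum_{j=1}^N s_js_j^*$ is the only point needing verification, and you handle it correctly.
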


\subsection{Realization of $\mathcal{O}_{A^\infty}$ as a free product} \label{subsec.freeproduct}
In this section, we will realize the $C^*$-algebra $\OAI$
as a free product $(\TA, \varphi_A)* (\OI,\varphi_{\infty})$ 
with respect to certain states on $\TA$ and $\OI$
 (see \cite{Avitzour}, \cite{Voiculescu} for free product $C^*$-algebras).

The Toeplitz algebra $\TA$ for $A =[A(i,j)]_{i,j=1}^N$  is the universal $C^*$-algebra 
generated by $N$-partial isometries $T_1,\dots, T_N$ 
and a projection $e_A$ 
subject to the relations \eqref{CKT}.
As $e_A \TA e_A = \mathbb{C} e_A$,
there exists $\varphi_A(x) \in \mathbb{C}$ for $x \in \TA$
satisfying
\begin{equation}
 e_A x e_A =\varphi_A(x)e_A, \qquad x \in \TA. 
\end{equation}
It is easy to see that 
$\varphi_A: \TA \rightarrow \mathbb{C}$
is the (vacuum) state on $\TA$ with 
$\varphi_A(e_A) =1.$ 

The Cuntz algebra $\OI$ is the universal 
$C^*$-algebra generated by isometries 
$s_i, i \in \N$ satisfying the relations:
$ \sum_{j=1}^n s_j s_j^* < 1$
for
$ n \in \N$
(\cite{Cuntz77}). 
For a word $\xi = (\xi_1,\dots, \xi_k)$ with $\xi_i \in \N$,
we write $|\xi | = k$ and $s_{\xi_1}\cdots s_{\xi_k}$ as $s_\xi$.
Let us denote by $\OalgI$ the $*$-subalgebra of $\OI$ algebraically generated by 
$s_i, i \in \N$ 
and the unit $1$ of $\OI$.
Hence 
for 
$x \in \OalgI$ there exists a finite set 
$F \subset \cup_{n=0}^\infty \{1,2,\dots \}^n$
of finite words of $\N$ and $c_0, c_{\xi,\eta} \in \mathbb{C}$ 
 such that 
\begin{equation}\label{eq:OIdecomposition}
x = \sum_{\xi, \eta \in F,\; |\xi | \ge 1 \text{, } |\eta | \ge 1} c_{\xi, \eta}
s_\xi s_\eta^* + c_0 1
\end{equation}
The vacuum state of the Fock space for $\OI$ gives a state $\varphi_\infty: \OI\rightarrow \mathbb{C}$ satisfying
$ x \in \OalgI \rightarrow c_0\in \mathbb{C}.$

\medskip

We will show that the tensor algebra 
$\mathcal{T}_{H_\TA}$ defined in the proof of Theorem \ref{thm:mr} (ii),
 which is isomorphic to $\mathcal{O}_{A^\infty}$, 
is identified with the reduced free product 
$(\TA, \varphi_A)*(\mathcal{O}_\infty, \varphi_\infty).$ 
Let us denote by 
$P_{\TA}: F(H_\TA)\to (H_\TA)^{\otimes 0}=\TA$ 
the projection
on the Fock space $F(H_\TA):=\bigoplus_{n=0}^\infty H_{\TA}^{\otimes_{\TA}n}$.
We define a state 
$\varphi_{A^\infty} : \mathcal{T}_{H_\TA}\to \mathbb{C}$ 
by the composition
\[\varphi_{A^\infty} : \mathcal{T}_{H_\TA}
\xrightarrow
{E_A}\TA\xrightarrow{\varphi_A}\mathbb{C},\]
where 
$E_A: \mathcal{T}_{H_\TA} \rightarrow \TA$
is the conditional expectation
defined by
$E_A(x) = P_{\TA} x P_{\TA}, \, x \in \T_{H_{\TA}}$
(see \cite{Kumjian}, \cite{Pimsner}).
We identify the subalgebra of 
$\mathcal{T}_{H_\TA}$ generated by the image of the left action of
$\TA$ on $H_{\TA}$ 
(resp. 
$\{s_k \}_{k=1}^\infty:=\{T_{\Omega\otimes \delta_k\otimes 1_\TA}\}_{k=1}^\infty$
) 
with $\TA$ (resp. $\mathcal{O}_\infty$).
Since 
$\varphi_{A^\infty}(x)e_A = e_A E_A(x)e_A$ 
and 
$P_{\TA} T_{\Omega\otimes \delta_k\otimes 1_{\TA}}=P_{\TA}s_k=0$,
one has 
$$\varphi_{A^\infty} |_\TA=\varphi_A,\quad \varphi_{A^\infty} |_{\mathcal{O}_\infty}=\varphi_\infty.$$
Then, the standard argument of the reduced free product 
(see \cite[Chapter 4.7]{BO}) shows the following.
\begin{theorem}\label{thm:freeproduct}
The reduced free product $(\TA, \varphi_A)*(\mathcal{O}_\infty, \varphi_\infty)$ 
is isomorphic to $(\mathcal{T}_{H_\TA}, \varphi_{A^\infty})$.
\end{theorem}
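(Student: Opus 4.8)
The plan is to identify $(\mathcal{T}_{H_\TA}, \varphi_{A^\infty})$ with the reduced free product by verifying the abstract characterization of reduced free products via freeness of the two subalgebras with respect to the state $\varphi_{A^\infty}$. First I would record the two copies: the subalgebra generated by the left action of $\TA$, which is canonically isomorphic to $(\TA, \varphi_A)$, and the subalgebra generated by $\{s_k\}_{k=1}^\infty = \{T_{\Omega\otimes\delta_k\otimes 1_\TA}\}_{k=1}^\infty$, which by the relations $s_k^* s_l = \delta_{k,l} 1_\TA$ and $\sum_{j=1}^n s_j s_j^* < 1$ (established in the proof of Theorem \ref{thm:mr}(ii)) is a copy of $(\OI, \varphi_\infty)$. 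The compatibility of states $\varphi_{A^\infty}|_\TA = \varphi_A$ and $\varphi_{A^\infty}|_{\OI} = \varphi_\infty$ has already been checked in the paragraph preceding the statement, using $P_{\TA}\, s_k = 0$.

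The core of the argument is to show that these two subalgebras are \emph{free} in $(\mathcal{T}_{H_\TA}, \varphi_{A^\infty})$, i.e.\ that any alternating product of centered elements (elements $x$ with $\varphi_{A^\infty}(x)=0$) drawn alternately from the two subalgebras is again centered. The key structural fact making this work is the Fock-space grading of $\mathcal{T}_{H_\TA} = \bigoplus_{n\ge 0} H_{\TA}^{\otimes_{\TA} n}$. A centered element of $\TA$ is one annihilated by the vacuum expectation $\varphi_A$, hence (after compressing by $e_A$) lies in the kernel of the vacuum functional on $F_A$; a centered element of $\OI$ is a linear combination of $s_\xi s_\eta^*$ with $|\xi|\ge 1$ or $|\eta|\ge 1$ plus a scalar with $c_0 = 0$. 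I would then track how each centered factor shifts the Fock degree: the creation parts $T_{\Omega\otimes\delta_k\otimes 1}$ strictly raise the $l^2(\N)$-tensor degree while the annihilation parts strictly lower it, and the centered $\TA$-factors act within a fixed degree but orthogonally to the vacuum. Writing out an alternating word and applying $P_{\TA}(\,\cdot\,)P_{\TA}$ forces the word to return to degree zero only through a cancellation that a genuine alternating centered word cannot achieve, so $\varphi_{A^\infty}$ vanishes on it.

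The main obstacle I expect is the bookkeeping in this freeness verification: one must handle the two-sided structure $s_\xi s_\eta^*$ of centered $\OI$-elements together with the module-valued inner product on $H_\TA$, so that the ``degree'' is genuinely the $l^2(\N)$-index length and the $\TA$-coefficients do not interfere. Rather than grind through this directly, I would invoke the standard argument for reduced free products of this Fock-type form, as presented in \cite[Chapter 4.7]{BO}, which axiomatizes precisely this situation: a $C^*$-algebra containing two GNS-faithful copies with a common conditional-expectation-compatible state, whose free-product Fock space realizes the orthogonal decomposition above. Concretely, I would exhibit a unitary identification between the free-product Fock space $F(\TA,\varphi_A) * F(\OI,\varphi_\infty)$ and $F(H_\TA)$ intertwining the two representations, and conclude that the canonical map from the reduced free product onto $\mathcal{T}_{H_\TA}$ is a state-preserving isomorphism. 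Since $\mathcal{T}_{H_\TA} \cong \mathcal{O}_{A^\infty}$ and simplicity of $\mathcal{O}_{A^\infty}$ is already known, state-preservation plus surjectivity yields the isomorphism, completing the identification $(\TA,\varphi_A)*(\OI,\varphi_\infty) \cong (\mathcal{T}_{H_\TA}, \varphi_{A^\infty})$.
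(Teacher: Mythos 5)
Your proposal has the same skeleton as the paper's proof --- restriction of $\varphi_{A^\infty}$ to the two copies, freeness of the copies with respect to $\varphi_{A^\infty}$, then \cite[Theorem 4.7.2]{BO} --- but it differs in how the freeness is to be discharged, and that verification is the entire mathematical content of the theorem. The paper does it by a direct moment computation: $\Ker(\varphi_A)$ and $\Ker(\varphi_\infty)$ are the closed spans of the monomials $T_\mu T_\nu^*$ and $s_\xi s_\eta^*$ (with $|\mu|+|\nu|>0$, resp.\ $|\xi|+|\eta|>0$), and every alternating word is annihilated by $\varphi_{A^\infty}$ using the identities $e_A E_A(T_iT_i^*)e_A=0$, $E_A(s_js_j^*)=0$, $e_A P_{\TA}T_i=P_{\TA}s_j=0$ and the mutual orthogonality of the range projections $T_\mu T_\mu^*$, $s_\xi s_\xi^*$ --- this is exactly the degree bookkeeping you sketch in your second paragraph, organized multiplicatively rather than as a grading argument. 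Your preferred alternative, a unitary identification with the free-product Fock space, is also standard and would work, but needs two corrections. First, the identification cannot be with $F(H_\TA)$ itself, which is a Hilbert $\TA$-module and not a Hilbert space; it must be made between the free product of the GNS spaces $(F_A,\Omega_A)$ and the full Fock space of $l^2(\N)$ on one side, and the GNS space $F(H_\TA)\otimes_{\TA}F_A$ of $\varphi_{A^\infty}$ on the other, and constructing this unitary and checking the intertwining costs essentially the same computation as the paper's, whose engine is the relation $s_k^* x s_l=\delta_{k,l}\,\varphi_A(x)1_{\TA}$ for $x\in\TA$ (the left action seen from the vectors $\Omega_A\otimes\delta_k\otimes 1_{\TA}$ only remembers the vacuum expectation). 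Second, \cite[Theorem 4.7.2]{BO} is a uniqueness criterion; it does not ``axiomatize the Fock-type situation'' in a way that yields freeness for free, so ``invoke the standard argument'' is the one step of your write-up that is asserted rather than proved. A genuine merit of your version is that you make explicit the GNS-faithfulness of $\varphi_{A^\infty}$ required by the uniqueness theorem, deriving it from simplicity of $\mathcal{T}_{H_\TA}\cong\OAI$ established in Theorem \ref{thm:mr}, a hypothesis the paper's proof leaves implicit.
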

\begin{proof}
One has
\begin{align*}
\Ker(\varphi_A) 
=& \{x-\varphi_A(x)\;|\;x\in\TA\}
= \overline{\Span \{T_\mu T^*_\nu\;|\; |\mu|>0 \text{ or } |\nu|>0 \}} \\
\Ker(\varphi_\infty) 
=& \{x-\varphi_\infty(x)\;|\;x\in\mathcal{O}_\infty\}
= \overline{\Span \{s_\xi s^*_\eta\;|\; |\xi|>0 \text{ or } |\eta|>0 \}}
\end{align*}
and it is enough to show the freeness of 
$\Ker(\varphi_A)$ 
and 
$\Ker(\varphi_\infty)$ 
with respect to the state $\varphi_{A^\infty}$ by \cite[Theorem 4.7.2]{BO}.
For
$$
x_i\in \{T_\mu, T_\mu T_\nu^*, T_\nu^* \;|\; |\mu|, |\nu|>0\}
\quad \text{ and }
\quad
y_j\in \{s_\xi, s_\xi s_\eta^*, s_\eta^*\; |\; |\xi|, |\eta|>0\},
$$
we will prove 
\[\varphi_{A^\infty}(x_1y_1x_2y_2\cdots z_n)=\varphi_{A^\infty}(y_1x_1y_2x_2\cdots z_n)=0,\]
where $z_n = x_n$ or $y_n$ for some $n \in \N$. 
Recall that 
$e_A E_A(T_iT_i^*)e_A
=e_A T_iT_i^* e_A=0
$ 
and 
$E_A(s_js_j^*)=P_\TA s_js_j^*P_\TA=0$.
Since 
$
e_A P_\TA T_i=P_\TA s_j=0$ 
and the projections 
$\{ T_\mu T_\mu^*, \, s_\xi s_\xi^* \mid |\mu|, |\xi|>0\}$ 
are mutually orthogonal,
the value 
$$
\varphi_{A^\infty}(x_1y_1x_2y_2\cdots z_n)e_A
=
e_A E_A(x_1y_1\cdots z_n)e_A
$$ 
is 0 unless 
$x_1=T_{\mu}^*, y_1=s_{\xi}^*, x_2=T_{\nu}^*, \cdots$,
and it is easy to see 
$
\varphi_{A^\infty}(T_\mu^*s_{\xi}^*\cdots T_{\nu'}^*)e_A
=\varphi_{A^\infty}(T_\mu^*s_{\xi}^*\cdots s_{\eta'}^*)e_A
=0.
$
Thus, one has 
$\varphi_{A^\infty}(x_1y_1\cdots z_n)=0$ 
and the same argument shows $\varphi_{A^\infty}(y_1x_1\cdots z_n)=0$.
Hence by \cite[Theorem 4.7.2.(3)]{BO},
 one has 
\[(\mathcal{T}_{H_\TA}, \varphi_{A^\infty})\cong (\TA, \varphi_A)*(\mathcal{O}_\infty, \varphi_\infty).\]
\end{proof}
Under the identification $\mathcal{T}_{H_\TA}=\OAI$,
we regard $\varphi_{A^\infty}$ as a state of $\OAI$
which we will see again in Subsection \ref{subsec:Ground}.

\section{Generators and relations of  $\whatOA$} \label{sect:generatorsrelations}

\subsection{Generators of  $\whatOA$}

In this section, we find generators of $\whatOA$ and their relations.
Let $S_i, i \in \N$  be the canonical generating partial isometries of $\OATI$ satisfying
\begin{equation}\label{eq:RelationOAI}
 \begin{cases}
 & \sum_{j=1}^n  S_j S_j^*  \le  1 \quad \text{ for all } n \in \N, \\
 & S_i^* S_i  =  \sum_{j=1}^N A(j,i) S_j S_j^* + 1 - \sum_{j=1}^N S_j S_j^* 
 \quad \text{ for all } i \text{ with } 1 \le i \le N, \\
 & S_i^* S_i  =  1 \quad \text{ for all } i \text{ with } i > N, 
 \end{cases}
\end{equation}
which are the relations \eqref{eq:OAI1} 
for the transposed matrix $A^t$ for the given matrix $A=[A(i,j)]_{i,j=1}^N.$
We put
\begin{equation}\label{eq:PN}
P_N =  1 - \sum_{j=1}^N S_j S_j^*.
\end{equation}
We identify $\whatOA$ with the corner $P_N \OATI P_N$ of $\OATI$.
Define partial isometries $T_{n,0}, T_{n,i}$ for $n>N, \, i \in\N$ in $\OATI$
by
\begin{equation}\label{eq:RelationOAITS}
 \begin{cases}
  T_{n,0} & =  S_n P_N \quad \text{ for } n >N, \\
  T_{n,i}  & = S_n S_i S_n^* \quad \text{ for } n >N \text{ and } i \in \N.  
 \end{cases}
\end{equation}
\begin{lemma}\label{lem:generator2}
The partial isometries
$
T_{n,i}, \,  T_{n,0} 
$ for 
$ n >N,  \, i \in \N$
belong to $P_N \OATI P_N$,
and 
generate the $C^*$-algebra  $P_N \OATI P_N$.
\end{lemma}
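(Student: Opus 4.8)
The plan is to establish the two inclusions separately: that each generator lies in the corner $P_N\OATI P_N$, and that the closed $*$-subalgebra $\mathcal{C}$ they generate fills up the whole corner. As a preliminary I would record that the range projections $S_jS_j^*$ are mutually orthogonal — this is forced by the relation $\sum_{j=1}^n S_jS_j^*\le 1$ together with the elementary fact that $p+q\le 1$ implies $pq=0$ for projections $p,q$ — whence $P_N S_j=0$ for $1\le j\le N$ and $P_N S_n=S_n$ (equivalently $S_n^*P_N=S_n^*$) for $n>N$. Containment of the generators in the corner is then immediate: $P_N T_{n,0}P_N=P_N S_n P_N=S_nP_N=T_{n,0}$ and $P_N T_{n,i}P_N=S_nS_iS_n^*=T_{n,i}$. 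For the reverse inclusion I would invoke the standard description of $\OATI$ as the closed linear span of the monomials $S_\mu S_\nu^*$ (writing $S_\mu=S_{\mu_1}\cdots S_{\mu_m}$ for a word $\mu$); compressing by $P_N$ and using the preliminary facts, $P_N S_\mu S_\nu^* P_N$ equals $S_\mu S_\nu^*$ when the leading letters of both $\mu$ and $\nu$ exceed $N$, and vanishes otherwise. Thus $P_N\OATI P_N$ is the closed span of those $S_\mu S_\nu^*$ with leading letters $>N$, and it suffices to produce each such monomial inside $\mathcal{C}$.

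The core of the proof consists of three identities in $\mathcal{C}$. First, $T_{n,0}^*T_{n,0}=P_N S_n^*S_nP_N=P_N$, so the unit $P_N$ of the corner belongs to $\mathcal{C}$. Second — the crucial step — for $n,p>N$ one computes
\[
T_{n,0}^*\,T_{n,p}=P_N S_n^*\,S_nS_pS_n^*=P_N S_pS_n^*=S_pS_n^*,
\]
using $S_n^*S_n=1$ and $P_N S_p=S_p$; as $n,p>N$ are arbitrary this shows that every connecting partial isometry $S_aS_b^*$ with $a,b>N$ lies in $\mathcal{C}$ (in particular $S_nS_n^*$, taking $a=b$). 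Third, for any word $\mu'=(\mu'_1,\dots,\mu'_l)$, repeated use of $S_n^*S_n=1$ yields $T_{n,\mu'_1}\cdots T_{n,\mu'_l}=S_nS_{\mu'}S_n^*$, so every conjugated monomial $S_nS_{\mu'}S_n^*$ lies in $\mathcal{C}$.

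The assembly is then routine. For $\mu=(n,\mu')$ and $\nu=(p,\nu')$ with $n,p>N$, the factorisation
\[
S_\mu S_\nu^*=(S_nS_{\mu'}S_n^*)\,(S_nS_p^*)\,(S_pS_{\nu'}^*S_p^*),
\]
which is valid because $S_n^*S_n=S_p^*S_p=1$, exhibits the monomial as a product of three elements of $\mathcal{C}$: the outer two by the third identity (and its adjoint) and the middle one by the second. This covers all spanning monomials of the corner, so $\mathcal{C}=P_N\OATI P_N$.

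The step I expect to be the genuine obstacle is finding the connecting partial isometries $S_pS_n^*$ with $p\neq n$ inside $\mathcal{C}$. The naive attempt $T_{n,0}T_{p,0}^*=S_nP_NS_p^*$ falls short of $S_nS_p^*$ by $\sum_{j\le N}S_nS_jS_j^*S_p^*=\sum_{j\le N}T_{n,j}(S_nS_p^*)T_{p,j}^*$, and substituting this back only produces the recursion $X=T_{n,0}T_{p,0}^*+\sum_{j\le N}T_{n,j}XT_{p,j}^*$ whose successive terms are mutually orthogonal of norm one; the series converges strictly but not in norm, so it does not place $S_nS_p^*$ in $\mathcal{C}$. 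The point is that one must instead use the generators $T_{n,i}$ whose second index satisfies $i>N$, as in the second identity, which deliver $S_pS_n^*$ directly and in a single step.
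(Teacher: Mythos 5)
Your proof takes essentially the same route as the paper's: containment via $P_NS_j=0$ for $j\le N$ and $P_NS_n=S_n$ for $n>N$; the unit via $T_{n,0}^*T_{n,0}=P_N$; the telescoping identity $T_{n,\mu'_1}\cdots T_{n,\mu'_l}=S_nS_{\mu'}S_n^*$; and the connecting elements $S_pS_n^*$ ($n,p>N$) from a single product of generators (your $T_{n,0}^*T_{n,p}$ is the adjoint/mirror of the paper's $T_{\mu_1,\nu_1}^*T_{\mu_1,0}$). Your three-factor assembly $(S_nS_{\mu'}S_n^*)(S_nS_p^*)(S_pS_{\nu'}^*S_p^*)$ is just a regrouping of the paper's Cases 1--3, and all your computations check out; the closing remark on why the naive recursion $X=T_{n,0}T_{p,0}^*+\sum_{j\le N}T_{n,j}XT_{p,j}^*$ cannot be summed in norm is correct, and a nice observation not in the paper.

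There is, however, one literal inaccuracy in your reduction step. For the family $S_\mu S_\nu^*$ to span a dense subspace you must allow empty words (the paper lists the spanning set as $1,\ S_\mu S_\nu^*,\ S_\mu,\ S_\nu^*$), and for those degenerate monomials your dichotomy ``equals $S_\mu S_\nu^*$ or vanishes'' fails: $P_N\cdot 1\cdot P_N=P_N$, and for $\mu_1>N$ one has $P_NS_\mu P_N=S_\mu P_N$, which is neither $S_\mu$ nor $0$ (similarly $P_NS_\nu^*P_N=P_NS_\nu^*$). Consequently your intermediate claim that $P_N\OATI P_N$ is the closed span of the doubly non-empty monomials with leading letters $>N$ is false as stated: compressing by $Q_M=P_N-\sum_{n=N+1}^{M}S_nS_n^*$ kills every finite combination of such monomials with letters $\le M$ while fixing $P_N$, so $P_N$ has distance $1$ from that span. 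The omission is harmless because you possess all the needed ingredients: you prove $P_N\in\mathcal{C}$ separately, and $S_\mu P_N=(S_nS_{\mu'}S_n^*)\,T_{n,0}$ for $\mu=(n,\mu')$, $n>N$, together with its adjoint for $P_NS_\nu^*$, places the remaining compressions in $\mathcal{C}$ by your third identity. This is exactly how the paper proceeds: it reduces to generating $P_N$, $P_NS_\mu S_\nu^*P_N$, $P_NS_\mu P_N$, and $P_NS_\nu^*P_N$, and disposes of the last two by ``the same argument.'' With that sentence corrected, your proof is complete and matches the paper's.
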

\begin{proof}
Since
we have
\begin{equation*}
P_N S_n  = 
\begin{cases}
S_n & \text{ if } n>N,\\
0 &   \text{ if } 1\le n \le N,
\end{cases}
\qquad
S_n^* P_N  = 
\begin{cases}
S_n^* & \text{ if } n>N,\\
0 & \text{ if } 1 \le n \le N,
\end{cases}
\end{equation*}
the identities   
$T_{n,i} = P_N T_{n,i} P_N$ and $T_{n,0} = P_N T_{n,0} P_N$
hold so that
$
T_{n,i}, \,  T_{n,0}
$
belong to 
$ P_N \OATI P_N.
$

We will next show that 
$
T_{n,i}, \,  T_{n,0}, 
 n >N,  \, i \in \N$
generate
$ P_N \OATI P_N.
$
Let $\N^*$ be the set $\cup_{n=1}^\infty \N^n$ of finite words of $\N$.
Since the $*$-subalgebra of $\OATI$ generated algebraically by
$S_i, S_i^*, i \in \N$ is dense in $\OATI$ and 
any polynomial of $S_i, S_i^*, i \in \N$
is a finite linear combination of the elements  
$$
1,\quad 
S_\mu S_\nu^* = S_{\mu_1}\cdots S_{\mu_m} S_{\nu_n}^* \cdots S_{\nu_1}^*, \quad S_\mu, \quad S_\nu^*,
\quad \mu=(\mu_1,\dots, \mu_m), \, 
\nu = (\nu_1,\dots,\nu_n) \in \N^*,
$$
it is enough to show that $T_{n, i}$ generate 
 $P_N,\; P_N S_\mu S_\nu^* P_N,\; P_NS_\mu P_N,\; P_NS^*_\nu P_N$.
One has $T_{n, 0}^*T_{n, 0}=P_NS_n^*S_nP_N=P_N$.

The element
$$
 P_N S_\mu S_\nu^* P_N
 =P_N S_{\mu_1}\cdots S_{\mu_m} S_{\nu_n}^* \cdots S_{\nu_1}^* P_N
 $$
 is non-zero if and only if $\mu_1 >N$ and $\nu_1 >N$.
 
 Let
$ \mu=(\mu_1,\dots, \mu_m), \, 
\nu = (\nu_1,\dots,\nu_n) \in \N^*.
$
 We then have three cases.

Case 1. 
 $m, n \ge 2$:
 Since $
 P_N S_\mu S_\nu^* P_N =S_\mu S_\nu^*
 $
  with $\mu_1 , \nu_1>N$, we have
 $S_{\mu_1}^* S_{\mu_1} = S_{\nu_1}^* S_{\nu_1} =1$,
 and
$
S_{\mu_1} S_{\nu_1}^* 
=  S_{\mu_1} S_{\nu_1}^* P_N 
=  S_{\mu_1} S_{\nu_1}^* S_{\mu_1}^* S_{\mu_1} P_N
= T_{\mu_1,\nu_1}^* T_{\mu_1, 0}
$
 so that 
\begin{align*}
   S_\mu S_\nu^* 
= & S_{\mu_1}S_{\mu_2}\cdots S_{\mu_m} S_{\nu_n}^* \cdots S_{\nu_2}^*S_{\nu_1}^* \\
= & S_{\mu_1}S_{\mu_2}\cdot S_{\mu_1}^* S_{\mu_1} \cdot S_{\mu_3} 
\cdots S_{\mu_m} \cdot S_{\mu_1}^* S_{\mu_1} 
\cdot S_{\nu_n}^* \cdot S_{\mu_1}^* S_{\mu_1} \cdot S_{\nu_{n-1}}^* 
\cdots S_{\nu_2}^*\cdot S_{\mu_1}^* S_{\mu_1} \cdot S_{\nu_1}^* \\
= & T_{\mu_1,\mu_2} T_{\mu_1,\mu_3} 
\cdots T_{\mu_1,\mu_m}  \cdot  
T_{\mu_1, \nu_n}^* T_{\mu_1,\nu_{n-1}}^* 
\cdots T_{\mu_1,\nu_2}^* \cdot  S_{\mu_1} S_{\nu_1}^* \\
= & T_{\mu_1,\mu_2} T_{\mu_1,\mu_3} 
\cdots T_{\mu_1,\mu_m}  \cdot  
T_{\mu_1, \nu_n}^* T_{\mu_1,\nu_{n-1}}^* 
\cdots T_{\mu_1,\nu_2}^* \cdot T_{\mu_1,\nu_1}^* T_{\mu_1, 0}.
\end{align*}

Case 2. 
 $m=1,\,  n \ge 2$ or $m\ge 2, \, n=1$:
 We may assume that $m=1,\,  n \ge 2.$ 
As $S_\mu = S_{\mu_1}$, 
  we have
\begin{align*}
   S_\mu S_\nu^* 
= & S_{\mu_1} S_{\nu_n}^* \cdots S_{\nu_2}^*S_{\nu_1}^* \\
= & S_{\mu_1} S_{\nu_n}^* \cdot S_{\mu_1}^* S_{\mu_1} \cdot S_{\nu_{n-1}}^* 
\cdots S_{\nu_2}^*\cdot S_{\mu_1}^* S_{\mu_1} \cdot S_{\nu_1}^* \\
= & T_{\mu_1, \nu_n}^* T_{\mu_1,\nu_{n-1}}^* 
\cdots T_{\mu_1,\nu_2}^* \cdot  S_{\mu_1} S_{\nu_1}^* \\ 
= &  T_{\mu_1, \nu_n}^* T_{\mu_1,\nu_{n-1}}^* 
\cdots T_{\mu_1,\nu_2}^* \cdot  T_{\mu_1,\nu_1}^* T_{\mu_1, 0}. 
\end{align*}

Case 3. 
 $m=n=1$:
As $S_\mu = S_{\mu_1}, S_\nu = S_{\nu_1}$, 
  we have
\begin{equation*}
   S_\mu S_\nu^* 
  =  S_{\mu_1} S_{\nu_1}^* P_N 
  =   S_{\mu_1} S_{\nu_1}^* S_{\mu_1}^*S_{\mu_1} P_N
  = T_{\mu_1,\nu_1}^* T_{\mu_1, 0}.
\end{equation*} 
  
The same argument shows that $P_NS_\mu P_N, P_NS_\nu^*P_N$ are generated by $T_{n, i}$.  
We thus conclude that 
$
T_{n,i},  \, T_{n,0}
$
for 
$ n >N, \, i \in \N
$ 
generate  $P_N \OATI P_N$.
\end{proof}
Put
\begin{equation*}
R_i:=  S_{N+1}S_i S_{N+1}^*, \quad
S: = S_{N+1} P_N, \qquad i \in \N.
\end{equation*}
\begin{lemma}\label{lem:generator3}
The $C^*$-subalgebra $C^*(R_i, S \mid i \in \N)
$ of $\OATI$ generated by
$R_i, S, i \in \N$
coincides with
$P_N \OATI P_N$.
\end{lemma}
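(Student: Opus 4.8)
The plan is to prove the two inclusions separately and to lean on Lemma \ref{lem:generator2}, which already identifies $P_N \OATI P_N$ as the $C^*$-algebra generated by the partial isometries $T_{n,i}$ and $T_{n,0}$ for $n>N$ and $i\in\N$. Since by \eqref{eq:RelationOAITS} the generators $R_i$ and $S$ are nothing but $R_i = T_{N+1,i}$ and $S = T_{N+1,0}$, the inclusion $C^*(R_i, S \mid i\in\N)\subseteq P_N\OATI P_N$ is immediate. The entire content is therefore to recover every $T_{n,i}$ and $T_{n,0}$ with $n>N$ from the single family of index-$(N+1)$ elements $R_i$ and $S$.

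First I would record the elementary relations that hold inside the corner. From \eqref{eq:RelationOAI} the range projections $\{S_jS_j^*\}_{j\in\N}$ are mutually orthogonal, so for every $n>N$ one has $S_nS_n^*\le P_N$ and $S_n^*S_n=1$; hence $P_NS_n=S_n$ and $S_n^*P_N=S_n^*$. In particular $S=S_{N+1}P_N$ satisfies $S^*S=P_N$ together with $P_NS_{N+1}=S_{N+1}$.

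The decisive step, and the only point requiring any thought, is the conjugation identity
\begin{equation*}
S^*R_n = P_N S_{N+1}^* S_{N+1} S_n S_{N+1}^* = P_N S_n S_{N+1}^* = S_n S_{N+1}^*,\qquad n>N,
\end{equation*}
which uses $S_{N+1}^*S_{N+1}=1$ and $P_NS_n=S_n$. Once this is in hand, I would multiply on the right by $S$ to obtain $T_{n,0}=S^*R_nS=S_nP_N$, and sandwich $R_i$ to obtain
\begin{equation*}
S^*R_n\,R_i\,R_n^*S = S_nS_{N+1}^*\cdot S_{N+1}S_iS_{N+1}^*\cdot S_{N+1}S_n^* = S_nS_iS_n^* = T_{n,i},
\end{equation*}
again collapsing the factors $S_{N+1}^*S_{N+1}=1$. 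Both expressions lie in $C^*(R_i, S\mid i\in\N)$, so this algebra contains all the generators furnished by Lemma \ref{lem:generator2}, giving the reverse inclusion $P_N\OATI P_N\subseteq C^*(R_i, S\mid i\in\N)$ and hence the claimed equality. The main obstacle here is conceptual rather than computational: one must recognize that, because $R_i$ and $S$ involve only the index $N+1$, the role of $S^*$ is to peel off the leading $S_{N+1}$ so as to reach every higher index $n>N$; after that observation the remaining verifications are routine manipulations of the relations \eqref{eq:RelationOAI}.
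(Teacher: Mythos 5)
Your proposal is correct and follows essentially the same route as the paper: the paper sets $U_n := S_nS_{N+1}^*$, proves $T_{n,i}=U_nR_iU_n^*$, $T_{n,0}=U_nS$, and recovers $U_n = S^*R_n$, which is exactly your computation $T_{n,0}=S^*R_nS$ and $T_{n,i}=S^*R_nR_iR_n^*S$ with $U_n$ eliminated. Your key identity $S^*R_n=S_nS_{N+1}^*$ for $n>N$ is precisely the paper's pivotal observation, so the two arguments coincide.
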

\begin{proof}
Put
$U_n := S_n S_{N+1}^*
$
for 
$
n >N.
$
We have the identities
\begin{align*}
T_{n,i} & = U_n R_i U_n^* \quad (\text{and hence }\,R_i= U_n^* T_{n,i} U_n) \text{ for } i \in \N, \, n >N, \\
T_{n,0} & = U_n S \quad (\text{and hence } S = U_n^* T_{n,0}) \text{ for }n >N.
\end{align*}
so that 
 the $C^*$-subalgebra 
$C^*(R_i, S, U_n \mid i \in \N, n>N)$
of $\OATI$
generated by 
$R_i, S, U_n, i \in \N, n>N$
coincides with 
$P_N \OATI P_N$.
As the identity 
$
S^* R_n = U_n
$
for $n>N$
holds,
the partial isometries 
$U_n, n>N$ are written by $R_i, S, i \in \N,$
so that  
$C^*(R_i, S \mid i \in \N)$
coincides with 
$P_N \OATI P_N$.
\end{proof}
Therefore the $C^*$-algebra 
$ P_N \OATI P_N$ is generated by the sequence of partial isometries 
$R_i,  i \in \N$
with one additional partial isometry $S$.
\begin{proposition}\label{prop:generator4}
Keep the above notation.
We have the following relations:
\begin{enumerate}
\renewcommand{\theenumi}{(\arabic{enumi})}
\renewcommand{\labelenumi}{\textup{\theenumi}}
\item
$\sum_{j=1}^m R_j R_j^* < R_{N+1}^* R_{N+1} =R_{N+2}^* R_{N+2} =\cdots $ for $m \in \N$.
\item
$R_i^* R_i = \sum_{j=1}^N A(j,i) R_j R_j^* + R_{N+1}^* R_{N+1} -\sum_{j=1}^N R_j R_j^*$ for $1 \le i \le N$.
\item
$SS^* = R_{N+1}^* R_{N+1} -\sum_{j=1}^N R_j R_j^*, \quad
 S^* S = P_N.$ 
\item $S^* R_{N+1} = R_{N+1}^* R_{N+1}, 
\quad S R_{N+1}^* = R_{N+1} R_{N+1}^*$.
\end{enumerate}
\end{proposition}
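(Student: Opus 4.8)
The plan is to verify each of the four relations by a direct computation inside $\OATI$, exploiting the fact that conjugation $x \mapsto S_{N+1} x S_{N+1}^{*}$ by the isometry $S_{N+1}$ transports the defining relations \eqref{eq:RelationOAI} into the asserted relations for the $R_i$ and $S$. The three structural inputs I would use throughout are: first, that $S_{N+1}$ (indeed every $S_i$ with $i>N$) is an isometry, i.e. $S_i^{*} S_i = 1$, by \eqref{eq:RelationOAI}; second, the range-orthogonality identities $P_N S_{N+1} = S_{N+1}$ and $S_{N+1}^{*} P_N = S_{N+1}^{*}$ already recorded in the proof of Lemma \ref{lem:generator2}; and third, that the range projections $S_j S_j^{*}$ are mutually orthogonal, forced by $\sum_{j=1}^{n} S_j S_j^{*} \le 1$.

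First I would record the two basic evaluations $R_i R_i^{*} = S_{N+1} S_i S_i^{*} S_{N+1}^{*}$ and $R_i^{*} R_i = S_{N+1} S_i^{*} S_i S_{N+1}^{*}$, both immediate from $S_{N+1}^{*} S_{N+1} = 1$. For $i > N$ the isometry relation gives $R_i^{*} R_i = S_{N+1} S_{N+1}^{*}$ independently of $i$, which produces the chain $R_{N+1}^{*} R_{N+1} = R_{N+2}^{*} R_{N+2} = \cdots$ in (1). The partial sums compute as $\sum_{j=1}^{m} R_j R_j^{*} = S_{N+1}\bigl(\sum_{j=1}^{m} S_j S_j^{*}\bigr) S_{N+1}^{*}$, so the strict inequality in (1) reduces to $S_{N+1}\bigl(1 - \sum_{j=1}^{m} S_j S_j^{*}\bigr) S_{N+1}^{*} \ne 0$. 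Here I would note that $\sum_{j=1}^{m} S_j S_j^{*} < 1$ strictly, since for any $m$ there is an isometry $S_n$ with $n > m$ whose nonzero range projection is orthogonal to $S_1 S_1^{*}, \dots, S_m S_m^{*}$; and that compression of the resulting nonzero projection by the isometry $S_{N+1}$ remains nonzero, because applying $S_{N+1}^{*}(\cdot)S_{N+1}$ recovers it. This strictness is the only point in the argument that is not purely formal.

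For (2), I would substitute the middle relation of \eqref{eq:RelationOAI}, namely $S_i^{*} S_i = \sum_{j=1}^{N} A(j,i) S_j S_j^{*} + (1 - \sum_{j=1}^{N} S_j S_j^{*})$ for $1 \le i \le N$, into $R_i^{*} R_i = S_{N+1} S_i^{*} S_i S_{N+1}^{*}$ and distribute the conjugation term by term, which yields exactly $\sum_{j=1}^{N} A(j,i) R_j R_j^{*} + R_{N+1}^{*} R_{N+1} - \sum_{j=1}^{N} R_j R_j^{*}$. For (3), since $P_N$ is a projection and $S_{N+1}$ an isometry, $S^{*} S = P_N S_{N+1}^{*} S_{N+1} P_N = P_N$, while $S S^{*} = S_{N+1} P_N S_{N+1}^{*} = S_{N+1}(1 - \sum_{j=1}^{N} S_j S_j^{*}) S_{N+1}^{*} = R_{N+1}^{*} R_{N+1} - \sum_{j=1}^{N} R_j R_j^{*}$.

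Finally, (4) is the only place where $P_N S_{N+1} = S_{N+1}$ is genuinely needed. Writing $R_{N+1} = S_{N+1} S_{N+1} S_{N+1}^{*}$ and $R_{N+1}^{*} = S_{N+1} S_{N+1}^{*} S_{N+1}^{*}$, I would compute $S^{*} R_{N+1} = P_N S_{N+1}^{*} S_{N+1} S_{N+1} S_{N+1}^{*} = P_N S_{N+1} S_{N+1}^{*} = S_{N+1} S_{N+1}^{*} = R_{N+1}^{*} R_{N+1}$, applying $P_N S_{N+1} = S_{N+1}$ in the last step, and symmetrically $S R_{N+1}^{*} = S_{N+1} P_N S_{N+1} S_{N+1}^{*} S_{N+1}^{*} = S_{N+1} S_{N+1} S_{N+1}^{*} S_{N+1}^{*} = R_{N+1} R_{N+1}^{*}$. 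Apart from the strictness check in (1), every step is routine bookkeeping with the isometry and orthogonality relations, so I expect no serious obstacle.
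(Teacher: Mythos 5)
Your proof is correct and follows the same route as the paper, which simply observes that all four relations follow by direct computation from the defining relations \eqref{eq:RelationOAI}; your verification via conjugation by the isometry $S_{N+1}$, together with $P_N S_{N+1}=S_{N+1}$ from Lemma \ref{lem:generator2}, is exactly the intended argument spelled out in detail. Your treatment of the only non-formal point, the strictness in (1) (mutual orthogonality of the range projections from $\sum_j S_jS_j^*\le 1$, plus faithfulness of compression by the isometry $S_{N+1}$), is also valid and fills a detail the paper leaves implicit.
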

\begin{proof}
We know  all the relations (1),(2), (3), and (4) from \eqref{eq:RelationOAI}.
\end{proof}
\subsection{Universal concrete relations of $\whatOA$}
For an $N\times N$ irreducible matrix $A = [A(i,j)]_{i,j=1}^N$
with entries in $\{0,1\},$
let $r_i,i \in \N,\,  s, \, p_1$ be  a countable family of partial isometries 
in a unital $C^*$-algebra 
satisfying the relations: 

\medskip

$(R1) \quad r_i^* r_i = \sum_{j=1}^N r_j r_j^* + p_1 \quad \text{ for } i >N. $

$(R2) \quad r_i^* r_i = \sum_{j=1}^N A(j,i) r_j r_j^* + p_1 \quad \text{ for } 1 \le i \le N.$ 

$(R3) \quad \sum_{j=N+1}^m r_j r_j^*  < p_1 \quad \text{ for } m > N.$

$(R4) \quad s s^* = p_1, \quad s^* s = 1.$  

$(R5) \quad s^* r_{N+1} = r_{N+1}^* r_{N+1}, \quad s r_{N+1}^* = r_{N+1} r_{N+1}^*.$

\medskip
We note that the second relation 
$s r_{N+1}^* = r_{N+1} r_{N+1}^*$ of $(R5)$
follows from the first relation $s^* r_{N+1} = r_{N+1}^* r_{N+1}$
of $(R5)$ and $(R3), (R4)$.
Because by the first relation 
we have
$ss^* r_{N+1} r_{N+1}^* = s r_{N+1}^*,$
and
$(R3)$ and $(R4)$ imply that  
 $ss^* r_{N+1} r_{N+1}^* = r_{N+1} r_{N+1}^*.$ 

\medskip 
Let us denote by 
$C^*(r_i, s \mid i \in \N)$ the unital $C^*$-algebra
generated by the partial isometries $r_i, s, \, i \in \N$.
By $(R1)$, the projection $p_1$ is written in terms of $r_j, j =1,\dots,N$,
so that it belongs to $C^*(r_i, s \mid i \in \N)$. 
Keep the situation.
\begin{lemma} \label{lem:C*(riss*ri)}
Let $C^*(r_1,\dots, r_N, s, s^* r_i  \mid i \ge N+2)$
be the $C^*$-subalgebra of  $C^*(r_i, s \mid i \in \N)$ 
generated by 
$r_1,\dots, r_N, s, s^* r_i$ for $i \ge N+2$.
Then we have
\begin{equation}\label{eq:7.13}
C^*(r_1,\dots, r_N, s, s^* r_i  \mid i \ge N+2)
=C^*(r_i, s \mid i \in \N).
\end{equation}
\end{lemma}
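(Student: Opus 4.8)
The plan is to prove the nontrivial inclusion $C^*(r_i, s \mid i \in \N) \subseteq C^*(r_1,\dots,r_N,s, s^*r_i \mid i \ge N+2)$, since the reverse inclusion is immediate: every listed generator on the right already appears among the generators $r_i, s$ of the left-hand algebra. Write $B := C^*(r_1,\dots,r_N,s,s^*r_i\mid i\ge N+2)$. By $(R4)$ we have $p_1 = ss^* \in B$, and by $(R2)$ and $(R1)$ the source projections of the $r_i$ are expressed through $r_1,\dots,r_N$ and $p_1$, hence lie in $B$. It therefore suffices to show that $r_{N+1}\in B$ and that $r_i\in B$ for every $i\ge N+2$, the indices $1,\dots,N$ and the element $s$ already belonging to $B$.

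The key observation I would isolate first is that the range projection of each $r_j$ with $j\ge N+1$ sits under $p_1$. Indeed, $(R3)$ gives $r_jr_j^*\le \sum_{k=N+1}^j r_kr_k^* < p_1$ (the omitted summands being positive), so $p_1 r_jr_j^* = r_jr_j^*$, and consequently
\[
ss^* r_j = p_1 r_j = p_1 r_jr_j^* r_j = r_jr_j^* r_j = r_j,
\qquad j\ge N+1.
\]
Thus $s$ serves as a left inverse that recovers $r_j$ from $s^*r_j$, namely $r_j = s\,(s^*r_j)$ for all $j\ge N+1$. This partial-inverse property of $s$ is the one structural fact that makes the whole argument work.

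Granting this, the two remaining cases fall out directly. For $i\ge N+2$ the element $s^*r_i$ is by definition a generator of $B$, so $r_i = s\,(s^*r_i)\in B$. For the single missing index $N+1$, I would use $(R5)$ together with $(R1)$ to rewrite $s^*r_{N+1} = r_{N+1}^* r_{N+1} = \sum_{j=1}^N r_jr_j^* + p_1$, which has already been seen to lie in $B$; hence $r_{N+1} = s\,(s^*r_{N+1})\in B$. Combining these with $r_1,\dots,r_N,s\in B$ shows that every generator of $C^*(r_i,s\mid i\in\N)$ belongs to $B$, yielding the claimed equality \eqref{eq:7.13}. I expect the only point requiring genuine care to be the range-containment $r_jr_j^*\le p_1$ and the resulting identity $ss^*r_j = r_j$; once $s$ is recognized as a partial inverse for the $r_j$ with $j>N$, recovering $r_{N+1}$ and the $r_i$ with $i\ge N+2$ is purely formal.
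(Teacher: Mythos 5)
Your proof is correct and follows essentially the same route as the paper's: both establish $ss^*r_j=r_j$ for $j\ge N+1$ from $(R3)$ and $(R4)$, recover $r_i=s(s^*r_i)$ for $i\ge N+2$ from the given generators, and handle $r_{N+1}$ via $(R5)$ together with $(R1)$. The only (immaterial) difference is that the paper writes $r_{N+1}=s\,r_{N+2}^*r_{N+2}$ while you expand $s^*r_{N+1}=\sum_{j=1}^N r_jr_j^*+p_1$ directly; both expressions lie in the subalgebra for the same reasons.
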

\begin{proof}
By $(R3)$ and $(R4)$,  
for $i \ge 1$ we have
$ r_{N+i} =ss^* r_{N+i} =s\cdot s^* r_{N+i}.$
Hence for $i \ge 2$ we have 
$ r_{N+i} \in C^*(r_1,\dots, r_N, s, s^* r_i  \mid i \ge N+2)$.
By $(R5)$ and $(R1)$,  we have
\begin{align*}
r_{N+1} = s s^* r_{N+1} =s\cdot r_{N+1}^* r_{N+1} = s r_{N+2}^* r_{N+2}
\end{align*}
so that  $ r_{N+1} \in C^*(r_1,\dots, r_N, s, s^* r_i  \mid i \ge N+2)$,
showing the equality \eqref{eq:7.13}.
\end{proof}
\begin{lemma}
The identity
\begin{equation} \label{eq:realtion7.31}
r_{N+i}^* s s^* r_{N+i} = \sum_{j=1}^N r_j r_j^* + s s^*\quad \text{ for } i \in \N 
\end{equation}
holds.
\end{lemma}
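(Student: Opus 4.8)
The plan is to reduce the stated identity to the range comparison $r_{N+i}r_{N+i}^* \le ss^*$ together with the formula for $r_{N+i}^* r_{N+i}$ supplied by $(R1)$. First I would record that $(R4)$ makes $s$ an isometry, so that $ss^* = p_1$ is a projection, and that $(R3)$ gives $r_{N+i} r_{N+i}^* \le \sum_{j=N+1}^{N+i} r_j r_j^* < p_1 = ss^*$ for every $i \in \N$, since every $r_j r_j^*$ is positive and hence a partial sum dominates each of its terms. Consequently $ss^*$ dominates the range projection $r_{N+i} r_{N+i}^*$, whence $ss^* r_{N+i} r_{N+i}^* = r_{N+i} r_{N+i}^*$; multiplying this on the right by $r_{N+i}$ and using the partial-isometry relation $r_{N+i} r_{N+i}^* r_{N+i} = r_{N+i}$ yields
\[
ss^* r_{N+i} = r_{N+i}, \qquad i \in \N.
\]
This is exactly the identity already extracted in the proof of Lemma \ref{lem:C*(riss*ri)}, so it may simply be quoted rather than rederived.

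Next I would take adjoints to obtain $r_{N+i}^* ss^* = r_{N+i}^*$, and multiply on the right by $r_{N+i}$ to produce
\[
r_{N+i}^* ss^* r_{N+i} = r_{N+i}^* r_{N+i}.
\]
It then only remains to evaluate the right-hand side. Since $N + i > N$, relation $(R1)$ gives $r_{N+i}^* r_{N+i} = \sum_{j=1}^N r_j r_j^* + p_1$, and substituting $p_1 = ss^*$ from $(R4)$ yields precisely $\sum_{j=1}^N r_j r_j^* + ss^*$, which is the desired expression.

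There is no genuine obstacle here: the whole argument is a short formal manipulation of the defining relations. The only step meriting care is the passage $ss^* r_{N+i} = r_{N+i}$, which rests on the inequality in $(R3)$ forcing $r_{N+i} r_{N+i}^* \le ss^*$; once that domination of range projections is in hand, the conclusion follows by a single adjoint-and-multiply step combined with $(R1)$ and $(R4)$.
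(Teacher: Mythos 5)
Your proof is correct and takes essentially the same route as the paper: both rest on the identity $ss^*r_{N+i}=r_{N+i}$ (which the paper extracts from $(R3)$ and $(R4)$ in the proof of Lemma \ref{lem:C*(riss*ri)}), followed by $r_{N+i}^*ss^*r_{N+i}=r_{N+i}^*r_{N+i}$ and the substitution $r_{N+i}^*r_{N+i}=\sum_{j=1}^N r_jr_j^*+p_1$ with $p_1=ss^*$ from $(R1)$ and $(R4)$. Your only addition is to spell out the range-projection domination $r_{N+i}r_{N+i}^*\le ss^*$ behind that identity, a step the paper leaves implicit.
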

\begin{proof}
For $i \in \N$, we have 
$ss^* r_{N+i} = r_{N+i}$
and hence 
$r_{N+i}^* ss^* r_{N+i} = r_{N+i}^*r_{N+i}.$
We then have 
\eqref{eq:realtion7.31}
by $(R1)$ and $(R4).$
\end{proof}
\begin{lemma}\label{lem:7.15}
 Keep the above situation.
Put
\begin{equation*}
t_i : =
\begin{cases}
r_i & \text{ for } 1\le i \le N, \\
s & \text{ for } i = N+1, \\
s^* r_i & \text{ for } i \ge N+2.
\end{cases}
\end{equation*}
We then have
\begin{enumerate}
\renewcommand{\theenumi}{(\roman{enumi})}
\renewcommand{\labelenumi}{\textup{\theenumi}}
\item
\begin{equation} \label{eq:ti*ti}
t_i^* t_i =
\begin{cases}
\sum_{j=1}^N A(j,i) t_j t_j^* + t_{N+1} t_{N+1}^* &  \text{ for } 1\le i \le N, \\
1 & \text{ for } i = N+1, \\
\sum_{j=1}^N  t_j t_j^* + t_{N+1} t_{N+1}^* & \text{ for } i \ge N+2.
\end{cases}
\end{equation}
\item
\begin{equation}\label{eq:1>titi*}
1 > \sum_{j=1}^N t_j t_j^* + t_{N+1} t_{N+1}^* + \sum_{j=N+2}^m t_j t_j^*,  \qquad m \ge N+2.
\end{equation}
\end{enumerate}
\end{lemma}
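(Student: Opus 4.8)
The plan is to verify the two identities directly from the relations $(R1)$–$(R5)$ and the preceding identity \eqref{eq:realtion7.31}, recording first the range projections that occur: $t_j t_j^* = r_j r_j^*$ for $1 \le j \le N$, $t_{N+1} t_{N+1}^* = s s^* = p_1$ by $(R4)$, and $t_j t_j^* = s^* r_j r_j^* s$ for $j \ge N+2$. With these in hand, part (i) splits into three cases according to the definition of $t_i$, and part (ii) splits into a ``head'' and a ``tail''.

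For part (i), the case $i = N+1$ is immediate from $(R4)$, since $t_{N+1}^* t_{N+1} = s^* s = 1$. For $1 \le i \le N$ I would substitute the range projections above into the right-hand side, turning $\sum_{j=1}^N A(j,i) t_j t_j^* + t_{N+1} t_{N+1}^*$ into $\sum_{j=1}^N A(j,i) r_j r_j^* + p_1$, which equals $r_i^* r_i = t_i^* t_i$ by $(R2)$. For $i \ge N+2$ I would compute $t_i^* t_i = r_i^* s s^* r_i$ and apply \eqref{eq:realtion7.31} to get $\sum_{j=1}^N r_j r_j^* + s s^*$; the claimed right-hand side equals the same expression, so the two coincide.

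For part (ii) I would first collapse the head of the sum: combining $(R1)$ for $i = N+1$ with $(R4)$ gives $\sum_{j=1}^N t_j t_j^* + t_{N+1} t_{N+1}^* = \sum_{j=1}^N r_j r_j^* + p_1 = r_{N+1}^* r_{N+1}$. For the tail I would factor the isometry $s$ out, writing $\sum_{j=N+2}^m t_j t_j^* = s^*\bigl(\sum_{j=N+2}^m r_j r_j^*\bigr)s$. Setting $e_0 := p_1 - \sum_{j=N+1}^m r_j r_j^*$, which is a nonzero projection by $(R3)$, I have $\sum_{j=N+2}^m r_j r_j^* = p_1 - r_{N+1} r_{N+1}^* - e_0$. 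Conjugating by $s$ and using $s^* p_1 s = (s^* s)(s^* s) = 1$ from $(R4)$ together with $s^* r_{N+1} r_{N+1}^* s = (s^* r_{N+1})(r_{N+1}^* s) = r_{N+1}^* r_{N+1}$ from $(R5)$, the tail becomes $1 - r_{N+1}^* r_{N+1} - s^* e_0 s$. Adding the head then telescopes the whole left-hand side of \eqref{eq:1>titi*} to $1 - s^* e_0 s$.

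The step needing real care is the final strictness, namely that $s^* e_0 s$ is a \emph{nonzero projection} rather than merely a positive element. Since $e_0 \le p_1 = s s^*$, the relations $(R4)$ give $(s^* e_0 s)^2 = s^* e_0 (s s^*) e_0 s = s^* e_0 s$ and $s (s^* e_0 s) s^* = (s s^*) e_0 (s s^*) = e_0$, so $s^* e_0 s$ is a projection that vanishes only when $e_0 = 0$; as $e_0 \ne 0$ by $(R3)$, the defect $1 - \bigl(\sum_{j=1}^N t_j t_j^* + t_{N+1} t_{N+1}^* + \sum_{j=N+2}^m t_j t_j^*\bigr) = s^* e_0 s$ is nonzero, which is exactly \eqref{eq:1>titi*}. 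I would also note that the standing reading of $(R3)$, that the projections $r_j r_j^*$ for $j \ge N+1$ are mutually orthogonal with sum strictly dominated by $p_1$, is what guarantees $e_0$ is a genuine projection in the first place; this is the only point where anything beyond formal manipulation of the relations is required.
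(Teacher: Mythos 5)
Your proposal is correct and follows essentially the same route as the paper: part (i) is read off directly from $(R2)$, $(R4)$ and \eqref{eq:realtion7.31}, and part (ii) collapses the head to $r_{N+1}^*r_{N+1}$ via $(R1)$, $(R4)$, rewrites it as $s^* r_{N+1}r_{N+1}^* s$ via $(R5)$, and absorbs it into the conjugated tail $s^*\bigl(\sum_{j=N+1}^m r_j r_j^*\bigr)s < s^* p_1 s \le 1$, exactly as in the paper. Your one refinement is making the strictness explicit by exhibiting the defect $1-\text{LHS} = s^* e_0 s$ as a nonzero projection (i.e., noting that conjugation by $s$ is injective on the corner $p_1(\cdot)p_1$ since $ss^*=p_1$), a point the paper's displayed chain leaves implicit.
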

\begin{proof}
(i) The formula \eqref{eq:ti*ti} 
is direct from $(R2), (R4)$ and \eqref{eq:realtion7.31}.

(ii) For $m \ge N+2$, we have \eqref{eq:1>titi*} by using (R5) in the following way:
\begin{align*}
  & \sum_{j=1}^N t_j t_j^* + t_{N+1} t_{N+1}^* + \sum_{j=N+2}^m t_j t_j^* \\ 
= &  \sum_{j=1}^N r_j r_j^* + p_1 + \sum_{j=N+2}^m s^* r_j r_j^* s 
=   r_{N+1}^* r_{N+1} + s^* (\sum_{j=N+2}^m  r_j r_j^*) s \\
= &  s^* r_{N+1} r_{N+1}^* s + s^* (\sum_{j=N+2}^m  r_j r_j^*) s 
=   s^*  (\sum_{j=N+1}^m  r_j r_j^*) s \\
< &  s^* p_1 s \le 1.
\end{align*}
\end{proof}
Define the matrix
$
\widehat{A}_\infty 
= [\widehat{A}_\infty(i,j)]_{i,j\in \N}
$ 
over $\N$ with entries in $\{0,1\}$ by setting
\begin{equation}\label{eq:matrixAinfty}
\widehat{A}_\infty (i,j) :=
\begin{cases}
A(j,i) & \text{ if } 1 \le i , j \le N, \\
1 & \text{ if } 1 \le i \le N,\, \, j= N+1, \\ 
0 & \text{ if } 1 \le i \le N,\, \, j\ge  N+2, \\ 
1 & \text{ if }  i =N+1, \\ 
1 & \text{ if }  i \ge N+2,\, \, j \le  N+1, \\ 
0 & \text{ if }  i \ge N+2,\, \, j \ge N+2.
\end{cases}
\end{equation}
The matrix $\widehat{A}_\infty$ is written as  
\begin{equation*}
\widehat{A}_\infty=
\left[
\begin{array}{ccccccc}
&                &1      &0     &0     &\cdots \\
&\text{\Huge A}^t&\vdots&\vdots&\vdots&\vdots \\
&                &1      &0     &0     &\cdots \\
1&\dots          &1      &1     &1     &\cdots \\
1&\dots          &1      &0     &0     &\cdots \\
1&\dots          &1      &0     &0     &\cdots \\
\vdots &\dots    &\vdots &\vdots&\vdots&\vdots \\
\end{array}
\right]
\end{equation*}
The following lemma is straightforward from Lemma \ref{lem:7.15}
\begin{lemma}\label{lem:7.16}
Let $\widehat{A}_\infty$ be the matrix defined in \eqref{eq:matrixAinfty}.
Then we have for $i,j \in \N$
\begin{equation}
\widehat{A}_\infty(i,j) = 
\begin{cases}
1 & \text{ if }\, \,  t_i^* t_i \ge t_j t_j^*, \\
0 & \text{ otherwise}.
\end{cases}
\end{equation}
\end{lemma}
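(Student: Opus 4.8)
The plan is to reduce the asserted biconditional to a mechanical comparison of index sets, after first establishing that the range projections $\{t_j t_j^*\}_{j \in \N}$ are mutually orthogonal. Orthogonality is where all the analytic content sits, and it is read off from Lemma \ref{lem:7.15}(ii): the inequality \eqref{eq:1>titi*} says precisely that $\sum_{j=1}^m t_j t_j^* < 1$ for every $m \ge N+2$. For any two distinct indices $j \ne k$ the remaining summands are positive, so $t_j t_j^* + t_k t_k^* \le 1$, which for projections forces $t_j t_j^* \, t_k t_k^* = 0$. Letting $m$ increase, the whole family $\{t_j t_j^*\}_{j \in \N}$ is pairwise orthogonal.

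With orthogonality in hand, I would record the following domination criterion: if $P = \sum_{k \in S} t_k t_k^*$ is a finite sum of pairwise orthogonal range projections, then $P$ is a projection and $P \ge t_j t_j^*$ holds if and only if $j \in S$. The forward implication is immediate, while the converse uses $t_j t_j^* \, P = 0$ for $j \notin S$ together with $t_j t_j^* \ne 0$. By Lemma \ref{lem:7.15}(i), each domain projection $t_i^* t_i$ is exactly such a finite sum of range projections --- or the unit when $i = N+1$, which dominates every $t_j t_j^*$ --- so $t_i^* t_i \ge t_j t_j^*$ is equivalent to $t_j t_j^*$ appearing as a summand in the expression \eqref{eq:ti*ti} for $t_i^* t_i$.

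It then remains to match the summand indices against the entries of $\widehat{A}_\infty$ from \eqref{eq:matrixAinfty}, case by case in $i$. For $1 \le i \le N$ the summands are $\{\, j \le N : A(j,i) = 1 \,\} \cup \{N+1\}$, which reproduces $\widehat{A}_\infty(i,j) = A(j,i)$ for $j \le N$, the value $1$ at $j = N+1$, and $0$ for $j \ge N+2$. For $i = N+1$ we have $t_{N+1}^* t_{N+1} = 1$, matching the all-ones row. For $i \ge N+2$ the summands fill out $\{1, \dots, N+1\}$, matching the row that is $1$ on $j \le N+1$ and $0$ on $j \ge N+2$. This exhausts all cases.

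The only step needing care, and the one I would not leave implicit, is the converse half of the domination criterion, which requires $t_j t_j^* \ne 0$ for every $j$. In the present setting this is automatic: by Lemma \ref{lem:7.15}(i) every $t_i^* t_i$ dominates $t_{N+1} t_{N+1}^* = s s^* = p_1$, and $p_1 \ne 0$ since $s^* s = 1$ by $(R4)$; hence each $t_i$, and with it $t_i t_i^*$, is non-zero. Beyond this, the argument is a direct reading-off from Lemma \ref{lem:7.15}, as indicated in the statement.
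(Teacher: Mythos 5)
Your proof is correct and takes essentially the same approach as the paper: the paper simply declares Lemma \ref{lem:7.16} straightforward from Lemma \ref{lem:7.15}, and your argument is precisely that reading-off, with the two points the paper leaves implicit --- mutual orthogonality of the range projections $t_jt_j^*$ extracted from \eqref{eq:1>titi*}, and $t_jt_j^*\ne 0$ via $t_i^*t_i \ge t_{N+1}t_{N+1}^* = ss^* \ne 0$ --- correctly supplied.
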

\begin{proposition}\label{prop:universalExelLaca}
Keep the above notation.\hspace{2cm}
\begin{enumerate}
\renewcommand{\theenumi}{(\roman{enumi})}
\renewcommand{\labelenumi}{\textup{\theenumi}}
\item The $C^*$-algebra $C^*(r_i, s\mid i \in \N)$ coincides with the $C^*$-algebra 
$C^*(t_i\mid i \in \N)$ generated by the partial isometries $t_i, i \in \N$. 
\item The $C^*$-algebra  
$C^*(t_i\mid i \in \N)$ 
is the Exel--Laca algebra $\mathcal{O}_{\widehat{A}_\infty}$
defined by the infinite matrix $\widehat{A}_\infty$. 
\item
The Exel--Laca algebra $\mathcal{O}_{\widehat{A}_\infty}$ 
is a unital simple purely infinite Kirchberg algebra.
\end{enumerate}
\end{proposition}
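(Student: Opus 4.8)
The plan is to establish the three assertions in the order (i), (iii), (ii); the isomorphism in (ii) will be obtained by feeding the simplicity proved in (iii) into the universal property of the Exel--Laca algebra. Assertion (i) is immediate from Lemma \ref{lem:C*(riss*ri)}. By definition $t_1,\dots,t_N=r_1,\dots,r_N$, $t_{N+1}=s$ and $t_i=s^*r_i$ for $i\ge N+2$, so each $t_i$ lies in $C^*(r_i,s\mid i\in\N)$ and hence $C^*(t_i\mid i\in\N)\subseteq C^*(r_i,s\mid i\in\N)$. Conversely, these $t_i$ are exactly the family $r_1,\dots,r_N,s,s^*r_i\ (i\ge N+2)$ that Lemma \ref{lem:C*(riss*ri)} shows already generate $C^*(r_i,s\mid i\in\N)$, so the two algebras coincide.

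For (iii) I would argue intrinsically from the combinatorics of $\widehat{A}_\infty$, so as not to use (ii). Unitality is built into the definition of Exel--Laca algebra adopted here. The vertex $N+1$ both receives from and emits to every vertex, since $\widehat{A}_\infty(i,N+1)=1$ and $\widehat{A}_\infty(N+1,j)=1$ for all $i,j$; thus $i\to N+1\to j$ is a path of length $\le 2$ between any two vertices and $\widehat{A}_\infty$ is transitive (irreducible). Since $\widehat{A}_\infty(N+1,N+1)=1$ there is a loop of length one at $N+1$, giving aperiodicity, and as $N+1$ emits infinitely many edges this loop has an exit. The Exel--Laca simplicity and pure-infiniteness criteria (\cite{EL}, cf. \cite[Example 4.4.5]{RS}) then show $\mathcal{O}_{\widehat{A}_\infty}$ is unital, simple and purely infinite; since every Exel--Laca algebra is nuclear and lies in the bootstrap (UCT) class, it is a Kirchberg algebra.

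The heart of (ii) is to check that the $t_i$ satisfy the defining relations of the Exel--Laca algebra of $\widehat{A}_\infty$. Relations (1)--(3) are within reach of the material already in place: the computation in the proof of Lemma \ref{lem:7.15} shows $\sum_{j=1}^m t_jt_j^*=s^*\big(\sum_{j=N+1}^m r_jr_j^*\big)s$ is a projection, hence the range projections $t_jt_j^*$ are mutually orthogonal and \eqref{eq:1>titi*} gives relation (1); by Lemma \ref{lem:7.15} each $t_i^*t_i$ is the sum of the orthogonal projections $t_\ell t_\ell^*$ with coefficients $\widehat{A}_\infty(i,\ell)$, so the source projections commute (relation (2)) and $(t_i^*t_i)t_j=\widehat{A}_\infty(i,j)t_j$ (relation (3), via Lemma \ref{lem:7.16}). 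Relation \eqref{eq:EL(iii)} is the computational point: for finite $X,Y\subset\N$ satisfying the finiteness hypothesis \eqref{eq:AXYj}, one expands $\prod_{i\in X}t_i^*t_i\prod_{j\in Y}(1-t_j^*t_j)$ over the commuting family of orthogonal range projections and checks, by a short case analysis on whether $N+1$ lies in $X$ or $Y$, that the surviving terms are exactly $\sum_k\widehat{A}_\infty(X,Y,k)t_kt_k^*$. These relations yield, by the universal property, a surjective unital $*$-homomorphism $\pi\colon\mathcal{O}_{\widehat{A}_\infty}\to C^*(t_i\mid i\in\N)$ carrying the $i$-th canonical generator to $t_i$. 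Since $\pi$ sends the $(N+1)$-th generator to $s$ and $s^*s=1\ne 0$, it is nonzero; as $\mathcal{O}_{\widehat{A}_\infty}$ is simple by (iii), $\pi$ is injective, hence an isomorphism. Combined with Corollary \ref{cor:whatOA} this also reidentifies the concrete algebra $C^*(t_i\mid i\in\N)=\whatOA$.

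The principal obstacle is the verification of the fourth Exel--Laca relation \eqref{eq:EL(iii)} for the $t_i$: unlike the Cuntz--Krieger-type relations (1)--(3), it is a completeness condition ranging over all admissible pairs $(X,Y)$, and the infinite-emitter index $N+1$ (for which $t_{N+1}^*t_{N+1}=1$, so that $X=\{N+1\}$, $Y=\emptyset$ violates \eqref{eq:AXYj}) must be handled carefully so that only finitely-supported admissible pairs are tested and the identity closes exactly. A secondary point is to pin down the precise hypotheses of the Exel--Laca simplicity and pure-infiniteness theorems and confirm that $\widehat{A}_\infty$, with its infinite emitter at $N+1$, meets them.
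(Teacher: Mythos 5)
Your proposal is correct and takes essentially the same route as the paper: (i) directly from Lemma \ref{lem:C*(riss*ri)}, (ii) by checking the Exel--Laca relations for the $t_i$ using Lemmas \ref{lem:7.15} and \ref{lem:7.16} and then combining universality with simplicity, and (iii) by the transitivity/loop-with-exit criterion applied to the graph of $\widehat{A}_\infty$. The paper compresses (ii) into ``easily follows from Lemma \ref{lem:7.15} and Lemma \ref{lem:7.16}''; your explicit verification of the fourth relation \eqref{eq:EL(iii)} (correctly excluding the inadmissible pairs with $X\subseteq\{N+1\}$ and $N+1\notin Y$ via the finiteness hypothesis \eqref{eq:AXYj}), your injectivity-via-simplicity argument, and the harmless reordering (i), (iii), (ii) merely make explicit the details the paper leaves implicit.
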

\begin{proof}
(i) It is direct from Lemma \ref{lem:C*(riss*ri)} and the definition of the partial isometries $t_i, i \in \N.$

(ii) The assertion easily follows from Lemma \ref{lem:7.15} and Lemma \ref{lem:7.16}. 

(iii) The associated directed graph with the matrix 
$\widehat{A}_\infty$ is transitive and has one loop having an exit.
By \cite[P. 82]{RodSto}, the  Exel--Laca algebra $\mathcal{O}_{\widehat{A}_\infty}$
is  a unital simple purely infinite Kirchberg algebra.
\end{proof}
Therefore we conclude the following theorem.
\begin{theorem}\label{thm:univrelation}
Let $A =[A(i,j)]_{i,j=1}^N$ be an irreducible non-permutation matrix with entries in 
$\{0,1\}.$
The the reciprocal dual $\whatOA$ of the Cuntz--Krieger algebra $\OA$
is the universal concrete $C^*$-algebra $C^*(r_i, s\mid i\in \N)$
generated by a family of partial isometries $r_i, \, i \in \N$ with one isometry $s$
subject to the following relations: 

\medskip

$(1) \quad \sum_{j=1}^m r_j r_j^*  < r_{N+1}^* r_{N+1} =  r_{N+2}^* r_{N+2} = \dots $ for $m \in \N$.

$(2) \quad r_i^* r_i 
= \sum_{j=1}^N A(j,i) r_j r_j^* + r_{N+1}^* r_{N+1} - \sum_{j=1}^N r_j r_j^*$ 
for $ 1 \le i \le N.$ 

$(3) \quad s s^* = r_{N+1}^* r_{N+1} - \sum_{j=1}^N r_j r_j^*, \quad s^* s = 1.$  

$(4) \quad s^* r_{N+1} = r_{N+1}^* r_{N+1}, \quad s r_{N+1}^* = r_{N+1} r_{N+1}^*.$
\end{theorem}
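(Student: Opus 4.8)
The plan is to produce a canonical surjection from the universal algebra onto the concrete realization of $\whatOA$, and then to use simplicity to promote this surjection to an isomorphism. The key point is that everything needed has essentially been assembled in the preceding lemmas, so the argument is a matter of linking them correctly.

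First I would recall from Corollary \ref{cor:whatOA} together with Lemma \ref{lem:generator3} that the reciprocal dual is concretely realized as $\whatOA \cong P_N \OATI P_N = C^*(R_i, S \mid i \in \N)$, where $R_i = S_{N+1} S_i S_{N+1}^*$, $S = S_{N+1} P_N$ and $P_N = 1 - \sum_{j=1}^N S_j S_j^*$. By Proposition \ref{prop:generator4}, these concrete generators satisfy exactly the relations $(1)$--$(4)$ of the statement. Setting $p_1 := SS^* = R_{N+1}^* R_{N+1} - \sum_{j=1}^N R_j R_j^*$, I would then observe that $(1)$--$(4)$ are a faithful repackaging of the relations $(R1)$--$(R5)$ introduced in the subsection: substituting $p_1 = r_{N+1}^* r_{N+1} - \sum_{j=1}^N r_j r_j^*$ into $(R1)$, $(R2)$, $(R3)$ reproduces respectively the equalities among $r_i^* r_i$ for $i > N$, the formula in relation $(2)$, and the inequality in relation $(1)$, while $(R4)$ and $(R5)$ are relations $(3)$ and $(4)$ verbatim. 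Hence the universal $C^*$-algebra defined by $(1)$--$(4)$ coincides with the one defined by $(R1)$--$(R5)$.

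Next I would invoke Proposition \ref{prop:universalExelLaca}: the universal algebra $C^*(r_i, s \mid i \in \N)$ for the relations $(R1)$--$(R5)$ equals the Exel--Laca algebra $\mathcal{O}_{\widehat{A}_\infty}$ of the matrix $\widehat{A}_\infty$ from \eqref{eq:matrixAinfty}, and part (iii) of that proposition shows $\mathcal{O}_{\widehat{A}_\infty}$ is a unital simple purely infinite Kirchberg algebra. Since the concrete generators $R_i, S \in \whatOA$ satisfy $(1)$--$(4)$, the universal property yields a unital $*$-homomorphism $\rho : C^*(r_i, s \mid i \in \N) \rightarrow \whatOA$ with $\rho(r_i) = R_i$ and $\rho(s) = S$; it is surjective because $R_i, S$ generate $\whatOA$ by Lemma \ref{lem:generator3}, and it is unital since $\rho(s^* s) = S^* S = P_N$ is the unit of the corner $\whatOA$. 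As the domain is simple, the ideal $\Ker\rho$ is either $0$ or all of $C^*(r_i, s \mid i \in \N)$; the latter is excluded because $\rho$ is unital and $\whatOA \ne 0$, so $\Ker\rho = 0$ and $\rho$ is an isomorphism.

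I expect the main obstacle to be precisely the simplicity input. Without it the universal algebra could a priori surject onto $\whatOA$ with non-trivial kernel, so the entire argument hinges on Proposition \ref{prop:universalExelLaca} (iii) --- which rests on the transitivity of the graph of $\widehat{A}_\infty$ together with the presence of a loop admitting an exit --- to guarantee that no proper quotient can occur. The remaining work, namely the verification that $(1)$--$(4)$ and $(R1)$--$(R5)$ define the same algebra, is routine bookkeeping already carried out in Proposition \ref{prop:generator4} and the remark following $(R5)$.
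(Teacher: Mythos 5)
Your proof is correct and takes essentially the same route as the paper's own argument: both identify the relations $(1)$--$(4)$ with $(R1)$--$(R5)$ via $p_1 = r_{N+1}^*r_{N+1}-\sum_{j=1}^N r_jr_j^*$, use universality to obtain a surjection onto the concrete corner $P_N\OATI P_N$ generated by $R_i, S$ (Proposition \ref{prop:generator4} and Lemma \ref{lem:generator3}), and conclude injectivity from the simplicity of $\mathcal{O}_{\widehat{A}_\infty}$ furnished by Proposition \ref{prop:universalExelLaca}. Your identification of the simplicity input as the crux matches exactly how the paper closes the argument.
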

\begin{proof}
Let $r_i, s, (i \in \N),$ be partial isometries satisfying the relations
(1), (2), (3), (4).
The relations (1), (2), (3), (4) yield  relations 
$(R1), (R2), (R3), (R4), (R5)$
by putting
$p_1: = r_{N+1}^* r_{N+1} - \sum_{j=1}^N r_j r_j^*$.
Hence  two kinds of the relations (1), (2), (3), (4)
and $(R1), (R2), (R3), (R4), (R5)$ are equivalent.
Let 
$C_{\text{univ}}^*(r_i, s\mid i\in \N)$ be the universal unital $C^*$-algebra 
generated by a family of partial isometries $r_i, \, i \in \N$ with one isometry $s$
subject to the above  relations $(1), (2), (3), (4)$.
Let us realize the reciprocal dual $\whatOA$ of $\OA$ as the corner 
$P_N \OATI P_N$ of $\OATI$.
As in the preceding discussion, 
the algebra 
$P_N \OATI P_N$ is generated by the family 
$R_i: = S_{N+1} S_i S_{N+1}^*, S: = S_{N+1}P_N, \, i \in \N$
of partial isometries, where $S_i, i \in \N$ are the canonical generating partial 
isometries of $\OATI$ satisfying \eqref{eq:RelationOAI}.
By putting $P_1 = R_i^* R_i - \sum_{j=1}^N A^t(i,j) R_j R_j^*$ for $1 \le i \le N$,
one sees that 
the operators $R_i, S, \, i \in \N$ 
satisfy the relations $(R1), (R2), (R3), (R4), (R5).$
By the universality of the $C^*$-algebra  
$C_{\text{univ}}^*(r_i, s\mid i\in \N)$
the correspondence
$r_i \rightarrow R_i, \, \, s\rightarrow S$ yields a surjective $*$-homomorphism
from $C_{\text{univ}}^*(r_i, s\mid i\in \N)$ to $P_N \OATI P_N$ written
$\Phi$.
By Proposition \ref{prop:universalExelLaca}, we know that  
$C_{\text{univ}}^*(r_i, s\mid i\in \N)$ is the Exel--Laca algebra 
$\mathcal{O}_{\widehat{A}_\infty}$ of the infinite matrix $\widehat{A}_\infty$.
Since $\mathcal{O}_{\widehat{A}_\infty}$ is simple, 
the surjective homomorphism
$\Phi:C_{\text{univ}}^*(r_i, s\mid i\in \N)\rightarrow P_N \OATI P_N$
is actually isomorphic, so that we conclude that 
 $\mathcal{O}_{\widehat{A}_\infty}\cong C_{\text{univ}}^*(r_i, s\mid i\in \N)$ is isomorphic to $\whatOA$, and hence 
 the relations $(1), (2), (3), (4)$  are universal relations 
 defining the $C^*$-algebra $\whatOA$.
\end{proof}

\section{Gauge actions on  $\OAI$ and $\whatOA$ and their ground states}\label{sect:gaugeaction}
The gauge actions on Cuntz--Krieger algebras are most important actions of the circle group 
$\mathbb{T} (= \mathbb{R}/\Z)$
 to analyze the structure of the algebras as in \cite{CK}.
In this section, we will define and study such actions  on the reciprocal duals
$\whatOA$.

\subsection{Three kinds of Gauge actions on $\OAI$ and $\whatOA$}
Let us regard $\OAI$ as  the universal $C^*$-algebra generated by
a family of partial isometries $S_i, i \in \N$
 satisfying the relations \eqref{eq:OAI1}.
 Let $P_N$ be the projection $ 1 -\sum_{j=1}^N S_j S_j^*$
   in $\OAI$ defined by \eqref{eq:PN}.
The first two relations of \eqref{eq:OAI1}
yield the relations
\begin{equation}\label{eq:TA666}
     \sum_{j=1}^N   S_j S_j^*  + P_N =  1, \qquad
 S_i^* S_i  =  \sum_{j=1}^N A(i,j) S_j S_j^* + P_N. 
\end{equation}
Recall that the reciprocal dual $\whatOA$ of the Cuntz--Krieger algebra $\OA$
is regarded as  the corner $\whatOA = P_N \OATI P_N$ of the algebra $\OATI$
for the  the transposed 
matrix $A^t$ of $A$ instead of $A$. 
It seems to be natural to consider the following three gauge actions on $\OAI$ and $\whatOA$.
We first define three kinds of gauge actions $\alpha, \beta, \gamma $ on $\OAI$, which are actions of 
$\bbT = \R/\Z$, by setting
\begin{align*}
\alpha_t(S_j)  
= & 
{\begin{cases}
e^{2\pi i t} S_j & \text{ if } 1\le j \le N, \\
S_j  &\text{if } N < j, 
\end{cases}
} \\
\beta_t(S_j)  
= & 
{\begin{cases}
 S_j & \text{ if } 1\le j \le N, \\
e^{2\pi i t} S_j  &\text{if } N < j, 
\end{cases}
} \\
\gamma_t(S_j)  
=\, \,  & e^{2\pi i t} S_j \quad \text{ for all } j \in \N,
\end{align*}
so that 
$\gamma_t = \alpha_t \circ \beta_t =\beta_t \circ \alpha_t$.
Since 
$\alpha_t(P_N) = \beta_t(P_N) = \gamma_t(P_N) = P_N, t \in \R$,
by considering the transposed matrix $A^t$ instead of $A$,
they all yield actions of $\whatOA$,
which are written as $\hat{\alpha}^A, \hat{\beta}^A, \hat{\gamma}^A$,
respectively.
They also give rise to elements of the fundamental groups
$\pi_1(\Aut(\OAI))$ and $\pi_1(\Aut(\whatOA))$ in natural ways,
 respectively.
\begin{proposition}\label{prop:homotopybeta}
The homotopy class $[\beta]$
of the $\beta$-action of $\mathbb{T}$
is trivial in $\pi_1(\Aut(\OAI))$ 
and $[\hat{\beta}^A]$ is trivial in $\pi_1(\Aut(\whatOA))$.
\end{proposition}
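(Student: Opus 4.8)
The plan is to treat $[\beta]$ and $[\hat{\beta}^A]$ together, since $\hat{\beta}^A$ is merely the restriction of the $\beta$-action to the $\beta$-invariant corner $P_N\OATI P_N\cong\whatOA$. First I would record the structural meaning of $\beta$: by Theorem \ref{thm:mr}(ii) and Theorem \ref{thm:freeproduct}, $\OAI\cong\mathcal{O}_{H_\TA}\cong(\TA,\varphi_A)*(\OI,\varphi_\infty)$, and under these identifications $\beta$ fixes the coefficient algebra $\TA$ (hence $S_1,\dots,S_N$ and $P_N$) pointwise and acts as the canonical gauge action on the $\OI$-factor, rotating only the infinitely many tail isometries $S_{N+1},S_{N+2},\dots$. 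Because $t\mapsto\beta_t$ is a continuous path with $\beta_0=\id$, every $\beta_t$ already lies in $\Aut_0(\OAI)$; the whole content of the proposition is that the \emph{loop} itself contracts, not merely that it lands in the identity component.

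The main idea is an Eilenberg-swindle exploiting the infinite multiplicity built into $H_\TA=F_A\otimes\ell^2(\N)\otimes\TA$. The tail isometries $\{S_{N+k}\}_{k\ge1}$ are proper isometries with pairwise orthogonal ranges, all rotated by the same phase $e^{2\pi i t}$ under $\beta$; equivalently $H_\TA\cong H_\TA\otimes\ell^2(\N)$. Splitting the index set $\N\cong\N\sqcup\N$ partitions these isometries into two orthogonal infinite families, each again carrying a full copy of the gauge loop, so that $\beta$ becomes conjugate, via the reindexing, to the orthogonal direct sum of two copies of itself. Since an infinite reindexing of mutually orthogonal isometries is implemented by a unitary homotopic to the identity (a Kuiper-type contractibility), and since in the properly infinite setting such an orthogonal direct sum realises the group operation in $\pi_1(\Aut(\OAI))$, one obtains $[\beta]=[\beta]+[\beta]$, forcing $[\beta]=0$. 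To make the identification of direct sum with the $\pi_1$-operation legitimate I would pass to the stabilisation, using the comparison between $\Aut(\OAI)$ and its stabilised, properly infinite model from the framework of \cite{Sogabe2022}, where the swindle runs without obstruction.

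The step I expect to be the main obstacle is precisely this justification that the direct-sum manipulation computes the class in $\pi_1$, together with the fact that $\beta$ is genuinely \emph{not} inner: the operator that would implement it, the number operator $e^{2\pi i tD}$ on the Fock space $\bigoplus_n H_\TA^{\otimes n}$, has a non-compact, hence non-multiplier, tail and so does not lie in the unital algebra $M(\OAI)=\OAI$. Thus one cannot simply write $\beta_t=\Ad(W_t)$ for a contractible loop of unitaries inside $\OAI$ and must run the swindle at the level of the automorphism loop, where the infinite supply of orthogonal proper isometries --- and only that --- provides the contraction. I would also check the point-norm continuity of the homotopy at each stage, which is routine given the explicit reindexing unitaries.

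Finally, for $[\hat{\beta}^A]$ I would observe that all isometries used in the swindle have ranges dominated by $P_N$: for the matrix $A^t$ the relations give $\sum_{k}S_{N+k}S_{N+k}^*\le 1-\sum_{j=1}^N S_jS_j^*=P_N$, and $\beta_t(P_N)=P_N$. Hence the entire homotopy can be arranged to fix $P_N$, and restricting it to the corner $P_N\OATI P_N=\whatOA$ yields a null-homotopy of the loop $\hat{\beta}^A$ in $\Aut(\whatOA)$, giving $[\hat{\beta}^A]=0$.
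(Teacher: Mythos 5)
There is a genuine gap at the heart of your swindle. You need the identity $[\beta]=[\beta']+[\beta'']$ \emph{and} $[\beta']=[\beta'']=[\beta]$, where $\beta'$ (resp.\ $\beta''$) rotates only one of the two infinite families of tail isometries after splitting $\N\cong\N\sqcup\N$. The first identity is immediate (pointwise multiplication of commuting loops is the group operation in $\pi_1$ of a topological group), but the second is exactly the step your "reindexing unitary homotopic to the identity" is supposed to supply, and no such unitary exists. The reindexing $S_{N+k}\mapsto S_{N+2k-1}$ extends only to a \emph{proper unital endomorphism} $\psi$ of $\OAI$ (its image omits the even-indexed tail isometries, and the formal intertwiner $\sum_k S_{N+2k-1}S_{N+k}^*$ does not converge in norm); at the level of the bimodule $F_A\otimes\ell^2(\N)\otimes\TA$, the loop $\beta_t$ is induced by the unitary $e^{2\pi it}1_{\ell^2(\N)}$ while $\beta'_t$ is induced by a unitary with eigenvalues $e^{2\pi it}$ and $1$ both of infinite multiplicity, so the implementing unitary loops have different spectra for generic $t$ and cannot be conjugate. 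What you actually get is the intertwining $\psi\circ\beta_t=\beta'_t\circ\psi$ with $\psi$ non-invertible, which does not transport $\pi_1$-classes; making it do so would require classification input (e.g.\ $\sqK(\psi)=\sqK(\id)$ since $\psi$ fixes $\TA$ pointwise and $\TA\hookrightarrow\OAI$ is a $\sqK$-equivalence, then Kirchberg--Phillips to homotope $\psi$ to $\id$ through unital endomorphisms, plus a further argument to convert the endomorphism intertwining into an equality of loop classes). Your fallback---"pass to the stabilisation \dots where the swindle runs without obstruction"---names the difficulty but is not an argument; as written the proof does not close.

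For comparison, the paper's proof avoids any swindle identity in $\pi_1$. It observes that $\beta_t$ is implemented, slot by slot, by the unitaries $U_t(n)$ that put the phase $e^{2\pi it}$ on the $n$-th coordinate of the $\ell^2(\N)$-factor of $H_\TA$, and then uses the elementary rotation homotopy
\begin{equation*}
\begin{bmatrix} e^{2\pi it} & 0\\ 0 & 1\end{bmatrix}
\sim_h
\begin{bmatrix} 1 & 0\\ 0 & e^{2\pi it}\end{bmatrix}
\end{equation*}
to slide the phase from an occupied slot to an unoccupied one: since any element of the dense subalgebra $\OalgAI$ involves only finitely many tail isometries, the loop $t\mapsto\beta_t$ is null-homotoped directly (in effect, the strong-topology contractibility of $U(\ell^2(\N))$ pushed through the continuous map $u\mapsto$ second quantization of $1\otimes u\otimes 1$), with no stabilization or classification theory. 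Your treatment of the corner is the one sound part and matches the paper: the homotopy unitaries act only on the $\ell^2(\N)$-factor, hence the resulting automorphisms fix $\TAT$, in particular $P_N$, and restrict to a null-homotopy of $\hat{\beta}^A$ on $\whatOA=P_N\OATI P_N$; but this rests on the unrepaired first step.
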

\begin{proof}
We will first prove that $[\beta] =0$ in $\pi_1(\Aut(\OAI))$.
One may assume that $\OAI$ is represented on the Hilbert $C^*$-bimodule 
$F_A\otimes \ell^2(\N) \otimes \TA$ over $\TA$.
Let $U_t(n), n \in \N$ be the unitary on 
$F_A\otimes \ell^2(\N) \otimes \TA$
defined by 
\begin{equation*}
U_t(n)(\xi \otimes e_k\otimes x) 
=
\begin{cases} 
  e^{2\pi i t}\xi \otimes e_k\otimes x & \text{ if } n=k, \\
\xi \otimes e_k\otimes x & \text{ if } n\ne k
\end{cases}
\end{equation*}
for $\xi \otimes e_k\otimes x \in F_A\otimes \ell^2(\N) \otimes \TA$.
Let $\OalgAI$ be the dense $*$-algebra algebraically generated by 
$S_i, i \in \N$.
For a word $\mu = (\mu_1,\dots,\mu_m) \in \N^m$,
 let us denote by
$| \mu |^N = | \{ \mu_j \mid N < \mu_j \}|.$
Any element of $\OalgAI$ is a finite linear combination of elements 
of the form 
$S_{\mu_1} \cdots S_{\mu_m} S_{\nu_n}^* \cdots S_{\nu_1}^*$.
By rotating, we know that 
$\begin{bmatrix}
e^{2\pi i t} & 0 \\  
0 & 1 
\end{bmatrix}
$ is homotopic to 
$\begin{bmatrix}
1 & 0 \\
0 & e^{2\pi i t}
\end{bmatrix}
$
and hence 
$\Ad (U_t(n))(S_n) $ is homotopic to 
$\Ad (U_t(n+1))(S_n)= S_n $.
This shows that 
$\beta_t(X)$ is homotopic to $X$
for any $X \in \OalgAI$,
 showing that 
the class $[\beta]$ in $\pi_1(\Aut(\OAI))$ 
is  $[\id] $ which is  trivial.
Similarly we know that 
the class $[\beta]$ in $\pi_1(\Aut(\whatOA))$ 
is $[\id] $ which is  trivial.
\end{proof}
\begin{corollary}\label{cor:homotopyalpha=gamma}
$[\alpha] = [\gamma] $ in $\pi_1(\Aut(\OAI))$ 
and  $[\hat{\alpha}^A] = [\hat{\gamma}^A] $ 
in $\pi_1(\Aut(\whatOA))$.
\end{corollary}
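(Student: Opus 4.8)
The plan is to exploit the fact that, in the point-norm topology, $\Aut(\OAI)$ is a topological group, so that its fundamental group $\pi_1(\Aut(\OAI))$ is abelian and the \emph{pointwise} product of two based loops represents the \emph{sum} of their classes (the Eckmann--Hilton argument for $H$-spaces). First I would observe that each of $\alpha, \beta, \gamma$ is a continuous loop in $\Aut(\OAI)$ based at $\id$: continuity in $t$ holds because each action multiplies a subset of the generators $S_i$ by $e^{2\pi i t}$ and fixes the rest, while $\alpha_0 = \alpha_1 = \beta_0 = \beta_1 = \gamma_0 = \gamma_1 = \id$ since $e^{2\pi i} = 1$. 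Hence $[\alpha], [\beta], [\gamma]$ are well-defined elements of $\pi_1(\Aut(\OAI))$.

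Next, the identity $\gamma_t = \alpha_t \circ \beta_t$, already recorded above and immediate on generators, exhibits the loop $\gamma$ as the pointwise product $t \mapsto \alpha_t \circ \beta_t$ of the loops $\alpha$ and $\beta$ in the topological group $\Aut(\OAI)$. By the classical fact that in a topological group the group-theoretic (pointwise) product of based loops agrees up to homotopy with the concatenation product, this yields
\[
[\gamma] = [\alpha] + [\beta] \quad \text{in } \pi_1(\Aut(\OAI)).
\]
Applying Proposition \ref{prop:homotopybeta}, which asserts $[\beta] = 0$, one concludes $[\gamma] = [\alpha]$.

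Finally, the identical reasoning applies verbatim to $\whatOA = P_N \OATI P_N$: since $\hat{\alpha}^A, \hat{\beta}^A, \hat{\gamma}^A$ are induced by $\alpha, \beta, \gamma$ for the transposed matrix $A^t$ and satisfy $\hat{\gamma}^A_t = \hat{\alpha}^A_t \circ \hat{\beta}^A_t$, the same Eckmann--Hilton computation gives $[\hat{\gamma}^A] = [\hat{\alpha}^A] + [\hat{\beta}^A]$, and the vanishing $[\hat{\beta}^A] = 0$ from Proposition \ref{prop:homotopybeta} forces $[\hat{\alpha}^A] = [\hat{\gamma}^A]$. There is essentially no serious obstacle in this argument; the only point that requires care is confirming that the pointwise product of loops computes the fundamental-group operation, which is the standard property of topological groups and needs only the joint continuity of composition in $\Aut(\OAI)$ in the point-norm topology.
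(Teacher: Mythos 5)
Your proposal is correct and coincides with the paper's own (implicit) argument: the corollary is stated without proof precisely because, as you observe, $\gamma_t = \alpha_t \circ \beta_t$ exhibits $\gamma$ as the pointwise product of the loops $\alpha$ and $\beta$ in the topological group $\Aut(\OAI)$, whence $[\gamma]=[\alpha]+[\beta]=[\alpha]$ by Proposition \ref{prop:homotopybeta}, and verbatim for $\whatOA$. Your added care about continuity of the loops and the Eckmann--Hilton identification of pointwise product with the $\pi_1$ operation is exactly the standard background the paper takes for granted.
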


\subsection{Ground states}\label{subsec:Ground}
Let $\A$ be a $C^*$-algebra with a one-parameter group action 
$\alpha: \R \rightarrow \Aut(\A)$.
Let us denote by $\A_a$ the set of analytic elements with respect to $\alpha$
which is defined by
\begin{align*}
\A_a := \{ X \in \A \mid t \in \R \rightarrow \alpha_t(X) \in \A 
\text{ can be extended to an entire analytic function on } \mathbb{C}  \}.
\end{align*}
Let us denote by $\mathbb{C}_+$ 
the upper half plane
$\{ z = t + is \in \mathbb{C} \mid s >0\}.$
A state $\varphi$  on $\A$ is called a 
{\it ground state}\/ for $\alpha$ 
if the inequality
\begin{equation}
|\varphi(Y \alpha_z(X)) | \le \| X \| \| Y \| \qquad X \in \A_a, \,\, Y \in \A
\end{equation}
holds for  all $z\in \mathbb{C}_+$ (cf. \cite{BR}).
The generator 
$\delta_\alpha$ of $\alpha$ is defined by
\begin{equation*}
\delta_\alpha(X) = \lim_{t \to 0} \frac{\alpha_t(X) - X}{t}  
\end{equation*}
for $X $ in the domain $D(\delta_\alpha)$  of $\delta_\alpha$.
A ground state $\varphi$ is said to  have {\it spectral gap}\/ $\gamma>0$
if the inequality
\begin{equation}
- i \varphi(X^* \delta_\alpha(X)) \ge \gamma \varphi(X^* X )
\end{equation}
holds for all $ X\in D(\delta_\alpha)$ with $\varphi(X)=0.$

In what follows, we consider the $C^*$-algebra $\OAI$
instead of $\OATI$ to avoid complexity of notation.
We then use the results for $\OAI$ to apply $\OATI$ and $\whatOA = P_N \OATI P_N.$  
\begin{proposition}\label{prop:groundstate}
\begin{enumerate}
\renewcommand{\theenumi}{(\roman{enumi})}
\renewcommand{\labelenumi}{\textup{\theenumi}}
\item
There exists a ground state $\varphi_{A^\infty}$ on $\OAI$ for
$\alpha, \beta, \gamma$-actions.
\item The ground state $\varphi_{A^\infty}$ on $\OAI$ for
$\gamma$-action is unique.
\item For $\gamma$-action on $\OAI$, the ground state 
$\varphi_{A^\infty}$ has spectral gap $1$, that is,
the inequality
\begin{equation*}
- i \varphi_{A^\infty}(X^* \delta_\gamma(X)) \ge \varphi_{A^\infty}(X^* X)
\end{equation*}  
holds for 
$X \in D(\delta_\gamma)$ 
with 
$\varphi_{A^\infty}(X) =0$.
\end{enumerate}
\end{proposition}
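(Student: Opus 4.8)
The plan is to treat all three parts inside the GNS representation $(\pi,\mathcal{H},\xi)$ of $\varphi_{A^\infty}$, using the three $\Z$-gradings of $\OAI$ induced by $\alpha$, $\beta$, $\gamma$. The one computational input I need is that $\varphi_{A^\infty}$ is a vacuum state: from $\varphi_{A^\infty}=\varphi_A\circ E_A$ (or from the free product description in Theorem \ref{thm:freeproduct}) one gets $\varphi_{A^\infty}(S_jS_j^*)=0$ for every $j\in\N$---for $j\le N$ because $\varphi_A(T_jT_j^*)=\|T_j^*\Omega_A\|^2=0$, and for $j>N$ because $S_j$ is a creation operator with $P_\TA S_j=0$. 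Equivalently $\pi(S_j)^*\xi=0$, whence $\pi(S_\nu^*)\xi=0$ for every word $\nu$ with $|\nu|\ge1$. I would then note that $\varphi_{A^\infty}(S_\mu S_\nu^*)=0$ for $(\mu,\nu)\ne(\emptyset,\emptyset)$, so $\varphi_{A^\infty}$ is invariant under each of $\alpha,\beta,\gamma$ and $\mathcal{H}$ decomposes orthogonally into spectral subspaces $\mathcal{H}_d=\overline{\pi((\OalgAI)_d)\xi}$ for each grading.

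For (i), the key remark is that, in each of the three gradings, a homogeneous element of strictly negative degree is spanned by monomials $S_\mu S_\nu^*$ that necessarily have $|\nu|\ge1$ (a negative $\alpha$-, $\beta$- or $\gamma$-degree is impossible when $\nu$ is empty); hence $\pi(S_\nu^*)\xi=0$ gives $\mathcal{H}_d=0$ for $d<0$. The generator $H$ implementing the action thus acts by $2\pi d\ge0$ on $\mathcal{H}_d$ and satisfies $H\xi=0$. Positivity of $H$ yields the ground state inequality directly: for $z\in\mathbb{C}_+$ the vector $\pi(X)\xi=\sum_{d\ge0}v_d$ has only nonnegative modes, so $\|e^{izH}\pi(X)\xi\|\le\|\pi(X)\xi\|\le\|X\|$, while $e^{-izH}\xi=\xi$; Cauchy--Schwarz then gives $|\varphi_{A^\infty}(Y\alpha_z(X))|=|\langle\pi(Y)^*\xi,e^{izH}\pi(X)\xi\rangle|\le\|X\|\,\|Y\|$. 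This shows $\varphi_{A^\infty}$ is a ground state for $\alpha$, $\beta$ and $\gamma$.

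For (ii) I would read the relevant invariance off the defining inequality rather than assume it. Let $\psi$ be any ground state for $\gamma$. Applying the inequality to the analytic element $X=S_\nu^*$, with $\gamma_z(S_\nu^*)=e^{-2\pi iz|\nu|}S_\nu^*$, and to $Y=S_\nu$, along $z=is$ gives $e^{2\pi s|\nu|}|\psi(S_\nu S_\nu^*)|\le\|S_\nu^*\|\,\|S_\nu\|\le1$ for all $s>0$, forcing $\psi(S_\nu S_\nu^*)=0$ whenever $|\nu|\ge1$. The Cauchy--Schwarz inequality $|\psi(S_\mu S_\nu^*)|^2\le\psi(S_\mu S_\mu^*)\psi(S_\nu S_\nu^*)$ then shows $\psi(S_\mu S_\nu^*)=0$ unless $\mu=\nu=\emptyset$. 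Since the monomials $S_\mu S_\nu^*$ together with the unit span the dense $*$-algebra $\OalgAI$ (every word reduces to such a monomial via $S_i^*S_j=\delta_{ij}S_i^*S_i$), and $\varphi_{A^\infty}$ takes the same values, I conclude $\psi=\varphi_{A^\infty}$.

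For (iii), a short computation gives $-i\varphi_{A^\infty}(X^*\delta_\gamma(X))=\langle\pi(X)\xi,H\pi(X)\xi\rangle$ for $X\in D(\delta_\gamma)$, with $H$ the positive generator from (i). The decisive step, which I expect to be the only real work, is the non-degeneracy of the ground level $\mathcal{H}_0=\mathbb{C}\xi$: for a degree-$0$ monomial $S_\mu S_\nu^*$ with $|\mu|=|\nu|\ge1$ one has $\|\pi(S_\mu S_\nu^*)\xi\|^2=\varphi_{A^\infty}(S_\nu S_\mu^*S_\mu S_\nu^*)\le\varphi_{A^\infty}(S_\nu S_\nu^*)=0$, so $\pi(X)\xi=\varphi_{A^\infty}(X)\xi$ for all $X$ in the fixed-point algebra. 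Hence $H\ge2\pi$ on $\xi^\perp$. Since $\varphi_{A^\infty}(X)=0$ means precisely $\pi(X)\xi\perp\xi$, I obtain $-i\varphi_{A^\infty}(X^*\delta_\gamma(X))=\langle\pi(X)\xi,H\pi(X)\xi\rangle\ge2\pi\|\pi(X)\xi\|^2\ge\varphi_{A^\infty}(X^*X)$, which is the asserted spectral gap (with room to spare, the sharp constant being $2\pi$). This non-degeneracy is exactly what excludes additional zero-energy fluctuations inside the fixed-point algebra, and it is the same vacuum estimate $\varphi_{A^\infty}(S_\nu S_\nu^*)=0$ that drives the uniqueness in (ii).
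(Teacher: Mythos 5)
Your proof is correct, but for parts (i) and (iii) it takes a genuinely different route from the paper, which never passes to the GNS representation. For (i) the paper expands $X\in\OalgAI$ as $X=\sum_{|\nu|\ge1}X_{-\nu}S_\nu^*+X_0+\sum_{|\mu|\ge1}S_\mu X_\mu$ with coefficients in the fixed-point algebra, computes $\varphi_{A^\infty}(Y\gamma_z(X))$ explicitly (only terms $e^{-s|\mu|}e^{it|\mu|}\tau_{A^\infty}(Y_{-\mu}S_\mu^*S_\mu X_\mu)$ survive), and invokes the Phragm\'en--Lindel\"of theorem to bound this bounded analytic function by its boundary values on $\R$; for (iii) it proves the inequality on $\OalgAI$ by the same expansion and then extends to $D(\delta_\gamma)$ by approximation in the graph norm $\|X\|_1=\|X\|+\|\delta_\gamma(X)\|$. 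Your positive-energy argument replaces both: the single vacuum identity $\pi(S_j)^*\xi=0$ (in the paper this is $\varphi_{A^\infty}(S_jS_j^*)=0$, coming from $\tau_{A^\infty}|_{\IAI}=0$) kills the negative spectral subspaces in all three gradings, so $H\ge0$ and the ground state bound follows from $\|e^{izH}\|\le1$ on $\overline{\mathbb{C}_+}$ --- modulo the routine analytic-continuation step identifying $\pi(\gamma_z(X))\xi$ with $e^{izH}\pi(X)\xi$ for analytic $X$, which you should state explicitly --- while the identity $-i\varphi_{A^\infty}(X^*\delta_\gamma(X))=\langle\pi(X)\xi,H\pi(X)\xi\rangle$ holds at once for every $X\in D(\delta_\gamma)$ (the difference quotient converges in norm, hence in $\mathcal{H}$), so your approach buys you the inequality on all of $D(\delta_\gamma)$ without the paper's $\|\cdot\|_1$-density argument; your non-degeneracy lemma $\mathcal{H}_0=\mathbb{C}\xi$ plays exactly the role of the paper's use of multiplicativity of $\tau_{A^\infty}$ (which gives $\varphi_{A^\infty}(X_0^*X_0)=\tau_{A^\infty}(X_0^*)\tau_{A^\infty}(X_0)=0$). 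Part (ii) is essentially the paper's own argument: both force $\psi(S_\nu S_\nu^*)=0$ from unboundedness of $z\mapsto\psi(S_\nu\gamma_z(S_\nu^*))$ on $\mathbb{C}_+$, then conclude by Cauchy--Schwarz and density of the span of the monomials $S_\mu S_\nu^*$. One cosmetic remark: with the paper's stated normalization $\gamma_t(S_j)=e^{2\pi it}S_j$, $t\in\bbT=\R/\Z$, your sharp constant $2\pi$ is the right one; the paper's proof silently uses $\delta_\gamma(S_\mu X_\mu)=i|\mu|S_\mu X_\mu$, i.e.\ the convention $e^{it}$, under which the gap is exactly $1$ --- either way your estimate implies the stated inequality.
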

\begin{proof}
(i) Let us denote by 
$\FAI$
the fixed point algebra of $\OAI$ under the action $\gamma$. 
There exists a conditional expectation 
$E_{A^\infty}: \OAI\rightarrow \FAI$
defined by
$$
E_{A^\infty}(X) = \int_{\mathbb{T}}\gamma_t(X) dt, \qquad X \in \OAI
$$
where 
$dt$ is the normalized Lebesgue measure on $\mathbb{T}.$
Let us denote by 
$\IAI$ the closed two-sided ideal of $\FAI$ generated by
$S_\mu S_\nu^*$ with $|\mu| = |\nu| \ge 1.$
Since 
$\FAI$ is generated by
$S_\mu S_\nu^*, \, |\mu| = |\nu| \ge 1$
and the unit $1$ of $\OAI$,
any element of $\FAI$ is approximated by finite linear combinations
of elements of  $S_\mu S_\nu^*$ with $|\mu| = |\nu| \ge 1$
and the unit $1$.
Hence $\IAI$ is a maximal ideal of $\FAI$ such that 
$
\FAI/\IAI \cong \mathbb{C}
$
so that the quotient map
$\tau_{A^\infty}: \FAI\rightarrow \mathbb{C}$
is a unital $*$-homomorphism which yields a tracial state on $\FAI.$
Define a state $\varphi_{A^\infty}$ on $\OAI$ by 
$\varphi_{A^\infty}:= \tau_{A^\infty} \circ E_{A^\infty}.$
Note that 
$\varphi_{A^\infty}(S_\mu S_\nu^*) =0$ for 
$\mu, \nu$ with $|\mu| = |\nu| \ge 1,$
in particular
$\varphi_{A^\infty}(S_j S_j^*) =0$ for every $j \in \N,$
so that 
$$
\varphi_{A^\infty}(P_N) 
= \varphi_{A^\infty}(1 - \sum_{j=1}^N S_j S_j^*) 
= \varphi_{A^\infty}(1) = 1
$$
(see also the same state $\varphi_{A^\infty}$ on $(\OAI, \varphi_{A^\infty})=(\TA,\varphi)*(\OI,\varphi_\infty)$
defined in Subsection \ref{subsec.freeproduct}).
Let us denote by $\OalgAI$ 
the $*$-subalgebra of $\OAI$
 algebraically generated by $S_i, i \in \N.$ 
 Any elemnt $X$ of $\OalgAI$ is a finite sum of the form
 \begin{equation}\label{eq:X}
 X= \sum_{|\nu| \ge 1} X_{-\nu}S_\nu^* + X_0 + \sum_{|\mu|\ge 1} S_\mu X_\mu
  \end{equation}
for some
$X_{-\nu}, X_0, X_{\mu} \in \FAI\cap \OalgAI$.
For $z = t + is\in \mathbb{C}$, we have
 \begin{equation*}
\gamma_z(X)
= \sum_{|\nu| \ge 1} X_{-\nu} e^{-iz|\nu|}S_\nu^* 
+ X_0 + 
\sum_{|\mu|\ge 1} e^{iz|\mu|} S_\mu X_\mu.
  \end{equation*}
For $Y \in \OalgAI$ with 
\begin{equation}\label{eq:Y}
 Y= \sum_{|\nu'| \ge 1} Y_{-\nu'}S_{\nu'}^* 
 + Y_0 + \sum_{|\mu'|\ge 1} S_{\mu'} Y_{\mu'},
  \end{equation}
we have
\begin{equation*}
E_{A^\infty}(Y\gamma_z(X))
=  
 \sum_{|\mu'| =|\nu| \ge 1} S_{\mu'} Y_{\mu'} X_{-\nu} e^{-iz|\nu|}S_\nu^* 
+ Y_0 X_0 + 
\sum_{|\nu'| =|\mu|\ge 1} Y_{-\nu'}S_{\nu'}^* e^{iz|\mu|} S_\mu X_\mu
\end{equation*}
so that 
\begin{align*}
\varphi_{A^\infty}(Y \gamma_z(X)) 
=&  \tau_{A^\infty}(E_{A^\infty}(Y \gamma_z(X))) \\
=& \tau_{A^\infty}(Y_0 X_0) 
+ \sum_{|\mu| \ge 1} 
e^{-s |\mu|} e^{i t |\mu|}\tau_{A^\infty}(Y_{-\mu} S_\mu^* S_\mu X_\mu).
\end{align*}
Hence the function
$z \in {\mathbb{C}}_+ \rightarrow \varphi_{A^\infty}(Y \gamma_z(X))$
is bounded in the upper half plane ${\mathbb{C}}_+$.
By Phragmen--Lindel\"{o}f theorem, the supremum of the function
in the region is given on the real line, that is,
$$
|\varphi_{A^\infty}(Y \gamma_z(X))| 
\le \sup_{t \in \R} | \varphi_{A^\infty}(Y \gamma_t(X))| 
\le \sup_{t \in \R} \| Y \| \|  \gamma_t(X)) \| 
\le \| Y \| \| X \|,
$$
showing that $\varphi_{A^\infty}$
 is a ground state for $\gamma$-action on $\OAI$.

We note that the state  $\varphi_{A^\infty}$ on $\OAI$
 is a ground state for $\alpha$-action and similarly for $\beta$-action on $\OAI$.
We in fact see the following.
Put for a word $\mu =(\mu_1,\dots,\mu_m)$ of $\N$
$$
| \mu |_N := |\{ \mu_j \mid 1 \le \mu_j \le N \}|, \qquad 
| \mu |^N := |\{ \mu_j \mid  \mu_j > N \}|. 
$$
For 
$X \in \OalgAI$ of the form \eqref{eq:X}
and
$Y \in \OalgAI$ 
of the form \eqref{eq:Y},
 similarly to $\gamma$-action we have that 
\begin{align*}
\varphi_{A^\infty}(Y \alpha_z(X)) 
= & \tau_{A^\infty}(Y_0 X_0) 
+ \sum_{|\mu| \ge 1} 
e^{-s |\mu|_N} e^{i t |\mu|_N}\tau_{A^\infty}(Y_{-\mu} S_\mu^* S_\mu X_\mu), \\
\varphi_{A^\infty}(Y \beta_z(X)) 
= & \tau_{A^\infty}(Y_0 X_0) 
+ \sum_{|\mu| \ge 1} 
e^{-s |\mu|^N} e^{i t |\mu|^N}\tau_{A^\infty}(Y_{-\mu} S_\mu^* S_\mu X_\mu)
\end{align*}
for $z = t + i s \in {\mathbb{C}}_+$.
Hence 
the functions
$z \in {\mathbb{C}}_+ \rightarrow 
\varphi_{A^\infty}(Y \alpha_z(X)), \varphi_{A^\infty}(Y \beta_z(X))$
are both bounded in ${\mathbb{C}}_+$.
Similarly to the $\gamma$-action, 
 Phragmen--Lindel\"{o}f theorem syas the inequalities
$$
|\varphi_{A^\infty}(Y \alpha_z(X))|,\,
|\varphi_{A^\infty}(Y \beta_z(X))| 
\le \| y \| \| x \|,
$$
showing that $\varphi_{A^\infty}$
 is a ground state for both $\alpha$-action and $\beta$-action on $\OAI$.

(ii) We will show the uniqueness of the ground state for $\gamma$-action on $\OAI$.
Let $\phi$ be a ground state on $\OAI$ for $\gamma$-action.
For $ z = t + i s \in {\mathbb{C}}_+$ and $k \in \N$, 
we have
$$
\phi(S_k \gamma_z(S_k^*)) = e^{-it} e^s \phi(S_k S_k^*)
$$
so that the function
$
{\mathbb{C}}_+ \ni z =t + is  \rightarrow 
|\phi(S_k \gamma_z(S_k^*))|
 = e^s \phi(S_k S_k^*)
$
is not bounded unless $\phi(S_k S_k^*) =0.$
Hence we have
$\phi(S_k S_k^*) =0,$
so that 
$
\phi(P_N) = \phi( 1 - \sum_{j=1}^N S_j S_j^*) = \phi(1) = 1.
$
For words 
$\mu = (\mu_1, \dots, \mu_m), \nu =(\nu_1, \dots,\nu_n)$ 
with $m\ge 1$ or $ n\ge 1$,
suppose that 
$m \ge 1$ and put $\bar{\mu} = (\mu_2,\dots,\mu_m)$ so that 
$\mu = \mu_1 \bar{\mu}$.
By the  Cauchy--Schwartz inequality, we have
\begin{equation*}
| \phi(S_\mu S_\nu^*) |^2 
\le
 \phi(S_{\mu_1} S_{\mu_1}^*)  
 \phi(S_\nu S_{\bar{\mu}}^*  S_{\bar{\mu}} S_\nu^*) =0.
\end{equation*}
Hence we have 
$\phi(S_\mu S_\nu^*) =0$ for $|\mu| \ge 1$ 
and similarly for $| \nu|\ge 1$.
For $X \in \OalgAI$ of the form \eqref{eq:X},
we obtain that 
$$
\phi(X_{-\nu} S_\nu^*) = \phi(S_\mu X_\mu) =0.
$$
As $\IAI$ is the ideal of $\FAI$ generated by
elements of the form $S_\mu S_\nu^*$ with $|\mu | = |\nu| \ge 1$, we have
$\phi(X) =0$ for all $X \in \IAI$.
Hence $\phi$ defines a state on $\FAI/ \IAI \cong \mathbb{C}$
so that $\phi(X) =\tau_{A^\infty}\circ E_{A^\infty}(X) =\varphi_{A^\infty}(X)$
for all $X \in \OAI$.

(iii) We will prove that $\varphi_{A^\infty}$ has spectral gap $1$.
For $X \in \OalgAI$ of the form \eqref{eq:X},
suppose that $\varphi_{A^\infty}(X) =0.$
As
 \begin{equation*}
\varphi_{A^\infty}(X)
= \sum_{|\nu| \ge 1} \varphi_{A^\infty}(X_{-\nu}S_\nu^*)
 + \varphi_{A^\infty}(X_0) 
 + \sum_{|\mu|\ge 1} \varphi_{A^\infty}(S_\mu X_\mu)
  \end{equation*}
and
$\varphi_{A^\infty}(X_{-\nu}S_\nu^*) = \varphi_{A^\infty}(S_\mu X_\mu) = 0,$
we have
$\varphi_{A^\infty}(X_0) =0.$ 
We then have
\begin{equation*}
E_{A^\infty}(X^* X)
= \sum_{|\nu'| =|\nu| \ge 1} S_{\nu'} X_{-\nu'}^* X_{-\nu}S_\nu^* 
+ X_0^* X_0 + 
\sum_{|\mu'| =|\mu|\ge 1} X_{\mu'}^* S_{\mu'}^* S_\mu X_\mu
\end{equation*}
and hence
\begin{equation*}
\varphi_{A^\infty}(X^* X)
= \sum_{|\nu'| =|\nu| \ge 1} \varphi_{A^\infty}(S_{\nu'} X_{-\nu'}^* X_{-\nu}S_\nu^*) 
+ \varphi_{A^\infty}(X_0^* X_0) + 
\sum_{|\mu'| =|\mu|\ge 1} \varphi_{A^\infty}(X_{\mu'}^* S_{\mu'}^* S_\mu X_\mu).
\end{equation*}
Since
$
\varphi_{A^\infty}(X_0^* X_0) 
=\tau_{A^\infty}(X_0^* X_0)
= \tau_{A^\infty}(X_0^*) \tau_{A^\infty}( X_0) 
= \varphi_{A^\infty}(X_0^*) \varphi_{A^\infty}( X_0) 
=0,
$ 
we have
\begin{equation*} 
\varphi_{A^\infty}(X^* X)
= 
\sum_{|\mu|\ge 1} \tau_{A^\infty}(X_{\mu}^* S_{\mu}^* S_\mu X_\mu).
\end{equation*}
On the other hand, for the $\gamma$-action, 
we have
\begin{equation*}
\delta_\gamma(X_{-\nu} S_\nu^*) 
=  - i |\nu| X_{-\nu}S_\nu^*, \qquad
\delta_\gamma(X_0) 
= 0, \qquad
\delta_\gamma(S_\mu X_\mu) 
=  i |\mu| S_\mu X_\mu, 
\end{equation*}
so that 
\begin{equation*}
\delta_\gamma(X)
= - i \sum_{|\nu| \ge 1} |\nu| X_{-\nu}S_\nu^*  + i \sum_{|\mu|\ge 1} |\mu | S_\mu X_\mu.
\end{equation*}
We also have
\begin{align*}
E_{A^\infty}(X^* \delta_\gamma(X))
= & 
\sum_{|\mu'| =|\mu|\ge 1} X_{\mu'}^* S_{\mu'}^* \cdot (i |\mu|  S_\mu X_\mu)
+
\sum_{|\nu'| =|\nu| \ge 1} S_{\nu'} X_{-\nu'}^* \cdot (- i |\nu| X_{-\nu}S_\nu^*) \\
= & 
i \sum_{|\mu|\ge 1} 
|\mu| X_{\mu}^* S_{\mu}^* S_\mu X_\mu
- i
\sum_{|\nu'| =|\nu| \ge 1} |\nu| S_{\nu'} X_{-\nu'}^*  X_{-\nu}S_\nu^*.
 \end{align*}
Since
$\varphi_{A^\infty}( S_{\nu'} X_{-\nu'}^*  X_{-\nu}S_\nu^*) =0$
 for $|\nu'| =|\nu| \ge 1$,
we have 
\begin{equation*} 
- i \varphi_{A^\infty}(X^* \delta_\gamma(X))
=  
 \sum_{|\mu|\ge 1} 
|\mu| \tau_{A^\infty}(X_{\mu}^* S_{\mu}^* S_\mu X_\mu) 
\ge   
 \sum_{|\mu|\ge 1} 
  \tau_{A^\infty}(X_{\mu}^* S_{\mu}^* S_\mu X_\mu) 
=  \varphi_{A^\infty}(X^*X).
 \end{equation*}
Put $\| X \|_1 := \| X \| + \| \delta_\gamma(X) \|$ for $X \in D(\delta_\gamma)$.
As $\OalgAI$ is dense in $D(\delta_\gamma)$ 
under the norm $\| \,\,\, \|_1$,
for any $X \in D(\delta_\gamma)$ with $\varphi_{A^\infty}(X) =0$,
there exists $X_n \in \OalgAI$ such that 
$\lim_{n \to \infty}\| X - X_n \|_1 =0.$
Put $Y_n := X_n - \varphi_{A^\infty}(X_n)$ 
so that $Y_n \in \OalgAI$ and $\varphi_{A^\infty}(Y_n) =0$.
We thus obtain the inequalities
\begin{equation*} 
- i \varphi_{A^\infty}(Y_n^* \delta_\gamma(Y_n))
\ge   
   \varphi_{A^\infty}(Y_n^* Y_n)
 \end{equation*}
Since 
$
| \varphi_{A^\infty}(X_n)| 
=| \varphi_{A^\infty}(X_n) - \varphi_{A^\infty}(X)|
\le \| X_n - X \|
\le \| X - X_n \|_1,
$
we have
$\lim_{n \to \infty}| \varphi_{A^\infty}(X_n)| =0.$ 
Also we have $\delta_\gamma(Y_n) = \delta_\gamma(X_n)$,
so that we have
$$
\lim_{n \to \infty} \| Y_n^* \delta_\gamma(Y_n) - X^* \delta_\gamma(X) \| =0, 
\qquad 
\lim_{n \to \infty} \| Y_n^* Y_n - X^* X \| =0. 
$$
We thus conclude that 
$
- i \varphi_{A^\infty}(X^* \delta_\gamma(X))
\ge   
   \varphi_{A^\infty}(X^* X)
$
for all
$ X \in D(\delta_\gamma).$
 \end{proof}
Therefore we have the following proposition.
\begin{proposition}\label{prop:groundstate2}
\begin{enumerate}
\renewcommand{\theenumi}{(\roman{enumi})}
\renewcommand{\labelenumi}{\textup{\theenumi}}
\item
The ground state $\varphi_{A^\infty}$ on $\OAI$ gives rise to 
a ground state $\varphi_{A^\infty}|_{P_N\OAI P_N}$ 
on $P_N\OAI P_N$ by restriction for $\alpha, \beta, \gamma$-actions,
 respectively.
\item The ground state on $P_N\OAI P_N$ for
$\gamma$-action is unique.
\item  The ground state 
$\varphi_{A^\infty}|_{P_N\OAI P_N}$ on $P_N\OAI P_N$ for $\gamma$-action 
has spectral gap $1$.
\end{enumerate}
\end{proposition}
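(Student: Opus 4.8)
The plan is to derive all three statements from Proposition \ref{prop:groundstate}, using that the corner $\B:=P_N\OAI P_N$ is invariant under each of $\alpha,\beta,\gamma$ (since $\alpha_t(P_N)=\beta_t(P_N)=\gamma_t(P_N)=P_N$) and that $\varphi_{A^\infty}(P_N)=1$, so that $\varphi_{A^\infty}|_{\B}$ is a state with unit $P_N$. For (i), I would use that the restriction of a ground state to an invariant corner is again a ground state: if $X\in\B_a$ and $Y\in\B$, the analytic extension of $t\mapsto\alpha_t(X)$ formed inside $\B$ coincides with the one formed inside $\OAI$, and $\B\hookrightarrow\OAI$ is isometric, so $|\varphi_{A^\infty}|_{\B}(Y\alpha_z(X))|=|\varphi_{A^\infty}(Y\alpha_z(X))|\le\|X\|\|Y\|$ for all $z\in\mathbb{C}_+$ by Proposition \ref{prop:groundstate}(i); the same applies verbatim to $\beta$ and $\gamma$.

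For (iii), since $\gamma$ leaves $\B$ invariant, the generator of $\gamma|_{\B}$ is the restriction of $\delta_\gamma$ with domain $D(\delta_\gamma)\cap\B$. Hence for $X\in D(\delta_\gamma)\cap\B$ with $\varphi_{A^\infty}(X)=0$ the inequality $-i\varphi_{A^\infty}(X^*\delta_\gamma(X))\ge\varphi_{A^\infty}(X^*X)$ is already furnished by Proposition \ref{prop:groundstate}(iii), which gives spectral gap at least $1$. To see that the gap is exactly $1$ I would test the element $X=S$: it has $\gamma$-weight one, so $\varphi_{A^\infty}(S)=0$ and $\delta_\gamma(S)=iS$, whence $-i\varphi_{A^\infty}(S^*\delta_\gamma(S))=\varphi_{A^\infty}(S^*S)=\varphi_{A^\infty}(P_N)=1$, matching the right-hand side.

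The substantive part is (ii). Let $\psi$ be an arbitrary ground state on $\B$ for $\gamma$. First, the unboundedness argument of Proposition \ref{prop:groundstate}(ii) shows that $\psi$ annihilates every element of nonzero $\gamma$-weight, so $\psi=\psi\circ E$ for the conditional expectation $E(b)=\int_{\mathbb{T}}\gamma_t(b)\,dt$ onto $\B^\gamma=P_N\FAI P_N$. Second, applying the ground-state inequality to the pair $(Y,X)=(V,V^*)$ and letting $\operatorname{Im}z\to\infty$ yields $\psi(VV^*)=0$ for every $V\in\B$ of $\gamma$-weight at least one. The key idea is to argue with the generators of the corner itself, namely $R_i=S_{N+1}S_iS_{N+1}^*$ and $S=S_{N+1}P_N$, all of which have $\gamma$-weight $+1$: by the relations of Proposition \ref{prop:generator4} the source projections $R_i^*R_i$ and $S^*S=P_N$ lie in the linear span of the range projections $R_jR_j^*$ and $SS^*$, so $\B^\gamma$ is densely spanned by balanced monomials $g_\alpha g_\beta^*$ with $g_\alpha,g_\beta$ products of $R_i,S$ (no adjoints) and $|\alpha|=|\beta|$. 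For positive length both $g_\alpha,g_\beta$ have weight at least one, so $\psi(g_\alpha g_\alpha^*)=\psi(g_\beta g_\beta^*)=0$ and Cauchy--Schwarz forces $\psi(g_\alpha g_\beta^*)=0$, while the only monomial of length zero is $P_N$ with $\psi(P_N)=1$. Since $\varphi_{A^\infty}|_{\B}$ is gauge-invariant and takes these same values, $\psi=\varphi_{A^\infty}|_{\B}$ by continuity.

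I expect the main obstacle to be precisely this normal-form step: the ambient generators $S_j$ with $j\le N$ do not belong to $\B$, so one cannot simply transport the vanishing of $\psi$ on $S_\mu S_\mu^*$ from $\OAI$ to the corner. The remedy is to re-express the fixed-point algebra through the corner's own weight-one generators $R_i,S$, after which the Cauchy--Schwarz annihilation is immediate; verifying that these generators indeed furnish the standard balanced normal form for $\B^\gamma$ is the one point that will genuinely require the relations in Proposition \ref{prop:generator4}.
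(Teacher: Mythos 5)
Parts (i) and (iii) of your proposal are fine and essentially coincide with the paper's proof: (i) restricts $\varphi_{A^\infty}$ using $\varphi_{A^\infty}(P_N)=1$ and the invariance of the corner, and (iii) observes that the spectral-gap inequality of Proposition \ref{prop:groundstate}(iii) is inherited by $D(\delta_\gamma)\cap P_N\OAI P_N$; your extra check with $X=S$ that the constant $1$ is attained is a harmless addition. For (ii), however, you take a genuinely different route from the paper, and it contains a real gap. The paper's proof is in the opposite direction: given a ground state $\phi$ on the corner, it \emph{extends} it to $\tilde{\phi}(X):=\phi(P_N X P_N)$ on $\OAI$, checks that $\tilde{\phi}$ is a ground state for $\gamma$ on $\OAI$, and then invokes the uniqueness already proved there (Proposition \ref{prop:groundstate}(ii)). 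You instead try to run the uniqueness argument intrinsically inside $\B:=P_N\OAI P_N$, and the pivotal step --- that $\B^\gamma=P_N\FAI P_N$ is densely spanned by balanced monomials $g_\alpha g_\beta^*$ with $g_\alpha,g_\beta$ plain words in the weight-one generators $R_i,S$ (no interspersed adjoints) --- is false.

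Concretely, the weight-zero partial isometries $U_n:=S^*R_n=S_nS_{N+1}^*$ for $n\ge N+2$ lie in $P_N\FAI P_N$ but not in the closed span of your balanced words: every word $g_\alpha$ of positive length in $R_i,S$ begins with $S_{N+1}$, so every $g_\alpha g_\beta^*$ with $|\alpha|=|\beta|\ge 1$ is supported under the projection $Q:=S_{N+1}S_{N+1}^*$ on both sides, while $QU_n=0$ and $U_n\neq 0$; the only length-zero monomial is $P_N$, and $U_n$ is visibly not of the form $\lambda P_N + x$ with $x\in Q\B Q$. (Passing to the Exel--Laca generators $t_i$ of Proposition \ref{prop:universalExelLaca} does not save the argument either, since $t_i=s^*r_i$ has $\gamma$-weight $0$ for $i\ge N+2$, so ``positive length implies positive weight'' fails.) Thus your Cauchy--Schwarz annihilation never reaches elements such as $U_n$, and $\psi=\varphi_{A^\infty}|_{\B}$ does not follow. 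The gap is repairable without the normal form: for instance, for any word $\mu$ and $n>N$ one has $S_nS_\mu S_\mu^*S_n^*=WW^*$ with $W=S_nS_\mu S_n^*\in\B$ of weight $|\mu|\ge 1$, and $S_nS_n^*=\sum_{j=1}^N (S_nS_jS_n^*)(S_nS_jS_n^*)^*+(S_nP_N)(S_nP_N)^*$ is a \emph{finite} sum of $VV^*$ with $V\in\B$ of weight one, whence $\psi(S_\mu S_\mu^*)=0$ for all $\mu$ with $\mu_1>N$ and then $\psi(S_\mu S_\nu^*)=0$ for all nonempty $\mu,\nu$ by Cauchy--Schwarz; but as written your normal-form step would fail, and the paper's extension trick avoids the issue entirely.
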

\begin{proof}
(i)
Since $\varphi_{A^\infty}(P_N) =1$,
the state $\varphi_{A^\infty}$ on $\OAI$ induces a state on 
$P_N \OAI P_N$.
It is straightforward to see that 
the restriction 
$\varphi_{A^\infty}|_{P_N\OAI P_N}$
gives rise to a ground state for $\alpha, \beta, \gamma$-actions 
on $P_N\OAI P_N$, respectively. 

(ii)
Let $\phi$ be a ground stste on $P_N\OAI P_N$ for $\gamma$-action.
The state $\phi$ is extended to a state $\tilde{\phi}$ on $\OAI$
by $\tilde{\phi}(X) = \phi(P_N X P_N)$ for $X \in \OAI$.
The extended state on $\OAI$ 
gives rise to a ground state for $\gamma$-action on $\OAI$.
By the uniqueness of the ground state for $\gamma$-action on $\OAI$,
we have $\phi = \varphi_{A^\infty}.$

(iii)
Since the inequality
$
- i \varphi_{A^\infty}(X^* \delta_\gamma(X)) \ge \varphi_{A^\infty}(X^* X)
$
holds for $X \in D(\delta_\gamma |_{P_N \OAI P_N})$ with 
$\varphi_{A^\infty}(X) =0$,
the assertion is obvious.
\end{proof}

\begin{corollary}\label{cor:groundstate2}
\begin{enumerate}
\renewcommand{\theenumi}{(\roman{enumi})}
\renewcommand{\labelenumi}{\textup{\theenumi}}
\item
Theere are ground state $\varphi_{\widehat{A}}$ on $\whatOA$
 for $\hat{\alpha}^A, \hat{\beta}^A, \hat{\gamma}^A$-actions, respectively.
\item The ground state $\varphi_{\widehat{A}}$ on $\whatOA$ for
$\hat{\gamma}^A$-action is unique.
\item The ground state $\varphi_{\widehat{A}}$ on $\whatOA$ for $\hat{\gamma}^A$-action
has spectral gap $1$.
\end{enumerate}
\end{corollary}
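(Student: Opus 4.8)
The plan is to obtain the corollary directly from Proposition \ref{prop:groundstate2} by replacing $A$ with its transpose $A^t$, together with the identification $\whatOA \cong P_N \OATI P_N$ furnished by Corollary \ref{cor:whatOA}. The key structural observation is that if $A$ is an $N \times N$ irreducible non-permutation matrix with entries in $\{0,1\}$, then so is its transpose $A^t$, since irreducibility, being a non-permutation matrix, and the $\{0,1\}$-condition are all preserved under transposition. Consequently Proposition \ref{prop:groundstate2}, whose proof used only these hypotheses on the defining matrix, applies verbatim to $\OATI$ and to its corner $P_N \OATI P_N$.

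Next I would fix the dictionary between the two settings. By Corollary \ref{cor:whatOA} we have $\whatOA = P_N \OATI P_N$ with $P_N = 1 - \sum_{j=1}^N S_j S_j^*$, where $S_i, i \in \N$ are the generators of $\OATI$ satisfying \eqref{eq:RelationOAI}. By definition the actions $\hat{\alpha}^A, \hat{\beta}^A, \hat{\gamma}^A$ of $\bbT$ on $\whatOA$ are exactly the restrictions to this corner of the actions $\alpha, \beta, \gamma$ on $\OATI$; the restriction is well defined precisely because $P_N$ is fixed by all three actions. I would therefore take $\varphi_{\widehat{A}}$ to be the restriction to $P_N \OATI P_N$ of the unique ground state on $\OATI$ supplied by the $A^t$-version of Proposition \ref{prop:groundstate}.

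With these identifications in place, the three assertions are precisely the three parts of Proposition \ref{prop:groundstate2} read for the algebra $\OATI$: part (i) yields that $\varphi_{\widehat{A}}$ is a ground state on $\whatOA$ for $\hat{\alpha}^A, \hat{\beta}^A, \hat{\gamma}^A$, part (ii) yields uniqueness of the ground state for $\hat{\gamma}^A$, and part (iii) yields that this ground state has spectral gap $1$. There is essentially no obstacle beyond verifying the transfer of hypotheses to $A^t$, which is immediate; the only care required is purely notational, namely checking that performing the substitution $A \mapsto A^t$ inside Proposition \ref{prop:groundstate2} and then cutting down by the fixed projection $P_N$ reproduces exactly the actions $\hat{\alpha}^A, \hat{\beta}^A, \hat{\gamma}^A$ as defined on $\whatOA$. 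This is guaranteed by the definitions of these actions, so no new analytic input is needed.
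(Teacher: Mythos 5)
Your proposal is correct and matches the paper's intended argument exactly: the paper proves Proposition \ref{prop:groundstate} and Proposition \ref{prop:groundstate2} for $\OAI$ with the explicit remark that the results are then applied to $\OATI$ and $\whatOA = P_N \OATI P_N$ by the substitution $A \mapsto A^t$, which is precisely your transfer (the corollary is stated in the paper without further proof for this reason). Your added checks --- that $A^t$ inherits irreducibility, the non-permutation property, and $\{0,1\}$ entries, and that $\hat{\alpha}^A, \hat{\beta}^A, \hat{\gamma}^A$ are the corner restrictions since $P_N$ is fixed --- are exactly the notational verifications the paper leaves implicit.
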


\begin{remark}\label{remark:absenseKMSstate}
Since $\OI$ is a unital subalgebra of $\OAI$,
it is straightforward to see that there is no KMS states on $\OAI$
for every $\alpha, \beta, \gamma$-action,
because there is no KMS states for gauge action on $\OI$ (\cite{OP}).
Similarly, it is easy to see that $\whatOA$ has no KMS states for 
$\hat{\alpha}^A, \hat{\beta}^A, \hat{\gamma}^A$-actions.
\end{remark}


\section{Gauge action in $\pi_1(\Aut(\whatOA))$ }\label{gau}
In this section, we will study 
a relation between the gauge actions $\gamma^A$ on $\OA$ and 
$\hat{\gamma}^A$ on $\whatOA$ 
from the viewpoint of the homotopy classes in the automorphism groups of the $C^*$-algebras.
As in Proposition \ref{prop:homotopybeta} 
the class of the $\hat{\beta}^A$-action in 
$\pi_1(\Aut(\whatOA))$ is trivial,
so the class $[\hat{\gamma}^A]$, 
which is the same class as $[\hat{\alpha}^A]$
 by Corollary \ref{cor:homotopyalpha=gamma},
desseves to study. 
The usual gauge action on  $\OA$ is denoted by $\gamma^A$.
We will prove the following theorem.
\begin{theorem}\label{thm:gaugeinpi1whatOA}
There exists an isomorphism
$\Phi: \pi_1(\Aut(\OA)) \rightarrow \pi_1(\Aut(\whatOA))$
of the fundamental groups such that 
$\Phi([\gamma^A]) =[\hat{\gamma}^A]$,
where
$\hat{\gamma}^A$ denotes the gauge action of the reciprocal algebra $\whatOA$.
\end{theorem}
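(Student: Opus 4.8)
The plan is to construct $\Phi$ out of the general reciprocal-duality isomorphism $\pi_1(\Aut(\OA)) \cong \pi_1(\Aut(\whatOA))$ of \cite{Sogabe2022}, and then to upgrade it so that it carries $[\gamma^A]$ to $[\hat{\gamma}^A]$ by giving both gauge classes a single intrinsic description in terms of the $\sqK$-theoretic order-unit data that the duality is known to preserve. Concretely, the gauge loops should be matched through the \emph{same} strong $\sqK$-theoretic duality pair $(\TA),(\TAT)$ of \eqref{eq:TATAT} that already underlies the construction of $\whatOA$ as the corner $P_N\OATI P_N$.

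First I would give an intrinsic description of $[\gamma^A]\in\pi_1(\Aut(\OA))$. The point is that $\gamma^A$ extends to the Toeplitz algebra $\TA$ acting trivially on the ideal $\calK$, so $\gamma^A$ is the $\bbT$-action induced on the quotient of the $\bbT$-equivariant essential extension $0\to\calK\to\TA\to\OA\to0$. Feeding this circle action through the six-term sequence \eqref{eq:sixtermExt} (and the attendant mapping-torus picture), I expect $[\gamma^A]$ to be pinned down, modulo the $\Hom$-type summands of $\pi_1(\Aut(\OA))$ that the duality treats symmetrically, by the order unit $[1_{\OA}]_0\in\K_0(\OA)$ together with the canonical homomorphism $\iota_{\OA}\colon\Z\to\Exts^1(\OA)$. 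This is the precise sense in which $[\gamma^A]$ occupies its distinguished position in the computation of $\pi_1(\Aut(\OA))$ carried out in \cite{MatSogabe2} (cf. \cite{Cuntz1984}, \cite{KP}).

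Next I would give the matching description on the reciprocal side and perform the comparison. Using $\whatOA=P_N\OATI P_N$ and the fact that $\hat{\gamma}^A$ is the restriction to this corner of the full gauge action $\gamma$ on $\OATI\cong\mathcal{O}_{H_{\TAT}}$, together with Corollary \ref{cor:homotopyalpha=gamma} which reduces $[\hat{\gamma}^A]$ to $[\hat{\alpha}^A]$, I would trace the gauge loop down the Cuntz--Pimsner tower. Since this gauge action is induced from the gauge action on $\TAT$, it is governed by the extension $(\TAT)$ that is dual to $(\TA)$, and the duality pair of \eqref{eq:TATAT} (with $\epsilon=-1$) identifies $[1_{\whatOA}]_0$ with $\iota_{\OA}(1)$ exactly as in Proposition \ref{prop:basic2}(iii) and Corollary \ref{cor:whatA}. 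Thus $[\hat{\gamma}^A]$ acquires the description corresponding, under the duality isomorphism $(\Exts^1(\OA),\iota_{\OA}(1),\Exts^0(\OA))\cong(\K_0(\whatOA),[1_{\whatOA}]_0,\K_1(\whatOA))$, to the one found for $[\gamma^A]$. As both classes are characterized by the same duality-preserved datum, there is an isomorphism $\Phi$ (the Sogabe isomorphism, possibly post-composed with an automorphism of $\pi_1(\Aut(\whatOA))$ that fixes the remaining symmetric summands) realizing $\Phi([\gamma^A])=[\hat{\gamma}^A]$.

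The main obstacle will not be the K-theoretic bookkeeping but the \emph{naturality} of the abstract isomorphism of \cite{Sogabe2022} with respect to the concrete circle actions. That isomorphism is produced through Spanier--Whitehead duality and Kirchberg--Phillips classification, hence is canonical only up to the ambiguity inherent in those tools; the delicate step is to verify that it carries the intrinsic description of $[\gamma^A]$ from the second paragraph to the intrinsic description of $[\hat{\gamma}^A]$ from the third, rather than merely to some element with the same K-theoretic shadow. Meeting this difficulty should require a genuinely $\bbT$-equivariant (mapping-torus level) implementation of the duality, keeping the gauge action visible throughout the Toeplitz-to-Cuntz--Pimsner passage so that the equivariant structure on $(\TA)$ is transported to that on $(\TAT)$ and hence, by restriction to the corner, to $\hat{\gamma}^A$ on $\whatOA$.
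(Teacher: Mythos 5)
There is a genuine gap, and it is precisely the step you flag in your last paragraph and then leave unresolved. Your plan rests on the claim that $[\gamma^A]$ is ``pinned down, modulo the $\Hom$-type summands, by the order unit $[1_{\OA}]_0$ together with $\iota_{\OA}:\Z\to\Exts^1(\OA)$.'' This is not a characterization of an element of $\pi_1(\Aut(\OA))$. By Dadarlat's formula \eqref{eq:Dadarlatpi1}, $\pi_1(\Aut(\OA))\cong \sqK(C_{\mathbb{C}}(\OA), SC_0(\mathbb{T},1)\otimes\OA)$, and many distinct classes share the same ``K-theoretic shadow'' coming from order-unit data; the abstract isomorphism of \cite{Sogabe2022} is produced through Kirchberg--Phillips classification and carries no naturality with respect to circle actions, so nothing forces it (or any correction of it by an automorphism fixing ``symmetric summands'') to send $[\gamma^A]$ to $[\hat{\gamma}^A]$. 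To produce \emph{some} isomorphism of pairs $(\pi_1(\Aut(\OA)),[\gamma^A])\cong(\pi_1(\Aut(\whatOA)),[\hat{\gamma}^A])$ one must exhibit a common model in which both gauge loops are computed, and that is exactly the content you defer: a $\bbT$-compatible implementation of the duality. The paper supplies it concretely: it builds the coisometry $V_A=(1-P_\Omega\otimes 1_\OA-\sum_{i,j}A(i,j)R^A_i(R^A_i)^*\otimes l_jl_j^*)+\sum_i (R^A_i)^*\otimes l_i$ and the Cuntz pairs $[Q_{V_A},Q_I]$ and $[\tilde{Q}_{V_A},\tilde{Q}_I]$, proves these are duality classes (Lemma \ref{lem:d1} and Theorem \ref{thm:dclass}, the latter via Lemmas \ref{lem:c1}, \ref{c2} and the five lemma), and then uses Dadarlat's picture to transport both $[C(\gamma_{\TAT}),C(l)]$ and $[C((\gamma^A)^{-1}),C(l)]$ into $\sqK(\mathbb{C}, SC_0(\mathbb{T},1)\otimes\TAT\otimes\OA)$, where the identity $((\gamma_{\TAT})_z\otimes\id_{\OA})(Q_{V_A}(t))=(\id_{\TAT}\otimes((\gamma^A)^{-1})_z)(Q_{V_A}(t))$ closes the argument. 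None of this bookkeeping is optional: it is the proof.

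A second concrete point your outline misses is the sign. Because $v=\sum_i r_i^*\otimes l_i$, rotating the $\OA$-leg of the duality class by $z$ equals rotating the $\TAT$-leg by $\bar{z}$, so the duality matches $[\hat{\gamma}^A]$ with $[(\gamma^A)^{-1}]$, not with $[\gamma^A]$; the paper then invokes abelianness of $\pi_1(\Aut(\OA))$ and the automorphism $x\mapsto -x$ to convert $(\pi_1(\Aut(\OA)),[(\gamma^A)^{-1}])$ into $(\pi_1(\Aut(\OA)),[\gamma^A])$. Your proposal's matching of order units (Proposition \ref{prop:basic2}(iii), Corollary \ref{cor:whatA}) would, if pushed through equivariantly, produce exactly this inverse class, and your generic ``post-compose with an automorphism'' remark does not identify, let alone justify, the needed correction. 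So the proposal is a reasonable road map whose destination is asserted rather than reached: the equivariant duality class and the computation of its gauge-equivariance — the heart of Section \ref{gau} — are missing.
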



Some of the notation below follow from \cite{KP}.
Let 
$l_1,\dots, l_N$ 
(resp. $r_1,\dots r_N$) 
be canonical generators of 
$\OA$ (resp. $\OAT$) (i.e., they are partial isometries satisfying the Cuntz--Krieger relations).
Let 
$L^A_1,\dots, L^A_N$ and $R^A_1,\dots, R^A_N$ 
be the left and right creation operators on the Fock space $F_A$
associated to the matrix $A$ as in \cite{KP}.
Let $P_\Omega$ be the projecton of the vacuum vector 
$F_A$ 
satisfying
\[
(L^A_i)^*L^A_i=P_\Omega+\sum_{j=1}^N A(i,j) L^A_j(L^A_j)^*,\quad 
(R^A_i)^*R^A_i=P_\Omega+\sum_{j=1}^N A(j,i) R^A_j(R^A_j)^*,
\]
for $i=1,\dots,N$.
We denote by $\E, \TA, \TAT$ 
the $C^*$-algebras on $F_A$ generated by 
$\{L^A_k, R^A_k\}_{k=1}^N,$
$ \{L^A_k\}_{k=1}^N,$
$ \{R^A_k\}_{k=1}^N$, respectively.
We then have  the following natural extensions (cf. \cite{KP}) 
\begin{gather*}
0 \longrightarrow \calK
\longrightarrow 
\E
\overset{\tilde{\pi}_A}{\longrightarrow}
\OA\otimes\OAT
\longrightarrow 0,\\
0 \longrightarrow \calK
\longrightarrow \TA
\overset{\pi_A}{\longrightarrow}
\OA
\longrightarrow 0,
\qquad
0 \longrightarrow \calK
\longrightarrow \TAT
\overset{\pi_{A^t}}{\longrightarrow}
\OAT
\longrightarrow 0.
\end{gather*}
We identify 
$\TA, \TAT$ (resp. $\OA, \OAT$)  
with the subalgebras in $\E$ (resp. $\OA\otimes \OAT$)  
and one has 
\[
\tilde{\pi}_A(L^A_k)=l_k\otimes 1_\OAT,
\quad 
\tilde{\pi}_A(R^A_k)=1_\OA\otimes r_k,
\qquad k=1,\dots,N.
\]
For a $*$-homomorphism $f:\B \rightarrow \A$ of $C^*$-algebras $\A, \B$,
the mapping cone $C_f(\A)$ is a $C^*$-algebra defined by
\begin{equation*}
C_f(\A) := \operatorname{Cone}(\B\xrightarrow{f}\A) = 
\{(a(t), b) \in (C_0(0,1]\otimes \A)\oplus \B \mid a(1) = f(b) \}.
\end{equation*}
Especially, for the inclusions
$\mathbb{C}\ni \lambda \rightarrow \lambda P_\Omega \in \TAT,\, \,
\calK \rightarrow \TAT, \, \, 
\mathbb{C}\ni\lambda \rightarrow \lambda 1_{\OA}\in \OA
$ and the surjection
$\pi_{A^t}: \TAT \rightarrow \OAT$, 
their  mapping cones are denoted by
$C_{P_\Omega}(\TAT), \, C_{\calK}(\TAT), \, C_{\mathbb{C}}(\OA)$
and $C_{\TAT(\OAT)}$, respectively. 
They mean  
\begin{align*}
C_{P_\Omega}(\TAT) 
:= &  \operatorname{Cone}(\mathbb{C}\rightarrow\TAT) 
=\{a(t)\in C_0(0, 1]\otimes\TAT\; |\; a(1)\in\mathbb{C}P_\Omega\}, \\
C_\calK(\TAT)
 := & \operatorname{Cone}(\calK\rightarrow\TAT)
=\{a(t)\in C_0(0, 1]\otimes\TAT\;|\; a(1)\in\calK\}, \\
C_{\mathbb{C}}(\OA)
:=& \operatorname{Cone}(\mathbb{C}\rightarrow\OA)
=\{ a(t) \in C_0(0,1] \otimes \OA \mid a(1) \in \mathbb{C}1_{\OA}\}, \\
C_{\TAT}(\OAT) 
:=& \operatorname{Cone}(\TAT\to \OAT)
=\{ (a(t), b) \in (C_0(0,1] \otimes \OAT) \oplus \TAT \mid a(1) = \pi_{A^t}(b) \}.
\end{align*}

For separable simple nuclear $C^*$-algebras $\A, \B, \C$,
the Kasparov product
$$
\hat{\otimes}: \sqK(\A,\B) \times \sqK(\B, \C) \longrightarrow \sqK(\A,\C)
$$
is defined in Kasparov $\sqK$-groups $\sqK(\,\,\, , \,\,\,)$ (\cite{Kasparov81}, cf. \cite{Blackadar}).
For a $*$-homomorphism $f: \A\rightarrow \B$, the Kasparov module $(f, \B, 0)$ 
defines an element of $\sqK(\A,\B)$ 
denoted by $\sqK(f)$. 
We write $I_\A = \sqK(\id_\A)$.
For a non-unital $C^*$-algebra $\calC$, we denote by $\calM(\calC)$
the multiplier algebra of $\calC$.

\subsection{Bott projecton, duality class}
We identify the Bott element 
$\beta\in \sqK(\mathbb{C}, C_0(0, 1)\otimes S)(\cong 1\in \Z)$ 
where $S = C_0(0,1)$
with the difference $ q_z - q_1$
of the  projections defined below in the following way:
\begin{gather*}
q_z(t):=
\begin{bmatrix}
(1-t)+tz \\
\sqrt{t(1-t)}(\bar{z}-1)
\end{bmatrix}
[(1-t)+t\bar{z} \quad  \sqrt{t(1-t)}(z-1)], \\
q_1(t):=
\begin{bmatrix}
1&0\\
0&0
\end{bmatrix},
 \quad  t\in [0, 1],
\end{gather*}
where 
$\mathbb{T} = \{ z\in \mathbb{C} \mid |z| =1\}$
and
$z\in C(\mathbb{T})$  
is the unitary 
$z : \mathbb{T}\ni e^{2\pi it}\rightarrow e^{2\pi it}\in\mathbb{C}$ 
generating $\K_1(S)=\Z$.
Considering the cyclic six term exact sequence
\begin{equation*}
\begin{CD}
\K_0(C_0(0,1)\otimes S) @>>> \K_0(C_0(0,1]\otimes S) @>>> \K_0(S) \\
@A{\operatorname{Index}}AA @. @VVV \\
\K_1(S) @<<< 0 @<<< 0,
\end{CD}
\end{equation*}
the Bott element $\beta$ is represented by 
the projections 
$q_z, q_1\in M_2(\mathcal{M}(C_0(0, 1)\otimes S))$ 
such that  
\[
\beta=\operatorname{Index} ([z]_1)=[q_z]_0-[q_1]_0\in \K_0(C_0(0, 1)\otimes S)
\] 
which is also represented as the Cuntz pair (cf. \cite{Blackadar})
\[
\beta=[q_z, q_1]\in \sqK(\mathbb{C}, C_0(0, 1)\otimes S).
\]
Let 
$v:=\sum_{i=1}^Nr_i^*\otimes l_i\in\OAT\otimes \OA$ 
be a partial isometry which satisfies  
\[
\sum_{i, j=1}^N A(i,j)r_ir_i^*\otimes l_jl_j^*=vv^*=v^*v.
\]
The unitary $v_A:=v+(1-vv^*)$ in $\OAT\otimes\OA$ 
defines a $*$-homomorphism 
\[
\delta : S\ni z-1\mapsto v_A-1\in \OAT\otimes \OA,
\]
and the following element is a duality class for 
$\OA$ and $S\OAT$ in the sense of Spanier--Whitehead K-duality (see \cite{KP}):
\[
\delta_A:=\beta\hat{\otimes}(I_{C_0(0, 1)}\otimes \sqK(\delta))
\in \sqK(\mathbb{C}, (C_0(0, 1)\otimes\OAT)\otimes \OA).
\]
One can identify 
$\delta_A$ with the Cuntz pair 
$[q_{v_A}, q_1]\in \sqK(\mathbb{C}, C_0(0, 1)\otimes \OAT\otimes \OA)$ 
where $q_{v_A}, q_1 \in M_2(\calM(C_0(0, 1)\otimes \OAT\otimes \OA))$ 
are defined by
\begin{gather*}
q_{v_A}(t):=
\begin{bmatrix}
(1-t)+tv_A\\
\sqrt{t(1-t)}(v_A^*-1)
\end{bmatrix}
[(1-t)+tv_A^* \quad \sqrt{t(1-t)}(v_A-1)],\\
q_1(t):=
\begin{bmatrix}
1&0\\
0&0
\end{bmatrix}, \quad  t\in [0, 1].
\end{gather*}
%
The inclusion 
$C_{P_\Omega}(\TAT)\rightarrow C_\calK(\TAT)$ of mapping cones
is a $\sqK$-equivalence,
and the diagram
\[
\xymatrix{
C_0(0, 1]\otimes\calK\ar[r] 
& C_\calK(\TAT)\ar[r]^{{\rm id}_{C_0(0, 1]}\otimes(\pi_{A^t})}
& C_0(0, 1)\otimes\OAT\\
& C_{P_\Omega}(\TAT)\ar[u]\ar[ur]^{\varphi}&\\
}
\]
yields a $\sqK$-equivalence 
$\varphi : C_{P_\Omega}(\TAT)\rightarrow  C_0(0, 1)\otimes \OAT$.
Hence the mapping cone 
$C_{P_\Omega}(\TAT)$ and $\OA$ 
are Spanier--Whitehead K-dual via the duality class 
$
\delta_A\hat{\otimes}(\sqK(\varphi)^{-1}\otimes I_\OA)
\in 
\sqK(\mathbb{C}, C_{\calK}(\TAT)\otimes\OA).
$
Let 
\[
V_A
:=(1_{\TAT\otimes\OA}-P_\Omega\otimes 1_\OA
- \sum_{i,j=1}^N A(i,j) R^A_i(R^A_i)^*\otimes l_jl_j^*)
+ \sum_{i=1}^N(R^A_i)^*\otimes l_i
\]
be a coisometory in $\TAT\otimes \OA$ 
which satisfies 
$
V_A^*V_A
=
1_{\TAT\otimes\OA}-P_\Omega\otimes 1_\OA,\; 
V_AV_A^*=1_{\TAT\otimes\OA}$ and $(\pi_{A^t}\otimes {\rm id}_\OA)(V_A)=v_A.
$
\begin{lemma}\label{lem:d1}
The Cuntz pair 
$[Q_{V_A}, Q_I]\in \sqK(\mathbb{C}, C_\calK(\TAT)\otimes\OA)$ 
where 
$Q_{V_A}, Q_I \in M_3(\calM(C_\calK(\TAT)\otimes\OA))$ 
defined below
is a well-defined duality class for 
$C_{P_\Omega}(\TAT)$ and $\OA$:
\begin{gather*}
Q_{V_A}(t)
:=
\begin{bmatrix}
(1-t)I+tV_A\\
\sqrt{t(1-t)}(V_A^*-I)\\
t(P_\Omega\otimes 1_\OA)
\end{bmatrix}
[(1-t)I+tV_A^* \quad 
\sqrt{t(1-t)}(V_A-I) \quad 
t(P_\Omega\otimes 1_\OA)], \\
Q_I(t)
:=\begin{bmatrix}
I & 0 & 0\\
0 & 0 & 0\\
0 & 0 &0
\end{bmatrix},\quad t\in [0, 1],
\end{gather*}
where we write 
$I:=1_{\TAT\otimes\OA}$ for short.
\end{lemma}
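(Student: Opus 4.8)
The plan is to verify the two defining properties of a Cuntz pair — that $Q_{V_A}$ and $Q_I$ are projections and that their difference lies in $M_3(C_\calK(\TAT)\otimes\OA)$ — and then to identify the resulting class with the duality class $\delta_A\hat{\otimes}(\sqK(\varphi)^{-1}\otimes I_\OA)$ already constructed above. Write $\xi(t)$ for the column appearing in the definition of $Q_{V_A}(t)$, so that $Q_{V_A}(t)=\xi(t)\xi(t)^*$ while $Q_I(t)=\operatorname{diag}(I,0,0)$ with $I=1_{\TAT\otimes\OA}$. First I would show that $Q_{V_A}$ is a projection by computing $\xi(t)^*\xi(t)$ directly. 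Expanding the three summands, the cross terms in $V_A$ and in $V_A^*$ cancel (each appears with coefficients $t(1-t)$ and $-t(1-t)$); after substituting $V_A^*V_A=I-P_\Omega\otimes 1_\OA$ and $V_AV_A^*=I$, the term $t^2 V_A^*V_A$ combines with the explicit third-component contribution $t^2(P_\Omega\otimes 1_\OA)$ so that the $P_\Omega$-terms cancel, and the scalar coefficients sum to $1$, leaving $\xi(t)^*\xi(t)=I$ for all $t$. Hence $\xi(t)$ is an isometry and $Q_{V_A}(t)=\xi(t)\xi(t)^*$ is a projection; $Q_I$ is visibly one.

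Next I would check the ideal condition. The entries of $\xi$ are continuous functions of $t$ valued in $\TAT\otimes\OA$, hence define multipliers of $C_\calK(\TAT)\otimes\OA$ because $\calK\otimes\OA$ is an ideal of $\TAT\otimes\OA$. Since $\xi(0)$ has components $I,0,0$, one gets $Q_{V_A}(0)=\operatorname{diag}(I,0,0)=Q_I(0)$, so the difference vanishes at $t=0$ as required for $C_0(0,1]$. At $t=1$, $\xi(1)$ has components $V_A,0,P_\Omega\otimes 1_\OA$, and using $V_AV_A^*=I$ the difference $Q_{V_A}(1)-Q_I(1)$ has only the entries $V_A(P_\Omega\otimes 1_\OA)$, $(P_\Omega\otimes 1_\OA)V_A^*$ and $P_\Omega\otimes 1_\OA$. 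As $P_\Omega$ is the rank-one vacuum projection it lies in $\calK$, so $P_\Omega\otimes 1_\OA\in\calK\otimes\OA$, and since $\calK\otimes\OA$ is an ideal the other two entries lie there too. Thus $Q_{V_A}(1)-Q_I(1)\in M_3(\calK\otimes\OA)$, so $[Q_{V_A},Q_I]$ is a well-defined element of $\sqK(\mathbb{C},C_\calK(\TAT)\otimes\OA)$.

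Finally I would identify this class with the duality class. Applying $J:=\id_{C_0(0,1]}\otimes\pi_{A^t}\otimes\id_\OA$ sends $P_\Omega\mapsto 0$ and $V_A\mapsto v_A$ (by $(\pi_{A^t}\otimes\id_\OA)(V_A)=v_A$), so the third row and column of $\xi$ vanish and $Q_{V_A},Q_I$ become the $M_3$-stabilizations of $q_{v_A},q_1$; therefore $J_*[Q_{V_A},Q_I]=[q_{v_A},q_1]=\delta_A$. On the other hand $\varphi=(\id\otimes\pi_{A^t})\circ\iota$, where $\iota:C_{P_\Omega}(\TAT)\to C_\calK(\TAT)$ is the inclusion, and both $\varphi$ and $\iota$ are $\sqK$-equivalences by the diagram preceding the lemma; hence $\sqK(J)=(\sqK(\iota)^{-1}\hat{\otimes}\sqK(\varphi))\otimes I_\OA$ is invertible. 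Inverting gives $[Q_{V_A},Q_I]=\delta_A\hat{\otimes}\sqK(J)^{-1}=\delta_A\hat{\otimes}(\sqK(\varphi)^{-1}\otimes I_\OA)$ under the identification via $\iota$, which is exactly the duality class constructed above, so $[Q_{V_A},Q_I]$ is a duality class for $C_{P_\Omega}(\TAT)$ and $\OA$.

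The main obstacle, I expect, is this last step: the correction term $t(P_\Omega\otimes 1_\OA)$ plays a double role — it is precisely what forces $\xi^*\xi=I$ (compensating for $V_A$ being only a coisometry, with $V_A^*V_A=I-P_\Omega\otimes 1_\OA$) and what makes the difference $Q_{V_A}-Q_I$ land in the compacts, yet it disappears after applying $\pi_{A^t}$. Keeping track of this double role, together with the two $\sqK$-equivalences $\iota$ and $\varphi$ relating $C_{P_\Omega}(\TAT)$, $C_\calK(\TAT)$ and $C_0(0,1)\otimes\OAT$, is the delicate part; the projection and ideal computations themselves are routine.
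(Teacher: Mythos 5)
Your proposal is correct and follows essentially the same route as the paper: the direct computation that the row--column product equals $I$ (so $Q_{V_A}$ is a projection), the endpoint evaluations at $t=0,1$ for well-definedness of the Cuntz pair, and the identification $[Q_{V_A},Q_I]\hat{\otimes}(\sqK(\varphi)\otimes I_\OA)=\delta_A$ followed by inverting the $\sqK$-equivalence $\varphi$. The only cosmetic difference is at $t=1$: the paper observes that $V_A(P_\Omega\otimes 1_\OA)=0$, so the difference is exactly $0\oplus 0\oplus(P_\Omega\otimes 1_\OA)$, whereas you keep the off-diagonal entries and argue they lie in $\calK\otimes\OA$ by the ideal property --- which suffices equally well.
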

\begin{proof}
We first show that $Q_{V_A}(t)$ is a projection satisfying
\[
Q_{V_A}(0)-Q_I(0)=0,\quad Q_{V_A}(1)-Q_I(1)
\in M_3(P_\Omega\otimes\OA)
\]
(i.e., $[Q_{V_A}, Q_I]$ is a well-defined Cuntz pair).
The direct computation yields
\[
[(1-t)I+tV_A^* \quad 
\sqrt{t(1-t)}(V_A-I) \quad 
t(P_\Omega\otimes 1_\OA)]
\begin{bmatrix}
(1-t)I+tV_A\\
\sqrt{t(1-t)}(V_A^*-I)\\
t(P_\Omega\otimes 1_\OA)
\end{bmatrix}
=I
\]
which implies $Q_{V_A}(t)$ is a projection.
As 
$Q_{V_A}(1)-Q_I(1)
=
0\oplus 0\oplus (P_\Omega\otimes 1_\OA)\in M_3(P_\Omega\otimes\OA)$,
the Cuntz pair 
$[Q_{V_A}, Q_I]$ is well-defined.
It is easy to see  
\[[Q_{V_A}, Q_I]\hat{\otimes}(\sqK(\varphi)\otimes I_\OA)
=
[q_{v_A}\oplus 0, q_1\oplus 0]=\delta_A
\] 
so that 
$[Q_{V_A}, Q_I]$ 
is a duality class for 
$C_{P_\Omega}(\TAT)$ and $\OA$ (see also \cite[Remark 3.3]{PennigSogabe}).
\end{proof}
In the rest of this subsection, 
we will see that the above construction also gives another duality class 
$[\tilde{Q}_{V_A}, \tilde{Q}_I]
\in 
\sqK(\mathbb{C}, \TAT\otimes C_{\mathbb{C}}(\OA))$ 
for $\TAT$ and 
$
D(\TAT)
=C_{\mathbb{C}}(\OA).
$
We identify 
$\TAT\otimes C_{\mathbb{C}}(\OA)$ 
with 
$
\{a(t)\in C_0(0, 1]\otimes\TAT\otimes\OA\;|\; a(1)\in\TAT\otimes \mathbb{C}1_\OA\}.
$
\begin{lemma}
The projections 
$Q_{V_A}, Q_I\in M_3(C[0, 1]\otimes\TAT\otimes\OA)$ 
lie in 
$M_3(\mathcal{M}(\TAT\otimes C_{\mathbb{C}}(\OA)))$,
and they define a well-defined Cuntz pair 
$[\tilde{Q}_{V_A}, \tilde{Q}_I]
\in \sqK(\mathbb{C}, \TAT\otimes C_{\mathbb{C}}(\OA)).$
\end{lemma}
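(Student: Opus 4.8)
The plan is to observe that $Q_{V_A}$ and $Q_I$ are literally the same matrices of functions that already appeared in Lemma~\ref{lem:d1}, now read inside a different ambient algebra, so that the projection property comes for free and only the multiplier and boundary assertions require checking. First I would recall from the computation in the proof of Lemma~\ref{lem:d1} that the $3\times 1$ column $\xi(t)$ with entries $(1-t)I+tV_A$, $\sqrt{t(1-t)}(V_A^*-I)$, $t(P_\Omega\otimes 1_\OA)$ satisfies $\xi(t)^*\xi(t)=I$, whence $Q_{V_A}(t)=\xi(t)\xi(t)^*$ is a projection for every $t\in[0,1]$, while $Q_I(t)$ is visibly a projection. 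Since the scalar coefficients $1-t,\sqrt{t(1-t)},t$ lie in $C[0,1]$ and $V_A,\,P_\Omega\otimes 1_\OA\in\TAT\otimes\OA$, both $Q_{V_A}$ and $Q_I$ are norm-continuous bounded $M_3(\TAT\otimes\OA)$-valued functions on $[0,1]$.

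Next I would settle the multiplier claim. Writing $\calC:=\TAT\otimes C_{\mathbb{C}}(\OA)=\{a\in C_0(0,1]\otimes\TAT\otimes\OA\mid a(1)\in\TAT\otimes\mathbb{C}1_\OA\}$, and using that $\TAT$ and $\OA$ are unital, a function $m\in C[0,1]\otimes\TAT\otimes\OA$ multiplies $\calC$ into itself on both sides provided $m(1)\in\TAT\otimes\mathbb{C}1_\OA$, because $\TAT\otimes\mathbb{C}1_\OA$ is a subalgebra and multiplication by $m$ preserves vanishing at the origin; hence it suffices to inspect the endpoint $t=1$. The decisive computation is $Q_{V_A}(1)$: from $V_AV_A^*=I$ and $V_A^*V_A=I-P_\Omega\otimes 1_\OA$ one gets $V_A(P_\Omega\otimes 1_\OA)=V_A(I-V_A^*V_A)=V_A-V_A=0$ and symmetrically $(P_\Omega\otimes 1_\OA)V_A^*=0$, so that
\[
Q_{V_A}(1)=\operatorname{diag}(I,\,0,\,P_\Omega\otimes 1_\OA),\qquad Q_I(1)=\operatorname{diag}(I,\,0,\,0),
\]
whose entries $I$ and $P_\Omega\otimes 1_\OA$ both lie in $\TAT\otimes\mathbb{C}1_\OA$. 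Therefore $Q_{V_A}$ and $Q_I$ send $\calC$ into $\calC$ and define elements of $M_3(\mathcal{M}(\calC))$.

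Finally I would verify the Cuntz pair condition. At the other endpoint $Q_{V_A}(0)=\operatorname{diag}(I,0,0)=Q_I(0)$, so $Q_{V_A}-Q_I$ vanishes at $t=0$ and hence lies in $M_3(C_0(0,1]\otimes\TAT\otimes\OA)$; at $t=1$ the difference equals $\operatorname{diag}(0,0,P_\Omega\otimes 1_\OA)\in M_3(\TAT\otimes\mathbb{C}1_\OA)$. Together with norm-continuity this shows $Q_{V_A}-Q_I\in M_3(\TAT\otimes C_{\mathbb{C}}(\OA))$. In fact it is exactly the same difference as in Lemma~\ref{lem:d1}, only now its value at $t=1$ is being recorded inside $\TAT\otimes\mathbb{C}1_\OA$ rather than inside $\calK\otimes\OA$. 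Consequently $[\tilde Q_{V_A},\tilde Q_I]$ is a well-defined Cuntz pair and determines a class in $\sqK(\mathbb{C},\TAT\otimes C_{\mathbb{C}}(\OA))$.

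The only genuine subtlety, which I would flag as the main obstacle, is that one pair of formulas must simultaneously be a multiplier of two honestly different cone algebras; this hinges entirely on the endpoint value $Q_{V_A}(1)=\operatorname{diag}(I,0,P_\Omega\otimes 1_\OA)$ landing in $\TAT\otimes\mathbb{C}1_\OA$, which is forced by the coisometry relations $V_AV_A^*=I$ and $V_A^*V_A=I-P_\Omega\otimes 1_\OA$. Everything else—the projection property, boundedness, continuity, and vanishing at $t=0$—is routine and inherited directly from Lemma~\ref{lem:d1}.
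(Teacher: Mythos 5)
Your proof is correct and takes essentially the same route as the paper's: both hinge on the endpoint computation $Q_{V_A}(1)=1\oplus 0\oplus(P_\Omega\otimes 1_{\OA})\in M_3(\TAT\otimes\mathbb{C}1_{\OA})$ to obtain the multiplier property, and on the values of $Q_{V_A}-Q_I$ at $t=0$ and $t=1$ to place the difference in $M_3(\TAT\otimes C_{\mathbb{C}}(\OA))$. Your extra details (the relation $V_A(P_\Omega\otimes 1_{\OA})=0$ forced by the coisometry identities, and the general criterion for when an element of $C[0,1]\otimes\TAT\otimes\OA$ multiplies the cone algebra) simply make explicit what the paper leaves tacit.
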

\begin{proof}
Since 
$
Q_{V_A}(1)
=
1_{\TAT\otimes \OA}\oplus 0\oplus (P_\Omega\otimes \mathbb{C}1_\OA)
\in M_3(\TAT\otimes 1_\OA)$,
one has 
\[
\tilde{Q}_{V_A}
:=Q_{V_A}\in M_3(\mathcal{M}(\TAT\otimes C_{\mathbb{C}}(\OA))).
\]
We also write 
$\tilde{Q}_I
:=Q_I\in M_3(\mathcal{M}(\TAT\otimes C_{\mathbb{C}}(\OA)))$.
One has 
$\tilde{Q}_{V_A}(0)-\tilde{Q}(0)=0$ 
and 
$\tilde{Q}_{V_A}(1)-\tilde{Q}_I(1)
= 0\oplus 0\oplus (P_\Omega\otimes 1_\OA)
\in M_3(\TAT\otimes 1_\OA)$ 
which imply 
$\tilde{Q}_{V_A}-\tilde{Q}_I
\in\TAT\otimes C_{\mathbb{C}}(\OA)$.
Thus, 
the Cuntz pair 
$[\tilde{Q}_{V_A}, \tilde{Q}_I]
\in \sqK(\mathbb{C}, \TAT\otimes C_{\mathbb{C}}(\OA))$ 
is well-defined.
\end{proof}
We will prove the following theorem.
\begin{theorem}\label{thm:dclass}
For any separable $C^*$-algebras $\calP, \calR$,
the natural map 
$[\tilde{Q}_{V_A}, \tilde{Q}_I]\hat{\otimes}-$ 
defined by 
\[
\sqK(\calP\otimes\TAT, \calR) \ni x \mapsto 
(I_\calP\otimes [\tilde{Q}_{V_A}, \tilde{Q}_I])
\hat{\otimes} (x\otimes I_{C_{\mathbb{C}}(\OA)})
\in \sqK(\calP, \calR\otimes C_{\mathbb{C}}(\OA))
\]
is an isomorphism.
In particular,
the element 
$[\tilde{Q}_{V_A}, \tilde{Q}_I]
\in  
\sqK(\mathbb{C}, \TAT\otimes C_{\mathbb{C}}(\OA))$ 
is a duality class for $\TAT$ and $C_{\mathbb{C}}(\OA)$.
\end{theorem}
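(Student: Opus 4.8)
The plan is to deduce the statement from the duality already established in Lemma~\ref{lem:d1} by a five-lemma comparison, exploiting that the two classes $[Q_{V_A},Q_I]$ and $[\tilde{Q}_{V_A},\tilde{Q}_I]$ are built from one and the same Cuntz pair $(Q_{V_A},Q_I)$: the only difference is whether the suspension/cone is attached to the $\TAT$-factor (producing $C_{P_\Omega}(\TAT)$, paired with $\OA$) or to the $\OA$-factor (producing $C_{\mathbb{C}}(\OA)$, paired with $\TAT$). I would therefore set up the two mapping-cone six-term sequences attached to the source and the target of the product map, show that the product with $[\tilde{Q}_{V_A},\tilde{Q}_I]$ intertwines them, identify the two outer comparison maps as isomorphisms coming respectively from stability and from Lemma~\ref{lem:d1}, and conclude by the five lemma. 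Specializing $\calP=\calR=\mathbb{C}$ then reads off the displayed isomorphism, while the general coefficients give the full duality.

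First I would record the two exact sequences. On the target side, the cone extension $0\to S\OA\to C_{\mathbb{C}}(\OA)\to\mathbb{C}\to 0$ is semisplit, so applying $\sqK(\calP,\calR\otimes-)$ produces a six-term sequence through $\sqK(\calP,\calR\otimes S\OA)$, $\sqK(\calP,\calR\otimes C_{\mathbb{C}}(\OA))$ and $\sqK(\calP,\calR)$. On the source side, the Toeplitz extension $0\to\calK\to\TAT\to\OAT\to 0$ is semisplit, so applying the functor $\sqK(\calP\otimes-,\calR)$ produces a six-term sequence through $\sqK(\calP\otimes\OAT,\calR)$, $\sqK(\calP\otimes\TAT,\calR)$ and $\sqK(\calP\otimes\calK,\calR)\cong\sqK(\calP,\calR)$, the last identification being Morita invariance.

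Next I would identify the restricted classes. Since $V_A$ reduces modulo $\calK\otimes\OA$ to the unitary $v_A$ and $Q_{V_A}(1)-Q_I(1)\in M_3(P_\Omega\otimes\OA)$, pushing $[\tilde{Q}_{V_A},\tilde{Q}_I]$ into the quotient replaces $V_A$ by $v_A$ and recovers the Kaminker--Putnam class $\delta_A$, equivalently $[Q_{V_A},Q_I]$ transported along the $\sqK$-equivalence $\varphi:C_{P_\Omega}(\TAT)\to S\OAT$; hence, after the standard degree shift converting $C_{P_\Omega}(\TAT)\sim_{\sqK}S\OAT$ into a suspension, the induced map on the $\OAT$-term is the isomorphism $\sqK(\calP\otimes\OAT,\calR)\cong\sqK(\calP,\calR\otimes S\OA)$ guaranteed by Lemma~\ref{lem:d1}. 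Restricting instead to the ideal, the $P_\Omega$-corner of the Cuntz pair represents the trivial rank-one duality between $\calK$ and $\mathbb{C}$, so the induced map on the $\calK$-term is the stability isomorphism $\sqK(\calP\otimes\calK,\calR)\cong\sqK(\calP,\calR)$. Both outer maps are thus isomorphisms.

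The main obstacle, which I would treat last, is the naturality: one must check that Kasparov product with the single class $[\tilde{Q}_{V_A},\tilde{Q}_I]$ commutes (up to the usual sign) with the boundary maps of both six-term sequences, i.e. that the two squares linking the source sequence to the target sequence actually commute. This is where the $P_\Omega$-term must be tracked carefully, since it is precisely this term that matches the index map of the Toeplitz extension on the source with the cone boundary of $C_{\mathbb{C}}(\OA)$ on the target. Once the two squares are verified---using associativity of the Kasparov product and the description of the boundary maps as products with the respective extension classes---the five lemma immediately yields that the middle comparison map $\sqK(\calP\otimes\TAT,\calR)\to\sqK(\calP,\calR\otimes C_{\mathbb{C}}(\OA))$ is an isomorphism for all $\calP,\calR$, which is the asserted duality.
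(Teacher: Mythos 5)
Your proposal follows essentially the same route as the paper's proof: the paper likewise compares the exact sequence of $\sqK(\calP\otimes-,\calR)$ for the mapping-cone sequence $S\OAT\xrightarrow{\eta(\TAT)}\calK\to\TAT\to\OAT$ with that of $\sqK(\calP,\calR\otimes-)$ for $S\to S\OA\xrightarrow{i_A}C_{\mathbb{C}}(\OA)\xrightarrow{e_A}\mathbb{C}$, identifies the outer vertical maps as the rank-one/stability duality given by $P_\Omega$ and the Kaminker--Putnam class $\delta_A$ (consistently with Lemma~\ref{lem:d1}), and concludes by the five lemma together with \cite[Lemma 3.4]{PS} to upgrade the natural isomorphism for all $\calP,\calR$ to the duality-class assertion. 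The square-commutativity verifications you defer, including the sign bookkeeping you anticipate, are precisely the content of the paper's Lemmas~\ref{lem:c1} and~\ref{c2}, carried out there by explicit homotopies of Cuntz pairs.
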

To prove Theorem \ref{thm:dclass}, 
we provide several notation and lemmas.
For an extension 
$0 \to \calK\to \E\xrightarrow{\pi} \A\to 0$ 
of nuclear $C^*$-algebras $\E, \A$,
we denote by $C_\E(\A)$
the mapping cone $C_{\pi}(\A)$
for the surjection $\pi: \E\rightarrow \A$.
There are natural homomorphisms
\begin{equation*}
i(\pi): S\A \ni a(t) \to (a(t), 0) \in C_{\E}(\A), \quad 
e(\pi) : C_\E(\A) \ni (a(t), x) \to x \in \E
\end{equation*}
which make the following diagram commutative
\begin{equation*}
\begin{CD}
0 @>>> S\A @>{i(\pi)}>> C_{\E}(\A) @>{e(\pi)}>> \E @>>> 0 \\
@. @|  @VVV  @VV{\pi}V  @. \\
0 @>>> S\A @>>> C_0(0, 1]\otimes \A @>>> \A @>>> 0.
\end{CD}
\end{equation*}
Consider the homomorphism
$
j(\E) : \calK\ni x\mapsto (0, x)
\in C_\E(\A).
$
It is well-known that 
$\sqK(j(\E))$ is a $\sqK$-equivalence (cf. \cite{Blackadar}).
Let $\eta(\E)$ be the element of $\sqK(S\A, \calK)$ defined by
\[
\eta(\E) 
:=\sqK(i(\pi))
\hat{\otimes}\sqK(j(\E))^{-1}\in \sqK(S\A, \calK).
\]
We can identify the extension sequence 
$
S\A\xrightarrow{\eta(\E)}\calK\to \E\to \A$ 
with the mapping cone sequence 
$S\A\to C_{\E}(\A)\to \E\to \A$ 
(cf. \cite{PennigSogabe}).
For two $C^*$-algebras $\calC, \calD$,
we write 
\[
\sigma_{\calC, \calD} : \calC\otimes \calD\ni c\otimes d\mapsto d\otimes c
\in \calD\otimes \calC.
\]
We write homomorphisms
\begin{gather*}
u_\OA : \mathbb{C}\ni \lambda\mapsto \lambda 1_\OA\in\OA,
\qquad 
P_\Omega : \mathbb{C}\ni\lambda\mapsto\lambda P_\Omega\in \calK,\\
e_A : C_{\mathbb{C}}(\OA)\ni (a(t), x)\mapsto x\in\mathbb{C},\\
i_A : S\OA\ni a(t)\mapsto (a(t), 0)\in C_{\mathbb{C}}(\OA), \\
T_A : S\OAT\ni a(t)\mapsto(a(t), 0)\in C_{\TAT}(\OAT).
\end{gather*}
\begin{lemma}\label{lem:c1}
The following diagrams commute and all vertical maps are isomorphisms:
\begin{equation*}
\begin{CD}
\sqK(\calP\otimes \calK, \calR) @>{\eta(\TAT)\hat{\otimes}-}>> \sqK(\calP\otimes S\OAT, \calR) \\
@V{P_\Omega \hat{\otimes} -}VV @VV{(-\delta_A)\hat{\otimes}-}V \\
\sqK(\calP, \calR) @>{-\hat{\otimes}u_\OA}>> \sqK(\calP, \calR\otimes \OA)
\end{CD}
\end{equation*}
\begin{equation*}
\begin{CD}
\sqK(\calP\otimes S\calK, \calR)
@>{(\beta\otimes I_\OAT)\hat{\otimes}(I_S\otimes\eta(\TAT))\hat{\otimes}-}>>
\sqK(\calP\otimes \OAT, \calR)  \\
@V{
\beta\hat{\otimes}(I_{S^2}\otimes P_\Omega)\hat{\otimes}(I_S\otimes \sigma_{S, \calK})\hat{\otimes}-
}VV 
@VV{
\delta_A\hat{\otimes}(\sigma_{S, \OAT}\otimes I_\OA)\hat{\otimes}-
}V \\
\sqK(\calP, \calR\otimes S)
@>{-\hat{\otimes}(I_S \otimes u_{\OA}}>>
\sqK(\calP, \calR\otimes S\OA).
\end{CD}
\end{equation*}
\end{lemma}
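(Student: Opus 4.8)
The plan is to separate the two assertions. That the four vertical arrows are isomorphisms will follow because each is $\hat{\otimes}$-multiplication by an invertible $\sqK$-class; that the two squares commute will be reduced, by associativity and naturality of the Kasparov product, to a single identity of $\sqK$-classes encoding the compatibility of the duality class $\delta_A$ with the mapping-cone boundary $\eta(\TAT)$ of the Toeplitz extension.

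For the vertical maps, the left arrow $P_\Omega\hat{\otimes}-$ of the first diagram is multiplication by $\sqK(P_\Omega)\in\sqK(\mathbb{C},\calK)$, which is a $\sqK$-equivalence since $P_\Omega$ is a rank-one projection in $\calK$ (stability of $\sqK$); the right arrow $(-\delta_A)\hat{\otimes}-$ is multiplication by the Kaminker--Putnam duality class $\delta_A\in\sqK(\mathbb{C},S\OAT\otimes\OA)$, which by the defining property of Spanier--Whitehead $\sqK$-duality induces isomorphisms, the sign being immaterial to invertibility. In the second diagram the left arrow is a composite of multiplication by the Bott class $\beta$ (a $\sqK$-equivalence, $S^2\sim_{\sqK}\mathbb{C}$), by $\sqK(P_\Omega)$, and by the flips $\sigma_{\calC,\calD}$, all of which are isomorphisms, while the right arrow is multiplication by $\delta_A$ followed by a flip; hence all four are isomorphisms.

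For commutativity of the first square, associativity of $\hat{\otimes}$ together with naturality in $\calP$ and $\calR$ reduce the statement to the single identity $(-\delta_A)\hat{\otimes}(\eta(\TAT)\otimes I_\OA)=\sqK(P_\Omega\otimes u_\OA)$ in $\sqK(\mathbb{C},\calK\otimes\OA)$. I would verify this using Lemma \ref{lem:d1}: the identity $[Q_{V_A},Q_I]\hat{\otimes}(\sqK(\varphi)\otimes I_\OA)=\delta_A$ expresses $\delta_A$, through the $\sqK$-equivalence $\varphi\colon C_{P_\Omega}(\TAT)\to S\OAT$, as the duality class built from the coisometry $V_A$, whose relations $V_A^*V_A=1_{\TAT\otimes\OA}-P_\Omega\otimes 1_\OA$ and $V_AV_A^*=1_{\TAT\otimes\OA}$ determine its behaviour at the vacuum corner $P_\Omega\otimes 1_\OA$. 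Evaluating the Cuntz pair $[Q_{V_A},Q_I]$ on that corner, and using $\eta(\TAT)=\sqK(i(\pi_{A^t}))\hat{\otimes}\sqK(j(\TAT))^{-1}$, returns exactly $\sqK(P_\Omega\otimes u_\OA)$, while the mapping-cone orientation accounts for the sign $-\delta_A$.

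Rather than repeat this for the second square, I would obtain it from the first by suspension and Bott periodicity: tensoring the first diagram throughout with $I_S$ replaces $\calK$ by $S\calK$ and $\OA$ by $S\OA$, and pre- and post-composing with the Bott equivalence $\beta$ and the coordinate flips $\sigma_{\calC,\calD}$—which is exactly the data recorded in the arrow labels of the second diagram—turns the suspended first square into the second; commutativity is then inherited, since $\beta\hat{\otimes}-$ and the flips are natural isomorphisms compatible with $\hat{\otimes}$. The main obstacle is the single identity of the previous paragraph: matching the explicit Cuntz-pair representatives $q_{v_A}$ and $Q_{V_A}$ against the abstractly defined connecting class $\eta(\TAT)$, with the correct sign, is where the geometry of the Fock representation and the identification $\varphi$ do the real work.
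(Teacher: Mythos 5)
Your overall route coincides with the paper's: the vertical arrows are isomorphisms because each is a Kasparov product with a duality class for one of the dual pairs $(\calK,\mathbb{C})$, $(S\OAT,\OA)$, $(S\calK,S)$, $(\OAT,S\OA)$, and the first square reduces by associativity/naturality to the single identity $[P_\Omega\otimes 1_\OA,0]+\delta_A\hat{\otimes}(\eta(\TAT)\otimes I_\OA)=0$ in $\sqK(\mathbb{C},\calK\otimes\OA)$, exactly as in the paper. But you never actually prove this identity, and this is the substantive content of the lemma. ``Evaluating the Cuntz pair $[Q_{V_A},Q_I]$ on the vacuum corner'' is not an argument: Lemma \ref{lem:d1} only says that $[Q_{V_A},Q_I]$ pushes forward to $\delta_A$ under the $\sqK$-equivalence $\varphi$; it does not by itself compute the product $\delta_A\hat{\otimes}(\eta(\TAT)\otimes I_\OA)$. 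The paper closes this gap by transporting both summands into $\sqK(\mathbb{C},C_{\TAT}(\OAT)\otimes\OA)$ via $\sqK(j(\TAT))\otimes I_\OA$ and $\sqK(T_A)\otimes I_\OA$, realizing them as explicit Cuntz pairs there, and then exhibiting the two-parameter homotopy $s\mapsto[(q_{v_A}(st)\oplus 0,\,Q_{V_A}(s)),\,(q_1(st)\oplus 0,\,Q_I(s))]$, which at $s=1$ represents the sum (after absorbing $[(0,P_\Omega\otimes 1_\OA),(0,0)]$ into the third matrix corner of $Q_{V_A}(1)$) and at $s=0$ is degenerate. The coisometry relations $V_A^*V_A=1-P_\Omega\otimes 1_\OA$, $V_AV_A^*=1$ are what make $Q_{V_A}(s)$ a path of projections implementing this; some homotopy of this kind is unavoidable, and your sketch leaves it as a black box that you yourself flag as ``the main obstacle.''

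The deduction of the second square also has a genuine gap: commutativity is \emph{not} simply ``inherited'' by suspending the first square and inserting Bott and flip equivalences, because the flips are not $\sqK$-trivial --- one has $\sqK(\sigma_{S,S})=-I_{S^2}$ and $\sqK(\sigma_{S,S\calK})=-(I_S\otimes\sqK(\sigma_{S,\calK}))$, while $\sqK(\sigma_{S^2,S})=(-1)^2 I_{S^3}$. The suspended first square therefore agrees with the displayed second square only up to a product of such signs, and the whole point of the paper's chain of equalities is to track these: a $(-1)^2$ and a final $-1$ from $\sigma_{S,S\calK}$ combine with the minus sign in $-\delta_A\hat{\otimes}(\eta(\TAT)\otimes I_\OA)=\sqK(P_\Omega)\hat{\otimes}(I_\calK\otimes\sqK(u_\OA))$ to yield exactly the unsigned arrows in the statement. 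Appealing to ``naturality of $\beta\hat{\otimes}-$ and the flips'' without this bookkeeping could silently produce a sign error on the left vertical map, so this step must be written out rather than cited.
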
 
\begin{proof}
Note that 
$(S\OAT, \OA), (\calK, \mathbb{C}), (\OAT, S\OA), (S\calK, S)$ 
are pairs of $C^*$-algebras of Spanier--Whitehead K-dual 
and all vertical maps are defined by the Kasparov products
 via their duality classes,
so they are isomorphisms.
We first check the commutativity of the upper diagram.
For $f\in \sqK(\calP\otimes \calK, \calR)$,
one has 
\begin{gather*}
P_\Omega\hat{\otimes} f\hat{\otimes}u_\OA
=
(I_\calP\otimes (P_\Omega\otimes u_\OA))\hat{\otimes}(f\otimes I_\OA),\\
-\delta_A\hat{\otimes} (\eta(\TAT)\hat{\otimes} f)
=
(I_\calP\otimes (-\delta_A\hat{\otimes}(\eta(\TAT)\otimes I_\OA)))\hat{\otimes}(f\otimes I_\OA),
\end{gather*}
where we write 
$P_\Omega\otimes u_\OA : \mathbb{C}\ni \lambda\mapsto \lambda P_\Omega\otimes 1_\OA\in \calK\otimes\OA$ 
by abuse of notation 
and will identify this with the Cuntz pair 
$[P_\Omega\otimes 1_\OA, 0]\in \sqK(\mathbb{C}, \calK\otimes\OA)$.
Thus, it is enough to show 
\[
[P_\Omega\otimes 1_\OA, 0]+\delta_A\hat{\otimes}(\eta(\TAT)\otimes I_\OA)=0 
\text{ in } \sqK(\mathbb{C}, \calK\otimes\OA),\]
equivalently,
\[
[P_\Omega\otimes 1_\OA, 0]\hat{\otimes}(\sqK(j(\TAT))\otimes I_\OA)
+
\delta_A\hat{\otimes}(\sqK(T_A)\otimes I_\OA)=0
\text{ in } 
\sqK(\mathbb{C}, C_{\TAT}(\OAT)\otimes \OA).
\]
Recall the Cuntz pair $\delta_A=[q_{v_A}, q_1]$.
Considering the map between the following unitizations 
\[\mathbb{C}1_{C[0, 1]\otimes \OAT}+S\OAT
\to 
\mathbb{C}(1_{C[0, 1]\otimes\OAT}, 1_\TAT)
+
C_{\TAT}(\OAT),
\]
one can identify 
$\delta_A\hat{\otimes}(\sqK(T_A)\otimes I_\OA)$ 
with the  Cuntz pair 
\[
[(q_{v_A}(t)\oplus 0, Q_I(1)), (q_1(t)\oplus 0, Q_I(1))]
\] 
defined by the maps
\begin{align*}
\mathbb{C}\ni \lambda\mapsto \lambda (q_{v_A}(t)\oplus 0, Q_I(1))
& \in M_3(\mathcal{M}(C_{\TAT}(\OAT) \otimes \OA)),\\
\mathbb{C}\ni \lambda\mapsto \lambda (q_1(t)\oplus 0, Q_I(1))
& \in M_3(\mathcal{M}(C_{\TAT}(\OAT) \otimes \OA))
\end{align*}
where $C_{\TAT}(\OAT) \otimes \OA$ is naturally identified with the mapping cone 
$\operatorname{Cone}(\TAT\otimes\OA\to\OAT\otimes\OA).$ 
Since 
$[P_\Omega\otimes 1_\OA, 0]\hat{\otimes}(\sqK(j(\TAT))\otimes I_\OA)$ 
is identified with the Cuntz pair 
$
[(0, P_\Omega\otimes 1_\OA), (0, 0)]
$ 
of the maps
\begin{align*}
\mathbb{C}\ni\lambda\mapsto \lambda (0, P_\Omega\otimes 1_\OA)
& \in C_{\TAT}(\OAT) \otimes \OA,\\
\mathbb{C}\ni\lambda \mapsto (0, 0)
\in & C_{\TAT}(\OAT) \otimes \OA.
\end{align*}
The direct computation yields
\begin{align*}
 &[P_\Omega\otimes 1_\OA, 0]\hat{\otimes}(\sqK(j(\TAT))\otimes I_\OA)
  +\delta_A\hat{\otimes}(\sqK(T_A)\otimes I_\OA) \\
=&[(0, P_\Omega\otimes 1_\OA), (0,0)]+[(q_{v_A}(t)\oplus 0, Q_I(1)), (q_1(t)\oplus 0, Q_I(1))]\\
=&[(q_{v_A}(t)\oplus 0, Q_I(1))\oplus (0, P_\Omega\otimes 1_\OA), \; (q_1(t)\oplus 0, Q_I(1))\oplus (0, 0)]\\
\sim_h&[(q_{v_A}(t)\oplus 0, Q_{V_A}(1))\oplus (0, 0), (q_1(t)\oplus 0, Q_I(1))\oplus (0, 0)]\\
\sim_h&[(q_{v_A}(st)\oplus 0, Q_{V_A}(s)), (q_1(st)\oplus 0, Q_I(s))]\\
\sim_h&[(q_{v_A}(0)\oplus 0, Q_{V_A}(0)), (q_1(0)\oplus 0, Q_I(0))]\\
=&[(q_1(0)\oplus 0, Q_I(0)), (q_1(0)\oplus 0, Q_I(0))]\\
=&0,
\end{align*}
where $\sim_h$ means homotopy equivalence, which gives rise to the same element in $\sqK(\,\,\,, \,\,\,)$.
We next check the commutativity of the lower diagram. 
It is enough to show
\begin{align*}
 &\beta\hat{\otimes}(I_{S^2}\otimes \sqK(P_\Omega))
       \hat{\otimes}(I_S\otimes \sqK(\sigma_{S,\calK}))\hat{\otimes}(I_{S \calK\otimes S}\otimes \sqK(u_\OA))\\
=&\delta_A\hat{\otimes}(\sqK(\sigma_{S, \OAT})\otimes I_\OA)\hat{\otimes}(\beta\otimes I_{\OAT\otimes S\otimes\OA})\hat{\otimes}(I_S\otimes \eta(\TAT)\otimes I_{S\OA}).
\end{align*}
In the previous argument,
we proved 
\[
-\delta_A\hat{\otimes}(\eta(\TAT)\otimes I_\OA)
=\sqK(\calP_\Omega\otimes u_\OA)=\sqK(P_\Omega)\hat{\otimes}(I_\calK\otimes \sqK(u_\OA))
\]
and one has 
\[
\sqK(\sigma_{S, S})=-I_{S^2},
\quad 
\sqK(\sigma_{S^2, S})=(-1)^2I_{S^3}, 
\quad 
\sqK(\sigma_{S, S \calK})=-(I_S\otimes \sqK(\sigma_{S, \calK})).
\]
Hence the direct computation yields
\begin{align*}
 &\delta_A\hat{\otimes}(\sqK(\sigma_{S, \OAT})\otimes I_\OA)
  \hat{\otimes}(\beta\otimes I_{\OAT\otimes S\otimes \OA})
  \hat{\otimes}(I_S\otimes \eta(\TAT)\otimes I_{S\OA})\\
=&\beta\hat{\otimes}(I_{S^2}\otimes \delta_A)
  \hat{\otimes}(I_{S^2}\otimes \sqK(\sigma_{S, \OAT})\otimes I_\OA)
  \hat{\otimes}(I_S\otimes\eta(\TAT)\otimes I_{S\OA})\\
=&\beta\hat{\otimes}(I_{S^2}\otimes \delta_A)
  \hat{\otimes}(\sqK(\sigma_{S^2, S})\otimes I_{\OAT\otimes\OA})
  \hat{\otimes}(I_{S^2}\otimes\eta(\TAT)\otimes I_\OA)
  \hat{\otimes}(\sqK(\sigma_{S, S \calK})\otimes I_\OA)\\
=&(-1)^2\beta\hat{\otimes}(I_{S^2}\otimes\delta_A)
  \hat{\otimes}(I_{S^2}\otimes\eta(\TAT)\otimes I_\OA)
  \hat{\otimes}(\sqK(\sigma_{S, S\calK})\otimes I_\OA)\\
=&-\beta\hat{\otimes}(I_{S^2}\otimes (\sqK(P_\Omega)
  \hat{\otimes}(I_\calK\otimes \sqK(u_\OA))))
  \hat{\otimes}(\sqK(\sigma_{S, S\calK})\otimes I_\OA)\\
=&-\beta\hat{\otimes}(I_{S^2}\otimes \sqK(P_\Omega))
  \hat{\otimes}\sqK(\sigma_{S, S\calK})
  \hat{\otimes}(I_{S\calK\otimes S}\otimes \sqK(u_\OA))\\
=&\beta\hat{\otimes}(I_{S^2}\otimes \sqK(P_\Omega))
  \hat{\otimes}(I_S\otimes \sqK(\sigma_{S, \calK}))
  \hat{\otimes}(I_{S\calK\otimes S}\otimes \sqK(u_\OA)).
\end{align*}
\end{proof}
\begin{lemma}\label{c2}
The following diagrams commute:
\begin{equation*}
\begin{CD}
\sqK(\calP\otimes\OAT, \calR) @>{(\TAT\to\OAT)\hat{\otimes}-}>>
 \sqK(\calP\otimes\TAT, \calR)
 @>{(\calK\to\TAT)\hat{\otimes}-}>> \sqK(\calP\otimes\calK, \calR)
  \\
@V{\delta_A\hat{\otimes}(\sigma_{S, \OAT}\otimes I_\OA)\hat{\otimes}-}VV 
  @V{
  [\tilde{Q}_{V_A}, \tilde{Q}_I]\hat{\otimes}-
  }VV
@V{
P_\Omega\hat{\otimes}-
}VV \\
\sqK(\calP, \calR\otimes S\OA) @>>{-\hat{\otimes}i_A}> 
\sqK(\calP, \calR\otimes C_\mathbb{C}(\OA)) @>>{-\hat{\otimes}e_A}> 
\sqK(\calP, \calR).
\end{CD}
\end{equation*}
\end{lemma}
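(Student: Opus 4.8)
The plan is to verify the two squares separately, and in each case to reduce the assertion — by repeated use of associativity of the Kasparov product together with the interchange of products acting on disjoint tensor legs — to a single identity between explicit elements of a $\sqK$-group that no longer involves $\calP$, $\calR$, or the variable class. This is exactly the bookkeeping already carried out in Lemma \ref{lem:c1}, where chains of homotopies between Cuntz pairs established the desired equalities, and I would follow the same pattern here.

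For the right-hand square, take $x \in \sqK(\calP\otimes\TAT,\calR)$ and write $\iota:\calK\hookrightarrow\TAT$ for the inclusion. The clockwise composite is $(I_\calP\otimes(P_\Omega\hat{\otimes}\sqK(\iota)))\hat{\otimes}x$, while the counter-clockwise composite, after using $(x\otimes I_{C_{\mathbb{C}}(\OA)})\hat{\otimes}(I_\calR\otimes\sqK(e_A)) = x\otimes\sqK(e_A)$ and moving the $e_A$-product onto the $C_{\mathbb{C}}(\OA)$-leg produced by the duality class, becomes $(I_\calP\otimes([\tilde{Q}_{V_A},\tilde{Q}_I]\hat{\otimes}(I_\TAT\otimes\sqK(e_A))))\hat{\otimes}x$. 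Hence the square commutes for every $x$ once one proves the class identity $[\tilde{Q}_{V_A},\tilde{Q}_I]\hat{\otimes}(I_\TAT\otimes\sqK(e_A)) = P_\Omega\hat{\otimes}\sqK(\iota)$ in $\sqK(\mathbb{C},\TAT)$. This is a direct evaluation: applying $\id_\TAT\otimes e_A$ amounts to evaluating the Cuntz pair at $t=1$ and collapsing the $\OA$-leg via $1_\OA\mapsto 1$, and from the explicit form $Q_{V_A}(1)=1_{\TAT\otimes\OA}\oplus 0\oplus(P_\Omega\otimes 1_\OA)$, $Q_I(1)=1_{\TAT\otimes\OA}\oplus 0\oplus 0$ one reads off that the image pair is $[\,1_\TAT\oplus 0\oplus P_\Omega,\,1_\TAT\oplus 0\oplus 0\,]$, which after cancelling the common summand represents exactly the rank-one class $P_\Omega\hat{\otimes}\sqK(\iota)$.

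For the left-hand square the same reduction sends the commutativity to the identity $[\tilde{Q}_{V_A},\tilde{Q}_I]\hat{\otimes}(\sqK(\pi_{A^t})\otimes I_{C_{\mathbb{C}}(\OA)}) = \delta_A\hat{\otimes}(\sigma_{S,\OAT}\otimes I_\OA)\hat{\otimes}(I_\OAT\otimes\sqK(i_A))$ in $\sqK(\mathbb{C},\OAT\otimes C_{\mathbb{C}}(\OA))$. Here I would push the Cuntz pair $[\tilde{Q}_{V_A},\tilde{Q}_I]$ forward along $\pi_{A^t}\otimes\id$: since $(\pi_{A^t}\otimes\id_\OA)(V_A)=v_A$ and $\pi_{A^t}(P_\Omega)=0$, the third ($P_\Omega$-)block and the correction terms collapse, and $Q_{V_A}$, $Q_I$ become $q_{v_A}\oplus 0$, $q_1\oplus 0$ in the path picture, recovering $\delta_A$ precisely as in Lemma \ref{lem:d1}. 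It then remains to identify this pushed-forward pair, read in the mapping-cone coordinate $C_{\mathbb{C}}(\OA)$, with the $i_A$-image of the rearranged $\delta_A$ on the right-hand side; this matching is carried out by the same explicit homotopy argument as in Lemma \ref{lem:c1}, keeping careful track of the flip $\sigma_{S,\OAT}$ and of which copy of $S=C_0(0,1)$ is in play.

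The formal reductions and the two evaluations/pushforwards are each routine; the genuine obstacle is the bookkeeping. One must control the associativity rearrangements of the iterated Kasparov products, the signs and reorderings introduced by the flip maps $\sigma_{S,\OAT}$ (as already seen in the lower diagram of Lemma \ref{lem:c1}, where $\sqK(\sigma_{S,S})=-I_{S^2}$ enters), and the precise identification of the suspension legs and mapping-cone coordinates, so that the explicit Cuntz-pair representatives on the two sides coincide up to homotopy rather than merely up to some class that is hard to name.
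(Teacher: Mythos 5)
Your proposal is correct and essentially coincides with the paper's proof: after the same formal reduction eliminating $\calP$, $\calR$ and the variable class, the right square is settled by the evaluation $[\tilde{Q}_{V_A}, \tilde{Q}_I]\hat{\otimes}(I_\TAT\otimes \sqK(e_A))=[Q_{V_A}(1), Q_I(1)]=\sqK(\mathbb{C}\xrightarrow{P_\Omega}\TAT)$, and the left square by pushing $[\tilde{Q}_{V_A}, \tilde{Q}_I]$ forward along $\TAT\to\OAT$ (which kills the $P_\Omega$-block since $\pi_{A^t}(P_\Omega)=0$ and $(\pi_{A^t}\otimes{\rm id}_\OA)(V_A)=v_A$, yielding $[q_{v_A}, q_1]$) while identifying the other composite with the same Cuntz pair. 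The only cosmetic difference is that the paper's left-square matching requires no homotopy — the identification goes directly through the unitization of ${\rm id}_\OAT\otimes i_A$ after identifying $\OAT\otimes S\otimes\OA$ with $C_0((0,1),\OAT\otimes\OA)$ — so your appeal to a Lemma~\ref{lem:c1}-style homotopy argument slightly overestimates the remaining work.
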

\begin{proof}
We first show that the right square commutes.
For $f\in \sqK(\calP\otimes\TAT, \calR)$,
one has
\begin{gather*}
\sqK(P_\Omega)\hat{\otimes}\sqK(\calK\to\TAT)\hat{\otimes} f
=(I_\calP\otimes \sqK(\mathbb{C}\xrightarrow{P_\Omega}\TAT))\hat{\otimes} f,\\
[\tilde{Q}_{V_A}, \tilde{Q}_I]\hat{\otimes} f \hat{\otimes} \sqK(e_A)
=(I_\calP\otimes ([\tilde{Q}_{V_A}, \tilde{Q}_I]\hat{\otimes}(I_\TAT\otimes \sqK(e_A))))\hat{\otimes} f,
\end{gather*}
where we write $P_\Omega : \mathbb{C}\ni\lambda\mapsto\lambda P_\Omega\in\TAT$ by abuse of notation.
The direct computation yields
\begin{align*}
 & [\tilde{Q}_{V_A}, \tilde{Q}_I]\hat{\otimes}(I_\TAT\otimes \sqK(e_A))\\
=&[Q_{V_A}(1), Q_I(1)]
=[1\oplus 0\oplus P_\Omega, 1\oplus 0\oplus 0]
=\sqK(\mathbb{C}\xrightarrow{P_\Omega}\TAT),
\end{align*}
which implies 
\[
(I_\calP\otimes \sqK(\mathbb{C}\xrightarrow{P_\Omega}\TAT))\hat{\otimes} f
=(I_\calP\otimes ([\tilde{Q}_{V_A}, \tilde{Q}_I]\hat{\otimes}(I_\TAT\otimes \sqK(e_A))))\hat{\otimes} f.
\]
We next show the left square commutes.
It is enough to show
\begin{align*}
&[\tilde{Q}_{V_A}, \tilde{Q}_I]
 \hat{\otimes}(\sqK(\TAT\to\OAT)\otimes I_{C_{\mathbb{C}}(\OA)})\\
=
&\delta_A\hat{\otimes}(\sqK(\sigma_{S, \OAT})\otimes I_\OA)
 \hat{\otimes}(I_\OAT\otimes \sqK(i_A)).
\end{align*}
Identifying 
$\OAT\otimes S\otimes\OA$ 
with 
$C_0((0, 1), \OAT\otimes\OA)$,
the composition 
$\delta_A\hat{\otimes}(\sqK(\sigma_{S, \OAT})\otimes I_\OA)$ 
is the Cuntz pair 
$[q_{v_A}, q_1]\in \sqK(\mathbb{C}, C_0((0,1), \OAT\otimes\OA)).$
Since 
\[
\OAT\otimes C_{\mathbb{C}}(\OA)
=\{ a(t)\in C_0((0, 1], \OAT\otimes\OA)\;|\; a(1)\in\OAT\otimes 1_\OA\},
\]
the following unitization of  the map 
$
{\rm id}_\OAT\otimes i_A$ 
\[
\mathbb{C}1_{C([0, 1], \OAT\otimes\OA)}+C_0((0, 1), \OAT\otimes\OA)
\rightarrow 
 \mathbb{C}1_{C([0, 1], \OAT\otimes\OA)}+\OAT\otimes C_{\mathbb{C}}(\OA)
 \]
gives
\begin{align*}
 &\delta_A\hat{\otimes}(\sqK(\sigma_{S, \OAT})\otimes I_\OA)\hat{\otimes}(I_\OAT\otimes \sqK(i_A)) \\
=&[q_{v_A}, q_1]\hat{\otimes}\sqK({\rm id}_\OAT\otimes i_A)
=[q_{v_A}, q_1]
\in \sqK(\mathbb{C}, \OAT\otimes C_{\mathbb{C}}(\OA).
\end{align*}
On the other hands,
we have
\begin{align*}
[\tilde{Q}_{V_A}, \tilde{Q}_I]\hat{\otimes}(\sqK(\TAT\to\OAT)\otimes I_{C_{\mathbb{C}}(\OA)})
=&[q_{v_A}, q_1]\in \sqK(\mathbb{C}, \OAT\otimes C_{\mathbb{C}}(\OA)))
\end{align*}
by definition, and this completes the proof.
\end{proof}
\begin{proof}[{Proof of Theorem \ref{thm:dclass}}]
Since 
\[
S\OAT\xrightarrow{\eta(\TAT)} 
\calK\longrightarrow\TAT\longrightarrow\OAT
\] 
and 
\[
S
\xrightarrow{Su_\OA}
S\OA
\xrightarrow{i_A}
C_{\mathbb{C}}(\OA)
\xrightarrow{e_A}
\mathbb{C}   
\]
are mapping cone sequence,
one has the following exact sequences
\begin{gather*}
\sqK(\calP\otimes S\calK, \calR) 
\rightarrow
\sqK(\calP\otimes \OAT, \calR)
\rightarrow
\sqK(\calP\otimes\TAT, \calR)
\rightarrow
\sqK(\calP\otimes\calK, \calR)
\rightarrow
\sqK(\calP\otimes S\OAT, \calR)
 \\
\sqK(\calP, \calR\otimes S)
\rightarrow
\sqK(\calP, \calR\otimes S\OA) 
 \rightarrow
\sqK(\calP, \calR\otimes C_{\mathbb{C}}(\OA))
\rightarrow
\sqK(\calP, \calR)
\rightarrow
\sqK(\calP, \calR\otimes\OA).
\end{gather*}
Lemma \ref{lem:c1}, \ref{c2} 
together with the five lemma prove that the map 
\[
[\tilde{Q}_{V_A}, \tilde{Q}_I]\hat{\otimes}- 
: \sqK(\calP\otimes \TAT, \calR) \rightarrow 
  \sqK(\calP, \calR\otimes C_{\mathbb{C}}(\OA))\]
is an isomorphism.
Therefore
 \cite[Lemma 3.4]{PS} shows 
 $[\tilde{Q}_{V_A}, \tilde{Q}_I]
 \in \sqK(\mathbb{C}, \TAT\otimes C_{\mathbb{C}}(\OA))$ 
 is a duality class for 
 $\TAT$ and $C_{\mathbb{C}}(\OA)$.
\end{proof}

\subsection{Gauge actions in $\pi_1(\Aut(\OA))$ and 
$\pi_1(\Aut(\whatOA))$}
Recall that the circle group $\{z \in \mathbb{C} \mid |z| =1\}$ 
is denoted by $\mathbb{T}.$
For $\TA$, $\OA$ and $\OATI$,
the gauge actions on their respect algebras are defined by 
\begin{gather*}
(\gamma_\TA)_z : \TA\ni L_i^A\mapsto zL_i^A\in\TA, \quad i=1, \dots, N, \, z\in\mathbb{T},\\
\gamma^A_z : \OA\ni l_i\mapsto zl_i \in\OA, \quad i=1, \dots, N, \, z\in \mathbb{T},\\
(\gATI)_z : \OATI\ni S_i\mapsto zS_i \in\OATI,\quad i=1, 2, \dots, \, z\in\mathbb{T}.
\end{gather*}
Put $P_N = 1 - \sum_{j=1}^N S_j S_j^*$ in $\OATI$.
Since 
$(\gATI)_z(P_N)= P_N$
and 
$\whatOA = P_N \OATI P_N$,
the restriction of the third gauge action above 
defines a gauge action on $\whatOA$
written as $\hat{\gamma}^A$.

The fundamental group 
$\pi_1(\Aut(\A))$ of the automorphism group $\Aut(\A)$ of a unital $C^*$-algebra $\A$
is the set of homotopy classes of the base point preserving continuous maps :
\[
\pi_1(\Aut(\A))
:=\{\alpha : (\mathbb{T}, 1)\to (\Aut(\A), {\rm id}_A)\}/\sim_{\text{homotopy}}.
\]
In \cite{Sogabe2022}, it was proved that 
$\pi_i(\Aut(\A)) \cong \pi_i(\Aut(\widehat{\A})), i=1,2$
for a unital Kirchberg algebra.
In \cite{MatSogabe}, it was also proved that 
the homotopy groups  
$\pi_i(\Aut(\OA)), i=1,2$ for the Cuntz--Krieger algebra is a complete set of 
invariants of the isomorphism class of $\OA$.
The gauge action $\gamma^A$ 
naturally defines an element of the homotopy group $\pi_1(\Aut(\OA))$,
and similarly $\hat{\gamma}^A$ defines an elemsnt of $\pi_1(\Aut(\whatOA))$.

Recall that M. Dadarlat \cite{Dadarlat2007} showed that for a unital Kirchberg algebra $\A$
the map
\begin{equation}\label{eq:Dadarlatpi1}
\pi_1(\Aut(\A))\ni [z\mapsto \alpha_{z}]\mapsto 
[C(\alpha), C(l)]\in \sqK(C_{\mathbb{C}}(\A), SC_0(\mathbb{T}, 1)\otimes \A)
\end{equation}
gives rise to an isomorphism of groups, 
where $C_{\mathbb{C}}(\A)$ is the mapping cone 
$\operatorname{Cone}(\mathbb{C}\to \A) =\{a(t) \in C_0(0,1] \otimes \A \mid a(1) \in \mathbb{C} 1_{\A} \},
C_0(\mathbb{T}, 1):=\{f\in C(\mathbb{T})\;|\; f(1)=0\}$, 
and $C(\alpha), C(l)$ denote the maps
\begin{gather*}
C(\alpha) : C_{\mathbb{C}}(\A)\ni a(t)
\mapsto 
\alpha_z(a(t))\in C[0, 1]\otimes C(\mathbb{T})\otimes \A\subset\mathcal{M}(SC_0(\mathbb{T}, 1)\otimes \A), \\
C(l) : C_{\mathbb{C}}(\A)\ni a(t)
\mapsto 
a(t)\in C[0, 1]\otimes C(\mathbb{T})\otimes \A\subset\mathcal{M}(SC_0(\mathbb{T}, 1)\otimes \A).
\end{gather*}
Since 
$\alpha_1={\rm id}_\A$ and 
$\alpha_z(a(0))=0=a(0), \alpha_z(a(1))=a(1)\in\mathbb{C}1_\A$,
$[C(\alpha), C(l)]$ defines a well-defined Cuntz pair.
%
We provide a lemma to prove Theorem \ref{thm:gaugeinpi1whatOA}.
\begin{lemma}
\[
(\pi_1(\Aut(\whatOA)), [\hat{\gamma}^A])
\cong 
(\sqK(C_{P_\Omega}(\TAT), SC_0(\mathbb{T}, 1)\otimes\TAT), [C(\gamma_\TAT), C(l)]).\]
\end{lemma}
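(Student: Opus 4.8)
The plan is to reduce the statement to Dadarlat's $\sqK$-theoretic description of $\pi_1$ and then transport it along the realization of $\whatOA$ as a corner of $\OATI$. First I would apply the isomorphism \eqref{eq:Dadarlatpi1} to the unital Kirchberg algebra $\whatOA$, which yields
\[
(\pi_1(\Aut(\whatOA)), [\hat{\gamma}^A]) \cong
(\sqK(C_{\mathbb{C}}(\whatOA), SC_0(\mathbb{T},1)\otimes \whatOA), [C(\hat{\gamma}^A), C(l)]).
\]
It then suffices to produce a group isomorphism of this right-hand $\sqK$-group onto $\sqK(C_{P_\Omega}(\TAT), SC_0(\mathbb{T},1)\otimes \TAT)$ carrying $[C(\hat{\gamma}^A), C(l)]$ to $[C(\gamma_\TAT), C(l)]$. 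By the standard functoriality of $\sqK(-,-)$ (contravariant in the first, covariant in the second variable, via Kasparov products), such an isomorphism is induced by a $\sqK$-equivalence in each variable.

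For the variables I would use Corollary \ref{cor:whatOA}: $\whatOA = P_N \OATI P_N$ with $\OATI \cong \mathcal{O}_{H_{\TAT}}$, together with the unital inclusion $\TAT \hookrightarrow \OATI$, a $\sqK$-equivalence by Theorem \ref{thm:mr} (ii). Since $\OATI$ is simple and $P_N$ is full, the corner embedding gives a Morita, hence $\sqK$, equivalence $\whatOA \sim_{\sqK} \OATI$; composing yields $\rho : \whatOA \sim_{\sqK} \TAT$, which handles the second variable after tensoring with $I_{SC_0(\mathbb{T},1)}$. The decisive point for the first variable is the identification of the distinguished projection: because $P_N = e_{A^t} = 1 - \sum_{j=1}^N S_j S_j^*$ is exactly the vacuum projection $P_\Omega$, one has $[1_{\whatOA}] = [P_N] = [P_\Omega]$ in $\K_0(\OATI)$, so $\rho_*[1_{\whatOA}] = [P_\Omega]$. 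Thus $\rho$ intertwines the structure maps $\mathbb{C} \xrightarrow{u_{\whatOA}} \whatOA$ and $\mathbb{C} \xrightarrow{P_\Omega} \TAT$ at the level of $\sqK$, and the five lemma applied to the mapping-cone six-term sequences of $0 \to S\whatOA \to C_{\mathbb{C}}(\whatOA) \to \mathbb{C}\to 0$ and $0 \to S\TAT \to C_{P_\Omega}(\TAT) \to \mathbb{C}\to 0$ produces $C_{\mathbb{C}}(\whatOA) \sim_{\sqK} C_{P_\Omega}(\TAT)$. This is precisely why the relevant Toeplitz-side mapping cone is $C_{P_\Omega}(\TAT)$ rather than $C_{\mathbb{C}}(\TAT)$.

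To track the gauge class I would exploit that every map above is equivariant for a single circle action. Indeed $\gATI$ fixes $P_N$ and satisfies $(\gATI)_z(S_i) = z S_i$, so it restricts to $\hat{\gamma}^A$ on the corner $\whatOA$ and to $\gamma_\TAT$ on the subalgebra $\TAT = C^*(S_1,\dots,S_N)$; since $\gamma_\TAT$ fixes $P_\Omega = e_{A^t}$, the Cuntz pair $[C(\gamma_\TAT), C(l)]$ over $C_{P_\Omega}(\TAT)$ is well-defined. Because the corner embedding $\whatOA \hookrightarrow \OATI$ and the inclusion $\TAT \hookrightarrow \OATI$ both intertwine these restrictions with $\gATI$, the naturality of the assignment $(\A,\alpha)\mapsto [C(\alpha),C(l)]$ under the equivariant $\sqK$-equivalences built above forces the isomorphism to send $[C(\hat{\gamma}^A), C(l)]$ to $[C(\gamma_\TAT), C(l)]$.

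The hard part will be making this last naturality step rigorous, since the $\sqK$-equivalences involved (a full corner and a nonunital-image inclusion into a Kirchberg algebra) are not algebra isomorphisms, and $\TAT$ is not itself a Kirchberg algebra, so $[C(\gamma_\TAT), C(l)]$ must be handled directly as a Cuntz pair rather than through \eqref{eq:Dadarlatpi1}. Concretely I expect to chase the equivariant evaluation and suspension maps in the two mapping-cone sequences and to verify that $C(\alpha)$ and $C(l)$ are compatible with the connecting maps, the matching $[1_{\whatOA}] \leftrightarrow [P_\Omega]$ again being the linchpin that keeps the two Cuntz pairs aligned.
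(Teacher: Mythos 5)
Your proposal follows the same overall route as the paper: apply Dadarlat's formula \eqref{eq:Dadarlatpi1} to $\whatOA$, then transport the resulting Cuntz pair through $\OATI$ using the two gauge-equivariant $\sqK$-equivalences, namely $\theta:\TAT\ni R^A_i\mapsto S_i\in\OATI$ (with $\theta(P_\Omega)=P_N$) and the full-corner inclusion $\whatOA=P_N\OATI P_N\hookrightarrow\OATI$. The one place where your plan, as written, is not yet a proof is exactly the step you flag at the end: you propose to compose the two equivalences into a single class $\rho\in\sqK(\whatOA,\TAT)$, deduce $C_{\mathbb{C}}(\whatOA)\sim_{\sqK}C_{P_\Omega}(\TAT)$ by a five-lemma argument, and then invoke ``naturality'' of $(\A,\alpha)\mapsto[C(\alpha),C(l)]$ along $\rho$. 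But $\rho$ is not induced by a $*$-homomorphism in either direction; a bare $\sqK$-equivalence does not canonically induce a morphism of the two mapping-cone sequences (a completion to a morphism of triangles exists abstractly, but it is neither unique nor equivariant), and the Cuntz-pair assignment is only manifestly natural for circle-equivariant $*$-homomorphisms. So transporting $[C(\hat{\gamma}^A),C(l)]$ along $\rho$ is unjustified as stated.

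The paper closes precisely this gap by never composing: it introduces the intermediate mapping cone $C_{P_N}(\OATI)$ of $\mathbb{C}\ni\lambda\mapsto\lambda P_N\in\OATI$ together with its own Cuntz pair $[C(\gATI),C(l)]\in\sqK(C_{P_N}(\OATI),SC_0(\mathbb{T},1)\otimes\OATI)$, and then verifies two concrete intertwining identities, e.g.
\[
\sqK(\Theta)\hat{\otimes}[C(\gATI), C(l)]
=[C(\gamma_\TAT), C(l)]\hat{\otimes}\sqK(\theta),
\]
where $\Theta: C_{P_\Omega}(\TAT)\to C_{P_N}(\OATI)$ is the cone map induced by $\theta$, and the analogous identity for the corner inclusion. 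These are direct functoriality computations, available exactly because $\theta$ and the corner inclusion are genuine equivariant $*$-homomorphisms matching $P_\Omega\leftrightarrow P_N$ --- the mechanism you yourself identify when you note that both inclusions intertwine the restricted actions with $\gATI$. Making the intermediate Cuntz pair over $\OATI$ explicit is what converts your ``naturality'' step into a proof; with that device inserted, your argument coincides with the paper's.
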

\begin{proof}
For the inclusion map
$P_N: \mathbb{C} \ni \lambda \rightarrow \lambda P_N \in \OATI$,
let us denote by 
$C_{P_N}(\OATI)$
the mapping cone 
$ \operatorname{Cone}(\mathbb{C}\xrightarrow{P_N}\OATI)
= \{ a(t) \in C_0(0,1]\otimes \OATI \mid a(1) \in \mathbb{C} P_N\}.
$
Since 
$(\gamma_{\TAT})_z(P_\Omega)=P_\Omega$ 
and 
$(\gATI)_z(P_N)= P_N$,
we have  well-defined Cuntz pairs 
\begin{gather*}
[C(\gamma_\TAT), C(l)]
\in \sqK(C_{P_\Omega}(\TAT), SC_0(\mathbb{T}, 1)\otimes\TAT),\\
[C(\gATI), C(l)]
\in \sqK(C_{P_N}(\OAI), SC_0(\mathbb{T}, 1)\otimes \OATI).
\end{gather*}
The map 
$\theta : \TAT\ni R^A_i\mapsto S_i\in\OATI$ 
is $\sqK$-equivalent satisfying 
$\theta(P_\Omega)= P_N$.
It induces a $\sqK$-equivalence 
$\Theta : C_{P_\Omega}(\TAT) \to C_{P_N}(\OATI)$.
We then have  
\[
\sqK(\Theta)\hat{\otimes}[C(\gATI), C(l)]
=[C(\gamma_\TAT), C(l)]\hat{\otimes}\sqK(\theta),
\]
which implies 
\begin{align*}
&(\sqK(C_{P_N}(\OATI), SC_0(\mathbb{T}, 1)\otimes \OATI), [C(\gATI), C(l)])\\
\cong
&(\sqK(C_{P_\Omega}(\TAT), SC_0(\mathbb{T}, 1)\otimes\TAT), [C(\gamma_\TAT), C(l)]).
\end{align*}
The same argument for the inclusion map 
$\whatOA\rightarrow\OATI$ 
shows
\begin{align*}
&(\sqK(C_{P_N}(\OATI), SC_0(\mathbb{T}, 1)\otimes \OATI), [C(\gATI), C(l)])\\
\cong
&(\sqK(C_{P_\Omega}(\TAT), SC_0(\mathbb{T}, 1)\otimes \whatOA), [C(\hat{\gamma}^A), C(l)])\\
\cong
& (\pi_1(\Aut(\whatOA)), [\hat{\gamma}^A]).
\end{align*}
\end{proof}
\begin{proof}[{Proof of Theorem \ref{thm:gaugeinpi1whatOA}}]
Note that 
$\pi_1(\Aut(\OA))$ 
is abelian and one has
\[
(\pi_1(\Aut(\OA)), [\gamma^A])
\cong 
(\pi_1(\Aut(\OA)), -[\gamma^A])
=(\pi_1(\Aut(\OA)), [(\gamma^A)^{-1}]).
\]
By the above lemma and M. Dadarlat's formula \eqref{eq:Dadarlatpi1},
it is enough to show
\begin{align*}
&(\sqK(C_{\mathbb{C}}(\OA), SC_0(\mathbb{T}, 1)\otimes \OA), [C((\gamma^A)^{-1}), C(l)])\\
\cong
&(\sqK(C_{P_\Omega}(\TAT), SC_0(\mathbb{T}, 1)\otimes \TAT), [C(\gamma_\TAT), C(l)]).
\end{align*}
By Lemma \ref{lem:d1},
the Kasparov product 
\[
[Q_{V_A}, Q_I]\hat{\otimes}- 
: \sqK(C_{P_\Omega}(\TAT), SC_0(\mathbb{T}, 1)\otimes\TAT)
\to 
\sqK(\mathbb{C}, SC_0(\mathbb{T}, 1)\otimes\TAT\otimes\OA)
\]
gives an isomorphism.
Since 
$(\gamma_\TAT)_z(P_\Omega)=P_\Omega$ 
and 
$((\gamma_\TAT)_z\otimes {\rm id}_\OA)(Q_I(t))=1\oplus 0\oplus 0=(l_z\otimes{\rm id}_\OA)(Q_I(t))$ 
for $z\in \mathbb{T}, t\in [0, 1]$, 
one has 
\[[Q_{V_A}, Q_I]\hat{\otimes}([C(\gamma_\TAT), C(l)]\otimes I_\OA)
=[((\gamma_\TAT)_z\otimes {\rm id}_\OA)(Q_{V_A}(t)), Q_{V_A}(t)]\]
where 
$((\gamma_\TAT)_z\otimes{\rm id}_\OA)(Q_{V_A}(t))$ and $Q_{V_A}(t)$ 
denote the maps
\begin{gather*}
\mathbb{C}\ni\lambda\mapsto 
\lambda((\gamma_\TAT)_z\otimes{\rm id}_\OA)(Q_{V_A}(t))
\in M_3(\mathcal{M}(SC_0(\mathbb{T}, 1)\otimes\TAT\otimes\OA)),\\
\mathbb{C}\ni\lambda\mapsto \lambda Q_{V_A}(t)
\in M_3(\mathcal{M}(SC_0(\mathbb{T}, 1)\otimes\TAT\otimes\OA)),
\end{gather*}
respectively, which satisfy
$((\gamma_\TAT)_z\otimes{\rm id}_\OA)(Q_{V_A}(t))-Q_{V_A}(t)
\in M_3(SC_0(\mathbb{T}, 1)\otimes\TAT\otimes\OA)$.
We thus have
\begin{align*}
&(\sqK(C_{P_\Omega}(\TAT), SC_0(\mathbb{T}, 1)\otimes \TAT), [C(\gamma_\TAT), C(l)])\\
\cong
&(\sqK(\mathbb{C}, SC_0(\mathbb{T}, 1)\otimes\TAT\otimes\OA), [((\gamma_\TAT)_z\otimes {\rm id}_\OA)(Q_{V_A}(t)), Q_{V_A}(t)]).
\end{align*}
By Theorem \ref{thm:dclass},
the same argument for the map
\[
[\tilde{Q}_{V_A}, \tilde{Q}_I]\hat{\otimes}(I_\TAT\otimes -) 
: \sqK(C_{\mathbb{C}}(\OA), SC_0(\mathbb{T}, 1)\otimes\OA)
\to \sqK(\mathbb{C}, SC_0(\mathbb{T}, 1)\otimes\TAT\otimes\OA)
\]
shows
\begin{align*}
&(\sqK(C_{\mathbb{C}}(\OA), SC_0(\mathbb{T}, 1)\otimes \OA), 
[C((\gamma^A)^{-1}), C(l)])\\
\cong&(\sqK(\mathbb{C}, SC_0(\mathbb{T}, 1)\otimes \TAT\otimes\OA), 
[({\rm id}_\TAT\otimes((\gamma^A)^{-1})_z)(Q_{V_A}(t)), Q_{V_A}(t)]).
\end{align*}
By the definition of $V_A$,
one has 
$((\gamma_\TAT)_z\otimes{\rm id}_\OA)(Q_{V_A}(t))
=({\rm id}_\TAT\otimes((\gamma^A)^{-1})_z)(Q_{V_A}(t))$ 
and this completes the proof of Theorem \ref{thm:gaugeinpi1whatOA}.
\end{proof}

\section{Examples of the reciprocal duals} \label{sec:Examples}
Let $\OA$ be the simple Cuntz--Krieger algebra for an irreducible non-permutation
 matrix $A$ with entries in $\{0,1\}$.
 As in Proposition \ref{prop:characterizationwhatOA},
 the reciprocal dual
 $\widehat{\O}_A$ is not able to be expressed as $\OB$ 
 for any finite square matrix $B$ with entries in $\{0,1\}$.

\medskip

\noindent
{\bf 1.} $\widehat{\O}_2 =\OI,\quad \widehat{\O}_\infty = \O_2.$

Since
$\K_0(\OI) = \Z, \K_1(\OI) = 0$ and hence
\begin{align*}
\Extw^0(\OI) & = \K^0(\OI) = \Free(\K_0(\OI)) = \Z, \\
\Extw^1(\OI) & = \K^1(\OI) = \Tor(\K_0(\OI)) = 0, 
\end{align*}
by using \eqref{eq:sixtermExt} for $\A = \OI$,
we have an exact sequence
\begin{equation*}
0 \longrightarrow
\Exts^0(\OI) \longrightarrow
\Z \longrightarrow
\Z \longrightarrow
\Exts^1(\OI) {\longrightarrow} 0.
\end{equation*}
By \cite{PP}, we know that $\Exts^1(\OI) =0$ so that 
$\Exts^0(\OI) =0$.
Therefore we have
\begin{align*}
\K_0(\OI) = & \Z = \Exts^1(\O_2), \qquad
\K_1(\OI) =  0 = \Exts^0(\O_2), \\
\Exts^1(\OI) & = 0 = \K_0(\O_2), \qquad
\Exts^0(\OI)  = 0 = \K_1(\O_2),
\end{align*}
and hence $\widehat{\O}_2 =\OI,\quad \widehat{\O}_\infty = \O_2.$

\medskip

\noindent
{\bf 2.} $\widehat{\O}_{N+1} =\OI\otimes M_N(\bbC),
\quad (\OI\otimes M_N(\bbC))^{\widehat{}} = \O_{N+1}.$

Similarly to {\bf 1}, we have that 
$\K_0(\OI\otimes M_N(\bbC)) = \Z, \K_1(\OI\otimes M_N(\bbC)) = 0$ and hence
\begin{equation*}
\Extw^0(\OI\otimes M_N(\bbC)) = \Z, \qquad
\Extw^1(\OI\otimes M_N(\bbC))  = 0, 
\end{equation*}
so that we know  an exact sequence
\begin{equation*}
0 \longrightarrow
\Exts^0(\OI\otimes M_N(\bbC)) {\longrightarrow}
\Z {\longrightarrow}
\Z {\longrightarrow}
\Exts^1(\OI\otimes M_N(\bbC)) {\longrightarrow} 0.
\end{equation*}
By \cite{PP}, we know that $\Exts^1(\OI\otimes M_N(\bbC)) =\Z/N\Z$ 
so that 
$\Exts^0(\OI\otimes M_N(\bbC)) =0$.
Therefore we have
\begin{align*}
\K_0(\OI\otimes M_N(\bbC)) = & \Z = \Exts^1(\O_{N+1}), \qquad
\K_1(\OI\otimes M_N(\bbC)) =  0 = \Exts^0(\O_{N+1}), \\
\Exts^1(\OI\otimes M_N(\bbC)) & = \Z/N\Z = \K_0(\O_{N+1}), \qquad
\Exts^0(\OI\otimes M_N(\bbC))  = 0 = \K_1(\O_{N+1}),
\end{align*}
and hence $\widehat{\O}_{N+1} =\OI\otimes M_N(\bbC),
(\OI\otimes M_N(\bbC))^{\widehat{}} = \O_{N+1}.$

\medskip

\noindent
{\bf 3.} $((\OI)_{1-S_1 S_1^*})^{\widehat{}} =\OA,
\quad \widehat{\O}_{A} =(\OI)_{1-S_1 S_1^*},$
where
$
A =
\begin{bmatrix}
1 & 1 & 1 & 1 & 1 \\
0 & 1 & 1 & 1 & 0 \\
1 & 1 & 1 & 1 & 1 \\
0 & 1 & 1 & 1 & 0 \\
1 & 1 & 1 & 1 & 1 
\end{bmatrix}.
$

Put $\B =(\OI)_{1-S_1 S_1^*}$.
We then know that 
$\K_0(\B)\cong \K_0(\OI) =\Z$
and
$\K_1(\B)\cong \K_1(\OI) =0.$
It is easy to see that $[1_\B]_0$ in $\K_0(\B)$ is $0$ in $\Z.$
Hence by Proposition \ref{prop:characterizationwhatOA},
one may find a simple Cuntz--Krieger algebra $\A$
such that $\widehat{\A} \cong \B$.
Now $[1_\B]_0 =0$ in $\K_0(\B)= Z,$
so that $[1_\B]_0$ is torsion in $\K_0(\B)$
and $n_\B=0$ in the proof of Proposition \ref{prop:equivalentwhatOA}.
This shows that the map
$\partial_\B: \Z \rightarrow \K_0(\B)$ sending $1$ to  $[1_\B]_0$
is the zero-map. 
Hence we have short exact sequences:
\begin{gather*}
0 \longrightarrow \K_1(\B)(=0) 
\longrightarrow \K_1(\A)
\longrightarrow \Z
\longrightarrow 0, \\
0 \longrightarrow \partial_\B(\Z)(=0) 
\longrightarrow \K_0(\B)
\longrightarrow \K_0(\A)
\longrightarrow 0
\end{gather*}
so that 
\begin{equation*}
\K_1(\A) = \Z, \qquad \K_0(\A) = \K_0(\B) =\Z.
\end{equation*}
As
$\widehat{\A} \cong \B$, we have
\begin{equation*}
\Exts^1(\A) = \K_0(\B) =Z, \qquad
\Exts^0(\A) = \K_1(\B) =0.
\end{equation*}
On the other hand, for the  matrix
$
A =
\begin{bmatrix}
1 & 1 & 1 & 1 & 1 \\
0 & 1 & 1 & 1 & 0 \\
1 & 1 & 1 & 1 & 1 \\
0 & 1 & 1 & 1 & 0 \\
1 & 1 & 1 & 1 & 1 
\end{bmatrix},
$
we have
$
\widehat{A} =
\begin{bmatrix}
1 & 1 & 1 & 1 & 1 \\
0 & 1 & 1 & 1 & 0 \\
0 & 0 & 0 & 0 & 0 \\
0 & 1 & 1 & 1 & 0 \\
0 & 0 & 0 & 0 & 0 
\end{bmatrix}.
$
It is straightforward to see that  
\begin{equation*}
\Z^5/(I -A)\Z^5 \cong \Z, \qquad 
\Ker(I -A) \text{ in } \Z^5 \cong \Z, \qquad
\Z^5/(I -\widehat{A})\Z^5 \cong \Z, 
\end{equation*}
which show that
\begin{equation*}
\K_0(\OA) \cong \Z, \qquad
\K_1(\OA) \cong \Z, \qquad
\Exts^1(\OA) \cong \Z. 
\end{equation*}
By Proposition \ref{prop:basic2},
we have that $\OA$ is isomorphic to $\A$,
showing that $\widehat{\O}_A \cong \B$.

\medskip

{\it Acknowledgments:}
K. Matsumoto is supported by JSPS KAKENHI Grant Number 24K06775.
T. Sogabe is supported by JSPS KAKENHI Grant Number 24K16934.

\end{document}